\DeclareFontFamily{U}{mathx}{\hyphenchar\font45}
\DeclareFontShape{U}{mathx}{m}{n}{<-> mathx10}{}
\DeclareSymbolFont{mathx}{U}{mathx}{m}{n}
\DeclareMathAccent{\widebar}{0}{mathx}{"73}
\newtheorem{proposition}{Proposition}[section]
\newtheorem{theorem}[proposition]{Theorem}
\newtheorem{lemma}[proposition]{Lemma}
\newtheorem{prop}[proposition]{Proposition}
\newtheorem{cor}[proposition]{Corollary}
\newtheorem{phen}{Phenomenon}
\theoremstyle{definition}
\newtheorem{example}[proposition]{Example}
\newtheorem{definition}[proposition]{Definition}
\theoremstyle{remark}
\newtheorem{remark}[proposition]{Remark}
\numberwithin{equation}{section}
\newcommand{\margincolor}{red}      
\definecolor{darkgreen}{rgb}{0,0.7,0}
\newcounter{margincounter}
\newcommand{\marginnum}{
\ifnum\value{margincounter}<10
\textcolor{\margincolor}{\begin{picture}(0,0)\put(2.2,2.4){\circle{9}}\end{picture}\footnotesize\arabic{margincounter}}
\else\ifnum\value{margincounter}<100
\textcolor{\margincolor}{\begin{picture}(0,0)\put(4.256,2.5){\circle{11}}\end{picture}\footnotesize\arabic{margincounter}}
\else
\textcolor{\margincolor}{\begin{picture}(0,0)\put(6.8,2.5){\circle{14}}\end{picture}\footnotesize\arabic{margincounter}}
\fi\fi
}
\newcommand{\newword}[1]{\textbf{\emph{#1}}}
\newcommand{\integers}{\mathbb Z}
\newcommand{\rationals}{\mathbb Q}
\newcommand{\reals}{\mathbb R}
\newcommand{\thet}{\vartheta}
\newcommand{\id}{\operatorname{id}}
\newcommand{\sgn}{\operatorname{sgn}}
\newcommand{\vsgn}{\mathbf{sgn}}
\newcommand{\Geom}{{\operatorname{\textbf{Geom}}}}
\newcommand{\A}{{\mathcal A}}
\newcommand{\F}{{\mathcal F}}
\newcommand{\D}{{\mathcal D}}
\newcommand{\W}{{\mathcal W}}
\newcommand{\ck}{^\vee}
\newcommand{\toname}[1]{\stackrel{#1}{\longrightarrow}}
\renewcommand{\th}{^\text{th}}
\newcommand{\0}{{\mathbf{0}}}
\newcommand{\Proj}{\mathrm{Proj}}
\newcommand{\diag}{\mathrm{diag}}
\DeclareMathOperator{\Span}{Span}
\DeclareMathOperator{\supp}{supp}
\newcommand{\g}{\mathbf{g}}
\renewcommand{\c}{\mathbf{c}}
\renewcommand{\b}{\mathbf{b}}
\renewcommand{\k}{\mathbf{k}}
\renewcommand{\a}{\mathbf{a}}
\newcommand{\e}{\mathbf{e}}
\newcommand{\x}{\mathbf{x}}
\newcommand{\y}{\mathbf{y}}
\newcommand{\z}{\mathbf{z}}
\renewcommand{\v}{\mathbf{v}}
\newcommand{\n}{\mathbf{n}}
\newcommand{\w}{\mathbf{w}}
\newcommand{\tB}{\tilde{B}}
\newcommand{\M}{\mathcal{M}}
\newcommand{\V}{\mathcal{V}}
\newcommand{\Rel}{\operatorname{Rel}}
\renewcommand{\S}{\mathbf{S}}
\renewcommand{\M}{\mathbf{M}}
\newcommand{\Var}{\operatorname{Var}}
\newcommand{\Fan}{\operatorname{Fan}}
\newcommand{\ScatFan}{\operatorname{ScatFan}}
\DeclareMathOperator{\Cart}{Cart}
\renewcommand{\d}{{\mathfrak d}}
\renewcommand{\D}{{\mathfrak D}}
\newcommand{\Ram}{{\operatorname{Ram}}}
\newcommand{\fakesubsec}[1]{\medskip\noindent\textbf{#1.}}  
\newcommand{\B}[2]{\begin{bsmallmatrix*}[r]0&#1\\#2&0\end{bsmallmatrix*}}
\newcommand{\hy}{\hat{y}}
\title[Dominance phenomena]{Dominance phenomena: \\ mutation, scattering and cluster algebras}
\author{Nathan Reading}
\thanks{This paper is based upon work partially supported by the National Science Foundation under Grant Numbers DMS-1101568 and DMS-1500949.}
\subjclass[2010]{Primary: 13F60, 52C99;  Secondary 05E15, 20F55, 57Q15.}
\begin{document}

\begin{abstract}
An exchange matrix $B$ dominates an exchange matrix $B'$ if the signs of corresponding entries weakly agree, with the entry of $B$ always having weakly greater absolute value.
When $B$ dominates $B'$, interesting things happen in many cases (but not always):
the identity map between the associated mutation-linear structures is often mutation-linear;
the mutation fan for $B$ often refines the mutation fan for $B'$;
the scattering (diagram) fan for $B$ often refines the scattering fan for $B'$;
and there is often an injective homomorphism from the principal-coefficients cluster algebra for $B'$ to the principal-coefficients cluster algebra for $B$, preserving $\g$-vectors and sending the set of cluster variables for $B'$ (or an analogous larger set) into the set of cluster variables for $B$ (or an analogous larger set).
The scope of the description ``often'' is not the same in all four contexts and is not settled in any of them.
In this paper, we prove theorems that provide examples of these dominance phenomena.
\end{abstract}

\maketitle

\setcounter{tocdepth}{2}
\tableofcontents

\section{Introduction}\label{intro sec}  An \newword{exchange matrix} is a skew-symmetrizable integer matrix.
Given $n\times n$ exchange matrices $B=[b_{ij}]$ and $B'=[b'_{ij}]$, we say $B$ \newword{dominates} $B'$ if for each $i$ and~$j$, we have $b_{ij}b'_{ij}\ge0$ and $|b_{ij}|\ge|b'_{ij}|$.
This paper explores the consequences of the dominance relationship between $B$ and $B'$ for the mutation-linear algebra, mutation fans, scattering diagrams, and principal-coefficients cluster algebras associated to $B$ and $B'$.
We present our results by describing several phenomena that often occur when $B$ dominates $B'$.
We call these ``phenomena'' because the hypotheses are not yet nailed down and because there are negative examples (where a phenomenon does not occur). 
The goal of this paper is to prove theorems that give compelling and surprising examples of the phenomena.

By providing examples of the phenomena, we wish to establish that something real and nontrivial is happening, with an eye towards two potential benefits:
In one direction, we anticipate that researchers from the various areas will apply their tools to find additional examples of the phenomena, necessary and/or sufficient conditions for the phenomena to occur, and/or additional dominance phenomena.
In the other direction, since the phenomena discussed here concern fundamental aspects of matrix mutation, the geometry of scattering diagrams, and the commutative algebra of cluster algebras, we anticipate that the phenomena will lead to insights in the various areas where matrix mutation, scattering diagrams, and cluster algebras are fundamental.

We now describe the phenomena and state the main results.

\subsection{Mutation-linear maps}\label{mulin intro}
The study of mutation-linear algebra was initiated in \cite{universal} and continued in \cite{unisurface,unitorus,unisphere}.
The ``mutation'' in ``mutation-linear'' is matrix mutation in the sense of cluster algebras.  
(See \cite[Definition~4.2]{ca1} or Section~\ref{mla sec}.)
To put mutation-linear algebra into context, consider the formulation of (ordinary) linear algebra as the study of linear relations on a module.
For example, the notion of basis is defined in terms of the existence and non-existence of certain linear relations.
As another example, a map $\lambda:V\to V'$ is linear if for every linear relation $\sum_{i\in S}c_i\v_i$ in $V$, the sum $\sum_{i\in S}c_i\lambda(\v_i)$ is a linear relation in $V'$.

In the same sense, mutation-linear algebra is the study of $B$-coherent linear relations.
A \newword{$B$-coherent linear relation}\footnote{Here in the introduction, we sweep a technicality under the rug.  See Definition~\ref{B coher def}.} among vectors in $\reals^n$ is a linear relation $\sum_{i\in S}c_i\v_i$ with the following property:
If the $\v_i$ are placed as coefficient rows under $B$ and the resulting extended exchange matrix is subjected to an arbitrary sequence of mutations to produce new coefficient rows $\v'_i$, then the relation $\sum_{i\in S}c_i\v'_i=\0$ holds.
The notation $\reals^B$ is shorthand for the \newword{mutation-linear structure} on $\reals^n$, meaning the set $\reals^n$ together with the collection of $B$-coherent linear relations.
Mutation-linear algebra is closely tied to cluster algebras.
For example, finding a \newword{basis} for $\reals^B$ is the same thing as finding a cluster algebra for $B$ with universal (geometric) coefficients \cite[Theorem~4.4]{universal}.
The key mutation-linear notion in this paper is the notion of a mutation-linear map.
A map $\lambda$ from $\reals^B$ to $\reals^{B'}$ is \newword{mutation-linear} if for every $B$-coherent linear relation $\sum_{i\in S}c_i\v_i$, the sum $\sum_{i\in S}c_i\lambda(\v_i)$ is a $B'$-coherent linear relation.
A mutation-linear map defines a functor from the category of geometric cluster algebras with exchange matrix $B$, under coefficient specialization, to the same category for $B'$.
(See Section~\ref{functor sec}.)

From the definition of a mutation-linear map, it is not clear whether interesting mutation-linear maps exist.
The first phenomenon points out that they often do. 

\begin{phen}[Identity is mutation-linear]\label{id phen}
Suppose $B$ and $B'$ are exchange matrices such that $B$ dominates~$B'$.  
In many cases, the identity map from $\reals^B$ to $\reals^{B'}$ is mutation-linear.
\end{phen}

%
There are non-examples of Phenomenon~\ref{id phen}; the surprise is that the phenomenon occurs as much as it does.
One simple non-example is when $B$ comes from a quiver with three vertices and three arrows, forming a directed cycle, and $B'$ is defined by deleting one arrow from the cycle.
On the positive side, we verify the phenomenon for $B$ acyclic of finite type.
We prove the following theorem using results of \cite{diagram}.

\begin{theorem}\label{id phen finite}  
Suppose $B$ is an acyclic exchange matrix of finite type, and suppose $B'$ is another exchange matrix dominated by $B$.
Then the identity map from $\reals^B$ to $\reals^{B'}$ is mutation-linear.
\end{theorem}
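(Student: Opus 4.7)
The plan is to reduce the theorem to a geometric statement about the mutation fans $\Fan_B$ and $\Fan_{B'}$, exploiting the fact that $\Fan_B$ is a complete simplicial fan in the acyclic finite type case.

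\textbf{Step 1 (Mutation fan refinement).}  First, I would invoke the mutation fan refinement phenomenon for the setting at hand: when $B$ is acyclic of finite type and $B$ dominates $B'$, the mutation fan $\Fan_B$ refines $\Fan_{B'}$.  This is one of the other dominance phenomena listed in the introduction, and the expectation (given the parenthetical ``using results of \cite{diagram}'') is that the acyclic finite type refinement is already established in \cite{diagram}, where $\Fan_B$ coincides with the Cambrian/$\g$-vector fan.  From this refinement, every cone of $\Fan_B$ is contained in a cone of $\Fan_{B'}$.

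\textbf{Step 2 (Characterize $B$-coherent relations via $\Fan_B$).}  Next I would use Reading's characterization from \cite{universal}: in the finite type (hence rational, complete, simplicial) setting, a linear relation $\sum_{i\in S} c_i \v_i = \0$ is $B$-coherent if and only if it can be written as a finite sum of ``local'' relations, each of which has all of its $\v_i$'s lying in the closure of a single maximal cone of $\Fan_B$.  The forward direction uses the fact that the ray generators of $\Fan_B$ form a basis of $\reals^B$ (finite type universal coefficients), which lets one decompose each $\v_i$ uniquely against the cone it sits in and then rearrange the given relation as a sum of cone-local relations among $\g$-vectors.

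\textbf{Step 3 (Transfer to $B'$).}  Finally, for each local relation $\sum_{i \in T} c_i \v_i = \0$ with all $\v_i$'s in the closure of a single cone $C$ of $\Fan_B$, Step 1 places all $\v_i$'s in the closure of a single cone $C'$ of $\Fan_{B'}$.  Mutation is linear on each cone of a mutation fan, so any linear relation among coefficient rows sitting in a single $\Fan_{B'}$-cone is automatically preserved under mutation of the extended exchange matrix built from $B'$; that is, the relation is $B'$-coherent.  Summing over the local pieces from Step 2 shows that the original $B$-coherent relation is $B'$-coherent, so the identity map $\reals^B \to \reals^{B'}$ is mutation-linear.

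\textbf{Main obstacle.}  The hard part is Step 2: producing and justifying the decomposition of a $B$-coherent relation into cone-local pieces, with proper treatment of vectors lying on walls of $\Fan_B$ (this is precisely where the ``technicality'' swept under the rug by the introduction, i.e., the precise form of Definition~\ref{B coher def}, starts to matter).  In non-simplicial or infinite type situations this decomposition can fail or require transfinite sums, which is why the hypothesis that $B$ is acyclic of finite type is crucial and why the result can be extracted from \cite{diagram} rather than proved from scratch; once the refinement $\Fan_B \le \Fan_{B'}$ and the ray-basis property are in hand, Step~3 is essentially automatic from the piecewise linearity of mutation.
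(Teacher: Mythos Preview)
Your proposal is correct and is essentially the same route the paper takes: the paper first establishes that $\F_B$ refines $\F_{B'}$ via the Cambrian fan results of \cite{diagram} (your Step~1, appearing here as Theorem~\ref{ref phen finite}), and then deduces mutation-linearity of the identity via Proposition~\ref{refines FB}, which packages your Steps~2 and~3 into the statement that, when $\reals^B$ admits a cone basis, $\id$ is mutation-linear if and only if $\F_B$ refines $\F_{B'}$. The cone-basis hypothesis needed for your Step~2 is supplied in the paper by the existence of a positive basis in finite type (\cite[Theorem~10.12]{universal} plus Proposition~\ref{positive cone basis}), and your decomposition into cone-local relations is exactly Proposition~\ref{cone B-local}.
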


Replacing $\reals$ with $\rationals$ in Phenomenon~\ref{id phen}, we obtain a weaker phenomenon, which we will call the ``rational version'' of Phenomenon~\ref{id phen}, exemplified by Theorem~\ref{id phen resect}, below.
We define a simple operation, called resection, on triangulated surfaces that produces a dominance relation among signed adjacency matrices.
Resection and the Null Tangle Property are defined in Section~\ref{bij surf sec}.
A \newword{null surface} is a once-punctured monogon, once-punctured digon, or unpunctured quadrilateral.

\begin{theorem}\label{id phen resect}
Given a marked surface $(\S,\M)$ with a triangulation $T$, perform a resection of $(\S,\M)$ compatible with $T$ and let $T'$ be the triangulation induced by $T$ on the resected surface $(\S',\M')$.
If every component of $(\S,\M)$ either has the Null Tangle Property or is a null surface and if $(\S',\M')$ has the Curve Separation Property, then the identity map from $\rationals^{B(T)}$ to $\rationals^{B(T')}$ is mutation-linear.
\end{theorem}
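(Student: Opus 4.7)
The plan is to pass through the geometric model: the Null Tangle Property of $(\S,\M)$ should guarantee that every rational $B(T)$-coherent linear relation arises from a null tangle of tagged allowable curves on $(\S,\M)$, and the Curve Separation Property of $(\S',\M')$ should be strong enough to guarantee that a rational linear relation among shear coordinate vectors on the resected surface is $B(T')$-coherent once it is realized by a null tangle on $(\S',\M')$. Mutation-linearity of the identity will then reduce to exhibiting a resection-induced pushforward of tangles that sends null tangles on $(\S,\M)$ to null tangles on $(\S',\M')$ and is compatible with shear coordinates.

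First I would reduce to the case of a connected surface, since $\rationals^B$ respects block-diagonal decompositions of $B$ and resection acts component-wise. The three null-surface cases (once-punctured monogon, once-punctured digon, unpunctured quadrilateral) are handled by direct computation on the small exchange matrices and their resections. For a component with the Null Tangle Property, the strategy on an arbitrary $B(T)$-coherent rational linear relation is: clear denominators and apply the Null Tangle Property to realize the relation as a null tangle on $(\S,\M)$; apply the resection pushforward to obtain a tangle on $(\S',\M')$; and then invoke the Curve Separation Property of $(\S',\M')$ to deduce $B(T')$-coherence of the resulting relation.

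The central technical step is the construction of the pushforward together with the shear coordinate compatibility. Compatibility of the resection with $T$ means each resection cut runs along arcs of $T$, so any tagged allowable curve $\gamma$ on $(\S,\M)$ restricts to a disjoint union of tagged allowable curves on $(\S',\M')$ after discarding pieces that lie in regions capped off or deleted by the resection. The key claim is that for each arc $e$ of $T'$, the shear coordinate of $\gamma$ at $e$ with respect to $T$ equals the sum of shear coordinates at $e$ of the surviving pieces of $\gamma$ with respect to $T'$. This is the geometric manifestation of the dominance $B(T) \succeq B(T')$, and once it is in hand, additivity of shear coordinates over disjoint unions immediately upgrades null tangles on $(\S,\M)$ to null tangles on $(\S',\M')$.

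The main obstacle is this shear coordinate compatibility. It requires a case analysis of the local picture at each type of resection cut (capping an arc of $T$, excising a subsurface bounded by arcs of $T$, or similar) and a separate local check near punctures, where the interaction between tags and the cut affects the sign conventions for shear coordinates. I expect the null-surface exclusions in the hypothesis to surface precisely in this local analysis, as the bad local configurations should correspond to resections whose discarded components are exactly null surfaces. Once the local compatibility is established, the theorem follows by assembling the above ingredients.
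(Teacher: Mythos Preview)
Your proposal has a genuine gap in how the Null Tangle Property and the Curve Separation Property are deployed; neither does what you ask of it, and the missing piece is not easily patched along the lines you sketch.

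First, the role of the hypotheses. The Null Tangle Property does not ``realize'' a $B(T)$-coherent relation as a null tangle; it says the opposite direction, that null tangles are trivial. What the paper actually extracts from it (via Theorem~\ref{null tangle theorem} and Proposition~\ref{positive cone basis}) is that the shear coordinates of allowable curves form a \emph{cone basis} for $\rationals^{B(T)}$. Likewise, the Curve Separation Property does not let you promote ``zero shear coordinates at $T'$'' to $B(T')$-coherence; what it gives (via Theorem~\ref{rat FB surfaces}) is that the rational quasi-lamination fan $\F_\rationals(T')$ is the rational part of the mutation fan $\F_{B(T')}$. These two structural facts feed into Proposition~\ref{rat part refine mulin}: with a cone basis on the source side and rational parts identified on both sides, mutation-linearity of the identity is \emph{equivalent} to the fan refinement $\F_\rationals(T)\cap\rationals^n$ refining $\F_\rationals(T')\cap\rationals^n$. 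That refinement is Theorem~\ref{ref phen resect}, proved by the geometric cutting construction you describe, but applied only to \emph{pairwise compatible} collections of curves.

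Second, and more seriously, your pushforward argument breaks at the step where you need the pushed-forward tangle to be a null tangle on $(\S',\M')$. Your local shear-coordinate compatibility $b_e(T,\lambda)=\sum_\ell b_e(T',\lambda'_\ell)$ holds only at the single pair $(T,T')$. For $B(T')$-coherence you would need the analogous identity for \emph{every} tagged triangulation $\tilde T'$ of $(\S',\M')$ (equivalently, equivariance of the pushforward under all mutation maps), and there is no natural correspondence between triangulations of $(\S',\M')$ and triangulations of $(\S,\M)$ that would supply this. Curve Separation on $(\S',\M')$ says nothing about this; it separates incompatible curves, it does not upgrade a single vanishing of shear coordinates to vanishing for all triangulations. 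The paper avoids this problem entirely by never pushing forward arbitrary tangles: it pushes forward compatible collections to compatible collections (so cones go to cones), and then the cone-basis criterion (Proposition~\ref{B-cone sufficient R}, packaged as Proposition~\ref{rat part refine mulin}) converts the fan refinement into mutation-linearity without ever needing a relation-by-relation check.
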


The only surfaces currently known to have the Null Tangle Property (or defined to be null surfaces) are those whose connected components are of the following types:  a disk with $0$, $1$, or $2$ punctures, an annulus with $0$ or $1$ punctures, a sphere with three boundary components and no punctures, a torus with one puncture and no boundary components, and a sphere with four punctures and no boundary components.
(See \cite[Theorem~3.2]{unitorus} for the once-punctured torus, \cite[Theorem~1.1]{unisphere} for the four-punctured sphere, and \cite[Theorem~7.4]{unisurface} for the other surfaces.)
Interestingly, this list is closed under resection.
This list includes the surfaces of finite type---those with finitely many triangulations.
No surfaces are known (or conjectured) not to have the Null Tangle Property except the null surfaces.

In her Ph.D. thesis~\cite{Viel}, Shira Viel extends Theorem~\ref{id phen resect} to the case of orbifolds in the sense of \cite{FeShT2}.  
Specifically, Viel considers resection, in the sense of this paper, on orbifolds and also defines a notion of resection that is special to orbifolds.
For both types of resection, she proves the analog \cite[Theorem~4.1.1]{Viel} of Theorem~\ref{id phen resect}.

Say that $B'=[b'_{ij}]$ is obtained from $B=[b_{ij}]$ by \newword{erasing edges} if $b'_{ij}\in\set{0,b_{ij}}$ for every pair $i,j$.
In this case, $B$ dominates $B'$.
The reference to edges refers to a graph on the indices of $B$ with an edge connecting two indices $i$ and $j$ if and only if $b_{ij}\neq0$:
The edges ``erased'' are $i$---$j$ such that $b'_{ij}=0\neq b_{ij}$.

\begin{theorem}\label{id phen erase}
Suppose the indices of $B$ are written as a disjoint union $I\cup J$ and suppose $B'$ is obtained by erasing all edges of $B$ that connect indices in $I$ to indices in $J$.
Then the identity map from $\reals^B$ to $\reals^{B'}$ is mutation-linear.
\end{theorem}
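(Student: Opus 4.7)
The plan is to argue directly from the definition of $B$-coherent linear relations, exploiting that $B'$ is block diagonal with respect to the partition $I\cup J$.  Write $B_I$ and $B_J$ for the principal submatrices of $B$ (equivalently, of $B'$) indexed by $I$ and $J$, respectively.  Since every entry of $B'$ connecting an index in $I$ to an index in $J$ is zero, mutating $B'$ at any $k\in I$ leaves the $J$-$J$ block of $B'$ and the $J$-entries of each coefficient row unchanged, and acts on the $I$-$I$ block and on the $I$-entries of coefficient rows by exactly the $B_I$-mutation rule (and symmetrically for $k\in J$).  Writing each coefficient row as $\v=(v_I,v_J)$ according to $\reals^n=\reals^I\oplus\reals^J$, and letting $\k_I$, $\k_J$ denote the subsequences of a mutation sequence $\k$ taking values in $I$ and $J$, this decoupling yields
\[
\mu^{B'}_\k(\v)=\bigl(\mu^{B_I}_{\k_I}(v_I),\ \mu^{B_J}_{\k_J}(v_J)\bigr).
\]

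The essential observation is that a $B$-mutation sequence $\k_I$ consisting only of indices in $I$ produces, on the $I$-$I$ block of $B$ and on the $I$-entries of any coefficient row, exactly the same result as the corresponding $B_I$-mutation sequence produces on $B_I$:
\[
\bigl(\mu^{B}_{\k_I}(\v)\bigr)_I \;=\;\mu^{B_I}_{\k_I}(v_I).
\]
I would prove this by induction on the length of $\k_I$: the matrix-mutation formula at $k\in I$ transforms an entry $b_{ij}$ with $i,j\in I$ using only the other $I$-$I$ entries $b_{ik}$ and $b_{kj}$, and the coefficient-row mutation at $k$ acts on the $j$-th entry using only $v_k$ and $b_{kj}$, which for $k,j\in I$ is an $I$-$I$ entry.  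A symmetric statement holds for sequences $\k_J$ valued in $J$.

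With these two ingredients in hand, the main argument is immediate.  Suppose $\sum_i c_i\v_i$ is a $B$-coherent linear relation, and let $\k$ be any mutation sequence.  Applying $B$-coherence to the subsequence $\k_I$ yields $\sum_i c_i\,\mu^B_{\k_I}(\v_i)=\0$; projecting onto $\reals^I$ and substituting the displayed identity gives $\sum_i c_i\,\mu^{B_I}_{\k_I}(v_{i,I})=\0$.  The symmetric argument with $\k_J$ gives $\sum_i c_i\,\mu^{B_J}_{\k_J}(v_{i,J})=\0$.  Combining via the formula for $\mu^{B'}_\k$ yields $\sum_i c_i\,\mu^{B'}_\k(\v_i)=\0$, so the relation is $B'$-coherent, as required.

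The main obstacle is the careful bookkeeping needed to verify $(\mu^B_{\k_I}(\v))_I=\mu^{B_I}_{\k_I}(v_I)$ inductively, keeping track of how the $I$-$J$ and $J$-$J$ blocks of $B$ evolve but remain irrelevant to $I$-entries under mutations at $I$-indices.  The technicality in the definition of $B$-coherent relation alluded to in the footnote following Definition~\ref{B coher def} (allowing infinite index sets $S$ under a local-finiteness condition) is preserved under the coordinate projection $\v\mapsto v_I$, so it does not cause difficulty.
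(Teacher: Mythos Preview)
Your approach is essentially identical to the paper's proof in Section~\ref{erase sec}: the paper states your inductive identity $(\eta^B_{\k_I}(\v))_I=\eta^{B_I}_{\k_I}(v_I)$ as Proposition~\ref{I mu eta}, uses it to show that $\sum_i c_i\Proj_I(\v_i)$ is $B_I$-coherent and $\sum_i c_i\Proj_J(\v_i)$ is $B_J$-coherent, and then combines via the block-diagonal decomposition $\eta^{B'}_\k(\v)=\eta^{B_I}_{\k_I}(\Proj_I(\v))+\eta^{B_J}_{\k_J}(\Proj_J(\v))$.

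There is, however, a genuine gap. You have misread the footnote: the ``technicality'' swept under the rug in the introduction is \emph{not} an infinite-index-set condition (indeed, Definition~\ref{B coher def} takes $S$ finite). Rather, $B$-coherence requires two conditions, \eqref{linear eta} and \eqref{piecewise eta}, and the introduction only described the first. Your argument verifies only \eqref{linear eta} for $B'$: you show $\sum_i c_i\,\eta^{B'}_\k(\v_i)=\0$, but never address $\sum_i c_i\,\mathbf{min}(\eta^{B'}_\k(\v_i),\0)=\0$. The fix is immediate with your same machinery: since $\mathbf{min}(\cdot,\0)$ is componentwise, it commutes with $\Proj_I$, so $B$-coherence (condition~\eqref{piecewise eta} applied to the sequence $\k_I$) yields $\sum_i c_i\,\mathbf{min}(\eta^{B_I}_{\k_I}(v_{i,I}),\0)=\0$, and similarly for $J$; the block decomposition of $\eta^{B'}_\k$ then gives \eqref{piecewise eta} for $B'$. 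The paper handles this by first asserting full $B_I$- and $B_J$-coherence of the projected relations, from which both conditions for $B'$ follow at once.
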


A $2\times 2$ exchange matrix is $B=\begin{bsmallmatrix*}[r]0&a\\b&0\end{bsmallmatrix*}$ with $ab\le0$.
It is of finite type if and only if $ab\ge-3$ and of affine type if and only if $ab=-4$.
Otherwise, it is of wild type.
The next theorem completely describes Phenomenon~\ref{id phen} for $2\times 2$ exchange matrices.

\begin{theorem}\label{id phen 2x2}
Suppose $B$ and $B'$ are $2\times2$ exchange matrices.
\begin{enumerate}
\item 
If $B$ does not dominate $B'$, then $\id:\reals^B\to\reals^{B'}$ is not mutation-linear.
\item 
If neither $B$ nor $B'$ is wild then $\id:\reals^B\to\reals^{B'}$ is mutation-linear if and only if $B$ dominates $B'$.
\item
If exactly one of $B$ and $B'$ is wild, then $\id:\reals^B\to\reals^{B'}$ is mutation-linear if and only if either
\begin{enumerate}
\item $B'=\begin{bsmallmatrix*}[r]0&0\\0&0\end{bsmallmatrix*}$, or
\item
$B'=\begin{bsmallmatrix*}[r]0&\pm1\\\mp1&0\end{bsmallmatrix*}$ and $B$ dominates $B'$ but agrees with $B'$ except in one position.
\end{enumerate}
\item
If both $B$ and $B'$ are wild, then $\id:\reals^B\to\reals^{B'}$ is not mutation-linear.
\end{enumerate}
\end{theorem}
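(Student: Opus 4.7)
The plan is to reduce the theorem to an explicit analysis of rank-$2$ mutation fans, whose structure is fully understood. For $B=\B{a}{b}$ with $ab\le 0$, the fan $\Fan(B)$ always contains the two positive coordinate rays, a family of ``mutation rays'' in the second and fourth quadrants (finite when $|ab|\le 3$, accumulating when $ab=-4$), and in the wild case $ab\le -5$ an additional two-dimensional ``cone of irrational slopes'' $K_B$ in whose relative interior every ray is itself a ray of $\Fan(B)$; the bounding limit rays of $K_B$ are determined by the eigenvectors of the rank-$2$ mutation matrix. Since a $B$-coherent linear relation decomposes as a system of ordinary linear relations, one per cone of $\Fan(B)$, the identity map $\reals^B\to\reals^{B'}$ is mutation-linear essentially when $\Fan(B)$ refines $\Fan(B')$. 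My strategy is to verify or refute this refinement criterion in each of the four cases.

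For part~(1) I would argue contrapositively: if $B$ fails to dominate $B'$, then either the signs of corresponding entries disagree --- so $\Fan(B')$ has a mutation ray in a quadrant where $\Fan(B)$ has no interior rays --- or $|b'_{ij}|>|b_{ij}|$ somewhere, so one mutation of a coordinate axis produces different rays for $B$ and for $B'$. In either subcase, an explicit two-term $B$-coherent relation collinear in $\Fan(B)$ fails to be $B'$-coherent because the supporting ray meets the interior of more than one maximal cone of $\Fan(B')$. For part~(2), every $2\times 2$ exchange matrix is acyclic, so Theorem~\ref{id phen finite} handles the subcase where $B$ is of finite type. The remaining case is $B$ affine ($ab=-4$), and the dominance hypothesis together with the non-wildness of $B'$ forces $B'$ into a short list of possibilities; in each of these I would verify by direct computation that every mutation ray of $\Fan(B')$ appears as a ray of $\Fan(B)$ and that the limit ray of $\Fan(B)$ either coincides with that of $\Fan(B')$ or sits inside a maximal cone of $\Fan(B')$.

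The main obstacle is parts~(3) and~(4), which hinge on the cone of irrationals. When $B$ is wild, $\Fan(B)$ can refine $\Fan(B')$ only if $K_B$ lies inside a single maximal cone of $\Fan(B')$, because no ray of $\Fan(B')$ can pierce the relative interior of $K_B$ (such a ray would need to coincide with infinitely many rays of $\Fan(B)$). For~(4), both $B$ and $B'$ are wild with distinct irrational cones (the bounding limit rays are determined by the product $ab$ via the matrix eigenvalues), so $K_{B'}$ is not a union of cones of $\Fan(B)$ and refinement fails. For~(3), the trivial case $B'=\0$ is immediate; in the remaining case $a'b'\in\{-1,-2,-3\}$ I would carry out a careful geometric analysis comparing the bounding slopes of $K_B$ with the rays of the $A_2$, $B_2$ and $G_2$ fans, showing that $K_B$ fits inside a single maximal cone of $\Fan(B')$ precisely when $a'b'=-1$, $B'=\B{\pm 1}{\mp 1}$, and $B$ differs from $B'$ in exactly one off-diagonal entry; any other configuration places a mutation ray of $\Fan(B')$ through the interior of $K_B$. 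The hardest step is this explicit verification that the ``agree except in one position'' constraint is exactly what permits refinement, which amounts to computing the limit slopes of $K_B$ and checking their containment in specific maximal cones of the $A_2$ fan.
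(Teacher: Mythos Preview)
Your overall reduction strategy---use that $\reals^B$ admits a cone basis in rank~$2$, so by Proposition~\ref{refines FB} mutation-linearity of the identity is equivalent to $\F_B$ refining $\F_{B'}$, then analyze the rays of the rank-$2$ mutation fans---is exactly the paper's approach. The paper proves Theorem~\ref{ref phen 2x2} first and then deduces Theorem~\ref{id phen 2x2} from it.

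However, your execution in parts~(3) and~(4) rests on a mistaken picture of the wild mutation fan. In the wild case $ab\le -5$, the region between the two limiting rays $\v_\infty$ and $\w_\infty$ is a \emph{single} two-dimensional $B$-cone of $\F_B$; it is not decomposed into individual rays. (You may be thinking of the scattering fan under the Discreteness Hypothesis, which is a different object.) Consequently, your argument for~(4)---that a ray of $\F_{B'}$ ``piercing the relative interior of $K_B$'' would have to coincide with infinitely many rays of $\F_B$---does not apply. The correct criterion, which the paper uses, is simply that in $\reals^2$ refinement is equivalent to containment of ray sets: $\F_B$ refines $\F_{B'}$ if and only if every ray of $\F_{B'}$ is a ray of $\F_B$. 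For part~(4), both fans have exactly two irrational rays (the slopes $s_\infty$ and $t_\infty$), and one must show these coincide only when $B=B'$. Note also that $s_\infty(a,b)$ and $t_\infty(a,b)$ depend on $a$ and $b$ separately, not only on the product $ab$ as you state; the paper extracts $a=c$ and $b=d$ from equating the two irrational slopes via an argument about irrational square roots of integers.

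For part~(3), your focus on whether $K_B$ fits inside a maximal cone of $\F_{B'}$ is the wrong direction: you need the (finitely many, rational) rays of the finite-type $\F_{B'}$ to appear among the rays of $\F_B$. The paper checks directly that the slope $-1$ (the only relevant slope for $B'=\B{1}{-1}$) equals $s_0(a,b)=-a$ when $a=1$ and $t_0(a,b)=\frac{1}{b}$ when $b=-1$, and otherwise lies strictly between $s_\infty$ and $t_\infty$, hence is not a ray of $\F_B$. One must then also rule out the $B_2$ and $G_2$ cases by checking that slopes like $-\tfrac12$ or $-2$ fail to appear even when $a=1$ or $b=-1$.
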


\subsection{Mutation fans}
The \newword{mutation fan} $\F_B$ of an exchange matrix $B$ is a fan that encodes the piecewise-linear geometry of mutations of $B$.
(See Definition~\ref{mut fan def}.)
Given fans $\F$ and $\F'$ with $\cup\F=\cup\F'$, we say that $\F$ \newword{refines} $\F'$ (and $\F'$ \newword{coarsens} $\F$) if every cone of $\F$ is contained in a cone of $\F'$, or equivalently if every cone of $\F'$ is a union of cones of $\F$.
If the mutation-linear structure $\reals^B$ admits a cone basis in the sense of Definition~\ref{cone R def}, then the identity is mutation-linear from $\reals^B$ to $\reals^{B'}$ if and only if the mutation fan $\F_B$ refines the mutation fan $\F_{B'}$ (Proposition~\ref{refines FB}).
We are not aware of any $B$ such that $\reals^B$ does not admit a cone basis, and we expect that the following phenomenon is equivalent to Phenomenon~\ref{id phen}.

\begin{phen}[Refinement of mutation fans]\label{ref phen}
Suppose $B$ and $B'$ are exchange matrices such that $B$ dominates~$B'$.  
In many cases, the mutation fan $\F_B$ refines the mutation fan $\F_{B'}$.
\end{phen}

In all of the examples and non-examples described above for Phenomenon~\ref{id phen}, a cone basis is known to exist for $\reals^B$.
In particular, our proof of Theorem~\ref{id phen finite} proceeds by proving the following equivalent theorem.

\begin{theorem}\label{ref phen finite}  
Suppose $B$ is an acyclic exchange matrix of finite type, and suppose $B'$ is another exchange matrix dominated by $B$.
Then the mutation fan $\F_B$ refines the mutation fan $\F_{B'}$.
\end{theorem}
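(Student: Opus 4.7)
The plan is to reduce Theorem~\ref{ref phen finite} to Theorem~\ref{id phen finite} via Proposition~\ref{refines FB}, which gives that when $\reals^B$ admits a cone basis, the refinement $\F_B\ge\F_{B'}$ is equivalent to mutation-linearity of the identity map $\reals^B\to\reals^{B'}$.  For $B$ acyclic of finite type, the rays of the mutation fan $\F_B$ (equivalently, the $\g$-vectors of the associated cluster algebra, which in this setting exhaust the rays of the Cambrian fan and of $\F_B$) form a cone basis for $\reals^B$.  With this equivalence in hand, Theorem~\ref{ref phen finite} follows formally from Theorem~\ref{id phen finite}, so the substantive work is to prove the identity map is mutation-linear.

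The route to mutation-linearity is via an explicit presentation of $B$-coherent linear relations.  In the acyclic finite-type setting, the results of \cite{diagram} should supply a finite collection of ``elementary'' $B$-coherent relations that generate all $B$-coherent relations, with one elementary relation attached to each maximal cone $C$ of $\F_B$: since $\F_B$ is simplicial in this case, the rays of $C$ are linearly independent, and the elementary relation supported on the rays of $C$ is (up to scaling) the unique nonnegative relation among a ray of $C$ and the rays of the cone opposite to it across any one of the codimension-one faces of $C$.  Mutation-linearity of the identity then reduces to the finite, cone-by-cone task of checking that each such elementary relation is also a $B'$-coherent relation.

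The main obstacle is verifying this cone-by-cone check in the presence of the weakening $B\ge B'$.  Under iterated mutation, the coefficient rows evolve by a piecewise-linear rule whose local sign conditions are governed by the current extended matrix, and because $B$ and $B'$ agree in sign entrywise, any sign condition forced by a $B$-mutation step lifts to the corresponding $B'$-mutation step.  The subtle point is that the magnitudes of the mutated entries of $B$ and $B'$ diverge as one iterates, so the linear combination defining an elementary relation must be shown inductively on mutation length to continue to sum to $\0$ after $B'$-mutation; one wants the rays of $C$, transported by $B'$-mutation, still to lie in a common cone of $\F_{B'}$ and still to satisfy the same nonnegative relation.  The finite-type hypothesis limits the combinatorial complexity: there are only finitely many cones of $\F_B$ to treat and, up to the standard equivalence on mutation sequences, finitely many mutation paths to verify, which should make the induction manageable while leaving the delicate bookkeeping in the tracking of how the magnitude discrepancy between $B$ and $B'$ propagates.
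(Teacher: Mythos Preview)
Your reduction via Proposition~\ref{refines FB} is correct but runs in the opposite direction from the paper: the paper proves Theorem~\ref{ref phen finite} \emph{first} and deduces Theorem~\ref{id phen finite} from it, not the other way round.  That reversal is harmless in itself, since by Theorem~\ref{finite refines FB} the two statements are equivalent in finite type.  The substantive question is how to prove either one directly, and here your proposal diverges from the paper and carries a genuine gap.

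The paper does not attempt any direct analysis of $B$-coherent relations or any induction on mutation sequences.  Instead it identifies $\F_B$ with a Cambrian fan (Theorem~\ref{camb FB finite}, via Theorem~\ref{FB gvec finite} and the known identification of $\g$-vector fans with Cambrian fans) and then invokes Theorem~\ref{camb fan coarsen}, quoted from \cite{diagram}, which says that when $A$ dominates $A'$ the Cambrian fan for $(A,c)$ refines that for $(A',c')$.  This is packaged as Theorem~\ref{FB coarsen}.  What \cite{diagram} supplies is not a generating set of elementary coherent relations but a purely Coxeter-theoretic refinement statement, proved by lattice-homomorphism methods with no reference to mutation.

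Your proposed inductive argument has a concrete obstruction: dominance is \emph{not} preserved under mutation of exchange matrices.  For instance, if $B$ has $b_{12}=2$, $b_{23}=2$, $b_{13}=-3$ and $B'$ agrees with $B$ except $b'_{12}=1$, $b'_{23}=1$, then $B$ dominates $B'$, but after mutating at index~$2$ the $(1,3)$-entries become $+1$ and $-2$, with opposite signs.  Hence the claim that ``any sign condition forced by a $B$-mutation step lifts to the corresponding $B'$-mutation step'' fails already at the level of the exchange matrices, and the mutation maps $\eta^{\mu_\k(B)}_\ell$ and $\eta^{\mu_\k(B')}_\ell$ need not select the same linear piece.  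Compounding this, the vectors $\eta^B_\k(\v_i)$ and $\eta^{B'}_\k(\v_i)$ themselves diverge after a single step, so the sign of a coordinate relevant to the next mutation can differ between the two.  The finite-type hypothesis bounds the number of cones but does nothing to control this sign drift along arbitrary mutation paths, so the ``manageable induction'' you hope for does not materialize without a new idea.
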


We prove Theorem~\ref{ref phen finite} by relating the mutation fan to the Cambrian fan and appealing to the analogous refinement theorem for Cambrian fans, proved in \cite{diagram}.
\begin{remark}
This refinement relation among Cambrian fans was the origin of the present study of dominance.
It was discovered though lattice-theoretic investigations of the weak order \cite{diagram}, following a clue in work of Simion \cite{Simion}.
This is another example where the lattice theory of the weak order on finite Coxeter groups provides a guide to discovering much more general phenomena.
(Compare \cite{cambrian,sortable,typefree,framework}.)
\end{remark}

Given a triangulation $T$ of a marked surface $(\S,\M)$ having the Null Tangle Property (or a weaker property called the Curve Separation Property), the \newword{rational part} of the mutation fan $\F_{B(T)}$ (Definition~\ref{rat part def}) is a fan $\F_\rationals(T)$ called the \newword{rational quasi-lamination fan}.
The cones of $\F_\rationals(T)$ are the real spans of shear coordinates of pairwise compatible collections of certain curves called \newword{allowable curves}.
Since each of these shear coordinates is an integer vector, each cone of $\F_\rationals(T)$ is a rational cone.
Furthermore, we will see that every point in $\rationals^B$ is contained in some cone of $\F_\rationals(T)$.
Thus if $|T|=n$, then $\F_\rationals(T)\cap\rationals^n=\set{C\cap\rationals^n:C\in\F_\rationals(T)}$ is a complete fan in $\rationals^n$.
We prove Theorem~\ref{id phen resect} as a consequence of the following theorem. 
The theorem provides an example of a ``rational version'' of Phenomenon~\ref{ref phen}.

\begin{theorem}\label{ref phen resect}    
Given a marked surface $(\S,\M)$ and a triangulation $T$ with $|T|=n$, perform a resection of $(\S,\M)$ compatible with $T$ to obtain $(\S',\M')$ and let $T'$ be the triangulation of $(\S',\M')$ induced by $T$.
Then $\F_\rationals(T)\cap\rationals^n$ refines $\F_\rationals(T')\cap\rationals^n$.
If also every component of $(\S,\M)$ and $(\S',\M')$ has the Curve Separation Property, then the rational part of $\F_{B(T)}$ refines the rational part of $\F_{B(T')}$.
\end{theorem}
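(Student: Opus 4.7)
The plan is to prove the first assertion directly from the definition of the rational quasi-lamination fan in terms of shear coordinates of compatible collections of allowable curves, then to derive the second assertion by invoking the identification of the rational part of $\F_{B(T)}$ with $\F_\rationals(T)\cap\rationals^n$ that holds under the Curve Separation Property.

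For the first assertion, I would begin by making precise the correspondence between allowable curves on $(\S,\M)$ and allowable curves on $(\S',\M')$. Because the resection is compatible with $T$, each edge of $T$ survives as an edge of the induced triangulation $T'$, yielding an identification of the ambient rational $n$-space for $\F_\rationals(T')$ with $\rationals^n$. An allowable curve $\gamma$ on $(\S,\M)$ decomposes, after resection, as a finite disjoint union of curves $\gamma'_1,\ldots,\gamma'_r$ on $(\S',\M')$ obtained by cutting $\gamma$ along the resection arcs, and each $\gamma'_i$ is again allowable. The key local computation to carry out is that since shear coordinates are assembled edge-by-edge from the crossing pattern of a curve with the triangulation, the shear coordinate vector $\b(T,\gamma)$ equals $\sum_{i=1}^{r}\b(T',\gamma'_i)$ under the above identification.

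Next I would check that compatibility is preserved: if $\gamma_1,\ldots,\gamma_k$ are pairwise compatible allowable curves on $(\S,\M)$, then the union of all pieces on $(\S',\M')$ is also pairwise compatible, since any crossing between two pieces on $(\S',\M')$ would lift to a crossing of $\gamma$'s on $(\S,\M)$. Combining these two observations, every cone of $\F_\rationals(T)$, defined as the real span of $\b(T,\gamma_1),\ldots,\b(T,\gamma_k)$ for some compatible family, equals the real span of the shear coordinates on $(\S',\M')$ of the pieces $\gamma'_{i,j}$, hence is contained in a cone of $\F_\rationals(T')$. Intersecting with $\rationals^n$ then yields the claimed refinement.

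For the second assertion, the Curve Separation Property implies that $\F_\rationals(T)\cap\rationals^n$ is precisely the rational part of $\F_{B(T)}$, and likewise for $T'$ and $\F_{B(T')}$, so the refinement of the rational parts of the mutation fans follows immediately from the first assertion. The principal obstacle is the local shear-coordinate computation in the first step, which requires case analysis according to the type of allowable curve (ordinary arc, tagged arc at a puncture, or closed loop) and the location of the resection arcs — especially where the resection borders a null component or a tagged endpoint — in order to confirm both that the pieces $\gamma'_i$ are allowable on $(\S',\M')$ and that no shear-coordinate contributions are lost or double-counted along the resection arcs.
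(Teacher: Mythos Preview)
Your overall strategy matches the paper's: cut each allowable curve along the resection, argue that shear coordinates are preserved under this cutting, and verify that the pieces remain pairwise compatible so that the original cone lands inside a cone of $\F_\rationals(T')$. The second assertion is indeed a direct consequence of Theorem~\ref{rat FB surfaces} once the first is established. However, three of your intermediate claims are false as stated, and repairing them is the actual content of the proof.

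First, the pieces $\gamma'_i$ need not be allowable: a piece may be contractible to a boundary arc containing exactly one marked point (shear coordinates zero; discard it), or it may, together with a boundary segment, cut out a once-punctured disk (nonzero shear coordinates; it must be \emph{replaced} by two curves spiraling in opposite directions into that puncture). Second, the decomposition is not always finite: a curve spiraling into an endpoint of a resected arc is cut into infinitely many pieces, all but finitely many of the first discardable type. Third, your compatibility argument ``any crossing between two pieces would lift to a crossing on $(\S,\M)$'' fails for the one kind of intersection that compatibility \emph{allows}: two curves that coincide except for opposite spiral directions at a single puncture. If a resected arc has that puncture as an endpoint, the resulting pieces can genuinely intersect and fail to be compatible in $(\S',\M')$. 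It is exactly the third bullet in Definition~\ref{resect compat tri def} that rules this situation out, and you must invoke it. Finally, the case where the resected arc borders a self-folded triangle requires separate treatment, since shear coordinates at a fold edge are defined by a special rule.
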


As in the case of mutation-linear maps, the thesis \cite{Viel} extends this example of Phenomenon~\ref{ref phen} to orbifolds \cite[Theorem~4.1.2]{Viel}.

%
Proposition~\ref{refines FB}, mentioned above, also says that, independent of any hypotheses beyond dominance, if the identity map $\reals^B\to\reals^{B'}$ is mutation-linear, then $\F_B$ refines $\F_{B'}$.
Thus we obtain the following theorem from Theorem~\ref{id phen erase}.

\begin{theorem}\label{edge erase ref}
Suppose the indices of $B$ are written as a disjoint union $I\cup J$ and suppose $B'$ is obtained by erasing all edges of $B$ that connect indices in $I$ to indices in $J$.
Then $\F_B$ refines $\F_{B'}$.
\end{theorem}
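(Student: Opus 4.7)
The plan is to derive Theorem~\ref{edge erase ref} as an immediate corollary of two earlier results: Theorem~\ref{id phen erase} and Proposition~\ref{refines FB}. In the set-up of the theorem, the matrix $B'$ is obtained from $B$ by changing some entries $b_{ij}$ with $i\in I$, $j\in J$ (and their skew-symmetrizable partners $b_{ji}$) to zero, while leaving all other entries unchanged. In particular, each entry $b'_{ij}$ is either equal to $b_{ij}$ or to $0$, so the signs of corresponding entries weakly agree and $|b_{ij}|\ge|b'_{ij}|$ for all $i,j$. Thus $B$ dominates $B'$, placing us in the setting of both Phenomenon~\ref{id phen} and Phenomenon~\ref{ref phen}.

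With the dominance verified, I would first invoke Theorem~\ref{id phen erase}, which asserts that in exactly this index-partition edge-erasing situation the identity map from $\reals^B$ to $\reals^{B'}$ is mutation-linear. This is the substantive input, and its proof (which is not our task here) is where the real work lives. Once mutation-linearity of the identity is in hand, I would appeal to Proposition~\ref{refines FB}, whose statement explicitly notes that, regardless of further hypotheses beyond dominance, mutation-linearity of $\id:\reals^B\to\reals^{B'}$ implies that the mutation fan $\F_B$ refines the mutation fan $\F_{B'}$. Combining the two conclusions yields that $\F_B$ refines $\F_{B'}$, as desired.

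Because both ingredients are already established, there is no genuine obstacle to this chain of implications; the proof is essentially a two-line deduction. The only point requiring any care is confirming that the hypothesis of Proposition~\ref{refines FB} applies with no additional assumption on the existence of a cone basis for $\reals^B$: the proposition is stated as a one-directional implication from mutation-linearity of the identity to refinement of fans, so no such assumption is needed in this direction. This is exactly the direction we use here, so the argument goes through without any further hypothesis on $B$ or $B'$.
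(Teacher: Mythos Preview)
Your proposal is correct and matches the paper's own argument exactly: the paper derives Theorem~\ref{edge erase ref} as an immediate consequence of Theorem~\ref{id phen erase} together with the one-directional implication in Proposition~\ref{refines FB}. Your observation that the cone-basis hypothesis is not needed for this direction is also precisely the point the paper makes.
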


For every $2\times 2$ exchange matrix $B$, $\reals^B$ admits a cone basis \cite[Section~9]{universal}, so Theorem~\ref{id phen 2x2} implies the following theorem (in light of Proposition~\ref{refines FB} as above).

\begin{theorem}\label{ref phen 2x2}
Suppose $B$ and $B'$ are $2\times2$ exchange matrices.
\begin{enumerate}[\rm(1)]
\item \label{no dom no ref}
If $B$ does not dominate $B'$, then $\F_B$ does not refine $\F_{B'}$.
\item \label{neither wild}
If neither $B$ nor $B'$ is wild then $\F_B$ refines $\F_{B'}$ if and only if $B$ dominates~$B'$.
\item \label{one wild}
If exactly one of $B$ and $B'$ is wild, then $\F_B$ refines $\F_{B'}$ if and only if either
\begin{enumerate}
\item $B'=\begin{bsmallmatrix*}[r]0&0\\0&0\end{bsmallmatrix*}$, or
\item
$B'=\begin{bsmallmatrix*}[r]0&\pm1\\\mp1&0\end{bsmallmatrix*}$ and $B$ dominates $B'$ but agrees with $B'$ except in one position.
\end{enumerate}
\item \label{both wild}
If both $B$ and $B'$ are wild, then $\F_B$ does not refine $\F_{B'}$ unless $B=B'$.
\end{enumerate}\end{theorem}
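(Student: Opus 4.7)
The plan is to derive Theorem~\ref{ref phen 2x2} directly from Theorem~\ref{id phen 2x2} by means of Proposition~\ref{refines FB}, so essentially no new mutation-fan computations are needed. First I would invoke the fact, stated just above the theorem and cited from \cite[Section~9]{universal}, that $\reals^B$ admits a cone basis for every $2\times 2$ exchange matrix $B$. With this hypothesis in hand, Proposition~\ref{refines FB} says that the identity map $\reals^B\to\reals^{B'}$ is mutation-linear if and only if $\F_B$ refines $\F_{B'}$. This biconditional is the engine of the entire argument: every assertion about refinement of mutation fans becomes, without loss, an assertion about mutation-linearity of the identity.

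I would then treat the four parts in parallel with the four parts of Theorem~\ref{id phen 2x2}. Parts \eqref{no dom no ref}, \eqref{neither wild}, and \eqref{one wild} transport verbatim: the implication (resp.\ biconditional) characterizing when $\id:\reals^B\to\reals^{B'}$ is mutation-linear translates, via Proposition~\ref{refines FB}, into the corresponding implication (resp.\ biconditional) about $\F_B$ refining $\F_{B'}$. In particular the explicit list of pairs $(B,B')$ in part \eqref{one wild}---namely $B'$ zero, or $B'=\begin{bsmallmatrix*}[r]0&\pm1\\\mp1&0\end{bsmallmatrix*}$ with $B$ dominating $B'$ and differing from it in exactly one entry---is simply inherited from Theorem~\ref{id phen 2x2}(3).

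The only care needed is in part \eqref{both wild}. Theorem~\ref{id phen 2x2}(4) asserts that if both $B$ and $B'$ are wild then $\id:\reals^B\to\reals^{B'}$ is not mutation-linear, a statement that is meaningful only when $B\neq B'$, since for $B=B'$ the mutation-linear structures coincide and the identity is trivially mutation-linear (with $\F_B=\F_{B'}$ trivially refining itself). Combining Theorem~\ref{id phen 2x2}(4) with Proposition~\ref{refines FB} therefore yields the ``unless $B=B'$'' formulation in Theorem~\ref{ref phen 2x2}\eqref{both wild}, and this is the only place where the statement is not a verbatim translation.

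The main obstacle in the overall project is not located here but in Theorem~\ref{id phen 2x2} itself, whose proof requires a case analysis of the mutation-linear structure for each type ($2\times 2$ finite, affine, and wild). Once that theorem is established, the present proof reduces to applying Proposition~\ref{refines FB} four times and noting the tautological case $B=B'$ in the wild/wild part. I would therefore present this theorem as an immediate corollary, with the short paragraph above as its complete proof.
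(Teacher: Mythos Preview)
Your reduction via Proposition~\ref{refines FB} is exactly the bridge the paper uses, but you have the direction of the argument reversed. In Section~\ref{2x2 ref sec} the paper proves Theorem~\ref{ref phen 2x2} \emph{first}, by a direct computation with the explicit rays of the $2\times2$ mutation fans (the ``relevant slopes'' $s_k,t_k,s_\infty,t_\infty$ and the finite-type tables), and only \emph{then} invokes the cone-basis fact from \cite[Section~9]{universal} together with Proposition~\ref{refines FB} to deduce Theorem~\ref{id phen 2x2}. The sentence in the introduction that ``Theorem~\ref{id phen 2x2} implies the following theorem'' is recording the logical equivalence, not the order of proof; the paper makes this explicit at the start of Section~\ref{2x2 ref sec}.

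So your proposal, as written, is circular within the paper: the ``case analysis of the mutation-linear structure'' you defer to in Theorem~\ref{id phen 2x2} is not done there at all---it is precisely the slope analysis carried out in the proof of Theorem~\ref{ref phen 2x2}. If you want an independent proof along your lines, you would have to establish Theorem~\ref{id phen 2x2} without using Theorem~\ref{ref phen 2x2}, e.g.\ by working directly with $B$-coherent relations rather than with cones. That is harder, not easier, than the paper's route: the geometry of the mutation fan in rank~$2$ is completely explicit (a list of rays), so checking refinement reduces to checking slope containment, whereas checking mutation-linearity of the identity directly would require controlling all $B$-coherent linear relations.
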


\subsection{Scattering diagrams}
The mutation fan is related to the \newword{cluster scattering diagram} of \cite{GHKK}.
We will not give a complete definition of cluster scattering diagrams here, but will give enough of the definition and quote results that let us connect them to mutation fans.
The cluster scattering diagram associated to an exchange matrix $B$ is a certain collection $\W$ of walls---codimension-$1$ cones---together with some additional algebraic data that we will mostly ignore here. 
In a way that is made more precise in Section~\ref{scat sec}, the walls cut the ambient space into convex cones, and these cones, together with their faces, comprise a complete fan \cite[Theorem~3.1]{scatfan} called the \newword{(cluster) scattering fan} and denoted by $\ScatFan(B)$.

\begin{phen}[Refinement of scattering fans]\label{ref scat phen}
Suppose $B$ and $B'$ are exchange matrices such that $B$ dominates~$B'$.  
In many cases, $\ScatFan(B)$ refines $\ScatFan(B')$.
\end{phen}

Most, but not all, of our examples and nonexamples of Phenomenon~\ref{ref scat phen} come from the relationship between $\ScatFan(B)$ and $\F_B$.
The most general statement about that relationship is the following theorem, which is \cite[Theorem~4.10]{scatfan}.  

\begin{theorem}\label{scat ref mut}
The scattering fan $\ScatFan(B)$ refines the mutation fan $\F_B$ for any exchange matrix~$B$.
\end{theorem}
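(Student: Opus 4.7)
The plan is to show that each cone of $\ScatFan(B)$ is contained in some cone of $\F_B$. Since both fans are complete in $\reals^n$, that containment is exactly the refinement required.

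Recall that cones of $\F_B$ can be characterized as maximal domains of linearity of the iterated piecewise-linear $\y$-mutation maps: two vectors $\v,\w$ lie in the same closed cone of $\F_B$ if and only if, for every finite mutation sequence, the sign data consulted at each step of the $\y$-mutation procedure agree on $\v$ and on $\w$. My target is thus to prove that these sign data are locally constant on the relative interior of each cone $C\in\ScatFan(B)$.

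To achieve this I would use the combinatorial backbone of the cluster scattering diagram. Each wall is supported on a rational hyperplane orthogonal to a primitive vector obtained by iterated $\c$-vector mutation from a standard basis, and conversely every such hyperplane appears among the walls. Granting this, fix $C\in\ScatFan(B)$ and note that the relative interior of $C$ meets no such hyperplane. Then induct on the length of the mutation sequence: at each step, the sign used is the sign of the pairing of the current $\y$-vector with the corresponding iterated $\c$-vector, which is constant on $\relint C$ by the induction hypothesis together with wall-avoidance. Hence each iterated mutation map is computed by a single linear formula throughout $C$, and $C$ sits inside a single cone of $\F_B$.

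The main obstacle is the identification of scattering-diagram walls with exactly the hyperplanes where iterated mutation maps fail to be linear. Walls of the cluster scattering diagram are built recursively by the consistency condition on path-ordered products around codimension-two joints, and it is nontrivial that this recursion produces exactly the sign-change loci of iterated $\c$-vectors, rather than some coarser or finer collection. Making this identification precise is where the real work lies, and it is the input one would import from the scattering-diagram side: either by invoking the structure theorems of \cite{GHKK} directly, or by an inductive argument along mutation sequences, verifying that each wall newly produced when crossing an initial wall appears precisely on the sign-change locus of the $\c$-vector being mutated.
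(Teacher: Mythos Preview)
The paper does not prove this theorem: it is quoted from \cite[Theorem~4.10]{scatfan}. So there is no proof in the present paper to compare against. I will therefore comment on your proposal on its own merits and indicate how the argument in \cite{scatfan} actually runs.

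Your outline is sound up to the point you yourself flag as ``the main obstacle.'' The assertion that every hyperplane where some iterated mutation map $\eta^B_\k$ changes its linear formula lies in the support of the cluster scattering diagram is exactly the content of the theorem, and your sketch does not supply a mechanism for proving it. The recursive wall-production via consistency at joints is the wrong lever here: that procedure generates walls, but it gives no direct handle on which specific hyperplanes must occur.

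The approach in \cite{scatfan} bypasses any attempt to enumerate walls. The key input is the \emph{mutation invariance} of cluster scattering diagrams proved in \cite{GHKK}: for each index $k$ there is a piecewise-linear map $T_k$ (which coincides, up to conventions, with $\eta^B_k$) carrying the scattering diagram for $B$ to the scattering diagram for $\mu_k(B)$. Since the coordinate hyperplanes are always walls, every cone of $\ScatFan(B)$ lies in a single closed coordinate orthant; in particular the sign vector is constant on each such cone. Mutation invariance then says $\eta^B_k$ sends cones of $\ScatFan(B)$ to cones of $\ScatFan(\mu_k(B))$, so by induction on the length of $\k$ the sign vector of $\eta^B_\k(\,\cdot\,)$ is constant on each cone of $\ScatFan(B)$ for every $\k$. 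That is precisely the definition of being contained in a single $B$-cone, and refinement follows. Your proposal would become a proof if you replaced the wall-identification step with this mutation-invariance argument.
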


It is conjectured \cite[Conjecture~4.11]{scatfan} that the two fans coincide if and only if either $B$ is $2\times2$ and of finite or affine type or $B$ is $n\times n$ for $n>2$ and of finite \emph{mutation} type.
Since $\ScatFan(B)$ and $\F_B$ are known to coincide in finite type, Theorem~\ref{ref phen finite} implies the following example of Phenomenon~\ref{ref scat phen}.

\begin{theorem}\label{ref scat phen finite acyclic}  
If $B$ is acyclic and of finite type and $B$ dominates $B'$, then $\ScatFan(B)$ refines the $\ScatFan(B')$.
\end{theorem}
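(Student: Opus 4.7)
The plan is to chain Theorem~\ref{ref phen finite} with the fact, cited in the paragraph preceding the theorem, that $\ScatFan$ coincides with the mutation fan in finite type. The only point requiring argument is that the dominated matrix $B'$ is itself of finite type, so that the coincidence also applies to $B'$. Once that is established, the chain
\[ \ScatFan(B) = \F_B \text{ refines } \F_{B'} = \ScatFan(B'), \]
with the middle refinement supplied by Theorem~\ref{ref phen finite}, completes the proof.

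First I would verify that $B'$ is acyclic. The dominance conditions $b_{ij}b'_{ij}\ge 0$ and $|b'_{ij}|\le|b_{ij}|$ force every nonzero entry of $B'$ to share its sign with the corresponding entry of $B$, while other entries vanish. Hence the quiver of $B'$ is obtained from the quiver of $B$ by deleting arrows and/or decreasing their weights, and any oriented cycle in $B'$ would then also appear in $B$, contradicting the hypothesis that $B$ is acyclic.

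Next I would verify that $B'$ is of finite type. Since $B$ is acyclic and of finite type, its Cartan companion is a finite-type (Dynkin) Cartan matrix; equivalently, the diagram on the index set with edge labels $|b_{ij}b_{ji}|$ is a disjoint union of Dynkin diagrams. Because of the inequalities $|b'_{ij}b'_{ji}|\le|b_{ij}b_{ji}|$, the diagram for $B'$ is obtained from that of $B$ by erasing edges and/or lowering edge labels. A direct case check over the Dynkin list $A_n, B_n, C_n, D_n, E_6, E_7, E_8, F_4, G_2$ shows that any such reduction again produces a disjoint union of Dynkin diagrams, so $B'$ is acyclic of finite type. I expect this Dynkin-diagram bookkeeping to be the main (if essentially routine) obstacle; the remainder of the argument is a direct concatenation of cited results.
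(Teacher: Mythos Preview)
Your proposal is correct and follows essentially the same approach as the paper: concatenate the coincidence $\ScatFan=\F$ in finite type with Theorem~\ref{ref phen finite}. The paper leaves implicit the fact that $B'$ is itself of finite type, which you correctly identify as needed and verify; an alternative to your Dynkin-diagram case check is to invoke Proposition~\ref{dom subroot}, which gives $\Phi(\Cart(B))\supseteq\Phi(\Cart(B'))$ and hence finiteness of the latter.
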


Also because $\ScatFan(B)$ and $\F_B$ coincide in finite type, the non-example for Phenomenon~\ref{ref phen} (where $B$ comes from a cyclically-directed triangle and $B'$ is obtained by deleting one arrow) is a non-example of Phenomenon~\ref{ref scat phen} as well.

More generally, as an immediate consequence of Theorem~\ref{scat ref mut}, we have the following result which lets us obtain examples of Phenomenon~\ref{ref scat phen} from examples of Phenomenon~\ref{ref phen}.

\begin{theorem}\label{ref phen ref scat phen}
Suppose that $B$ dominates $B'$ and that $\ScatFan(B')$ coincides with $\F_{B'}$.
If Phenomenon~\ref{ref phen} occurs for $B$ and $B'$, then Phenomenon~\ref{ref scat phen} also occurs.
\end{theorem}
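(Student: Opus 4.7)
The plan is to chain together three refinements via transitivity. We have three facts at our disposal: (i) by Theorem~\ref{scat ref mut}, $\ScatFan(B)$ refines $\F_B$; (ii) by the hypothesis that Phenomenon~\ref{ref phen} occurs for the pair $(B,B')$, $\F_B$ refines $\F_{B'}$; and (iii) by the standing hypothesis of the theorem, $\F_{B'}$ coincides with $\ScatFan(B')$ as fans. Substituting (iii) into (ii) gives that $\F_B$ refines $\ScatFan(B')$, and composing with (i) yields that $\ScatFan(B)$ refines $\ScatFan(B')$, which is precisely the statement that Phenomenon~\ref{ref scat phen} occurs for $(B,B')$.

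The only substantive sub-step is transitivity of the refinement relation, which is immediate from the definition recalled just before Phenomenon~\ref{ref phen}: if every cone of $\F$ is contained in a cone of $\F'$ and every cone of $\F'$ is contained in a cone of $\F''$, then every cone of $\F$ is contained in a cone of $\F''$. For this to make sense, all three fans must have common support, and indeed the relevant mutation fans and scattering fans are all complete fans in $\reals^n$, so the supports agree automatically.

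Given these ingredients there is no real obstacle; the theorem is an immediate corollary of Theorem~\ref{scat ref mut}, as the paper's own phrasing ``immediate consequence'' indicates. The substance of the result is organizational: it isolates the precise extra hypothesis on $B'$ (that its scattering fan coincides with its mutation fan) needed to promote any instance of Phenomenon~\ref{ref phen} into an instance of Phenomenon~\ref{ref scat phen}, and it is this packaging, rather than any hidden calculation, that makes the result useful in subsequent applications.
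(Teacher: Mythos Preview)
Your argument is correct and is precisely the one the paper intends: chain $\ScatFan(B)$ refines $\F_B$ (Theorem~\ref{scat ref mut}) with $\F_B$ refines $\F_{B'}$ (the Phenomenon~\ref{ref phen} hypothesis) and $\F_{B'}=\ScatFan(B')$ to conclude. There is nothing to add.
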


%
Theorem~\ref{ref phen ref scat phen} has the following consequence.

\begin{cor}\label{ref scat phen finite}  
Suppose that $B$ is of finite type, that $B$ dominates $B'$, and that Phenomenon~\ref{ref phen} occurs for $B$ and $B'$.
Then $\ScatFan(B)$ refines $\ScatFan(B')$.
\end{cor}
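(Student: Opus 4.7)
The plan is to derive the corollary as a direct application of Theorem~\ref{ref phen ref scat phen}. That theorem has three hypotheses: $B$ dominates $B'$, Phenomenon~\ref{ref phen} occurs for $B$ and $B'$, and $\ScatFan(B')$ coincides with $\F_{B'}$. The first two are already part of the statement of the corollary, so the entire proof reduces to verifying the third.

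For the third, I would invoke the general fact---recalled between Theorem~\ref{scat ref mut} and Theorem~\ref{ref scat phen finite acyclic}---that the scattering fan and the mutation fan coincide when the exchange matrix is of finite type. Thus it suffices to argue that $B'$ is of finite type. Since $B$ is of finite type, $\F_B$ is a complete fan with finitely many maximal cones, namely the maximal cones of the $\g$-vector fan of $B$. Phenomenon~\ref{ref phen}, which is among the hypotheses, asserts that $\F_B$ refines $\F_{B'}$; under such a refinement, $\F_{B'}$ inherits both completeness and finiteness of the collection of maximal cones, from which I would conclude that $B'$ must itself be of finite type.

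Once $B'$ is known to be of finite type, the coincidence $\ScatFan(B') = \F_{B'}$ follows, all hypotheses of Theorem~\ref{ref phen ref scat phen} are verified, and Phenomenon~\ref{ref scat phen} occurs for $B$ and $B'$, which is precisely the conclusion that $\ScatFan(B)$ refines $\ScatFan(B')$.

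The main obstacle I anticipate is the step that extracts finite type of $B'$ from the fact that $\F_{B'}$ is a complete fan with finitely many maximal cones. While this characterization fits the broader intuition relating the mutation fan to the $\g$-vector fan, it is the one place in the outline where care is needed to appeal precisely to the theory of finite-type cluster algebras rather than to the techniques developed explicitly in this paper; an alternative route would be to verify directly, using the dominance relation on the entries of $B$ and $B'$, that any exchange matrix dominated by a finite-type matrix is again of finite type, thereby bypassing the mutation-fan characterization entirely.
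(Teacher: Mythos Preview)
Your proposal is correct and follows the same approach the paper intends: the corollary is stated as an immediate consequence of Theorem~\ref{ref phen ref scat phen}, and the only missing ingredient is that $\ScatFan(B')=\F_{B'}$, which follows from the fact (recalled just before Theorem~\ref{ref scat phen finite acyclic}) that the scattering fan and mutation fan coincide in finite type, once one knows $B'$ is of finite type. The paper leaves that last point implicit; your two suggested routes for it are both reasonable, and you correctly flag that the step deserves care (a minor slip: by Theorem~\ref{FB gvec finite} the cones of $\F_B$ are the $\g$-vector cones for $B^T$, not $B$, though this does not affect your finiteness argument).
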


Since finite mutation fans coincide with their rational parts, and since non-null surfaces of finite type have the Null Tangle Property \cite[Theorem~7.4]{unisurface}, we also have the following corollary.

\begin{cor}\label{ref scat finite surface}
Given a marked surface $(\S,\M)$ of finite type with triangulation~$T$, perform a resection of $(\S,\M)$ compatible with $T$ to obtain a triangulation $T'$ on the resected surface.  
Then $\ScatFan(B(T))$ refines $\ScatFan(B(T'))$.
\end{cor}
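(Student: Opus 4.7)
The plan is to deduce the corollary from Theorem~\ref{ref phen resect} and Corollary~\ref{ref scat phen finite}, whose hypotheses we verify in the finite-type setting. The ingredients are essentially laid out in the paragraph immediately preceding the corollary, so the argument is short.

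First I would check the hypotheses of Theorem~\ref{ref phen resect}. Since $(\S,\M)$ is a marked surface of finite type, each of its connected components is either a null surface or a non-null surface of finite type, the latter having the Null Tangle Property by \cite[Theorem~7.4]{unisurface}. The introduction notes that the list of known surfaces with the Null Tangle Property, together with the null surfaces, is closed under resection, so $(\S',\M')$ is again of finite type and satisfies the same property. Since the Null Tangle Property implies the Curve Separation Property, every component of both $(\S,\M)$ and $(\S',\M')$ has the Curve Separation Property.

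Next, Theorem~\ref{ref phen resect} applies and yields that the rational part of $\F_{B(T)}$ refines the rational part of $\F_{B(T')}$. Because $B(T)$ and $B(T')$ are of finite type, both mutation fans are finite and coincide with their rational parts, as mentioned in the preamble to the corollary. Hence $\F_{B(T)}$ refines $\F_{B(T')}$, which establishes Phenomenon~\ref{ref phen} for the pair $B(T),B(T')$.

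Finally, resection produces a dominance relation, so $B(T)$ dominates $B(T')$. Since $B(T)$ is of finite type, Corollary~\ref{ref scat phen finite} applies and yields that $\ScatFan(B(T))$ refines $\ScatFan(B(T'))$. The ``main obstacle'' here is not really an obstacle: one needs only to confirm that the list of finite-type surfaces is closed under resection and that finite mutation fans coincide with their rational parts, both of which are asserted in the surrounding discussion. No direct scattering-diagram computation is required, because all of the work has been delegated to Theorem~\ref{ref phen resect} and to the finite-type identification of $\F_B$ with $\ScatFan(B)$ underlying Corollary~\ref{ref scat phen finite}.
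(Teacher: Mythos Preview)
Your argument is correct and follows exactly the route sketched in the paper's one-sentence justification: use Theorem~\ref{ref phen resect} to obtain refinement of rational parts, identify the mutation fans with their rational parts because both exchange matrices are of finite type, and then apply Corollary~\ref{ref scat phen finite}. One small logical slip: the closure under resection of the Null-Tangle-Property list does not by itself show that $(\S',\M')$ is of finite type, since that list contains infinite-type surfaces such as the annulus and the once-punctured torus; the correct (and easy) observation is that resecting a disk with at most one puncture yields a disjoint union of disks each with at most one puncture, so $(\S',\M')$ is again of finite type, and the rest of your argument then goes through unchanged.
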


Our final example of Phenomenon~\ref{ref scat phen} is a departure from the correspondence with Phenomenon~\ref{ref phen}.
For any $2\times 2$ matrix, the full-dimensional rational cones of the scattering fan coincide with the full-dimensional rational cones of the mutation fan (because both are known to coincide with the $\g$-vector cones of clusters).
In wild type, the closure of the complement of the full-dimensional rational cones is a single irrational cone of the mutation fan.
It is expected (see \cite[Example~1.15]{GHKK}) that every rational ray in this irrational cone is a wall of the scattering diagram.
If this expectation is correct, then in particular every ray in the irrational cone is a distinct cone of the scattering fan.
We call the expectation that every ray in the irrational cone is a distinct cone of the scattering fan the Discreteness Hypothesis for the purpose of stating the following theorem, which is proved in Section~\ref{scat sec}.

\begin{theorem}\label{ref scat phen 2x2}
Assume the Discreteness Hypothesis. 
Then for $2\times2$ exchange matrices $B$ and $B'$, $\ScatFan(B)$ refines $\ScatFan(B')$ if and only if $B$ dominates~$B'$.
\end{theorem}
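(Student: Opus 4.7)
The plan relies on two structural facts recalled just before the theorem statement. For any $2\times 2$ exchange matrix $B$, the full-dimensional rational cones of $\ScatFan(B)$ coincide with those of $\F_B$ (equivalently, with the $\g$-vector cones of clusters), and when $B$ is wild the closure of the complement of these rational cones is the unique two-dimensional irrational ``badlands'' cone $C_B$ of $\F_B$. Under the Discreteness Hypothesis, $\ScatFan(B)$ subdivides $C_B$ so that every rational ray in $C_B$ is a one-dimensional cone; since the rational rays are dense in $C_B$, this forces $\ScatFan(B)$ to contain no two-dimensional cone strictly inside $C_B$. In finite or affine type, $C_B$ is empty and $\ScatFan(B)=\F_B$.

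For the backward direction, assume $B$ dominates $B'$. A cone of $\ScatFan(B)$ is either a face of a $\g$-vector cone of $B$, or a rational ray inside $C_B$. For the first type, I would show that each $\g$-vector cone of $B$ is contained in a $\g$-vector cone of $B'$, by a direct combinatorial analysis of $2\times 2$ cluster mutations: the $\g$-vector cones of $B$ form two sequences (one on each side of the initial quadrant) generated by an explicit recursion in the entries of $B$, and dominance translates, by induction on mutation depth, into the statement that each such sequence for $B$ refines the corresponding sequence for $B'$. For rational rays in $C_B$ nothing further needs to be checked: any such ray lies in some cone of the complete (generalized) fan $\ScatFan(B')$, either in a two-dimensional $\g$-vector cone of $B'$, or, if $B'$ is also wild and the ray lies in $C_{B'}$, in the one-dimensional cone supplied by applying the Discreteness Hypothesis to $B'$.

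For the forward direction, assume $\ScatFan(B)$ refines $\ScatFan(B')$. Every full-dimensional rational cone of $\ScatFan(B)$---that is, every $\g$-vector cone of $B$---is then contained in some cone of $\ScatFan(B')$; since the containing cone has nonempty interior, it must itself be full-dimensional and rational, hence is a $\g$-vector cone of $B'$. Thus the $\g$-vector cones of $B$ refine those of $B'$, and I would read off dominance by examining the two $\g$-vector cones adjacent to the initial quadrant: each has one boundary ray along a coordinate axis and the other at a slope determined in magnitude and sign by the corresponding off-diagonal entry of $B$. Refinement of these specific cones forces $b_{ij}b'_{ij}\ge 0$ and $|b_{ij}|\ge|b'_{ij}|$ for both off-diagonal positions, which is exactly dominance.

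The main obstacle is the backward direction's assertion that $\g$-vector cones of $B$ refine $\g$-vector cones of $B'$ under dominance, which is most delicate when both $B$ and $B'$ are wild: then the two sequences of $\g$-vector cones on each side accumulate at the distinct rays bounding $C_B$ and $C_{B'}$, and one must verify that the accumulation rays for $B$ lie in the closure of $C_{B'}$, so that the finer sequence for $B$ fits cleanly into the coarser sequence for $B'$. I expect this to be handled by a direct induction on mutation depth, driven by the explicit $2\times 2$ $\g$-vector recursion, with dominance at the matrix level propagating to the required cone containment at each step.
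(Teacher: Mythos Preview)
Your high-level decomposition of the backward direction is sensible and close to the paper's: handle the $\g$-vector cones of $B$ first, then note that rays inside $C_B$ are automatically fine because any ray lies in some cone of the complete fan $\ScatFan(B')$. But the execution of the key step has two problems.

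First, the containment you say must be verified in the wild--wild case is stated backwards. You write that ``the accumulation rays for $B$ lie in the closure of $C_{B'}$.'' In fact, under dominance one has $C_{B'}\subseteq C_B$, not the other way around: for instance, when $B=\B{b}{a}$ with $a>1$ and $B'=\B{b+1}{a}$, one computes $s_\infty(a,b)<s_1(c,d)\le s_\infty(c,d)$ and dually for the $t$-side, so the limit rays of $B$ lie \emph{outside} $C_{B'}$. What actually makes the argument go through is that all but a few of the rays $s_k(c,d)$, $t_k(c,d)$ of $\ScatFan(B')$ lie inside $C_B$ (where the Discreteness Hypothesis makes them rays of $\ScatFan(B)$), and the few that lie outside $C_B$ coincide with rays of $B$'s $\g$-vector fan.

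Second, your proposed mechanism---``induction on mutation depth'' with ``dominance propagating at each step''---is too vague to carry this. The depth-$k$ $\g$-vector cone of $B$ is not in general contained in the depth-$k$ cone of $B'$, nor is there an obvious inductive invariant relating them. The paper's proof proceeds differently: it reduces to one-step dominance (where $B$ and $B'$ differ by $1$ in a single entry) and then verifies explicit inequalities among the slopes $s_k$, $t_k$, $s_\infty$, $t_\infty$---for example $s_\infty(a,b)<s_1(c,d)$ and $t_0(c,d)<t_\infty(a,b)$, with a separate calculation when $a=1$ that matches one more ray on each side. These inequalities are what pin down exactly which finitely many rays of $\ScatFan(B')$ lie outside $C_B$, and that they agree with rays of $\ScatFan(B)$. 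The forward direction in the paper is likewise done by comparing the extremal relevant slopes $s_0=-a$ and $t_0=\tfrac1b$, which is essentially what you propose.
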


\subsection{Ring homomorphisms between cluster algebras}\label{hom intro} 
Another phenomenon surrounding dominance of exchange matrices is the existence of ring homomorphisms between cluster algebras with \emph{different} exchange matrices of the \emph{same} size, preserving $\g$-vectors.
This thought-provoking surprise suggests, in particular, that the category of principal-coefficients cluster algebras and $\g$-vector-preserving ring homomorphisms may reward study.
(Or perhaps one should consider a category with more morphisms, namely ring homomorphisms that act linearly on $\g$-vectors.)

We start with the acyclic, finite-type version of the phenomenon.
Write $\A_\bullet(B)$ for the principal-coefficients cluster algebra associated to $B$ and recall from above that when $B$ is of finite type, the mutation fan $\F_B$ coincides with the $\g$-vector fan for $B^T$.  
When $B$ is acyclic and of finite type and $B$ dominates $B'$, Theorem~\ref{ref phen finite}---applied to $B^T$ and $(B')^T$---implies that the set of $\g$-vectors of cluster variables in $\A_\bullet(B')$ is a subset of the set of $\g$-vectors of cluster variables in $\A_\bullet(B)$.  
Thus there is a natural inclusion of the sets of cluster variables, sending each cluster variable for $B'$ to the cluster variable for $B$ having the same $\g$-vector.   
We will see below, with additional details, that this inclusion extends to an injective ring homomorphism.

The principal-coefficients cluster algebra for an $n\times n$ exchange matrix is a subring of the ring of Laurent polynomials in indeterminates $x_1,\ldots,x_n$, with coefficients (ordinary) integer polynomials in indeterminates $y_1,\ldots,y_n$.
The \newword{$\g$-vector} is a $\integers^n$-grading of the cluster algebra given by setting the $\g$-vector of $x_k$ equal to the standard unit basis vector $\e_k$ and the $\g$-vector of $y_k$ equal to the negative of the $k\th$ column of the exchange matrix.
A \newword{cluster monomial} is a monomial in the cluster variables in some cluster.
In finite type, for each integer vector $\lambda\in\integers^n$, there exists a unique cluster monomial whose $\g$-vector is~$\lambda$.

We define $\A_\bullet(B')$ in terms of primed indeterminates $x'_i$ and $y'_j$.
A ring homomorphism from $\A_\bullet(B')$ is determined by its values on $x'_1,\ldots,x'_n$ and $y'_1,\ldots,y'_n$.
For each $k$, let $z_k$ be the cluster monomial whose $\g$-vector is the $k\th$ column of $B$ minus the $k\th$ column of $B'$.
We define a set map $\nu_\z$ on $\set{x'_1,\ldots,x'_n,y'_1,\ldots,y'_n}$ by
\begin{equation}\label{natural candidate}
\begin{aligned}
\nu_\z(x'_k)&=x_k\\
\nu_\z(y'_k)&=y_kz_k
\end{aligned}
\end{equation}
for all $k=1,\ldots,n$, and extend $\nu_\z$ to a ring homomorphism from $\A_\bullet(B')$ to the ring of Laurent polynomials in $x_1,\ldots,x_n$, with coefficients integer polynomials in $y_1,\ldots,y_n$.
The homomorphism $\nu_\z$ has a nice reformulation in terms of $F$-polynomials.
Define $\hy_i$ to be $y_ix_1^{b_{1i}}\cdots x_n^{b_{ni}}$.
As a consequence of \cite[Corollary~6.3]{ca4}, every cluster monomial $z$ is $x_1^{g_1}\cdots x_n^{g_n}$ times a polynomial $F_z$ in the $\hy_i$, where $(g_1,\ldots,g_n)$ is the $\g$-vector of $z$.
The map $\nu_\z$ sends each $\hy_k'$ to $\hy_k\cdot F_{z_k}$.
\textit{A priori}, it is not clear that the image of $\A_\bullet(B')$ under $\nu_\z$ is even contained in $\A_\bullet(B)$.
The following theorem is a joint result of this paper and Viel's Ph.D. thesis~\cite{Viel}.

\begin{theorem}\label{hom phen fin conj} 
If $B$ is acyclic and of finite type and $B$ dominates $B'$, then $\nu_\z$ is an injective, $\g$-vector-preserving ring homomorphism from $\A_\bullet(B')$ to $\A_\bullet(B)$ and sends each cluster variable in $\A_\bullet(B')$ to a cluster variable in $\A_\bullet(B)$.
\end{theorem}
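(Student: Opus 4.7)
The plan is to reduce the theorem to a single identity among $F$-polynomials, using Theorem~\ref{ref phen finite} as the main input. First I would apply that theorem to the transposed pair $B^T$ and $(B')^T$---observing that $B^T$ is still acyclic of finite type and dominates $(B')^T$---to conclude that $\F_{B^T}$ refines $\F_{(B')^T}$. Combined with the fact, recalled in the introduction, that in finite type $\F_C$ coincides with the $\g$-vector fan for $C^T$, this says exactly that the $\g$-vector fan of $B$ refines that of $B'$. Because $B$ is of finite type, its $\g$-vector fan is a complete simplicial fan whose maximal cones are the cluster cones, so every integer point is the $\g$-vector of a unique cluster monomial in $\A_\bullet(B)$. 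Hence the cluster monomial $z_k\in\A_\bullet(B)$ with $\g$-vector $\mathrm{col}_k B-\mathrm{col}_k B'$ is well defined, and each cluster variable of $\A_\bullet(B')$ shares its $\g$-vector with a unique cluster variable of $\A_\bullet(B)$.

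With $z_k$ in hand, $\g$-vector preservation of $\nu_\z$ is a routine check on generators: the $\g$-vector of $\nu_\z(y'_k)=y_k z_k$ is $-\mathrm{col}_k B+(\mathrm{col}_k B-\mathrm{col}_k B')=-\mathrm{col}_k B'$, matching that of $y'_k$, and $\nu_\z(x'_k)=x_k$ preserves $\e_k$. The substantive content of the theorem is the claim that each cluster variable $x'\in\A_\bullet(B')$ is carried by $\nu_\z$ to a cluster variable of $\A_\bullet(B)$---necessarily, by the previous paragraph, the cluster variable $x$ with $\g$-vector equal to that of $x'$. Writing $x'=(x'_1)^{g_1}\cdots(x'_n)^{g_n}F_{x'}(\hy'_1,\ldots,\hy'_n)$ via the separation-of-additions formula and applying the reformulation $\nu_\z(\hy'_k)=\hy_k F_{z_k}(\hy_1,\ldots,\hy_n)$ recorded in the introduction, the claim reduces to the polynomial identity
\[
F_x(\hy_1,\ldots,\hy_n)=F_{x'}\bigl(\hy_1 F_{z_1}(\hy),\ldots,\hy_n F_{z_n}(\hy)\bigr).
\]

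Proving this $F$-polynomial identity is the main obstacle. My first line of attack would be representation-theoretic: in the skew-symmetric case each $F$-polynomial is a Derksen--Weyman--Zelevinsky generating function of Euler characteristics of quiver Grassmannians, and I would try to use the refinement of $\g$-vector fans---which matches the representations relevant to $B$ and $B'$---to set up a bijection between contributing subrepresentations; a skew-symmetrizable analogue via valued quiver representations or species should cover the general case. A fallback is to interpret both sides via broken-line expansions in the cluster scattering diagrams of $B$ and $B'$ and then invoke the finite-type scattering fan refinement (Corollary~\ref{ref scat phen finite}) to identify them. Once cluster variables are shown to map to cluster variables, injectivity is comparatively routine: distinct cluster monomials of $\A_\bullet(B')$ have distinct $\g$-vectors, so their $\nu_\z$-images are distinct cluster monomials of $\A_\bullet(B)$, and this extends to all of $\A_\bullet(B')$ using that cluster monomials form a $\integers[y'_1,\ldots,y'_n]$-basis in finite type.
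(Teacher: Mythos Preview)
Your reduction to the $F$-polynomial identity
\[
F_x(\hy_1,\ldots,\hy_n)=F_{x'}\bigl(\hy_1 F_{z_1}(\hy),\ldots,\hy_n F_{z_n}(\hy)\bigr)
\]
is correct and clean, and the $\g$-vector bookkeeping leading up to it is fine. But the proposal stops at exactly the point where the content lies: you name two possible attacks on the identity (quiver Grassmannians via DWZ, or broken lines via the scattering refinement of Corollary~\ref{ref scat phen finite}) without carrying either one out, and neither is obviously routine. For the representation-theoretic route, the refinement of $\g$-vector fans tells you which rigid objects correspond, but it does not by itself hand you a stratification of quiver Grassmannians that produces the substitution $\hy_k\mapsto\hy_k F_{z_k}$; you would need to exhibit, for each subrepresentation contributing to $F_x$, a decomposition indexed by subrepresentations contributing to $F_{x'}$ with multiplicities governed by the $F_{z_k}$, and this is not in the literature. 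For the broken-line route, fan refinement alone does not compare wall-crossing functions on the two scattering diagrams, which is what you would need.

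The paper does not attempt a uniform proof of this identity at all. Instead it proceeds type by type: Proposition~\ref{factor nu z} reduces to the case where $B$ and $B'$ differ in a single off-diagonal pair, and then the theorem is verified (i) computationally for $n\le 8$, covering the exceptional types; (ii) via the surfaces model for types A and D (Theorem~\ref{hom phen surf thm}, proved by checking that $\chi$ carries exchange relations to exchange relations, using Proposition~\ref{hom from cl vars}); and (iii) via the orbifolds model for types B and C in Viel's thesis. Injectivity is handled by the Jacobian criterion of Proposition~\ref{one antip}, not by the linear-independence-of-cluster-monomials argument you sketch (though your argument would also work in finite type). So your outline and the paper's proof diverge completely after the setup; if you want to pursue a uniform argument along your lines, the $F$-polynomial identity is the theorem you would actually have to prove.
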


We have verified Theorem~\ref{hom phen fin conj} computationally for $n\times n$ exchange matrices with $n\le8$, thus in particular handling the exceptional types.
(See Section~\ref{acyc fin sec}.)
Furthermore, Theorem~\ref{hom phen surf thm}, below, in particular verifies the theorem when $B$ is acyclic of type A or D.
The proof is completed in \cite[Theorem~4.15]{Viel}, which uses the orbifolds model to verify the theorem when $B$ is acyclic of type B or C.

To generalize Theorem~\ref{hom phen fin conj} beyond finite type, we need a generalization of the set of cluster monomials.
A natural choice is the theta functions arising from cluster scattering diagrams \cite{GHKK}.
Due to differences in conventions already mentioned (and discussed in \cite[Section~1]{scatcomb}), there is a global transpose.
Because we want to work with $\g$-vectors using the conventions of~\cite{ca4}, the theta functions in this paper refer to the \newword{transposed cluster scattering fan} $\ScatFan^T(B)=\ScatFan(B^T)$.
(See \cite[Section~2.3]{scatcomb}.)
For each integer vector, there is a Laurent polynomial called a \newword{theta function} obtained as a sum of monomials arising from \newword{broken lines} in the transposed cluster scattering diagram.
Let $z_i$ be the theta function of the vector obtained as the $k\th$ column of $B$ minus the $k\th$ column of $B'$ and define $\nu_\z$ as in \eqref{natural candidate}.
The cluster variables are generalized by what we call \newword{ray theta functions}.
There is one ray theta function for each rational ray of the transposed cluster scattering fan.
It is the theta function of the shortest integer vector in the ray.

\begin{phen}[Injective homomorphisms]\label{hom phen}
In many cases, when $B$ dominates $B'$, the map $\nu_\z$ is an injective, $\g$-vector-preserving ring homomorphism from $\A_\bullet(B')$ to $\A_\bullet(B)$.
In a smaller set of cases, $\nu_\z$ sends each ray theta function for $B'$ to a ray theta function for $B$.
\end{phen}


We describe two instances of Phenomenon~\ref{hom phen} in the $2\times 2$ case.

\begin{theorem}\label{hom phen 2x2 part 2}
If $B$ and $B'$ are $2\times2$ exchange matrices such that $B$ dominates~$B'$, with $B$ of finite or affine type, then both parts of Phenomenon~\ref{hom phen} occur.
\end{theorem}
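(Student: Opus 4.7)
The plan is to combine the mutation-fan refinement from Theorem~\ref{ref phen 2x2} with the explicit structure of rank-$2$ scattering diagrams in finite and affine type. Since $B$ is of finite or affine type we have $|ab|\le 4$, and since $B$ dominates $B'$ we also have $|a'b'|\le 4$, so $B'$ is of finite or affine type as well. The same holds for the transposes, and $B^T$ dominates $(B')^T$, so by Theorem~\ref{ref phen 2x2}, $\F_{B^T}$ refines $\F_{(B')^T}$. In rank-$2$ finite or affine type the scattering fan coincides with the mutation fan, so Theorem~\ref{ref phen ref scat phen}, applied to the transposes, gives that $\ScatFan^T(B)$ refines $\ScatFan^T(B')$. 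In particular every rational ray of $\ScatFan^T(B')$ is a rational ray of $\ScatFan^T(B)$, so for each ray theta function $\theta'_v$ of $B'$ there is a unique ray theta function $\theta_v$ of $B$ with the same $\g$-vector~$v$.

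The core claim is that $\nu_\z(\theta'_v)=\theta_v$ for every ray theta function. Using the $F$-polynomial reformulation of $\nu_\z$, write $\theta'_v=(x'_1)^{v_1}(x'_2)^{v_2}F'_v(\hy'_1,\hy'_2)$; applying $\nu_\z$ gives $\nu_\z(\theta'_v)=x_1^{v_1}x_2^{v_2}F'_v(\hy_1 F_{z_1},\hy_2 F_{z_2})$, so the statement reduces to the polynomial identity $F_v(\hy_1,\hy_2)=F'_v(\hy_1 F_{z_1},\hy_2 F_{z_2})$, where $F_{z_k}$ is the $F$-polynomial of the theta function of $B$ with $\g$-vector $B_k-B'_k$. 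Granting this identity, $\g$-vector preservation holds on generators (indeed $\nu_\z(y'_k)=y_k z_k$ has $\g$-vector $-B_k+(B_k-B'_k)=-B'_k$, matching the $\g$-vector of $y'_k$), injectivity follows from the $\g$-vector grading together with the bijection of ray theta functions, and the image of $\nu_\z$ lies in $\A_\bullet(B)$ because $\A_\bullet(B')$ is generated as a ring by cluster variables together with $y'_1,y'_2$, all of which are then seen to map to elements of $\A_\bullet(B)$.

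In the finite-type cases $|ab|\in\{0,1,2,3\}$ the required identity is a finite check using the explicit $F$-polynomials in types $A_1\times A_1$, $A_2$, $B_2$, and $G_2$, and is already covered by the computational verification of Theorem~\ref{hom phen fin conj} for $n\le 8$. The main obstacle is the affine case $|ab|=4$, where infinitely many cluster-variable rays converge from both sides to a single limit ray that itself carries infinitely many imaginary ray theta functions. For the cluster-variable rays I would proceed by induction along mutation sequences from an initial cluster, using that the rank-$2$ mutation recursion for $F$-polynomials intertwines with the substitution $\hy_k\mapsto\hy_k F_{z_k}$ once the base case on $z_1,z_2$ is verified. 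For the limit-ray theta functions I would use the explicit product/series description of the single nontrivial wall function on the limit ray in rank-$2$ affine type (the Kronecker-type wall-crossing formula) and verify by direct substitution that this wall function for $B'$ transforms to the corresponding wall function for $B$ under $\nu_\z$; the essential input is that the $\g$-vector correspondence from the first paragraph identifies the limit ray of $B'$ with the limit ray of $B$, giving compatible parametrizations of the two wall functions by integer powers along the ray.
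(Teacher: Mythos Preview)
Your reduction in the first two paragraphs is fine and matches the paper's setup: the fan refinement gives the ray-by-ray correspondence of $\g$-vectors, and the core claim $\nu_\z(\theta'_v)=\theta_v$ is exactly what the paper calls Proposition~\ref{implies hom phen 2x2 part 2}. But your third paragraph does not prove that claim.

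First, two misconceptions. When $B$ is affine and $B'\neq B$, dominance forces $|a'b'|<4$, so $B'$ is of \emph{finite} type; thus there are only finitely many ray theta functions of $B'$ to check, and there is no limit ray on the $B'$ side at all. Second, the limit ray for affine $B$ carries exactly one ray theta function (the theta function of the primitive integer vector on that ray), not ``infinitely many imaginary ray theta functions.'' So the only role of the limit ray is as a possible \emph{target} for some cluster variable of $B'$, which is exactly what happens in Lemmas~\ref{22 21 lemma}, \ref{22 12 lemma}, \ref{31 41 lemma 2}, and~\ref{13 14 lemma 2}.

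The real gap is the sentence ``the rank-$2$ mutation recursion for $F$-polynomials intertwines with the substitution $\hy_k\mapsto\hy_k F_{z_k}$.'' This is not an argument; it is a restatement of the desired conclusion. The exchange relations in $\A_\bullet(B')$ and $\A_\bullet(B)$ have different exponents and different coefficient monomials, so applying $\nu_\z$ to an exchange relation for $B'$ does \emph{not} yield an exchange relation for $B$ in any automatic way. One has to verify, cluster variable by cluster variable (or at least for each non-initial one), that the image is a specific cluster variable or the limiting theta function of $B$. The paper does exactly this: it reduces via transitivity (composing $\nu_\z$'s along a chain of dominances) to nine ``adjacent'' pairs $(B,B')$ and then checks each of the finitely many diagonal cluster variables of $B'$ by explicit computation (Lemmas~\ref{00 lemma}--\ref{13 14 lemma 2}), using the closed formulas \eqref{x0 eq}--\eqref{x-41 eq}. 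Your proposal offers no substitute for this work.

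Finally, your injectivity argument (``from the $\g$-vector grading together with the bijection of ray theta functions'') is incomplete: you would need to know that theta functions form a basis and that $\nu_\z$ respects that basis, neither of which you have established. The paper instead uses the elementary Jacobian criterion of Proposition~\ref{one antip}, which applies immediately once one observes that each $z_k$ is a monomial in $x_1,x_2,x_0,x_{-1}$.
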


\begin{theorem}\label{hom phen 2x2 part 1}
If $B$ and $B'$ are $2\times2$ exchange matrices such that $B$ dominates $B'$, with $B'$ of finite type, then $\nu_\z$ is an injective, $\g$-vector-preserving ring homomorphism from $\A_\bullet(B')$ to $\A_\bullet(B)$.
Assuming the Discreteness Hypothesis, $\nu_\z$ sends ray theta functions to ray theta functions unless $B=\B ba$ and $B'=\B dc$ with $cd=-3$ and $1\not\in\set{|a|,|b|}$.
\end{theorem}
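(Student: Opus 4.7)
The plan is to reduce the theorem to a finite verification using the finite-type structure of $B'$ and then to analyze $\ScatFan(B)$ carefully in the $G_2$ case. Since $B'$ is of finite type, $\A_\bullet(B')$ is generated as a $\ZZ$-algebra by $y'_1,y'_2$ and the (at most eight) cluster variables of $B'$. Thus to establish the ring homomorphism assertion it suffices to verify two things: first, that $\nu_\z$ carries each of these generators into $\A_\bullet(B)$; and second, that $\nu_\z$ respects the exchange relations of $B'$. The elements $\nu_\z(y'_k)=y_k z_k$ are easy: the $\g$-vector of $z_k$ is a multiple of $\e_{3-k}$ (namely, $(0,b-d)$ for $k=1$ and $(a-c,0)$ for $k=2$), so $z_k$ is a power of the ray theta function of $B$ on the corresponding coordinate axis, which is a cluster monomial in $\A_\bullet(B)$.

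For the other cluster variables $z'$ of $B'$, I would use the $F$-polynomial expression $z'=x'^{g_1}_1 x'^{g_2}_2 F_{z'}(\hy'_1,\hy'_2)$ from \cite[Corollary~6.3]{ca4} together with the translation $\nu_\z(\hy'_k)=\hy_k F_{z_k}$ to write
\[
\nu_\z(z')=x_1^{g_1}x_2^{g_2}\,F_{z'}(\hy_1 F_{z_1},\hy_2 F_{z_2}),
\]
and identify this expression with the theta function of $B$ of $\g$-vector $\g=(g_1,g_2)$. By Theorem~\ref{ref phen 2x2}, $\F_B$ refines $\F_{B'}$, so the ray $\RR_{\ge0}\g$ lies inside some cone of $\F_B$; combined with Theorem~\ref{scat ref mut} (and the Discreteness Hypothesis in the wild case) this locates $\g$ inside a specific cone of $\ScatFan(B)$. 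The identification of $\nu_\z(z')$ with the corresponding target theta function is an $F$-polynomial calculation to be checked case by case, split according to $cd\in\set{0,-1,-2,-3}$ (the four finite-type possibilities for $B'$).

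The main obstacle is the $G_2$ case, where the $F$-polynomials are most intricate and where certain ``inner'' $\g$-vector rays of $B'$ can fall inside the irrational cone of $\ScatFan(B)$ when $B$ is wild. I expect the analysis to show that this failure of ray-theta-function preservation occurs exactly when $1\notin\set{|a|,|b|}$: when $|a|=1$ or $|b|=1$, the scattering diagram of $B$ retains enough structure in the troublesome angular sector to keep the relevant $G_2$ rays as walls of $\ScatFan(B)$, so $\nu_\z(z')$ is again a ray theta function of $B$; when $|a|,|b|\ge2$, the ray lies strictly in the interior of the irrational cone, and $\nu_\z(z')$, while still an element of $\A_\bullet(B)$, is not a ray theta function. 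Injectivity of $\nu_\z$ then follows automatically: the cluster monomials of $B'$ form a $\ZZ[y'_1,y'_2]$-basis of $\A_\bullet(B')$ with pairwise distinct $\g$-vectors, and the $\g$-vector-preserving property of $\nu_\z$ transfers this distinctness to the images.
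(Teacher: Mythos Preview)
Your overall strategy---reduce to checking that $\nu_\z$ sends each cluster variable of $B'$ to an appropriate element of $\A_\bullet(B)$, case-split by the value of $cd$, with $G_2$ singled out---matches the paper's, which carries this out via explicit theta-function computations (Propositions~\ref{theta m1 b}--\ref{specific}) and then invokes Propositions~\ref{hom from initial} and~\ref{one antip}. But two steps in your sketch do not hold up.

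First, you invoke Theorem~\ref{ref phen 2x2} to claim that $\F_B$ refines $\F_{B'}$, but that theorem says the opposite in most of the cases you need: when $B$ is wild and $B'$ is of finite type other than $\B00$ or a very restricted $\B1{-1}$, the mutation fan $\F_B$ does \emph{not} refine $\F_{B'}$. (And even if it did, ``the ray $\reals_{\ge0}\g$ lies inside some cone of $\F_B$'' is a tautology for a complete fan, so the sentence does no work.) What is actually needed is Theorem~\ref{ref scat phen 2x2} for scattering fans, and that already uses the Discreteness Hypothesis. Second, and more fundamentally, your diagnosis of the $G_2$ exception is backwards. You attribute the failure to the relevant $\g$-vector ray landing in the interior of the irrational cone and hence not being a ray of $\ScatFan(B)$. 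But under the Discreteness Hypothesis every rational ray in that cone \emph{is} a ray of $\ScatFan(B)$, so there is always a ray theta function at the target $\g$-vector. The actual obstruction is algebraic, not geometric: as the paper computes in Lemmas~\ref{thet -32 lem} and~\ref{thet -23 lem}, for $a\le-2$ one has $\nu_\z(\thet'_{[-3,2]})=\thet_{[-3,2]}+3y_1y_2x_0^{-2-a}x_1^{b-3}$, a ray theta function plus a nonzero extra monomial (and analogously for $[-2,3]$ when $b\ge2$). That this extra term is visibly in $\A_\bullet(B)$ is precisely what rescues the ring-homomorphism conclusion in the excluded case---something your ``identify $\nu_\z(z')$ with the theta function'' approach does not anticipate. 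For injectivity, the paper's Jacobian criterion (Proposition~\ref{one antip}) is more direct than your basis argument; in particular, the claim that cluster monomials form a $\ZZ[y'_1,y'_2]$-basis of $\A_\bullet(B')$ would need justification, and distinctness of $\g$-vectors on basis elements does not by itself give injectivity of a ring map that also moves the coefficients $y'_k$.
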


Theorem~\ref{hom phen 2x2 part 1} illustrates why Phenomenon~\ref{hom phen} references a ``smaller set of cases.'' 
Further understanding of Phenomenon~\ref{hom phen} in the $2\times2$ case is limited, for now, by a lack of detailed constructions of scattering diagrams in the ``wild'' $2\times 2$ cases.

The following example of Phenomenon~\ref{hom phen} is precisely the cases of Theorem~\ref{hom phen fin conj} where $B$ is acyclic of type A or D.

\begin{theorem}\label{hom phen surf thm}
Suppose $(\S,\M)$ is a once-punctured or unpunctured disk and suppose $T$ is a triangulation of $(\S,\M)$ such that $B(T)$ is acyclic.
If $B(T)$ dominates $B'$, then both parts of Phenomenon~\ref{hom phen} occur.
\end{theorem}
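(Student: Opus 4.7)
The plan is to combine the mutation-fan refinement from Theorem~\ref{ref phen finite} with the explicit surface expansions of cluster variables available in types A and D. Because $(\S,\M)$ is a once-punctured or unpunctured disk, $B=B(T)$ is acyclic and of finite type. Applying Theorem~\ref{ref phen finite} to $B^T$ and $(B')^T$ shows that $\F_{B^T}$ refines $\F_{(B')^T}$; in finite type the mutation fan coincides with the $\g$-vector fan, so every ray of the $\g$-vector fan for $B'$ is a ray of the $\g$-vector fan for $B$, and $B'$ is automatically of finite type. In particular, for each cluster variable $x'$ of $\A_\bullet(B')$ with $\g$-vector $\lambda$ there is a unique cluster variable of $\A_\bullet(B)$ with the same $\g$-vector. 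A direct computation on generators shows $\nu_\z$ preserves $\g$-vectors: the $\g$-vector of $\nu_\z(\hy'_k)=\hy_k F_{z_k}$ equals the negative of the $k$th column of $B$ plus the $k$th column of $B$ minus the $k$th column of $B'$, matching the $\g$-vector of $\hy'_k$.

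The crux is to prove that $\nu_\z$ carries each cluster variable of $\A_\bullet(B')$ to the unique cluster variable of $\A_\bullet(B)$ with the same $\g$-vector. Granting this, injectivity follows because distinct cluster variables have distinct $\g$-vectors and hence distinct images, and because $\nu_\z$ is determined by its values on the generators $x'_k,y'_k$, its image lies in the subring of $\A_\bullet(B)$ generated by the cluster variables together with the $y_k$ and the $z_k$, which is contained in $\A_\bullet(B)$ itself. The second part of Phenomenon~\ref{hom phen}---that ray theta functions map to ray theta functions---is then immediate, since in finite type the ray theta functions coincide with the cluster variables and the first part already handles them.

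To establish the cluster-variable claim I will use the Musiker--Schiffler--Williams snake-graph formulas for types A and D. Each cluster variable of $\A_\bullet(B)$ is associated with an arc or tagged arc $\gamma$ of $(\S,\M)$ and is expressed as a weighted sum over perfect matchings of the snake graph $G_{T,\gamma}$, with weights recorded by $x_i$- and $y_i$-monomials read off from the tiles. Proceeding inductively along the exchange graph of $\A_\bullet(B')$, starting from the base case $\nu_\z(x'_k)=x_k$ at the initial seed, I will verify at each mutation step that the substitution $\hy'_k\mapsto\hy_k F_{z_k}$ converts the exchange polynomial of $B'$ into that of $B$; the factors $F_{z_k}$ are designed to absorb precisely the discrepancy between $B$ and $B'$ at each entry.

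The main obstacle is this combinatorial bookkeeping: one must track, at every mutation step, how the shear-coordinate shifts encoded by the $z_k$ interact with the snake-graph matching weights and the tagged-arc combinatorics, and confirm that the resulting Laurent polynomial is exactly the snake-graph expansion of the corresponding arc for $B$. The type A case (unpunctured disk) reduces to the core snake-graph calculation; the type D case (once-punctured disk) brings in tagged arcs and the additional flip combinatorics near the puncture, and is the more delicate of the two. The finiteness and explicit recursive structure of snake graphs in types A and D keep this verification tractable, while the analogous type B and C cases (which require orbifold snake graphs) are handled in Viel's thesis.
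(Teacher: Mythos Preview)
Your outline shares its skeleton with the paper's proof: both aim to show that $\nu_\z$ sends cluster variables to cluster variables by checking exchange relations, then invoke Proposition~\ref{hom from initial} or~\ref{hom from cl vars}. But the proposal has real gaps at the two substantive steps.

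First, the injectivity argument is not a proof. Knowing that distinct cluster variables have distinct images shows only that $\nu_\z$ is injective on a generating set, not on the ring. The paper uses Proposition~\ref{one antip}, a Jacobian criterion, and this criterion applies here only because the paper has reduced (via Proposition~\ref{factor nu z}) to the case where $B$ and $B'$ differ in a single pair of entries; you have not made that reduction, and without it Proposition~\ref{one antip} does not apply.

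Second, and more seriously, the ``crux'' verification is missing its central ingredient. You say you will check inductively that $\nu_\z$ converts each exchange relation of $\A_\bullet(B')$ into ``the'' exchange relation of $\A_\bullet(B)$, but you never say \emph{which} exchange relation that is. The statement that ``the factors $F_{z_k}$ are designed to absorb precisely the discrepancy'' is only transparent at the initial seed; after one mutation the extended exchange matrices for $B$ and $B'$ no longer differ only in the coefficient rows, and the correspondence between clusters of $\A_\bullet(B')$ and clusters of $\A_\bullet(B)$ is not the naive one. The paper supplies exactly this missing piece: it realizes each relevant $B'$ via a specific resection of the surface, then defines an explicit geometric map $\chi$ on tagged arcs (Figures~\ref{chi fig}, \ref{chi mon fig}, \ref{case3 chi}) which names, for each arc $\gamma'$ in $(\S',\M')$, the target arc $\chi(\gamma')$ in $(\S,\M)$. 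With that map in hand the exchange-relation check becomes a concrete (if lengthy) case analysis (Figures~\ref{exch pic}--\ref{exch3 3}). Your reference to snake graphs does not substitute for this: you invoke the Musiker--Schiffler--Williams formulas but never actually use them, and the proposal contains no mechanism for matching the snake graph $G_{T',\gamma'}$ with a snake graph $G_{T,\gamma}$ for some specified $\gamma$.
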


In Section~\ref{resect hom sec}, we prove Theorem~\ref{hom phen surf thm} by observing that every matrix $B'$ dominated by $B(T)$ can be obtained by a resection and then establishing Phenomenon~\ref{hom phen} for each relevant resection.
Although in proving some of the cases, we generalize slightly to handle some of the non-acyclic cases, our treatment of the surfaces case suggests that acyclicity, at least in some local sense near the changed entries in the exchange matrix, is essential to Phenomenon~\ref{hom phen}.
The arguments given in this paper can be extended to deal with most resections of twice-punctured disks (surfaces of affine type D), providing a step towards a version of Theorem~\ref{hom phen fin conj} where $B$ is of affine type.
The cost is an increase in complexity, and we have not pursued the extension here. 

\subsection{Related work}
Several other lines of research touch tangentially on the ideas presented in this paper.
The notion of dominance appears in \cite{HLY,HL} as an integral part of the definition of \emph{seed homomorphisms}.
In \cite{HLY,HL}, the condition is broader in the sense that it allows passing to submatrices and/or making a global sign change.
Also, the condition in \cite{HLY,HL} is applied to extended exchange matrices, not just to exchange matrices.
Some surface-cutting constructions similar to resection have appeared in \cite{ADS,HL}, but resection is different because it aims to do something different, namely to construct dominance relations without restricting to submatrices and without ``freezing'' variables.

All of the references cited just above are concerned primarily with notions of morphisms of cluster algebras, but, to the author's knowledge, the homomorphisms proposed here do not fit into any category of cluster algebras already proposed.
One category, proposed in \cite{ADS}, features \emph{rooted cluster morphisms}.
These require that initial cluster variables (including ``frozen'' variables) map to cluster variables or to integers.
In our examples, the map restricts to a bijection on (non-frozen) initial cluster variables, but the frozen variables (which we call \emph{coefficients} and denote by $y_i$) may map to other elements of the cluster algebra.
In \cite{HLY}, it is shown that each rooted cluster morphism defines a seed homomorphism, or, loosely speaking, a dominance relationship in the broader sense allowing submatrices and/or a sign change as above.
Other notions of morphisms between cluster algebras include the \emph{coefficient specializations} of \cite[Section~12]{ca4} and \cite[Section~3]{universal} and the \emph{quasi-homomorphisms} of \cite{Fraser}, both of which fix the exchange matrix~$B$.

\fakesubsec{Plan for the rest of the paper}
The rest of this paper is devoted to proving the theorems stated above that provide examples of dominance phenomena.
We begin by defining mutation-linear algebra and proving or quoting key results in Sections~\ref{partial sec}--\ref{mla sec}.
We then consider the phenomena in the order in which they were introduced above, except that we consider Phenomena~\ref{id phen} and~\ref{ref phen} together.

\section{Mutation-linear algebra}\label{mulin sec}
In this section, we quote definitions and results about mutation-linear algebra, prove new preliminary results, and discuss some examples of mutation-linear maps.

\subsection{Partial linear structures}\label{partial sec}
To put the notion of mutation-linear algebra into context, we briefly explore a more general notion that we will call a ``partial linear structure.''
We do not propose at this time a systematic study of partial linear structures, because we are aware of only one interesting class of examples (the mutation-linear structures).
However, we hope that the idea behind mutation-linear structures will be clarified by this brief foray into greater generality.

To make the notion of partial linear structures completely clear, we first formulate the usual notion of linear algebra entirely in terms of linear relations, saying exactly nothing surprising in the process.

Let $M$ be a module over a ring $R$ (with $R$ having a multiplicative identity $1$).
Consider formal expressions of the form $\sum_{i\in S}c_ix_i$, where $S$ is some finite indexing set, each $c_i$ is an element of $R$, and each $x_i$ is an element of $M$.
The expression $\sum_{i\in S}c_ix_i$ is a \newword{linear relation} on $M$ if it evaluates (using the action of $R$ on $M$ and the addition operation in $M$) to the zero element of $M$.
A linear relation is \newword{trivial} if it is empty or can be reduced to the empty relation by repeated applications of combining like terms (i.e.\ replacing $ax+bx$ by $cx$ where $a+b=c$) and deleting terms of the form $0x$.
Since addition in $M$ is commutative, we consider linear relations up to commutativity, so that for example $c_1x_1+c_2x_2$ and $c_2x_2+c_1x_1$ are considered to be the \emph{same} linear relation.
Let $\Rel(M,R)$ be the set of linear relations on $M$ with coefficients in $R$.
A set $A\subseteq M$ is independent if every linear relation among elements of $A$ is trivial.
A set $A\subseteq M$ spans $M$ if, for every element $x\in M$, there exists a linear relation writing $x$ as a linear combination of elements of $A$.
Now suppose $M$ is a module over $R$ and $M'$ is a module over $R'$.
A map $\lambda:M\to M'$ induces a map on formal sums. 
Specifically, reusing the name $\lambda$ for the induced map, $\lambda\bigl(\sum_{i\in S}c_ix_i\bigr)$ is defined to be $\sum_{x\in S}r_x\lambda(x)$.
A map is linear, in the usual sense, if and only if every linear relation in $\lambda(\Rel(M,R))\subseteq\Rel(M',R')$.


In a partial linear structure, we modify all linear-algebraic constructions by ignoring all linear relations not in some fixed subset of $\Rel(M,R)$.

\begin{definition}[\emph{Partial linear structure}]\label{def partial}
A \newword{partial linear structure} is a triple $(M,R,\V)$ such that $M$ is a module over a ring $R$ (with identity) and $\V$ is a subset of $\Rel(M,R)$, satisfying the following conditions:
\begin{enumerate}
\item[(i)] (\textbf{Empty relation}.) The set $\V$ contains the empty relation (the relation with no terms).
\item[(ii)] (\textbf{Irrelevance of zeros}.) For any $x\in M$, a relation $0x+\sum_{i\in S}c_ix_i$ is in $\V$ if and only if $\sum_{i\in S}c_ix_i$ is in $\V$,
\item[(iii)] (\textbf{Combining like terms}.)  If $c=a+b$ in $R$, then $ax+bx+\sum_{i\in S}c_ix_i$ is in $\V$ if and only if $cx+\sum_{i\in S}c_ix_i$ is in $\V$.
\item[(iv)] (\textbf{Scaling}.)  If $c\in R$ and if $\sum_{i\in S}c_ix_i$ is in $\V$, then $\sum_{i\in S}d_ix_i$ is in $\V$, where each $d_i$ is $cc_i$.
\item[(v)] (\textbf{Formal addition}.)  If $\sum_{i\in S}c_ix_i$ and $\sum_{j\in T}d_jy_j$ are in $\V$, then the formal sum $\sum_{i\in S}c_ix_i+\sum_{j\in T}d_jy_j$ is in $\V$.
\end{enumerate}
\end{definition}

\begin{example}[\textbf{The trivial partial linear structure}]\label{trivial example}
If $\V_0$ is the set of all trivial relations, then $(M,R,\V_0)$ is a partial linear structure.
In light of conditions (i) and (ii) of Definition~\ref{def partial}, every partial linear structure $(M,R,\V)$ has $\V_0\subseteq\V$.
\end{example}

The definition of a partial linear structure easily implies the following properties.
\begin{prop}\label{properties}
Suppose $(M,R,\V)$ is a partial linear structure.
\begin{enumerate}
{\normalfont \item[(vi)] (\textbf{Tautology.})} $\V$ contains all relations of the form $1x+(-1)x$ for $x\in M$.
{\normalfont \item[(vii)] (\textbf{Formal substitution}.)}  If $a+\sum_{i\in S}(-c_i)x_i$ and $da+\sum_{j\in T}d_jy_j$ are in $\V$, then $\sum_{i\in S}(dc_i)x_i+\sum_{j\in T}d_jy_j$ is in $\V$.
\end{enumerate}
\end{prop}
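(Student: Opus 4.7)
The plan is direct axiom chasing for both parts; no step poses a real obstacle, only some bookkeeping.

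For (vi), I would start from the empty relation, which lies in $\V$ by axiom (i). Axiom (ii) lets us freely adjoin a term $0x$, and then axiom (iii), applied with $a=1$, $b=-1$, $c=0$, rewrites $0x$ as $1x+(-1)x$. This yields $1x+(-1)x\in\V$, as required.

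For (vii), the strategy is to cancel the two $a$-terms by a scale-and-add. First, scale the relation $a+\sum_{i\in S}(-c_i)x_i$ by $-d\in R$ using axiom (iv): the coefficient $1$ on $a$ becomes $-d$, and each coefficient $-c_i$ becomes $(-d)(-c_i)=dc_i$, producing $-da+\sum_{i\in S}(dc_i)x_i\in\V$. Next, apply formal addition (axiom (v)) to this and to the second given relation $da+\sum_{j\in T}d_jy_j$, to obtain
\[
-da+da+\sum_{i\in S}(dc_i)x_i+\sum_{j\in T}d_jy_j\in\V.
\]
Axiom (iii), invoked this time with the roles $a=-d$, $b=d$, $c=0$, collapses the two $a$-terms into a single $0\cdot a$ term, and then axiom (ii) deletes that $0\cdot a$. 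What remains is exactly $\sum_{i\in S}(dc_i)x_i+\sum_{j\in T}d_jy_j\in\V$.

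The only mild subtlety is the sign choice in the scaling step of (vii): one must scale by $-d$ rather than $d$ so that the $a$-terms actually cancel rather than add to $2da$. Apart from that, both proofs are mechanical unwindings of the five axioms, with no conceptual obstacle to anticipate.
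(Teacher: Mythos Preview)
Your proof is correct and is exactly the straightforward axiom-chasing the paper has in mind; the paper itself omits the proof, saying only that the definition ``easily implies'' these properties. There is nothing to add.
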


We will call the relations in $\V$ the \newword{valid} linear relations.
Given a partial linear structure, new versions of the usual linear-algebraic notions can be obtained by substituting the set $\V$ for the set $\Rel(M,R)$ in the definitions.
For example, we define a set $A\subseteq M$ to be \newword{independent} in $(M,R,\V)$ if every \emph{valid} linear relation among elements of $A$ is trivial.  
A set $A\subseteq M$ \newword{spans} $(M,R,\V)$ if, for every element $x\in M$, there exists a \emph{valid} linear relation $x+\sum_{i\in S}(-c_i)x_i$ with $\set{x_i:i\in S}\subseteq A$.
A \newword{basis} for $(M,R,\V)$ is an independent spanning set.

\begin{theorem}\label{basis exists}
If $(M,R,\V)$ is a partial linear structure and $R$ is a field, then $(M,R,\V)$ admits a basis.
\end{theorem}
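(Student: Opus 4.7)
The plan is to carry out the standard Zorn's lemma argument for the existence of a basis, taking care to invoke only the axioms in Definition~\ref{def partial} and the derived properties of Proposition~\ref{properties}. Let $\mathcal{I}$ denote the collection of independent subsets of $(M,R,\V)$, partially ordered by inclusion. The empty set is vacuously independent, so $\mathcal{I}$ is nonempty. The key point for applying Zorn's lemma is that a valid linear relation is, by definition, a finite formal sum: any valid relation whose terms lie in the union of a chain in $\mathcal{I}$ involves only finitely many vectors, and so lies entirely within a single member of the chain; independence of that member then forces triviality. Hence every chain in $\mathcal{I}$ has an upper bound, and Zorn's lemma produces a maximal independent set $A$.

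Next I would show that $A$ spans $M$. If $x \in A$, the tautological relation $1x + (-1)x$ from property (vi) already expresses $x$ as a combination of the element $x$ of $A$. Suppose instead $x \notin A$. By maximality of $A$, the set $A \cup \set{x}$ is not independent, so some nontrivial valid relation involves elements of $A \cup \set{x}$. Using property (iii) to combine like terms, we may write this relation in the form $cx + \sum_{i \in S} c_i a_i$ with $a_i \in A$. If $c = 0$, then property (ii) yields the valid relation $\sum_{i \in S} c_i a_i$, which must be nontrivial (since if it were trivial, restoring the $0x$ term via (ii) would make the original relation trivial as well), contradicting independence of $A$. Hence $c \neq 0$, and because $R$ is a field, axiom (iv) applied with the scalar $c^{-1}$ shows that $x + \sum_{i \in S}(c^{-1}c_i) a_i$ is valid, which is exactly a spanning relation for $x$ in terms of $A$.

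The only place the field hypothesis enters is in inverting the scalar $c$ at the end of the previous paragraph, so this is where I expect any genuine obstacle to lie: everything else is formal manipulation of the axioms, and there should be no real difficulty once one confirms that properties (i)--(v) permit the routine operations of dropping zero terms, combining like terms, and rescaling a valid relation by a scalar. Once these are in hand, the classical proof translates with essentially no modification.
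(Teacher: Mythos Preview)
Your proof is correct and matches the paper's approach exactly: the paper states that the proof ``follows the usual non-constructive proof of the existence of a basis for an arbitrary vector space, using Zorn's lemma,'' omits the details, and points to \cite[Proposition~4.6]{universal} for the mutation-linear case. Your write-up is precisely this standard argument, carefully tracking which axioms of Definition~\ref{def partial} are used at each step.
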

The proof of Theorem~\ref{basis exists} follows the usual non-constructive proof of the existence of a basis for an arbitrary vector space, using Zorn's lemma.
This proof, for the special case of mutation-linear algebra defined in Section~\ref{mla sec}, is given in \cite[Proposition~4.6]{universal}.
We omit it here.  

Given partial linear structures $(M,R,\V)$ and $(M',R',\V')$, a map $\lambda:M\to M'$ is \newword{linear}, with respect to the partial linear structures, if the induced map on linear relations restricts to a map from $\V$ to $\V'$.
That is, the map is linear if every \emph{valid} linear relation is mapped to a \emph{valid} linear relation.

\subsection{Mutation-linear structures}\label{mla sec}
We now define a partial linear structure that we call a mutation-linear structure.
An \newword{exchange matrix} is an $n\times n$ skew-symmetrizable integer matrix $B=[b_{ij}]$.
Skew-symmetrizability means that there exist positive integers $d_1,\ldots,d_n$ with $d_ib_{ij}=-d_jb_{ji}$ for all $i,j\in[n]$ and implies that $b_{ij}$ and $b_{ji}$ are either both zero or have strictly opposite signs.

Let $\tB$ be an $(n+\ell)\times n$ matrix whose top $n$ rows agree with $B$ and whose last $\ell$ rows are vectors in $\reals^n$, with $\ell\ge0$.
For each $k=1,\ldots,n$, the \newword{mutation} of $\tB$ at index $k$ is $\tB'=\mu_k(\tB)$ with entries given by
\begin{equation}\label{b mut}
b_{ij}'=\left\lbrace\!\!\begin{array}{ll}
-b_{ij}&\mbox{if }i=k\mbox{ or }j=k;\\
b_{ij}+\sgn(b_{kj})\,[b_{ik}b_{kj}]_+&\mbox{otherwise.}
\end{array}\right.
\end{equation}
Here $[x]_+$ means $\max(0,x)$.
The operation $\mu_k$ is an involution (i.e.\ $\mu_k(\mu_k(\tB))=\tB$).
We also use the symbol $\mu_k$ to denote the map $B\mapsto\mu_k(B)$ given by the same formula.
Given a sequence $\k=k_q,\ldots,k_1$, the notation $\mu_\k$ means $\mu_{k_q}\circ\mu_{k_{q-1}}\circ\cdots\circ\mu_{k_1}$.
An exchange matrix $B$ is called \newword{mutation finite} if the set $\set{\mu_\k(B)}$, where $\k$ ranges over all sequences of indices of $B$, is finite.

Given $B$ and a sequence $\k$ of indices, the \newword{mutation map} $\eta_\k^B:\reals^n\to\reals^n$ is defined as follows:
Given $\a\in\reals^n$, let $\tB$ be the $(n+1)\times n$ matrix with $B$ in the top $n$ rows and $\a$ in the bottom row.
Then $\eta_\k^B(\a)$ is defined to be the bottom row of $\mu_\k(\tB)$.
This is a piecewise linear homeomorphism from $\reals^n$ to itself.
When $\k$ consists of a single index $k$, if $\a=(a_1,\ldots,a_n)$, then $\eta_k^B(\a)=(a_1',\ldots,a_n')$ with each $a_j'$ given by
\begin{equation}\label{mutation map def}
a'_j=\left\lbrace\begin{array}{ll}
-a_k&\mbox{if }j=k;\\
a_j+a_kb_{kj}&\mbox{if $j\neq k$, $a_k\ge 0$ and $b_{kj}\ge 0$};\\
a_j-a_kb_{kj}&\mbox{if $j\neq k$, $a_k\le 0$ and $b_{kj}\le 0$};\\
a_j&\mbox{otherwise.}
\end{array}\right.
\end{equation}
When $\k$ is the empty sequence, $\eta^B_\k$ is the identity map.
For a nonempty sequence $\k=k_q,k_{q-1},\ldots,k_1$, if we define $B_1=B$ and $B_{i+1}=\mu_{k_i}(B_i)$ for $i=1,\ldots,q$, then
\begin{equation}\label{eta def}
\eta_\k^B=\eta^B_{k_q,k_{q-1}\ldots,k_1}=\eta_{k_q}^{B_{q}}\circ\eta_{k_{q-1}}^{B_{q-1}}\circ\cdots\circ\eta_{k_1}^{B_1}
\end{equation}

Let $R$ be $\integers$ or any subfield of $\reals$.
We call $R$ the \newword{underlying ring}.
Since the entries of $B$ are integers, the mutation maps on $\reals^n$ restrict to maps $R^n\to R^n$ for any underlying ring $R$.
These maps also commute with scaling by a nonnegative element $c\in R$, and have the property that 
\begin{equation}\label{eta antipodal}
\eta_\k^B(\a)=-\eta_\k^{-B}(-\a).
\end{equation}

\begin{definition}[\emph{$B$-coherent linear relation}]\label{B coher def}
Given a finite set $S$, vectors $(\v_i:i\in S)$ in $\reals^n$, and elements $(c_i:i\in S)$ of $R$, the formal expression $\sum_{i\in S}c_i\v_i$ is a \newword{$B$\nobreakdash-coherent linear relation with coefficients in $R$} if the equalities
\begin{eqnarray}
\label{linear eta}
&&\sum_{i\in S}c_i\eta^B_\k(\v_i)=\mathbf{0},\mbox{ and}\\
\label{piecewise eta}
&&\sum_{i\in S}c_i\mathbf{min}(\eta^B_\k(\v_i),\mathbf{0})=\mathbf{0}
\end{eqnarray}
hold for every sequence $\k=k_q,\ldots,k_1$ of indices.
Here $\mathbf{min}$ takes the minimum in each component separately.
Since, in particular, \eqref{linear eta} holds when $\k$ is the empty sequence, a $B$-coherent linear relation is in particular a linear relation in the usual sense. 
Recall that a linear relation $\sum_{i\in S}c_i\v_i$ is \newword{trivial} if it is empty or can be reduced to the empty relation by repeated applications of combining like terms (i.e.\ replacing $a\v+b\v$ by $c\v$ where $a+b=c$) and deleting terms of the form $0\v$.
\end{definition}

\begin{example}\label{not B coher}
It is easy to produce linear relations that are not $B$-coherent.
For example, if $B=\begin{bsmallmatrix*}[r]0&1\\-1&0\end{bsmallmatrix*}$, the relation $1\cdot\begin{bsmallmatrix*}[r]1&0\end{bsmallmatrix*}+1\cdot\begin{bsmallmatrix*}[r]-1&0\end{bsmallmatrix*}$ is not $B$-coherent because $\eta^B_1(\begin{bsmallmatrix*}[r]1&0\end{bsmallmatrix*})=\begin{bsmallmatrix*}[r]-1&1\end{bsmallmatrix*}$ but $\eta^B_1(\begin{bsmallmatrix*}[r]-1&0\end{bsmallmatrix*})=\begin{bsmallmatrix*}[r]1&0\end{bsmallmatrix*}$.
On the other hand, the relation $1\cdot\begin{bsmallmatrix*}[r]1&0\end{bsmallmatrix*}+1\cdot\begin{bsmallmatrix*}[r]0&1\end{bsmallmatrix*}+(-1)\cdot\begin{bsmallmatrix*}[r]1&1\end{bsmallmatrix*}$ \emph{is} $B$-coherent.
To see that this latter relation is $B$-coherent, one can apply mutation maps of the form $\eta_{1212\cdots}^B$ and $\eta_{2121\cdots}^B$ to the relation and observe that the results exhibit periodicity.
Thus \eqref{linear eta} and \eqref{piecewise eta} need only be checked a finite number of times.
Alternatively, and looking ahead, one can prove $B$-coherence of the relation using Proposition~\ref{local coherent}.
\end{example}

\begin{definition}[\emph{Mutation-linear structure}]\label{mu-lin struct def}
Given any underlying ring $R$ and any exchange matrix $B$, the \newword{mutation-linear structure} is the partial linear structure $R^B=(R^n,R,\V)$ where $\V$ is the set of $B$-coherent relations over $R$.
We will also write $R^B$ for the set $R^n$, understood to have this mutation-linear structure.
\end{definition}

Since $R^B$ is a partial linear structure, we can do ``mutation-linear algebra'' in~$R^B$.
That is, we can study the notions of basis and of linear maps with respect to the partial linear structure $(R^n,R,\V)$.
A set $A\subseteq R^B$ is \newword{independent} if every $B$-coherent linear relation among elements of $A$ is trivial.
The set $A$ is \newword{spanning} if for every $\v\in R^n$, there exists a $B$-coherent linear relation $\v-\sum_{i\in S}c_i\v_i$ with $\set{\v_i:i\in S}\subseteq A$. 
A \newword{basis} for $R^B$ is a set that is both independent and spanning.
Theorem~\ref{basis exists} implies that, when $R$ is a field, a basis exists for $R^B$.
See \cite[Proposition~4.6]{universal} for a proof in the mutation-linear case.
We have no proof that a basis exists for $\integers^B$ for every exchange matrix $B$, but also we have no example of an exchange matrix $B$ such that $\integers^B$ has no basis.


\begin{remark}\label{R- notation}
A basis for $R^B$ was called an ``$R$-basis for $B$'' in \cite{universal}, and similarly, there were ``$R$-independent sets for $B$'' and ``$R$-spanning sets for $B$,'' because the notion of a mutation-linear structure $R^B$ was not explicitly named in \cite{universal}.
\end{remark}

\subsection{Mutation-linear maps and the mutation fan}\label{mulin map sec}
A map $\lambda:R^B\to R^{B'}$ induces a map on formal linear combinations that sends $\sum_{i\in S}c_i\v_i$ to $\sum_{i\in S}c_i\lambda(\v_i)$.
\begin{definition}
The map $\lambda:R^B\to R^{B'}$ is \newword{mutation-linear} if every $B$-coherent linear relation with coefficients in $R$ is sent by $\lambda$ to a $B'$-coherent linear relation with coefficients in $R$.
\end{definition}

\begin{prop}\label{linear scaling}
Suppose $\lambda:R^B\to R^{B'}$ is a mutation-linear map.
If $\v\in R^B$ and $\w=c\v$ for $c$ a nonnegative element of $R$, then $\lambda(\w)=c\lambda(\v)$.
\end{prop}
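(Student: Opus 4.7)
The plan is to exhibit a specific $B$-coherent linear relation involving $\v$ and $\w$, apply $\lambda$ to obtain a $B'$-coherent relation, and read off the desired scaling identity from it.

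Concretely, with indexing set $S=\{1,2\}$, take $\v_1=\w$, $\v_2=\v$, $c_1=1$, $c_2=-c$. The formal expression $1\cdot\w+(-c)\cdot\v$ evaluates as a vector to $\w-c\v=\0$ because $\w=c\v$ by hypothesis. I would first verify that this formal expression is a $B$-coherent linear relation over $R$ by checking the two conditions \eqref{linear eta} and \eqref{piecewise eta} from Definition~\ref{B coher def}. For any sequence $\k$ of indices, the stated fact that $\eta^B_\k$ commutes with scaling by the nonnegative element $c\in R$ gives $\eta^B_\k(\w)=\eta^B_\k(c\v)=c\,\eta^B_\k(\v)$, so $\eta^B_\k(\w)-c\,\eta^B_\k(\v)=\0$, verifying \eqref{linear eta}. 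For \eqref{piecewise eta}, observe that since $c\ge0$ in $R\subseteq\reals$, one has $\min(cx,0)=c\min(x,0)$ for every real $x$; applying this componentwise yields $\mathbf{min}(c\,\eta^B_\k(\v),\0)=c\,\mathbf{min}(\eta^B_\k(\v),\0)$, so the relation $\mathbf{min}(\eta^B_\k(\w),\0)-c\,\mathbf{min}(\eta^B_\k(\v),\0)=\0$ holds as well.

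With $B$-coherence established, the hypothesis that $\lambda$ is mutation-linear means the induced formal expression $1\cdot\lambda(\w)+(-c)\cdot\lambda(\v)$ is a $B'$-coherent linear relation with coefficients in $R$. Specializing condition \eqref{linear eta} for this new relation to the empty sequence $\k$ (for which $\eta^{B'}_\k$ is the identity) produces the ordinary linear identity $\lambda(\w)-c\,\lambda(\v)=\0$, which is the desired conclusion $\lambda(\w)=c\,\lambda(\v)$.

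There is no serious obstacle: the argument is a direct unpacking of the definitions, and the only substantive ingredient beyond the definition of mutation-linear map is the compatibility of the mutation maps and of componentwise $\mathbf{min}$ with scaling by a nonnegative element of $R$. The case $c=0$ is handled uniformly by the same argument (the one-term relation $1\cdot\0$ is trivially $B$-coherent and its image $1\cdot\lambda(\0)$ being $B'$-coherent yields $\lambda(\0)=\0$), so no separate case analysis is required.
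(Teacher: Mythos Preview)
Your proof is correct and follows essentially the same approach as the paper's: exhibit the $B$-coherent relation $1\cdot\w+(-c)\cdot\v$ (the paper uses $c\v+(-1)\w$), apply mutation-linearity of $\lambda$, and read off the identity from the resulting $B'$-coherent relation. You are simply more explicit in verifying both \eqref{linear eta} and \eqref{piecewise eta}, whereas the paper compresses this into the phrase ``commutes with nonnegative scaling.''
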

\begin{proof}
Since each mutation map $\eta_\k^B$ commutes with nonnegative scaling, the relation $c\v+(-1)\w$ is $B$-coherent.
Thus if $\lambda:R^B\to R^{B'}$ is a mutation-linear map, then $c\lambda(\v)+(-1)\lambda(\w)$ is a $B'$-coherent linear relation.
In particular $c\lambda(\v)+(-1)\lambda(\w)$ is a linear relation, so $\lambda(\w)=c\lambda(\v)$.
\end{proof}

We omit the proof of the following easy proposition.


\begin{prop}\label{restrict linear}
Suppose $R_0$ and $R_1$ are underlying rings with $R_0\subseteq R_1$.
If $\lambda:R_1^B\to R_1^{B'}$ is mutation-linear and restricts to a map from $R_0^B$ to $R_0^{B'}$, then the restriction is mutation-linear.
\end{prop}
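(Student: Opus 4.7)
The plan is to unpack both definitions and observe that a $B$-coherent linear relation with coefficients in the smaller ring $R_0$ is automatically a $B$-coherent linear relation with coefficients in the larger ring $R_1$, after which mutation-linearity of $\lambda$ over $R_1$ does all the work.

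First I would let $\lambda_0:R_0^B\to R_0^{B'}$ denote the restriction of $\lambda$, which by hypothesis is a well-defined set map. Given an arbitrary $B$-coherent linear relation $\sum_{i\in S}c_i\v_i$ with $c_i\in R_0$ and $\v_i\in R_0^n$, I would observe that the defining equations \eqref{linear eta} and \eqref{piecewise eta} are equations among vectors in $\reals^n$ involving only the scalars $c_i$ and the images $\eta_\k^B(\v_i)$. Since $R_0\subseteq R_1$, the same scalars $c_i$ belong to $R_1$, and the vectors $\v_i$ belong to $R_1^n$, so exactly the same equations certify that $\sum_{i\in S}c_i\v_i$ is a $B$-coherent linear relation with coefficients in $R_1$.

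Next, since $\lambda:R_1^B\to R_1^{B'}$ is mutation-linear, the formal sum $\sum_{i\in S}c_i\lambda(\v_i)$ is a $B'$-coherent linear relation with coefficients in $R_1$. But each $\lambda(\v_i)=\lambda_0(\v_i)$ lies in $R_0^n$ by the restriction hypothesis, and each $c_i$ lies in $R_0$, so this relation in fact has coefficients in $R_0$ and involves vectors in $R_0^n$. Thus it qualifies as a $B'$-coherent linear relation with coefficients in $R_0$, which is exactly what is required for $\lambda_0$ to be mutation-linear.

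There is no real obstacle here: the statement amounts to the observation that $B$-coherence of a relation is an intrinsic property of the scalars and vectors appearing in it, depending only on the mutation maps $\eta_\k^B$ (which are defined on $\reals^n$ and restrict consistently to any underlying ring), and not on a choice of ambient coefficient ring. The only thing to be slightly careful about is that the restriction $\lambda_0$ is specified by hypothesis to land in $R_0^n$, so one need not worry about proving that $R_0$-valued coefficients force the image of $\lambda$ to lie in $R_0^n$.
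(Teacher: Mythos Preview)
Your argument is correct. The paper itself omits the proof, calling the proposition ``easy,'' and your unpacking of the definitions is exactly the natural (and essentially only) way to see it: $B$-coherence of a relation depends only on the scalars and vectors involved, not on the ambient underlying ring, so a relation that is $B$-coherent over $R_0$ is automatically $B$-coherent over $R_1$, and conversely a $B'$-coherent relation over $R_1$ whose scalars and vectors happen to lie in $R_0$ is $B'$-coherent over $R_0$.
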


On the other hand, given a mutation-linear map $\lambda:\integers^B\to\integers^{B'}$, one can extend $\lambda$ to a map $\overline\lambda:\rationals^B\to\rationals^{B'}$ by clearing denominators in the usual way, and $\overline{\lambda}$ can be shown to be mutation-linear as well.
In other cases, it is not clear whether mutation-linear maps can be extended to larger underlying rings. 

\begin{definition}[\emph{The mutation fan}]\label{mut fan def}
We define an equivalence relation $\equiv^B$ on $\reals^B$ by setting $\a_1\equiv^B\a_2$ if and only if $\vsgn(\eta^B_\k(\a_1))=\vsgn(\eta^B_\k(\a_2))$ for every sequence $\k$ of indices.
Here $\vsgn(\a)$ denotes the vector of signs ($-1$, $0$,  or $1$) of the entries of $\a$.
The equivalence classes of $\equiv^B$ are called \newword{$B$-classes}.
The closures of $B$-classes are called \newword{$B$-cones}.
These are closed convex cones \cite[Proposition~5.4]{universal}, meaning that they are closed under nonnegative scaling and addition.
The set of $B$-cones and their faces constitute a complete fan \cite[Theorem~5.13]{universal} called the \newword{mutation fan}~$\F_B$.
(A fan is a collection of convex cones, closed under taking faces, such that the intersection of any two cones is a face of each.
Although we have no examples of $B$-cones that are not \emph{polyhedral} cones, the possibility has not been ruled out.
Thus to refer to a ``face'' in this definition, we must use the general definition of a face of a convex body, rather than the more special notion of a face of a polyhedron.)
\end{definition}

The following results are \cite[Proposition~7.1]{universal} and part of \cite[Proposition~5.5]{universal}.
\begin{prop}\label{antipodal FB}
$\F_{-B}=-\F_B$.
\end{prop}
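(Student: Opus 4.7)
The plan is to reduce the equality of fans to an equality of equivalence relations, using the antipodal identity \eqref{eta antipodal} that relates mutation maps for $B$ and $-B$.

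First I would show that $\a_1 \equiv^{-B} \a_2$ if and only if $-\a_1 \equiv^{B} -\a_2$. To see this, note that by \eqref{eta antipodal}, for any sequence $\k$ of indices we have $\eta_\k^{-B}(\a) = -\eta_\k^B(-\a)$, so
\[
\vsgn\bigl(\eta_\k^{-B}(\a)\bigr) = \vsgn\bigl(-\eta_\k^B(-\a)\bigr) = -\vsgn\bigl(\eta_\k^B(-\a)\bigr).
\]
Thus $\vsgn(\eta_\k^{-B}(\a_1)) = \vsgn(\eta_\k^{-B}(\a_2))$ for every $\k$ if and only if $\vsgn(\eta_\k^B(-\a_1)) = \vsgn(\eta_\k^B(-\a_2))$ for every $\k$, which is the definition of $-\a_1\equiv^B -\a_2$.

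Next I would deduce that the $(-B)$-classes are exactly the sets of the form $-X$, where $X$ is a $B$-class. Since negation $\a\mapsto -\a$ is a self-homeomorphism of $\reals^n$, it commutes with taking closures, so the $(-B)$-cones are exactly the negations of the $B$-cones. Negation also sends (convex) faces to faces, so the faces of $(-B)$-cones are exactly the negations of faces of $B$-cones. Collecting these observations, the collection of cones comprising $\F_{-B}$ coincides with $\{-C : C\in\F_B\} = -\F_B$.

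There is no substantial obstacle here; the whole argument is essentially a bookkeeping consequence of \eqref{eta antipodal}. The only step requiring a small remark is that negation respects the general notion of a face of a closed convex cone (needed because, as the definition of $\F_B$ warns, $B$-cones have not been shown to be polyhedral); but this is immediate since $\a\mapsto -\a$ is a linear homeomorphism and the definition of a face of a convex body is preserved by linear homeomorphisms.
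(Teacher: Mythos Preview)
Your argument is correct and is exactly the natural one from the definitions: the identity \eqref{eta antipodal} gives $\vsgn(\eta_\k^{-B}(\a))=-\vsgn(\eta_\k^B(-\a))$, hence the $(-B)$-classes are the antipodes of the $B$-classes, and since negation is a linear self-homeomorphism of $\reals^n$, closures and faces behave as required.

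Note, however, that the paper does not give its own proof of this proposition; it is quoted as \cite[Proposition~7.1]{universal}. Your argument is presumably essentially what appears there, since there is really only one thing to do once \eqref{eta antipodal} is in hand.
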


\begin{prop}\label{cones preserved}
For any sequence $\k$ of indices, a set $C$ is a $B$-cone if and only if $\eta_\k^B(C)$ is a $\mu_\k(B)$-cone.
\end{prop}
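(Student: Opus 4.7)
The plan is to show that $\eta_\k^B$ is a homeomorphism of $\reals^n$ that intertwines the equivalence relation $\equiv^B$ with $\equiv^{\mu_\k(B)}$; the proposition then follows because homeomorphisms preserve closures, so they biject closures of $B$-classes onto closures of $\mu_\k(B)$-classes.

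First, I would record two bookkeeping identities. From the definition~\eqref{eta def}, the composition identity
\[
\eta_{\ell \cdot \k}^B \;=\; \eta_\ell^{\mu_\k(B)} \circ \eta_\k^B
\]
holds for any two sequences of indices, where $\ell \cdot \k$ denotes the sequence obtained by performing $\k$ first and then $\ell$. A short case check on the four cases of~\eqref{mutation map def}, using that $\mu_k$ is an involution on matrices, shows that $\eta_k^{\mu_k(B)} \circ \eta_k^B = \id$. Iterating yields $(\eta_\k^B)^{-1} = \eta_{\bar\k}^{\mu_\k(B)}$, where $\bar\k$ denotes the reverse of $\k$, so $\eta_\k^B$ is a piecewise-linear homeomorphism of $\reals^n$.

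Next, I would establish that for $\a_1, \a_2 \in \reals^n$, one has $\a_1 \equiv^B \a_2$ if and only if $\eta_\k^B(\a_1) \equiv^{\mu_\k(B)} \eta_\k^B(\a_2)$. For the forward direction, applying an arbitrary test sequence $\ell'$ to $\eta_\k^B(\a_i)$ gives $\eta_{\ell' \cdot \k}^B(\a_i)$ by the composition identity, and these agree in sign pattern for $i=1,2$ by the hypothesis $\a_1\equiv^B\a_2$. The reverse direction is symmetric: any test sequence $\ell$ for $B$ can be rewritten, via composition and inversion, as $\eta_\ell^B(\a_i) = \eta_{\ell \cdot \bar\k}^{\mu_\k(B)}(\eta_\k^B(\a_i))$, so the sign patterns again agree. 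Hence $\eta_\k^B$ induces a bijection between $B$-classes and $\mu_\k(B)$-classes.

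Since $\eta_\k^B$ is a homeomorphism of $\reals^n$, it commutes with topological closure, so $C$ is the closure of a $B$-class if and only if $\eta_\k^B(C)$ is the closure of a $\mu_\k(B)$-class; equivalently, $C$ is a $B$-cone if and only if $\eta_\k^B(C)$ is a $\mu_\k(B)$-cone. The main (and quite minor) obstacle I anticipate is keeping straight the sequence-composition convention from~\eqref{eta def} and the case analysis verifying that $\eta_k^{\mu_k(B)}$ really inverts $\eta_k^B$; once those are in hand, the rest is a clean unwinding of the definition of $\equiv^B$.
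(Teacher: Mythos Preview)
Your proof is correct. The paper does not actually prove this proposition; it merely cites it as part of \cite[Proposition~5.5]{universal}. Your argument---showing that $\eta_\k^B$ is a homeomorphism intertwining $\equiv^B$ with $\equiv^{\mu_\k(B)}$ via the composition identity $\eta_{\ell\cdot\k}^B=\eta_\ell^{\mu_\k(B)}\circ\eta_\k^B$ and the inverse formula $(\eta_\k^B)^{-1}=\eta_{\bar\k}^{\mu_\k(B)}$, and then using that homeomorphisms commute with closure---is exactly the natural proof and supplies the self-contained argument that the present paper omits.
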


The formal expression $\sum_{i\in S}c_i\v_i$ is a \newword{$B$-local linear relation} if it is a linear relation in the usual sense and if $\set{\v_i:i\in S}$ is contained in some $B$-cone.
The following is \cite[Proposition~5.9]{universal}.
\begin{prop}\label{local coherent} 
Every $B$-local linear relation is a $B$-coherent linear relation.
\end{prop}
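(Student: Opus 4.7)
The plan is to isolate the single technical fact that drives the result: the piecewise-linear map $\eta^B_\k$, though only piecewise-linear on $\reals^n$, restricts to an honest linear map on any single $B$-cone. Once that is established, both requirements \eqref{linear eta} and \eqref{piecewise eta} in the definition of $B$-coherence follow almost by inspection.

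First I would prove the following lemma by induction on the length of the mutation sequence $\k$: if $C$ is a $B$-cone, then the restriction of $\eta_\k^B$ to $C$ agrees with some linear map $L_\k^{B,C}\colon\reals^n\to\reals^n$, and $\eta_\k^B(C)$ is a single $\mu_\k(B)$-cone (this last statement is just Proposition~\ref{cones preserved}). The base case $\k$ empty is trivial. For one-step mutations $\eta_k^B$, observe from \eqref{mutation map def} that the formula is linear on the closed half-space $\{a_k\ge 0\}$ and linear on $\{a_k\le 0\}$, with the two rules agreeing on $\{a_k=0\}$. But by the definition of $B$-class via $\vsgn$, every $B$-class lies entirely in one open half-space $\{a_k>0\}$, $\{a_k<0\}$, or in the hyperplane $\{a_k=0\}$; hence the closure $C$ lies in a single closed half-space and $\eta_k^B|_C$ is the restriction of one linear map. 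The inductive step splits $\k=\k',k_1$ and uses that $\eta_{k_1}^B(C)$ is a $\mu_{k_1}(B)$-cone on which $\eta_{\k'}^{\mu_{k_1}(B)}$ is linear by the inductive hypothesis, so the composition described in \eqref{eta def} is linear on $C$.

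Given the lemma, suppose $\sum_{i\in S}c_i\v_i$ is a $B$-local linear relation, with all $\v_i$ in a $B$-cone $C$. For any sequence $\k$, applying the linear map $L_\k^{B,C}$ that agrees with $\eta_\k^B$ on $C$ to the usual linear identity $\sum_{i\in S}c_i\v_i=\0$ yields
\[
\sum_{i\in S}c_i\,\eta^B_\k(\v_i)\;=\;L_\k^{B,C}\!\Bigl(\sum_{i\in S}c_i\v_i\Bigr)\;=\;\0,
\]
establishing \eqref{linear eta}. For \eqref{piecewise eta}, fix a coordinate $j$ and set $w_i=(\eta^B_\k(\v_i))_j$. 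By the lemma, all the vectors $\eta^B_\k(\v_i)$ lie in a single $\mu_\k(B)$-cone, so by the sign-vector definition of classes (extended to closures) either $w_i\ge 0$ for every $i\in S$ or $w_i\le 0$ for every $i\in S$. In the first case each $\min(w_i,0)=0$, so the $j$-th coordinate of $\sum_i c_i\,\mathbf{min}(\eta^B_\k(\v_i),\0)$ vanishes; in the second case $\min(w_i,0)=w_i$, and the $j$-th coordinate is $\sum_i c_i w_i$, which equals zero by \eqref{linear eta} just established. This verifies \eqref{piecewise eta} and completes the proof.

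The main obstacle is the lemma, specifically the bookkeeping that the sign-vector constancy within a $B$-class is exactly what is needed to collapse the piecewise formula \eqref{mutation map def} to a single linear rule on the closure, and that this property propagates through the inductive step via Proposition~\ref{cones preserved}. Everything after the lemma is a one-line application of linearity plus a coordinatewise case split on the sign, so the whole proof is short once the lemma is in hand.
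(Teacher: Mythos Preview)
Your argument is correct. The paper does not itself prove this proposition; it only quotes it as \cite[Proposition~5.9]{universal}, so there is no proof here to compare against. That said, your approach is precisely the natural one and is essentially what appears in the cited reference: the key observation is that each $\eta_\k^B$ is genuinely linear on any $B$-cone (because sign-constancy on a $B$-class selects a single linear branch of the piecewise formula, and this passes to the closure by continuity), together with Proposition~\ref{cones preserved} to propagate through the induction. Once you know the images $\eta_\k^B(\v_i)$ all lie in a single $\mu_\k(B)$-cone, sign-coherence makes the componentwise $\mathbf{min}$ either identically zero or the identity on that coordinate, exactly as you wrote.
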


A collection of vectors in $R^n$ is \newword{sign-coherent} if for any $k\in[n]$, the $k\th$ coordinates of the vectors in the collection are either all nonnegative or all nonpositive.
The following is \cite[Proposition~5.30]{universal}.
\begin{prop}\label{contained Bcone}
A set $C\subseteq\reals^n$ is contained in some $B$-cone if and only if the set $\eta_\k^B(C)$ is sign-coherent for every sequence $\k$ of indices in $[n]$.
\end{prop}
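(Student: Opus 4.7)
The plan is to prove each direction separately. The forward implication will be a quick formal consequence of Proposition~\ref{cones preserved}: if $C$ lies in a $B$-cone $D$, then $\eta_\k^B(D)$ is a $\mu_\k(B)$-cone, i.e.\ the closure of a $\mu_\k(B)$-class; applying Definition~\ref{mut fan def} with the empty sequence to that class shows it has constant sign vector and therefore lies in a relatively open orthant, so its closure lies in the corresponding closed orthant and $\eta_\k^B(C) \subseteq \eta_\k^B(D)$ is sign-coherent.

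For the substantive converse, assume $\eta_\k^B(C)$ is sign-coherent for every $\k$. The strategy is to pick $\a_0$ in the relative interior of the convex hull $\mathrm{conv}(C)$ and to prove that $C$ is contained in the $B$-cone $\overline{C_{\a_0}}$, where $C_{\a_0}$ is the $B$-class of $\a_0$. The first step is to show by induction on the length of $\k$ that $\eta_\k^B$ restricts to a linear map on $\mathrm{conv}(C)$, which implies $\eta_\k^B(\mathrm{conv}(C)) = \mathrm{conv}(\eta_\k^B(C))$ is itself sign-coherent. For the inductive step, writing $\k = k_m,\k'$ with $\k' = k_{m-1},\ldots,k_1$, the inductive hypothesis gives linearity of $\eta_{\k'}^B$ on $\mathrm{conv}(C)$, so $\eta_{\k'}^B(\mathrm{conv}(C)) = \mathrm{conv}(\eta_{\k'}^B(C))$ is sign-coherent and hence lies in one of the closed half-spaces $\{a_{k_m} \ge 0\}$, $\{a_{k_m} \le 0\}$. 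Formula~\eqref{mutation map def} shows $\eta_{k_m}^{B_m}$ (with $B_m = \mu_{\k'}(B)$) is linear on either half-space, and then the composition \eqref{eta def} gives linearity of $\eta_\k^B$ on $\mathrm{conv}(C)$.

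The second step invokes the elementary convex fact that, if a convex set $X$ lies in the closed half-space $\{x_j \ge 0\}$, then every $\a \in \relint(X)$ satisfies $a_j > 0$ precisely when $X$ meets $\{x_j > 0\}$ and $a_j = 0$ otherwise (and dually for the opposite half-space). Applying this to $X = \eta_\k^B(\mathrm{conv}(C)) = \mathrm{conv}(\eta_\k^B(C))$, and using that affine maps carry relative interiors of convex sets into relative interiors of their images, one finds that $\vsgn(\eta_\k^B(\a))$ takes the same value for every $\a \in \relint(\mathrm{conv}(C))$ and every $\k$. In particular, $\relint(\mathrm{conv}(C)) \subseteq C_{\a_0}$.

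To conclude, for any $\b \in C$, the standard fact that $(1-t)\a_0 + t\b \in \relint(\mathrm{conv}(C))$ for $t \in [0,1)$ shows that the open segment from $\a_0$ toward $\b$ is entirely inside $C_{\a_0}$; letting $t \to 1$ places $\b$ in $\overline{C_{\a_0}}$. The main obstacle is the uniform control of $\vsgn(\eta_\k^B(\,\cdot\,))$ across \emph{all} mutation sequences $\k$ simultaneously at a single point $\a_0$; this is precisely what the inductive linearity proof supplies, since once $\eta_\k^B$ is linear on $\mathrm{conv}(C)$ the relative-interior/sign-coherence dichotomy applies to each $\k$ in one stroke.
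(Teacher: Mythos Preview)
Your argument is correct. The paper itself does not prove this proposition; it merely cites it as \cite[Proposition~5.30]{universal}, so there is no proof here to compare against directly.

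Your proof is self-contained and sound. The forward direction is handled cleanly via Proposition~\ref{cones preserved} and the observation that a $\mu_\k(B)$-class has constant sign vector (the empty-sequence case of Definition~\ref{mut fan def}), so its closure sits in a closed orthant. For the converse, your inductive proof that each $\eta_\k^B$ is linear on $\mathrm{conv}(C)$ is the key step, and it goes through because \eqref{mutation map def} shows $\eta_k$ is linear on each half-space $\{a_k\ge0\}$ and $\{a_k\le0\}$; sign-coherence of $\eta_{\k'}^B(C)$ in the $k_m$-th coordinate is exactly what forces $\eta_{\k'}^B(\mathrm{conv}(C))$ into one such half-space. The relative-interior argument is standard convex geometry: linear maps carry $\relint$ to $\relint$, and a convex set contained in $\{x_j\ge0\}$ either lies in the hyperplane $\{x_j=0\}$ or has every relative-interior point strictly positive in coordinate $j$. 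This pins down $\vsgn(\eta_\k^B(\a))$ uniformly on $\relint(\mathrm{conv}(C))$ for every $\k$, whence that relative interior lies in a single $B$-class, and $C$ in its closure. One trivial edge case you might mention explicitly is $C=\emptyset$, which is vacuously contained in any $B$-cone.
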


\begin{prop}\label{B-cone necessary}
If $\lambda:R^B\to R^{B'}$ is a mutation-linear map and $C$ is any cone of $\F_B$, then the restriction of $\lambda$ to $C\cap R^B$ is a linear map (in the usual sense) into some cone of $\F_{B'}$.
\end{prop}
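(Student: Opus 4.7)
The plan is to exploit Proposition~\ref{local coherent} to generate $B$-coherent linear relations among vectors of $C\cap R^B$ for free, and then push those relations through $\lambda$ using mutation-linearity.  The main work is extracting both the additivity needed for linearity and the sign-coherence needed for membership in a single $B'$-cone from a single well-chosen family of $B'$-coherent relations.

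First I would establish linearity on $C\cap R^B$.  Given $\v_1,\v_2\in C\cap R^B$, the sum $\v_1+\v_2$ lies in the convex cone $C$, so the trivially-true relation $\v_1+\v_2-(\v_1+\v_2)=\0$ is a $B$-local linear relation.  By Proposition~\ref{local coherent} it is $B$-coherent, and mutation-linearity of $\lambda$ converts it into the $B'$-coherent relation
\[
\lambda(\v_1)+\lambda(\v_2)-\lambda(\v_1+\v_2)=\0.
\]
In particular this is a genuine linear relation, giving additivity $\lambda(\v_1+\v_2)=\lambda(\v_1)+\lambda(\v_2)$.  Combined with Proposition~\ref{linear scaling}, which gives $\lambda(c\v)=c\lambda(\v)$ for $c$ a nonnegative element of $R$, I obtain that $\lambda$ is additive and commutes with nonnegative $R$-scaling on $C\cap R^B$; this is exactly what is needed to say that $\lambda|_{C\cap R^B}$ is the restriction of a linear map on $\Span(C\cap R^B)$.

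Next I would show the image lies in a single $B'$-cone by exploiting the \emph{piecewise-linear} part \eqref{piecewise eta} of $B'$-coherence for the same relation.  Fix a mutation sequence $\k$ and write $\a_1=\eta^{B'}_\k(\lambda(\v_1))$, $\a_2=\eta^{B'}_\k(\lambda(\v_2))$, $\a_{12}=\eta^{B'}_\k(\lambda(\v_1+\v_2))$.  Then \eqref{linear eta} gives $\a_1+\a_2=\a_{12}$ and \eqref{piecewise eta} gives $\mathbf{min}(\a_1,\0)+\mathbf{min}(\a_2,\0)=\mathbf{min}(\a_{12},\0)$.  A one-line check on each coordinate shows that the identity $\min(x,0)+\min(y,0)=\min(x+y,0)$ holds if and only if $x$ and $y$ have weakly the same sign.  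Hence $\a_1$ and $\a_2$ are sign-coherent in every coordinate, for every $\k$.

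Because this pairwise sign-coherence under all $\eta^{B'}_\k$ holds for every pair $\v_1,\v_2\in C\cap R^B$, the whole set $\lambda(C\cap R^B)$ is sign-coherent under every $\eta^{B'}_\k$, so Proposition~\ref{contained Bcone} places it inside a single $B'$-cone, which must lie in some cone of $\F_{B'}$.  The step I expect to be the only nonobvious one is the recognition that the piecewise-linear axiom \eqref{piecewise eta}, coupled with linearity \eqref{linear eta}, is \emph{exactly} the obstruction encoding sign-coherence of the summands; once that is observed, everything else is the assembly of Propositions~\ref{linear scaling}, \ref{local coherent}, and~\ref{contained Bcone}.
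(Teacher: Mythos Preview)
Your proof is correct and follows essentially the same route as the paper: both use Proposition~\ref{local coherent} on the relation $\v_1+\v_2-(\v_1+\v_2)$ to get additivity, Proposition~\ref{linear scaling} for scaling, and then combine \eqref{linear eta} and \eqref{piecewise eta} with Proposition~\ref{contained Bcone} to force the image into a single $B'$-cone. The only cosmetic difference is that the paper phrases the sign-coherence step as a contradiction (case-splitting on the sign of the $j$th entry of $\eta^{B'}_\k\lambda(\v_1+\v_2)$), whereas you state the clean equivalence $\min(x,0)+\min(y,0)=\min(x+y,0)\iff x,y$ weakly agree in sign and deduce pairwise, hence global, sign-coherence directly.
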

\begin{proof}
We argue the case where $C$ is a $B$-cone.
Since every cone of $\F_B$ is a face of some $B$-cone, the result then follows for arbitrary cones of $\F_B$.

Let $\v$ and $\w$ be elements of $C\cap R^B$ and let $\x=\v+\w$.
Then $\x$ is in $C\cap R^B$ as well.
The linear relation $\v+\w+(-1)\x$ is a $B$-local linear relation, and thus $B$-coherent by Proposition~\ref{local coherent}.
Mutation-linearity of $\lambda$ implies that $\lambda(\v)+\lambda(\w)+(-1)\lambda(\x)$ is a $B'$-coherent linear relation, and thus in particular a linear relation in the usual sense.
Thus $\lambda(\x)=\lambda(\v)+\lambda(\w)$, and this, together with Proposition~\ref{linear scaling}, shows that the restriction of $\lambda$ to $C\cap R^B$ is linear.

Suppose for the sake of contradiction that $\lambda$ does not map $C\cap R^B$ into a $B'$-cone.
Then in particular, there are two elements $\v$ and $\w$ of $C\cap R^B$ such that $\lambda(\v)$ and $\lambda(\w)$ are not contained in a common $B'$-cone.
Proposition~\ref{contained Bcone} says that there exists a sequence $\k$ of indices and an index $j$ such that $\eta^{B'}_\k\lambda(\v)$ and $\eta^{B'}_\k\lambda(\w)$ strictly disagree in the sign of their $j\th$ entry.
Without loss of generality, $\eta^{B'}_\k\lambda(\v)$ has strictly positive $j\th$ entry and $\eta^{B'}_\k\lambda(\w)$ has strictly negative $j\th$ entry.
Again, let $\x=\v+\w$.
Write $h_\v$, $h_\w$ and $h_\x$ for the $j\th$ entries of $\eta^{B'}_\k\lambda(\v)$, $\eta^{B'}_\k\lambda(\w)$, and $\eta^{B'}_\k\lambda(\x)$, respectively.
As above, $\lambda(\v)+\lambda(\w)+(-1)\lambda(\x)$ is a $B'$-coherent linear relation.
Thus if the $j\th$ entry of $\eta^{B'}_\k\lambda(\x)$ is nonpositive, then \eqref{linear eta} and \eqref{piecewise eta} imply that $h_\v+h_\w-h_\x=0$ and $h_\w-h_\x=0$, so that $h_\v=0$.
If the $j\th$ entry of $\eta^{B'}_\k\lambda(\x)$ is positive, then \eqref{piecewise eta} implies that $h_\w=0$.
In either case, we have reached a contradiction, because $h_\v$ and $h_\w$ strictly disagree in sign.
\end{proof}

We wish to prove the converse of Proposition~\ref{B-cone necessary}, but we will need an additional hypothesis.
(The additional hypothesis is necessary for our proof, but we know of no counterexample to the converse without the additional hypothesis.)

\begin{definition}\label{cone R def}
A basis $U$ for $R^B$ is \newword{positive} if, for every $\a\in R^n$, there exists a $B$-coherent linear relation $\a+\sum_{i\in S}(-c_i)\b_i$ such that all of the $c_i$ are nonnegative elements of $R$.
A \newword{cone basis} for $R^B$ is an independent set $U$ in $R^B$ such that, for every $B$-cone $C$, the $R$-linear span of $U\cap C$ contains $R^n\cap C$.
\end{definition}

By \cite[Proposition~6.4]{universal}, a cone basis for $R^B$ also spans $R^B$, and thus is a basis for $R^B$.
In fact, a stronger property than spanning is easily proved. 
\begin{prop}\label{B-cone comb}
Suppose a cone basis $U$ exists for $R^B$.
Then for every $\a\in R^B$, there exists a $B$-local linear relation $\a+\sum_{i\in S}(-c_i)\v_i$ such that each $\v_i$ is in $U$.
\end{prop}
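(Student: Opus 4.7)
The plan is to use the completeness of the mutation fan together with the defining property of a cone basis, which together immediately yield the desired $B$-local expression. The argument should take only a few lines.

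First I would invoke the fact (from Theorem~5.13 of \cite{universal}, quoted in the excerpt as part of Definition~\ref{mut fan def}) that $\F_B$ is a complete fan whose maximal cones (up to taking faces) are the $B$-cones. In particular, every point of $\reals^n$ lies in some $B$-cone. Applying this to $\a$, I obtain a $B$-cone $C$ with $\a \in C$. Since $\a \in R^n$, we have $\a \in R^n \cap C$.

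Next, by the definition of cone basis (Definition~\ref{cone R def}), the set $R^n \cap C$ is contained in the $R$-linear span of $U \cap C$. Hence there exist a finite index set $S$, elements $c_i \in R$, and vectors $\v_i \in U \cap C$ such that $\a = \sum_{i \in S} c_i \v_i$, i.e.\ $\a + \sum_{i \in S} (-c_i)\v_i$ is a linear relation in the ordinary sense.

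Finally I would verify $B$-locality: every vector appearing in the relation, namely $\a$ itself and each $\v_i$, lies in the single $B$-cone $C$, so the relation is $B$-local by definition. Each $\v_i$ belongs to $U$ as required, completing the proof. There is no real obstacle here; the only subtlety worth flagging is that one must remember that the $B$-cones (not merely lower-dimensional cones of $\F_B$) already cover $\reals^n$, which is guaranteed because each $\equiv^B$-class has a $B$-cone as its closure and the classes partition $\reals^n$.
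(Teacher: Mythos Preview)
Your proof is correct and follows essentially the same approach as the paper's: pick a $B$-cone $C$ containing $\a$, apply the defining property of a cone basis to write $\a$ as an $R$-linear combination of elements of $U\cap C$, and observe that the resulting relation is $B$-local. The paper's version is more terse, leaving implicit the completeness argument you spell out, but the logic is identical.
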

\begin{proof}
If $C$ is any $B$-cone containing $\a$, then the definition of a cone basis says that $\a$ is a linear combination (in the usual sense) $\sum_{i\in S}c_i\v_i$ of elements of $U\cap C$.
\end{proof}

Our only use for positive bases in this paper is the following fact, which allows us to quote results from other papers where positive bases are constructed.
This fact is a part of \cite[Proposition~6.7]{universal}.
\begin{prop}\label{positive cone basis}
If $U$ is a positive basis for $R^B$, then it is a cone basis for $R^B$.
\end{prop}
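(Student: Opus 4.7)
The plan is to fix an arbitrary $B$-cone $C$ and a vector $\a \in R^n \cap C$, and to exhibit $\a$ as a nonnegative $R$-linear combination of elements of $U \cap C$; since a positive basis is already independent (by definition of basis), this suffices to verify that $U$ is a cone basis. The opening move is to invoke the positive-basis hypothesis to produce a $B$-coherent relation $\a + \sum_{i \in S}(-c_i)\b_i$ with each $\b_i \in U$ and each $c_i$ a nonnegative element of $R$. The entire content of the argument then consists in showing that every $\b_i$ with $c_i > 0$ in fact lies in $C$.

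The next step is to unpack the two defining conditions~\eqref{linear eta} and~\eqref{piecewise eta} of $B$-coherence, applied to this relation for an arbitrary mutation sequence $\k$ and coordinate $j$. They yield $\eta_\k^B(\a)_j = \sum_i c_i\, \eta_\k^B(\b_i)_j$ and $\min(\eta_\k^B(\a)_j, 0) = \sum_i c_i \min(\eta_\k^B(\b_i)_j, 0)$; subtracting gives the analogous identity with $\max$ in place of $\min$. Because $c_i \geq 0$ while the $\min$-terms are nonpositive and the $\max$-terms are nonnegative, these identities force each $\eta_\k^B(\b_i)_j$ (for $i$ with $c_i > 0$) to weakly agree in sign with $\eta_\k^B(\a)_j$. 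Thus $\{\a\} \cup \{\b_i : c_i > 0\}$ is sign-coherent after every mutation sequence, and Proposition~\ref{contained Bcone} packages these agreements into the statement that the whole set lies inside a single $B$-cone $C'$.

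The closing step exploits the fan axioms: since $C$ and $C'$ both belong to $\F_B$ and both contain $\a$, the intersection $F := C \cap C'$ is a face of each. The decomposition $\a = \sum_i c_i \b_i$ now displays $\a \in F$ as a sum of summands $c_i \b_i$ all lying in the cone $C'$, and the face property of a closed convex cone forces every summand with $c_i > 0$ into $F$, hence into $C$. This delivers $\a$ as a nonnegative $R$-linear combination of elements of $U \cap C$, completing the verification.

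The delicate step I expect to watch most carefully is this final one: since $B$-cones are only known to be closed convex cones and not necessarily polyhedral, one must invoke the general convex-geometric notion of a face (as the authors explicitly do in the definition of $\F_B$). In that generality, a face $F$ of a convex cone $K$ retains the property that any additive decomposition within $K$ of a point of $F$ is automatically a decomposition within $F$, which is exactly what the argument needs; but if one were tempted to import polyhedral intuition (e.g.\ extreme rays), the proof would be on shakier ground. Everything else reduces to extracting the two sign inequalities from $B$-coherence and quoting Proposition~\ref{contained Bcone}.
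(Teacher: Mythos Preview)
Your proof is correct. The paper does not actually prove this proposition; it merely quotes it as ``part of \cite[Proposition~6.7]{universal}''. Your argument is self-contained and uses only tools already assembled in the present paper: the two defining conditions \eqref{linear eta}--\eqref{piecewise eta} of $B$-coherence, Proposition~\ref{contained Bcone}, and the fan axiom that any two cones of $\F_B$ intersect in a common face. The sign analysis is clean---from $\min(\eta_\k^B(\a)_j,0)=\sum_i c_i\min(\eta_\k^B(\b_i)_j,0)$ with $c_i\ge 0$ you correctly extract that each $\eta_\k^B(\b_i)_j$ (for $c_i>0$) weakly agrees in sign with $\eta_\k^B(\a)_j$, and likewise via the $\max$ identity; this is exactly the input Proposition~\ref{contained Bcone} needs.

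Your closing caution about the face step is well placed and your handling of it is sound. The intersection $F=C\cap C'$ is a face of $C'$ by the fan axiom, and since $F$ is itself a cone of $\F_B$ it is closed under nonnegative scaling; the general convex-body face property (if a relative interior point of a segment in $C'$ lies in $F$ then both endpoints do) then lets you peel off summands one at a time from $\a=\sum_i c_i\b_i$ and conclude each $\b_i\in F\subseteq C$. No polyhedrality is needed.
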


The existence of a cone basis for $\reals^B$ greatly simplifies mutation-linear algebra.

\begin{proposition}\label{cone B-local}
If a cone basis exists for $R^B$, then a linear relation is $B$-coherent if and only if it can be reduced to a (possibly empty) formal sum of $B$-local relations by a sequence of changes, each of which adds a term with coefficient zero or un-combines like terms.
\end{proposition}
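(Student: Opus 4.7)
My plan is to handle the two directions separately, with the nontrivial work concentrated on the ``only if'' direction, where the cone basis hypothesis is essential.

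For the easy direction, I will observe that every $B$-local relation is $B$-coherent by Proposition~\ref{local coherent}; that axiom~(v) of Definition~\ref{def partial} (formal addition) makes the set of $B$-coherent relations closed under finite formal sums; and that axioms~(ii) and~(iii) say adding zero terms and un-combining like terms each preserve membership in this set. Chaining these, any linear relation that can be reached from a formal sum of $B$-local relations by zero-adding and un-combining is itself $B$-coherent, giving the implication from the right-hand side to the left.

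For the hard direction, suppose $R_0 = \sum_{i \in S} c_i \v_i$ is $B$-coherent. First I would apply Proposition~\ref{B-cone comb} to each $\v_i$ to produce a $B$-local relation $L_i = \v_i + \sum_{j \in T_i}(-d_{ij})\u_{ij}$ with all $\u_{ij} \in U$. Scaling does not change which vectors appear, so each $c_iL_i$ is still $B$-local, and axioms~(iv) and~(v) make $\sum_i c_iL_i$ a $B$-coherent relation. The formal sum
\[
\textstyle \sum_i c_iL_i \;+\; \sum_{i,j}(c_id_{ij})\,\u_{ij}
\]
can be reached from $R_0$ by, for each pair $(i,j)$, first adding the zero term $0\cdot\u_{ij}$ and then un-combining it as $(-c_id_{ij})\u_{ij} + (c_id_{ij})\u_{ij}$, the negative halves being absorbed into the $c_iL_i$ pieces. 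The remaining summand $M := \sum_{i,j}(c_id_{ij})\u_{ij}$ is a $B$-coherent relation involving only elements of $U$, hence trivial by independence of $U$; equivalently, after combining like terms the coefficient on each basis vector is zero. This lets me partition the terms of $M$ by basis vector: for each $\u \in U$, the corresponding terms form a single-vector zero-sum relation, which is automatically $B$-local. Concatenating the $c_iL_i$ with these single-vector relations presents the expanded $R_0$ as the desired formal sum of $B$-local relations.

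The main obstacle I anticipate is precisely the handling of $M$: once the $\v_i$ have been expanded, the leftover basis-vector terms need not, a priori, be partitionable into $B$-local pieces, since they can involve many different basis vectors from different $B$-cones. The crucial point is that independence of the cone basis forces $M$ to be trivial, and triviality is exactly what is needed to group its terms by basis vector into single-vector zero-sum relations, each $B$-local for free. Every other step is a routine consequence of the axioms of a partial linear structure together with Propositions~\ref{local coherent} and~\ref{B-cone comb}.
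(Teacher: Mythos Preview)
Your proposal is correct and follows essentially the same approach as the paper: expand each $\v_i$ in the cone basis via Proposition~\ref{B-cone comb}, use independence of $U$ to see that the leftover basis-vector terms form a trivial relation, and conclude that $R_0$ can be reduced to a formal sum of the $B$-local pieces $c_iL_i$. The only cosmetic difference is that the paper adds the trivial relation $-M$ to $R_0$ (via zero-adding and un-combining) to obtain exactly $\sum_i c_iL_i$, whereas you keep $M$ in the expansion and partition it into single-vector zero-sum $B$-local pieces; both are valid packagings of the same argument. One small point worth making explicit in your write-up: you assert that $M$ is $B$-coherent without saying why---this follows from Proposition~\ref{properties}(vii) (formal substitution), or equivalently from the fact that both $R_0$ and $\sum_i c_iL_i$ are $B$-coherent together with axioms~(iv), (v), (ii), (iii).
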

\begin{proof}
Proposition~\ref{local coherent} implies that any formal sum of $B$-local relations is $B$-coherent.
Deleting a term with coefficient zero or combining like terms preserves $B$-coherence, so any linear relation that can be reduced to a formal sum of $B$-local relations by adding terms and/or un-combining like terms is $B$-coherent.

On the other hand, let $\sum_{i\in S}c_i\v_i$ be a $B$-coherent linear relation.
For each $i\in S$, write a $B$-local linear relation $\v_i+\sum_{j\in T_i}(-d_{ij})\w_{ij}$ such that each $\w_{ij}$ is an element of the cone basis.
(We can do this by Proposition~\ref{B-cone comb}.)
Starting with the $B$-coherent linear relation $\sum_{i\in S}c_i\v_i$, we apply Proposition~\ref{properties}(vii) repeatedly to replace each $c_i\v_i$ with $\sum_{j\in T_i}(c_id_{ij})\w_{ij}$.
We conclude that $\sum_{i\in S}\sum_{j\in T_i}(c_id_{ij})\w_{ij}$ is a $B$-coherent linear relation.
Since the $\w_{ij}$ are chosen from a basis, this relation is trivial, meaning that either it is empty or it can be reduced to the empty relation by repeated applications of combining like terms (replacing $a\x+b\x$ by $c\x$ where $a+b=c$) and deleting terms of the form $0\x$.
Therefore also $\sum_{i\in S}\sum_{j\in T_i}(-c_id_{ij})\w_{ij}$ is trivial.

Now, starting again with the $B$-coherent linear combination $\sum_{i\in S}c_i\v_i$, we can add in the trivial relation $\sum_{i\in S}\sum_{j\in T_i}(-c_id_{ij})\w_{ij}$, by a sequence of additions of terms with coefficient zero and/or un-combinings of like terms.
The result can be rewritten $\sum_{i\in S}\bigl(c_i\v_i+\sum_{j\in T_i}(-c_id_{ij})\w_{ij}\bigr)$ using commutativity.
(Recall that we consider linear relations up to commutativity.)
Each $c_i\v_i+\sum_{j\in T_i}(-c_id_{ij})\w_{ij}$ is $B$-local because $\v_i+\sum_{j\in T_i}(-d_{ij})\w_{ij}$ is.
\end{proof}

We now prove a converse to Proposition~\ref{B-cone necessary}, when a cone basis exists for $R^B$.

\begin{prop}\label{B-cone sufficient R}
Suppose $R^B$ admits a cone basis.
Then $\lambda:R^B\to R^{B'}$ is mutation-linear if and only if, for any $B$-cone $C$, the restriction of $\lambda$ to $C\cap R^B$ is a linear map (in the usual sense) into some $B'$-cone.
\end{prop}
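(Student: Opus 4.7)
The forward direction is exactly Proposition~\ref{B-cone necessary}, so the work is in the converse. Assume that for every $B$-cone $C$ the restriction of $\lambda$ to $C\cap R^B$ is a linear map (in the ordinary sense) into some $B'$-cone, and let $\sum_{i\in S}c_i\v_i$ be any $B$-coherent relation. The task is to verify that $\sum_{i\in S}c_i\lambda(\v_i)$ is $B'$-coherent.

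The key reduction is Proposition~\ref{cone B-local}: because a cone basis exists, the given $B$-coherent relation can be turned, by a finite sequence of operations each of which either inserts a term with coefficient $0$ or splits a term $c\v$ into $a\v+b\v$ with $a+b=c$, into a formal sum $\sum_{j}\bigl(\sum_{i\in S_j}c'_i\v'_i\bigr)$ of $B$-local relations. For each $j$ the vectors $\{\v'_i:i\in S_j\}$ lie in a common $B$-cone $C_j$.

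Now apply $\lambda$ componentwise. By hypothesis $\lambda$ is linear on $C_j\cap R^B$ and carries it into some $B'$-cone $C'_j$; hence $\sum_{i\in S_j}c'_i\lambda(\v'_i)$ is an honest linear relation whose vectors all lie in $C'_j$. That is, each piece is a $B'$-local relation, and so is $B'$-coherent by Proposition~\ref{local coherent}. Axiom~(v) of Definition~\ref{def partial} (formal addition) then says that the full formal sum $\sum_{j}\bigl(\sum_{i\in S_j}c'_i\lambda(\v'_i)\bigr)$ is $B'$-coherent.

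It remains to pass back from this formal sum to $\sum_{i\in S}c_i\lambda(\v_i)$. The inserted zero-coefficient terms can be deleted using axiom~(ii), and the split terms $a\lambda(\v)+b\lambda(\v)$ can be recombined into $c\lambda(\v)$ using axiom~(iii); both operations preserve $B'$-coherence. After undoing all the operations, we recover $\sum_{i\in S}c_i\lambda(\v_i)$, and we conclude that it is $B'$-coherent. I do not foresee a substantive obstacle: the argument is essentially bookkeeping on top of Proposition~\ref{cone B-local} and the axioms of a partial linear structure. The only point worth noting is that the reduction of Proposition~\ref{cone B-local} commutes with $\lambda$ in the precise sense just used, because each of its operations acts only on coefficients and formal groupings of terms, not on the vectors themselves.
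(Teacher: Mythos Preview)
Your proposal is correct and follows essentially the same route as the paper's proof: invoke Proposition~\ref{B-cone necessary} for the forward direction, and for the converse use Proposition~\ref{cone B-local} to reduce to a formal sum of $B$-local pieces, observe that the hypothesis on $\lambda$ turns each piece into a $B'$-local relation, and then undo the zero-insertions and un-combinings to recover $\sum_{i\in S}c_i\lambda(\v_i)$. Your version is slightly more explicit in citing axioms (ii), (iii), and (v), but the argument is the same.
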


\begin{proof}
One direction is Proposition~\ref{B-cone necessary}.
For the other direction, given a $B$-coherent linear relation $R=\sum_{i\in S}c_i\v_i$, let $\lambda(R)$ be the formal sum $\sum_{i\in S}c_i\lambda(\v_i)$.
Proposition~\ref{cone B-local} says that $R$ can be reduced to a formal sum of $B$-local relations by adding terms with coefficient zero or un-combining like terms.
If the corresponding changes are made to $\lambda(R)$, by the hypothesis on $\lambda$, the result is a formal sum of $B'$-local linear relations, which is therefore $B'$-coherent by Proposition~\ref{local coherent}.
Then $\lambda(R)$ is $B$-coherent because undoing the changes (i.e.\ recombining like terms and deleting zero terms) preserves $B$-coherence.
\end{proof}

\begin{remark}\label{cone basis exist remark}
We have no proof that every mutation-linear structure $R^B$ has a cone basis.
(Indeed, as mentioned, we have no proof that $\integers^B$ admits even a basis for every $B$.)
However, we know of no example of a mutation-linear structure $R^B$ that does not admit a cone basis.
\end{remark}

\subsection{Mutation-linear algebra and universal coefficients}\label{functor sec}
It was mentioned in Section~\ref{mulin intro} that by \cite[Theorem~4.4]{universal}, finding a basis for $R^B$ is the same thing as finding a cluster algebra for $B$ with universal (geometric) coefficients.
We now elaborate on that remark, and describe how mutation-linear maps figure into the picture.
For more details and background, see \cite[Sections~2--4]{universal}.

For a fixed exchange matrix $B$, we consider a category $\Geom_R(B)$ whose objects are \newword{cluster algebras of geometric type} over $R$ in the sense of \cite[Section~2]{universal}, all having the same initial exchange matrix $B$ and all having the same initial cluster.
This notion of geometric type is broader than the usual notion in \cite{ca4}, because it allows extended exchange matrices to have infinitely many coefficient rows, and these coefficient rows are allowed to have entries in $R$, which may be larger than~$\integers$.
The arrows in $\Geom_R(B)$ are \newword{coefficient specializations}, similar to those defined in \cite[Section~12]{ca4}, but with an extra topological requirement necessitated by the possible infinity of rows.
(When there are countably many rows, the requirement is that the coefficient specialization be continuous in a ``formal power series'' topology on the coefficient semifield.)
Let $B$ be an exchange matrix and let $I$ and $J$ be arbitrary index sets.
Let $\tB_1$ be an extended exchange matrix with exchange matrix $B$ and coefficient rows $(a_i:i\in I)$.
Let $\tB_2$ be an extended exchange matrix with exchange matrix~$B$ and coefficient rows $(b_j:j\in J)$.
A coefficient specialization between cluster algebras $\A(\tB_1)$ and $\A(\tB_2)$ amounts to a collection of $B$-coherent linear relations $b_j+\sum_{i\in S_j}(-c_{ij})a_i$, one for each $j\in J$, expressing the coefficient row $b_j$ of $\tB_2$ as a $B$-coherent linear combination of coefficient rows of $\tB_1$.

Given two exchange matrices $B$ and $B'$ and a mutation-linear map ${\lambda:R^B\to R^{B'}}$, we define a functor from $\Geom_R(B)$ to $\Geom_R(B')$:
If $\tB$ is an extension of $B$ with coefficient rows ${(c_k:k\in K)}$, then the functor sends $\A(\tB)$ to $\A(\tB')$, where $\tB'$ is an extension of $B'$ with coefficient rows $(\lambda(a_k):k\in K)$.
For $\tB_1$ and $\tB_2$ as in the previous paragraph, a coefficient specialization from $\A(\tB_1)$ to $\A(\tB_2)$ defined by $B$-coherent linear relations $b_k+\sum_{i\in S_k}(-c_{ik})a_i$ is sent to the coefficient specialization from $\A(\tB'_1)$ to $\A(\tB'_2)$ defined by the linear relations $\lambda(b_k)+\sum_{i\in S_k}(-c_{ik})\lambda(a_i)$.
The latter relations are $B'$-coherent because $\lambda$ is mutation-linear.
Indeed, given an arbitrary map $\lambda:R^B\to R^{B'}$, the construction described here yields a functor if and only if $\lambda$ is mutation-linear.

\subsection{Examples of mutation-linear maps}\label{ex sec}
Phenomenon \ref{id phen} considers bijective mutation-linear maps that are not isomorphisms, because their inverses are not mutation-linear (unless $B'=B$).
By way of context, here we mention some examples of mutation-linear maps, specifically mutation-linear isomorphisms, surjections and injections.
We omit the straightforward proofs.
In Section~\ref{phen 1 2 sec}, we return to the topic of dominance and discuss Phenomenon~\ref{id phen}.

\subsubsection{Mutation-linear isomorphisms}\label{iso sec}
\begin{prop}\label{mut map iso}
For any exchange matrix $B$, any underlying ring $R$ and any sequence $\k$ of indices, the mutation map $\eta_\k^B:R^B\to R^{\mu_\k(B)}$ is a mutation-linear isomorphism.
\end{prop}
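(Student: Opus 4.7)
The plan is to reduce everything to the composition law for mutation maps implicit in equation~\eqref{eta def}: for any two sequences $\k=k_q,\ldots,k_1$ and $\k'=k'_r,\ldots,k'_1$ of indices,
\begin{equation*}
\eta_{\k'}^{\mu_\k(B)}\circ\eta_\k^B \;=\; \eta_{\k'\cdot\k}^{B},
\end{equation*}
where $\k'\cdot\k$ denotes the concatenation $k'_r,\ldots,k'_1,k_q,\ldots,k_1$. This identity follows by unpacking the recursive definition of $\eta$ and observing that the intermediate matrix after $\k$ is precisely $\mu_\k(B)$, which is the base matrix for the $\k'$-portion of the composite.

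The first step is to verify mutation-linearity of $\eta_\k^B$. Given a $B$-coherent linear relation $\sum_{i\in S}c_i\v_i$, set $\w_i=\eta_\k^B(\v_i)$ and check that $\sum_{i\in S}c_i\w_i$ is $\mu_\k(B)$-coherent. This requires conditions~\eqref{linear eta} and~\eqref{piecewise eta} to hold for $\mu_\k(B)$ along every sequence $\k'$. Using the composition law, $\eta_{\k'}^{\mu_\k(B)}(\w_i)=\eta_{\k'\cdot\k}^B(\v_i)$, so the two desired equalities become exactly \eqref{linear eta} and \eqref{piecewise eta} for the original $B$-coherent relation along the sequence $\k'\cdot\k$, which hold by hypothesis.

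For the isomorphism part, I would invoke the fact that each matrix mutation $\mu_k$ is an involution, so $\mu_\k$ has set-theoretic inverse $\mu_{\k^{\mathrm{rev}}}$ where $\k^{\mathrm{rev}}=k_1,\ldots,k_q$. A direct case check on formula~\eqref{mutation map def} (already implicit in the remark that $\eta_\k^B$ is a piecewise linear homeomorphism) shows that $\eta_{\k^{\mathrm{rev}}}^{\mu_\k(B)}$ is the two-sided inverse of $\eta_\k^B$ as a map $R^n\to R^n$. Applying the first step with $\mu_\k(B)$ in place of $B$ and $\k^{\mathrm{rev}}$ in place of $\k$ then shows that this inverse is itself mutation-linear, completing the proof that $\eta_\k^B:R^B\to R^{\mu_\k(B)}$ is a mutation-linear isomorphism.

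There is no real obstacle here: the proposition is essentially a formal consequence of Definition~\ref{B coher def} once the composition law is in hand. The only care needed is with the order of concatenation and reversal of sequences when setting up the composition identity and identifying the inverse map.
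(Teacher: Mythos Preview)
Your argument is correct and is the natural way to prove the statement: the composition law $\eta_{\k'}^{\mu_\k(B)}\circ\eta_\k^B=\eta_{\k'\cdot\k}^B$ immediately reduces both conditions \eqref{linear eta} and \eqref{piecewise eta} for $\mu_\k(B)$ along $\k'$ to the same conditions for $B$ along $\k'\cdot\k$, and the inverse is handled symmetrically. The paper itself omits the proof as straightforward, so there is nothing to compare against; what you have written is exactly the routine verification the paper is alluding to.
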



\begin{prop}\label{antip iso}
For any exchange matrix $B$ and any underlying ring $R$, the antipodal map $\x\mapsto-\x$ is a mutation-linear isomorphism from $R^B$ to $R^{-B}$.
\end{prop}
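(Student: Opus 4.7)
The plan is to verify directly that the antipodal map $\x\mapsto-\x$ sends $B$-coherent relations to $(-B)$-coherent relations, and then observe that the same argument, with the roles of $B$ and $-B$ swapped, handles the inverse. The map is obviously a bijection $R^n\to R^n$, so the only content is mutation-linearity in both directions.

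The key input is the identity \eqref{eta antipodal}, which rearranges to $\eta_\k^{-B}(-\a)=-\eta_\k^B(\a)$. Given a $B$-coherent linear relation $\sum_{i\in S}c_i\v_i$, I want to check \eqref{linear eta} and \eqref{piecewise eta} for the image relation $\sum_{i\in S}c_i(-\v_i)$ with respect to $-B$. Applying $\eta_\k^{-B}$ term-by-term and using the identity, $\sum_{i\in S}c_i\eta_\k^{-B}(-\v_i)=-\sum_{i\in S}c_i\eta_\k^B(\v_i)$, which is $\0$ by \eqref{linear eta} for the original relation. This handles the linear condition.

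The piecewise condition \eqref{piecewise eta} is the only step that requires care, since the operator $\mathbf{min}(\,\cdot\,,\0)$ does not commute with negation. The trick is the componentwise identity $\min(-x,0)=-x+\min(x,0)$ (equivalently, $\min(-x,0)=-\max(x,0)$). Thus
\[
\sum_{i\in S}c_i\,\mathbf{min}\bigl(\eta_\k^{-B}(-\v_i),\0\bigr)=\sum_{i\in S}c_i\,\mathbf{min}\bigl(-\eta_\k^B(\v_i),\0\bigr)=-\sum_{i\in S}c_i\eta_\k^B(\v_i)+\sum_{i\in S}c_i\,\mathbf{min}\bigl(\eta_\k^B(\v_i),\0\bigr),
\]
and both sums on the right vanish by \eqref{linear eta} and \eqref{piecewise eta} for the original $B$-coherent relation.

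Finally, the antipodal map from $R^{-B}$ to $R^B=R^{-(-B)}$ is mutation-linear by the same argument applied with $B$ replaced by $-B$ (note that \eqref{eta antipodal} is symmetric under this swap). Hence $\x\mapsto-\x$ is a mutation-linear isomorphism with inverse $\x\mapsto-\x$. I do not anticipate a real obstacle here; the only subtle point is the bookkeeping for the piecewise-linear condition \eqref{piecewise eta}, which is resolved by the $\min(-x,0)=-x+\min(x,0)$ identity.
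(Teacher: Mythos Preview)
Your proof is correct; the paper omits the proof entirely, calling it straightforward, and your direct verification via \eqref{eta antipodal} together with the identity $\min(-x,0)=-x+\min(x,0)$ is exactly the natural way to fill it in.
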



Given a permutation $\pi$ of the indexing set of $B=[b_{ij}]$, let $\pi(B)$ be $[b_{\pi(i)\pi(j)}]$ and let $\pi$ also denote the linear map sending a vector $(a_1,\ldots,a_n)$ to $(a_{\pi(1)},\ldots,a_{\pi(n)})$.

\begin{prop}\label{pi iso}
For any exchange matrix $B$ and any underlying ring $R$, the linear reindexing map $\pi:R^B\to R^{\pi(B)}$ is a mutation-linear isomorphism.
\end{prop}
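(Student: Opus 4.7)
The plan is to reduce the proposition to the observation that matrix mutation and the mutation maps on vectors both commute with reindexing in a precise sense. Concretely, I would first establish two single-step compatibilities:
\[
\mu_{\pi^{-1}(k)}(\pi(B)) = \pi(\mu_k(B)), \qquad \eta^{\pi(B)}_{\pi^{-1}(k)}(\pi(\v)) = \pi(\eta^B_k(\v)).
\]
Both are immediate substitutions into the defining formulas \eqref{b mut} and \eqref{mutation map def}: every index $j$ that appears in the rule for $\mu_k$ or $\eta^B_k$ is replaced by $\pi(j)$, the condition ``$j=k$'' becomes ``$\pi(j)=k$,'' i.e.,\ ``$j=\pi^{-1}(k)$,'' and since $\pi$ only permutes coordinates without altering signs or magnitudes, the piecewise branching in \eqref{mutation map def} transports cleanly.

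Next, writing $\pi^{-1}(\k) = \pi^{-1}(k_q),\ldots,\pi^{-1}(k_1)$ for a sequence $\k = k_q,\ldots,k_1$, I would use \eqref{eta def} and a routine induction on the length of $\k$ to upgrade the single-step identities to
\[
\mu_{\pi^{-1}(\k)}(\pi(B)) = \pi(\mu_\k(B)), \qquad \eta^{\pi(B)}_{\pi^{-1}(\k)}(\pi(\v)) = \pi(\eta^B_\k(\v)).
\]

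To verify mutation-linearity, let $\sum_{i\in S} c_i \v_i$ be a $B$-coherent linear relation and let $\k'$ be any sequence of indices of $\pi(B)$. Setting $\k = \pi(\k')$, so that $\pi^{-1}(\k) = \k'$, the identity above gives
\[
\sum_{i\in S} c_i \eta^{\pi(B)}_{\k'}(\pi(\v_i)) \;=\; \pi\Bigl(\sum_{i\in S} c_i \eta^B_\k(\v_i)\Bigr) \;=\; \pi(\0) \;=\; \0,
\]
which is \eqref{linear eta} for the image relation. The analogue of \eqref{piecewise eta} follows by exactly the same manipulation, since $\pi$ is a coordinate permutation and therefore commutes with coordinatewise minimum: $\pi(\mathbf{min}(\x,\0)) = \mathbf{min}(\pi(\x),\0)$. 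Hence $\sum_{i\in S} c_i \pi(\v_i)$ is $\pi(B)$-coherent, and $\pi:R^B\to R^{\pi(B)}$ is mutation-linear.

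Finally, to upgrade ``mutation-linear'' to ``mutation-linear isomorphism,'' I would observe that $\pi^{-1}$ is itself a reindexing map, applied now to the exchange matrix $\pi(B)$, and that $\pi^{-1}(\pi(B)) = B$; so the same argument, applied with $\pi^{-1}$ in place of $\pi$, shows that the inverse is also mutation-linear. There is no genuine obstacle here: the only care needed is bookkeeping the two ways $\pi$ enters the picture (reindexing the entries of $B$ versus permuting the components of vectors in $R^n$) and confirming that the sign-based branching in \eqref{mutation map def} is preserved under this relabeling, which it is because $\pi$ acts as a coordinate permutation and preserves nonnegativity and nonpositivity coordinatewise.
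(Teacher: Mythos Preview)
Your proof is correct and follows essentially the same approach as the paper's: both establish the compatibility $\eta^{\pi(B)}_{\k'}(\pi(\v))=\pi(\eta^B_{\pi(\k')}(\v))$ (your version is the same identity after substituting $\k'=\pi^{-1}(\k)$), use it to push $B$-coherence through $\pi$, and then invoke symmetry for $\pi^{-1}$. Your write-up is in fact slightly more complete than the paper's, since you explicitly treat condition~\eqref{piecewise eta} via the observation that a coordinate permutation commutes with the coordinatewise minimum, and you spell out the induction on the length of $\k$; the paper leaves both of these implicit.
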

\begin{proof}
One can check that for any sequence $\k$ of indices and any vector $\v$, we have $\eta^{\pi(B)}_\k(\pi(\v))=\pi(\eta^B_{\pi(\k)}(\v))$.
Thus, given a $B$-coherent linear map $\sum_{i\in S}c_i\v_i$, for any sequence $\k$, the sum $\sum_{i\in S}c_i\eta_\k^{\pi(B)}(\pi(\v_i))$ is equal to
$\pi\bigl(\sum_{i\in S}c_i\eta_{\pi(\k)}^B(\v_i)\bigr)$, which equals zero because $\sum_{i\in S}c_i\v_i$ is a $B$-coherent linear relation and $\pi$ is a linear map.
Thus $\sum_{i\in S}c_i\pi(\v_i)$ is $\pi(B)$-coherent.
We see that $\pi$ is mutation-linear.  
The inverse map $\pi^{-1}$ is mutation-linear by the same argument.
\end{proof}

Let $B=[b_{ij}]$ and $B'=[b'_{ij}]$ be exchange matrices.
Then $B'$ is a \newword{rescaling} of $B$ if there exists a diagonal matrix $\Sigma=\diag(\sigma_1,\ldots,\sigma_n)$ with positive entries such that $B'=\Sigma^{-1} B\Sigma$.
Equivalently, $b'_{ij}=\frac{\sigma_j}{\sigma_i}b_{ij}$ for all $i$ and $j$.
More information on rescaling exchange matrices is found in \cite[Section~7]{universal}.
%
%

\begin{prop}\label{rescale iso}
Let $B$ be an exchange matrix and take the underlying ring $R$ to be a field.
Suppose $B'$ is a rescaling of $B$.
Write $B'=\Sigma^{-1} B\Sigma$ for $\Sigma$ having entries in $R$.
Then the map $\v\mapsto(\v\Sigma):R^B\to R^{B'}$ is a mutation-linear isomorphism.
\end{prop}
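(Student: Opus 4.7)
The plan is to show that the linear map $\phi:R^n\to R^n$ defined by $\phi(\v)=\v\Sigma$ intertwines the mutation maps for $B$ and $B'$ in the sense that $\phi\circ\eta_\k^B=\eta_\k^{B'}\circ\phi$ for every sequence $\k$ of indices, and that the same rescaling relation holds between the iterated mutations of $B$ and $B'$ as matrices. Mutation-linearity of $\phi$ then follows from a direct check against the defining conditions \eqref{linear eta} and \eqref{piecewise eta}.

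First I would verify the matrix-level compatibility: if $\tB=\Sigma^{-1}B\Sigma$ has entries $\tilde b_{ij}=(\sigma_j/\sigma_i)b_{ij}$, then $\mu_k(\tB)=\Sigma^{-1}\mu_k(B)\Sigma$. For $i,j\neq k$ this is a one-line computation using $\sgn(\tilde b_{kj})=\sgn(b_{kj})$ (since the $\sigma_i$ are positive) and $[\tilde b_{ik}\tilde b_{kj}]_+=(\sigma_k/\sigma_i)(\sigma_j/\sigma_k)[b_{ik}b_{kj}]_+$; the cases $i=k$ or $j=k$ are immediate from the sign flip in \eqref{b mut}. By induction on the length of $\k$, every matrix obtained from $B'$ by mutations is the corresponding rescaling of the matching matrix obtained from $B$. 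Next, for a single index $k$, writing $\v=(a_1,\ldots,a_n)$ and $\phi(\v)=(\sigma_1a_1,\ldots,\sigma_na_n)$, the cases in \eqref{mutation map def} are governed by the signs of $a_k$ and $b_{kj}$, which agree with the signs of $\sigma_ka_k$ and $b'_{kj}$ since each $\sigma_i$ is positive. In the nontrivial case one has $\sigma_ja_j\pm(\sigma_ka_k)b'_{kj}=\sigma_j(a_j\pm a_kb_{kj})$, and for $j=k$ we get $-\sigma_ka_k$. Thus $\eta_k^{B'}(\phi(\v))=\phi(\eta_k^B(\v))$, and iterating along $\k$ (using the matrix compatibility established above to identify the intermediate matrices) gives $\eta_\k^{B'}\circ\phi=\phi\circ\eta_\k^B$.

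Given this intertwining, for any $B$-coherent relation $\sum_{i\in S}c_i\v_i$ and any sequence $\k$, we have
\[
\sum_{i\in S}c_i\eta_\k^{B'}(\phi(\v_i))=\phi\Bigl(\sum_{i\in S}c_i\eta_\k^B(\v_i)\Bigr)=\phi(\0)=\0,
\]
verifying \eqref{linear eta} for the image relation. For \eqref{piecewise eta}, I would use the fact that coordinatewise $\mathbf{min}(\,\cdot\,,\0)$ commutes with multiplication by the positive diagonal matrix $\Sigma$, so $\mathbf{min}(\eta_\k^{B'}(\phi(\v_i)),\0)=\mathbf{min}(\phi(\eta_\k^B(\v_i)),\0)=\phi(\mathbf{min}(\eta_\k^B(\v_i),\0))$, and then conclude by the same linear extraction of $\phi$. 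Hence the image relation is $B'$-coherent, so $\phi$ is mutation-linear.

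Finally, because $R$ is a field and the $\sigma_i$ are positive (hence nonzero, hence invertible in $R$), $\Sigma$ is invertible and $\phi^{-1}(\w)=\w\Sigma^{-1}$. Since $\Sigma^{-1}$ is again diagonal with positive entries and $B=\Sigma B'\Sigma^{-1}=(\Sigma^{-1})^{-1}B'\Sigma^{-1}$ exhibits $B$ as a rescaling of $B'$, the same argument applied to $\Sigma^{-1}$ shows that $\phi^{-1}:R^{B'}\to R^B$ is mutation-linear, and so $\phi$ is a mutation-linear isomorphism. The only real care points are bookkeeping the intermediate exchange matrices along a sequence of mutations (to ensure the intertwining really holds step by step) and noting the necessity of the field hypothesis only to invert the $\sigma_i$; the sign-case analysis itself is routine.
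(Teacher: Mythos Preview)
Your proof is correct and is precisely the straightforward argument the paper has in mind; the paper omits the proof entirely (it announces ``We omit the straightforward proofs'' for this block of examples), so there is nothing further to compare. Your identification of the field hypothesis as needed only to invert the $\sigma_i$ matches the paper's remark immediately after the proposition that over $\integers$ the map remains mutation-linear and injective but need not be surjective.
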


When $R=\integers$, the map $\v\mapsto(\v\Sigma)$ is still mutation-linear from $\integers^B$ to $\integers^{B'}$.
It is injective, but need not be surjective.


\subsubsection{Surjective mutation-linear maps}\label{surj sec}
For any subset $I$ of the indices of $B$, define $B_I$ to be the matrix obtained from $B$ by deleting row $j$ and column $j$ for all $j\not\in I$.
Retain the original indexing of $B_I$, so that it is indexed by~$I$.
Let $\Proj_I:\reals^B\to\reals^{B_I}$ be the usual projection (ignoring the $j\th$ coordinate for each $j\not\in I$).

\begin{prop}\label{Proj mulin}
Let $B$ be an exchange matrix.
For any subset $I$ of the indices of $B$, the map $\Proj_I:\reals^B\to\reals^{B_I}$ is mutation-linear.
\end{prop}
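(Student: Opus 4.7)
The plan is to verify directly from Definition~\ref{B coher def} that $\Proj_I$ carries $B$-coherent linear relations to $B_I$-coherent linear relations. The whole argument rests on a single locality observation: both matrix mutation $\mu_k$ and the vector mutation map $\eta_k^B$ commute with $\Proj_I$ whenever $k\in I$.

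To set this up, I would first inspect the mutation formula~\eqref{b mut} and note that for $k\in I$ and $i,j\in I$, the entry $(\mu_k B)_{ij}$ depends only on $b_{ij}$, $b_{ik}$, and $b_{kj}$, all of which are entries of $B_I$. This yields $\mu_k(B)_I=\mu_k(B_I)$, and inductively $\mu_\k(B)_I=\mu_\k(B_I)$ for any sequence $\k$ drawn from $I$. The analogous inspection of~\eqref{mutation map def} shows that for $k\in I$ and $\v\in\reals^n$, the coordinates of $\eta_k^B(\v)$ indexed by $I$ depend only on $v_k$, on $v_j$ for $j\in I$, and on $b_{kj}$. Combining these two facts with the recursive definition~\eqref{eta def} gives the commutation identity $\Proj_I\circ\eta_\k^B=\eta_\k^{B_I}\circ\Proj_I$ for every sequence $\k$ of indices in $I$.

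Once this identity is in hand, the verification of conditions~\eqref{linear eta} and~\eqref{piecewise eta} for $\sum_{i\in S}c_i\Proj_I(\v_i)$ is automatic. For \eqref{linear eta}, I would pull $\Proj_I$ through the summation in the identity $\sum_{i\in S}c_i\eta_\k^B(\v_i)=\0$. For \eqref{piecewise eta}, I would additionally use that $\mathbf{min}(\cdot,\0)$ acts coordinatewise and therefore commutes with $\Proj_I$. I anticipate no substantive obstacle; the proof is essentially bookkeeping. The only point that merits a sanity check is that $B_I$-coherence only requires testing mutation sequences $\k$ whose entries lie in $I$---which is immediate, since $B_I$ has no other indices---so the commutation identity above is never needed for sequences that venture outside~$I$.
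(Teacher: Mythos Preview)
Your proposal is correct and matches the paper's implicit approach: the paper omits the proof of this proposition, but your key commutation identity $\Proj_I\circ\eta_\k^B=\eta_\k^{B_I}\circ\Proj_I$ for sequences $\k$ in $I$ is stated separately as Proposition~\ref{I mu eta}, and the deduction of $B_I$-coherence from it is exactly the bookkeeping you describe.
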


%

We also mention an \textit{ad hoc} construction of a surjective mutation-linear map by ``wrapping'' a ``larger'' mutation fan around a ``smaller'' one.
The exchange matrix $B=\begin{bsmallmatrix*}[r]0&1\\-3&0\end{bsmallmatrix*}$ has eight maximal cones, while the mutation fan for  $B'=\begin{bsmallmatrix*}[r]0&0\\0&0\end{bsmallmatrix*}$ has four.
(See Figure~\ref{finite mutation fans}.)
In this case, it is known that $\reals^B$ admits a cone basis.
One can construct a piecewise-linear branched double-cover $\lambda:\reals^B\to \reals^{B'}$ that maps each cone of $\F_B$ linearly to a cone of $\F_{B'}$ and thus is mutation-linear by Proposition~\ref{B-cone sufficient R}.
This construction admits many variations.


\subsubsection{Injective mutation-linear maps}\label{inj sec}
As mentioned above, nontrivial rescalings, when $R=\integers$ are mutation-linear and injective but possibly not bijective.
Much stranger injective mutation-linear maps can be found from $\reals^B$ to $\reals^{B'}$ when $B$ is $n\times n$ and $B'$ is $m\times m$ with $n<m$.
In this case, by Proposition~\ref{B-cone sufficient R}, one might, for example, map $\F_B$ injectively into the $n$-skeleton of $\F_{B'}$, sending each cone linearly to a cone.
A less strange special case is when $B=(B')_I$ in the sense of Section~\ref{surj sec}, taking the natural injection into the subspace of $\reals^B$ indexed by $I$.
Results of \cite[Section~8]{universal} can be used to show that such an injection is mutation-linear, at least in the case where $(B')_I$ is of finite type.

\section{Mutation-linear maps and refinement of the mutation fan}\label{phen 1 2 sec}
In this section, we discuss Phenomena~\ref{id phen} and~\ref{ref phen}.
%
We begin by observing (in Proposition~\ref{refines FB}), the close connection between Phenomenon~\ref{id phen} and Phenomenon~\ref{ref phen}.
Similarly, in Proposition~\ref{rat part refine mulin}, we connect rational versions of the two phenomena.  
Then we gather examples of the two phenomena. 

\subsection{Connecting Phenomenon~\ref{id phen} and Phenomenon~\ref{ref phen}}\label{mulin refine sec}
The following proposition is an immediate consequence of Propositions~\ref{B-cone necessary} and~\ref{B-cone sufficient R}.

\begin{prop}\label{refines FB}
If the identity map $\reals^B\to\reals^{B'}$ is mutation-linear, then $\F_B$ refines $\F_{B'}$.
If $\reals^B$ admits a cone basis, then the identity map $\reals^B\to\reals^{B'}$ is mutation-linear if and only if $\F_B$ refines $\F_{B'}$.
\end{prop}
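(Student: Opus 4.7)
My plan is to specialize Propositions~\ref{B-cone necessary} and~\ref{B-cone sufficient R} to $\lambda = \id$ with $R = \reals$; this should reduce the proposition to a straightforward unpacking, since $C \cap \reals^B = C$ for every cone $C \subseteq \reals^n$.

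For the first sentence, I would assume $\id : \reals^B \to \reals^{B'}$ is mutation-linear and fix an arbitrary cone $C$ of $\F_B$. Proposition~\ref{B-cone necessary} then says that the restriction of $\id$ to $C$ is a linear map into some cone $C'$ of $\F_{B'}$; but this restriction is literally the inclusion, so the content is just $C \subseteq C'$. Because $\F_B$ and $\F_{B'}$ are complete fans on $\reals^n$, the unions $\cup \F_B$ and $\cup \F_{B'}$ both equal $\reals^n$, so the condition ``every cone of $\F_B$ is contained in a cone of $\F_{B'}$'' is exactly the definition of $\F_B$ refining $\F_{B'}$.

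For the second sentence, the forward direction is the first sentence. For the reverse, assume $\reals^B$ admits a cone basis and $\F_B$ refines $\F_{B'}$. I would pick any $B$-cone $C$ and use refinement to place it inside some cone of $\F_{B'}$; since every cone of $\F_{B'}$ is a face of some $B'$-cone and is therefore contained in that $B'$-cone, $C$ sits inside some $B'$-cone. The identity restricted to $C$ is then trivially a linear map into a $B'$-cone, so Proposition~\ref{B-cone sufficient R} applies and delivers mutation-linearity of $\id : \reals^B \to \reals^{B'}$.

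I do not anticipate any real obstacle: the substantive work already lives in Propositions~\ref{B-cone necessary} and~\ref{B-cone sufficient R}, and this proposition is their immediate specialization. The only small piece of bookkeeping is the distinction between a general cone of $\F_{B'}$ (possibly a lower-dimensional face) and a maximal $B'$-cone, but this is bridged transparently by the fact that a face of a convex cone is contained in that cone.
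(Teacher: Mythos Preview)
Your proposal is correct and matches the paper's approach exactly: the paper states that this proposition ``is an immediate consequence of Propositions~\ref{B-cone necessary} and~\ref{B-cone sufficient R},'' and you have simply unpacked that immediacy by specializing to $\lambda=\id$ and $R=\reals$. Your bookkeeping remark about passing from a general cone of $\F_{B'}$ to a $B'$-cone is the one small detail worth noting, and you handle it correctly.
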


In the case of resection of surfaces, where we deal with rational versions of the phenomena, we need a rational version of Proposition~\ref{refines FB}.
In light of Proposition~\ref{restrict linear}, for fixed $B$ and $B'$, the statement that the identity map from $\reals^B$ to $\reals^{B'}$ is mutation-linear is equivalent to the statement that the identity map from $R^B$ to $R^{B'}$ is mutation-linear for any underlying ring $R$.  
Thus in particular the conclusion of Theorem~\ref{id phen resect} is formally weaker than Phenomenon~\ref{id phen}.
We now develop a geometric formulation of the weaker assertion.

%

\begin{definition}[\emph{Rational part of a fan}]\label{rat part def}
Suppose $\F$ and $\tilde\F$ are fans in $\reals^n$ satisfying the following conditions:
\begin{enumerate}[(i)]
\item \label{R cones R part}
Each cone in $\tilde\F$ is the nonnegative $\reals$-linear span of finitely many rational vectors.
\item \label{cone in cone R part}
Each cone in $\tilde\F$ is contained in a cone of $\F$.
\item \label{max cone R part}
For each cone $C$ of $\F$, there is a unique largest cone (under containment) among cones of $\tilde\F$ contained in $C$.
This largest cone contains $\rationals^n\cap C$.
\end{enumerate}
Then $\tilde\F$ is called the \newword{rational part} of $\F$.
The rational part of a given $\F$ might not exist, but if it exists, it is unique.
(See \cite[Definition~6.9]{universal} for examples of non-existence and a proof of uniqueness, in the more general context where $\rationals$ is replaced by any underlying ring $R$.)
\end{definition}

We have defined the \emph{rational} part of a real fan to be another \emph{real} fan.
Instead, one might want to consider the ``rational part'' of a fan as a fan in $\rationals^n$, by taking $\set{C\cap\rationals^n:C\in\tilde\F}$.
We write $\tilde\F\cap\rationals^n$ for this fan in $\rationals^n$.
%
%


\begin{prop}\label{rat part refine mulin}
Suppose that $B$ and $B'$ are $n\times n$ exchange matrices, that $\tilde\F_B$ is the rational part of $\F_B$, and that $\tilde\F_{B'}$ is the rational part of $\F_{B'}$.
If the identity map $\rationals^B\to\rationals^{B'}$ is mutation-linear, then $\tilde\F_B\cap\rationals^n$ refines $\tilde\F_{B'}\cap\rationals^n$.
Assuming also that $\rationals^B$ admits a cone basis, the identity map is mutation-linear if and only if $\tilde\F_B\cap\rationals^n$ refines $\tilde\F_{B'}\cap\rationals^n$.
\end{prop}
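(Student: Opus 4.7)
The plan is to mirror the proof of Proposition~\ref{refines FB} by invoking Propositions~\ref{B-cone necessary} and~\ref{B-cone sufficient R} with underlying ring $R=\rationals$, and then translating between cones of $\F_B$ and cones of $\tilde\F_B$ (and of $\tilde\F_B\cap\rationals^n$) by means of Definition~\ref{rat part def}. Before addressing either direction, it is worth noting that completeness of $\F_B$ together with property~(iii) of the rational part guarantees $\cup(\tilde\F_B\cap\rationals^n)=\rationals^n$, and similarly for $B'$, so that it makes sense to speak of one fan refining the other; refinement then amounts to each cone of the first being contained in a cone of the second.

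For the forward direction, suppose $\id:\rationals^B\to\rationals^{B'}$ is mutation-linear and let $D\cap\rationals^n$ be an arbitrary cone of $\tilde\F_B\cap\rationals^n$, with $D\in\tilde\F_B$. Property~(ii) places $D$ inside some cone $C_B$ of $\F_B$, and Proposition~\ref{B-cone necessary} applied to $\id$ with $R=\rationals$ places $C_B\cap\rationals^n$ inside some cone $C_{B'}$ of $\F_{B'}$. Applying property~(iii) to $C_{B'}$ gives a cone $D'\in\tilde\F_{B'}$ with $D'\subseteq C_{B'}$ and $D'\supseteq C_{B'}\cap\rationals^n$. Chaining these inclusions yields $D\cap\rationals^n\subseteq D'\cap\rationals^n$, exhibiting the desired containment.

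For the converse, assume in addition that $\rationals^B$ admits a cone basis and that $\tilde\F_B\cap\rationals^n$ refines $\tilde\F_{B'}\cap\rationals^n$. By Proposition~\ref{B-cone sufficient R} with $R=\rationals$, it suffices to check that for every $B$-cone $C$, the set $C\cap\rationals^n$ lies in some $B'$-cone. Property~(iii) provides a cone $\tilde C\in\tilde\F_B$ with $\tilde C\subseteq C$ and $\tilde C\supseteq C\cap\rationals^n$. The refinement hypothesis places the cone $\tilde C\cap\rationals^n$ inside some $\tilde C'\cap\rationals^n$ with $\tilde C'\in\tilde\F_{B'}$, and property~(ii) then places $\tilde C'$ inside a cone of $\F_{B'}$. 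Composing these inclusions gives $C\cap\rationals^n\subseteq\tilde C'\subseteq C'$ for some $B'$-cone $C'$, as needed.

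I do not foresee any serious obstacle; the argument is essentially a dictionary exercise, reusing the proofs of Propositions~\ref{B-cone necessary} and~\ref{B-cone sufficient R} verbatim over $\rationals$. The only delicate point is keeping track of three layers of fans --- the $B$-cones in $\reals^n$, the real cones of the rational part $\tilde\F_B$, and the rational cones comprising $\tilde\F_B\cap\rationals^n$ --- and invoking the correct clause of Definition~\ref{rat part def} when moving between them.
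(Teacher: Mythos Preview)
Your proof is correct and follows essentially the same route as the paper's own proof: both directions invoke Propositions~\ref{B-cone necessary} and~\ref{B-cone sufficient R} over $R=\rationals$, and pass between cones of $\F_B$, $\tilde\F_B$, and $\tilde\F_B\cap\rationals^n$ via the clauses of Definition~\ref{rat part def} in exactly the same order. Your added remark that $\cup(\tilde\F_B\cap\rationals^n)=\rationals^n$ makes the refinement statement well-posed, a point the paper leaves implicit.
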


\begin{proof}
Suppose the identity map $\rationals^B\to\rationals^{B'}$ is mutation-linear.
Suppose $\tilde C$ is any cone of $\tilde\F_B$.
By definition of the rational part of a fan, there exists a cone $C$ of $\F_B$ containing $\tilde C$.
Proposition~\ref{B-cone necessary} says that $C\cap\rationals^n$ is contained in some cone $C'$ of $\F_{B'}$.
There exists a cone $\tilde C'$ of $\tilde\F_{B'}$ contained in $C'$ and containing $C'\cap\rationals^n$.
Thus $\tilde C\cap\rationals^n\subseteq\tilde C'$.
We have shown that $\tilde\F_B\cap\rationals^n$ refines $\tilde\F_{B'}\cap\rationals^n$.

Assuming now that $\rationals^B$ admits a cone basis, suppose conversely that $\tilde\F_B\cap\rationals^n$ refines $\tilde\F_{B'}\cap\rationals^n$.
Given any $B$-cone $C$, there is a cone $\tilde C$ of $\tilde\F_B$ with $C\cap\rationals^n\subseteq\tilde C\subseteq C$.
Since $\tilde\F_B$ refines $\tilde\F_{B'}$ as fans in $\rationals^n$, there exists a cone $\tilde C'$ of $\tilde\F_{B'}$ containing~$\tilde C$.
Since some cone $C'$ of $\F_{B'}$ contains $\tilde C'$, we have satisfied the hypothesis of Proposition~\ref{B-cone sufficient R}, for $\lambda$ equal to the identity map, so the identity map is mutation-linear.
\end{proof}

\subsection{Erasing edges}\label{erase sec}
We now prove Theorem~\ref{id phen erase}.
Theorem~\ref{edge erase ref} then follows immediately by Proposition~\ref{refines FB}.
We use the following observation, which follows from \eqref{b mut} and \eqref{mutation map def} and a simple induction.
We continue the notation of Section~\ref{surj sec}.
\begin{prop}\label{I mu eta}
Suppose $I$ is a subset of the indices of $B$ and suppose $\k$ is a sequence of indices in $I$.
Then $(\mu_\k(B))_I=\mu_\k(B_I)$.
Furthermore, if $\v\in\reals^B$, then $\eta_\k^{B_I}(\Proj_I(\v))=\Proj_I(\eta_\k^B(\v))$.
\end{prop}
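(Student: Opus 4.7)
The plan is to prove both assertions by induction on the length $q$ of the sequence $\k = k_q, \ldots, k_1$. Since $\k$ is a sequence of indices in $I$, at each step of the induction we mutate only at an index that belongs to~$I$.

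For the base case $q = 1$, with $\k = k \in I$, the first claim $(\mu_k(B))_I = \mu_k(B_I)$ is a direct inspection of \eqref{b mut}: for $i, j \in I$, the entry $b'_{ij}$ of $\mu_k(B)$ depends only on $b_{ij}$, $b_{ik}$, and $b_{kj}$, all of which survive to $B_I$ because $k \in I$. So the $(i,j)$-entry of $\mu_k(B)_I$ equals the $(i,j)$-entry of $\mu_k(B_I)$. Similarly for the second claim, for $\v = (a_1, \ldots, a_n) \in \reals^n$, formula \eqref{mutation map def} shows that the $j$-th entry of $\eta_k^B(\v)$ for $j \in I$ depends only on $a_j$, $a_k$, and $b_{kj}$. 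Again, since $k \in I$, these are all present in the data $(\Proj_I(\v), B_I)$, and the same formula \eqref{mutation map def} applied in the smaller matrix $B_I$ yields exactly the same value for the $j$-th entry, so $\eta_k^{B_I}(\Proj_I(\v)) = \Proj_I(\eta_k^B(\v))$.

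For the inductive step, write $\k = k_q, \k'$ where $\k' = k_{q-1}, \ldots, k_1$. By induction, $(\mu_{\k'}(B))_I = \mu_{\k'}(B_I)$. Apply the base case to the matrix $\mu_{\k'}(B)$ and the single index $k_q \in I$: this gives $(\mu_{k_q}\mu_{\k'}(B))_I = \mu_{k_q}((\mu_{\k'}(B))_I) = \mu_{k_q}\mu_{\k'}(B_I)$, which is $(\mu_\k(B))_I = \mu_\k(B_I)$. For the mutation map, using the composition formula \eqref{eta def} together with the base case applied at each step (to the appropriate intermediate matrix $B_i$), projecting onto $I$ commutes with each individual step of the composition, so it commutes with the whole composition.

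I do not anticipate any genuine obstacle here: the proposition simply records that when the mutation sequence avoids the indices that are being forgotten, the mutation formulas for $B$ and for $B_I$ are literally identical on the surviving entries. The only care needed is to track that the hypothesis $\k \subseteq I$ is used precisely to guarantee that each $k_i$ remains a valid index of the restricted matrix at every stage of the induction.
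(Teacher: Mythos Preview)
Your proof is correct and matches the paper's approach: the paper simply notes that the proposition ``follows from \eqref{b mut} and \eqref{mutation map def} and a simple induction,'' and you have written out precisely that induction.
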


\begin{proof}[Proof of Theorem~\ref{id phen erase}]
By hypothesis, the indices of $B$ are written as a disjoint union $I\cup J$.
Also, $B'=[b'_{ij}]$ has $b'_{ij}=b'_{ji}=0$ for all $i\in I$ and $j\in J$ and every other entry of $B'$ agrees with the corresponding entry of $B$.

Suppose $\sum_{i\in S}c_i\v_i$ is a $B$-coherent linear relation.
We will show that $\sum_{i\in S}c_i\v_i$ is a $B'$-coherent relation.
Considering Proposition~\ref{I mu eta} for all sequences $\k$ of indices in $I$, we see that $\sum_{i\in S}c_i\Proj_I(\v_i)$ is a $B_I$-coherent linear relation.
Similarly, $\sum_{i\in S}c_i\Proj_J(\v_i)$ is $B_J$-coherent.
Each vector $\x\in\reals^B$ can be written $\x=\Proj_I(\x)+\Proj_J(\x)$.
Since $b'_{ij}=0$ whenever $i\in I$ and $j\in J$ and because $B_I=B'_I$ and $B_J=B'_J$, we see from \eqref{mutation map def} that $\eta_k^{B'}(\x)=\eta_k^{B_I}\Proj_I(\x)+\Proj_J(\x)$ if $k\in I$ and $\eta_k^{B'}(\x)=\Proj_I(\x)+\eta_k^{B_J}\Proj_J(\x)$ if $k\in J$.
Let $\k$ be any sequence of indices of $B$, let $\k_I$ be the subsequence of $\k$ consisting of indices in $I$, and let $\k_J$ be the subsequence of $\k$ consisting of indices in $J$.
Then $\sum_{i\in S}c_i\eta_\k^{B'}(\v_i)=\sum_{i\in S}c_i\eta_{\k_I}^{B_I}(\Proj_I(\v_i))+\sum_{i\in S}c_i\eta_\k^{B_J}(\Proj_J(\v_i))$.
Since $\sum_{i\in S}c_i\Proj_I(\v_i)$ is a $B_I$-coherent linear relation and $\sum_{i\in S}c_i\Proj_J(\v_i)$ is a $B_J$-coherent linear relation, we conclude that $\sum_{i\in S}c_i\eta_\k^{B'}(\v_i)=\0$.
We have shown that the identity map $\reals^B\to\reals^{B'}$ is mutation-linear.
\end{proof}

\subsection{\texorpdfstring{The $2\times2$ case}{The 2 by 2 case}}\label{2x2 ref sec}
In this section, we prove Theorem~\ref{ref phen 2x2}, which characterizes when Phenomenon~\ref{ref phen} occurs for $2\times2$ exchange matrices.
As explained in the introduction, results of \cite[Section~9]{universal} combined with Proposition~\ref{refines FB} then immediately imply Theorem~\ref{id phen 2x2}.

Suppose $B=\begin{bsmallmatrix*}[r]0&a\\b&0\end{bsmallmatrix*}$ is a $2\times2$ exchange matrix.
We now briefly review the description of the mutation fan of $B$ given in \cite[Section~9]{universal}.
Since $\F_B$ is complete and contained in $\reals^2$, we can describe it completely by listing its rays.

If $a=b=0$, then $\F_B$ has rays $\pm\begin{bsmallmatrix*}[r]1\\0\end{bsmallmatrix*}$ and $\pm\begin{bsmallmatrix*}[r]0\\1\end{bsmallmatrix*}$.
Otherwise, there are two possible sign patterns for $B$, either $a>0$ and $b<0$ or $a<0$ and $b>0$.
Figure~\ref{finite mutation fans} shows the mutation fans in the case of finite type (i.e.\ when $ab>-4$) with the additional condition that $0\le a\le-b$.
The rays occurring in these pictures are spanned by the vectors $\pm\begin{bsmallmatrix*}[r]1\\0\end{bsmallmatrix*}$, $\pm\begin{bsmallmatrix*}[r]0\\1\end{bsmallmatrix*}$, $\begin{bsmallmatrix*}[r]1\\-1\end{bsmallmatrix*}$, $\begin{bsmallmatrix*}[r]2\\-1\end{bsmallmatrix*}$, $\begin{bsmallmatrix*}[r]3\\-2\end{bsmallmatrix*}$, and $\begin{bsmallmatrix*}[r]3\\-1\end{bsmallmatrix*}$.
\begin{figure}[ht]
\begin{tabular}{ccccccc}
\scalebox{0.7}{\includegraphics{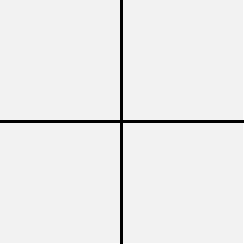}}&&
\scalebox{0.7}{\includegraphics{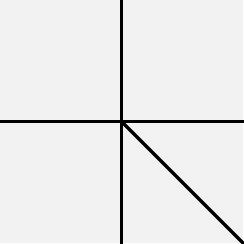}}&&
\scalebox{0.7}{\includegraphics{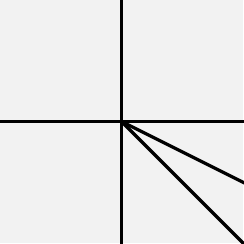}}&&
\scalebox{0.7}{\includegraphics{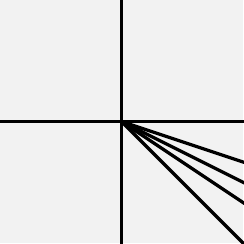}}
\\[2 pt]
$B=\begin{bsmallmatrix*}[r]0&0\\0&0\end{bsmallmatrix*}$&&
$B=\begin{bsmallmatrix*}[r]0&1\\-1&0\end{bsmallmatrix*}$&&
$B=\begin{bsmallmatrix*}[r]0&1\\-2&0\end{bsmallmatrix*}$&&
$B=\begin{bsmallmatrix*}[r]0&1\\-3&0\end{bsmallmatrix*}$
\end{tabular}
\caption{Mutation fans $\F_B$ for $2\times2$ exchange matrices of finite type}
\label{finite mutation fans}
\end{figure}

The remaining finite type cases can be recovered via two observations:
First, Proposition~\ref{antipodal FB} says that $\F_{-B}$ is the antipodal opposite of $\F_B$, and second, passing from $\begin{bsmallmatrix*}[r]0&a\\b&0\end{bsmallmatrix*}$ to $\begin{bsmallmatrix*}[r]0&b\\a&0\end{bsmallmatrix*}$ reflects the mutation fan through the line $x_1=x_2$.

In the cases where $ab\le-4$, we define $P_0=P_1=0$ and, for $m\ge2$
\begin{equation}\label{Pm recur}
P_m=\begin{cases}
-abP_{m-1}-P_{m-2}&\text{if }m\text{ is even, or}\\
P_{m-1}-P_{m-2}&\text{if }m\text{ is odd.}\\
\end{cases}
\end{equation}
This is a recursion, given in \cite[(3.1)]{scatcomb}, for the polynomials defined in \cite[(9.5)]{universal}.
The first several values of $P_m(a,b)$ are shown here.
\[\small\begin{tabular}{|r||c|c|c|c|c|c|c|c|}
\hline
$m$&$0$&$1$&$2$&$3$&$4$&$5$\\\hline
$P_m$	&$1$	&$1$	&$-ab-1$&$-ab-2$&$a^2b^2+3ab+1$	&$a^2b^2+4ab+3$	\\\hline
\end{tabular}\]
Define, for $k\ge0$, vectors 
{\allowdisplaybreaks\begin{align*}
&\v_k=\v_k(a,b)=\begin{cases}
\begin{bsmallmatrix}\sgn(a)P_k(a,b)\\-aP_{k+1}(a,b)\end{bsmallmatrix}&\text{if $k$ is even, or}\\[5pt]
\begin{bsmallmatrix}-bP_k(a,b)\\\sgn(b)P_{k+1}(a,b)\end{bsmallmatrix}&\text{if $k$ is odd.}
\end{cases}\\[5pt]
&\v_\infty=\v_\infty(a,b)=\begin{bsmallmatrix}2\sgn(a)\sqrt{-ab}\\-a(\sqrt{-ab}+\sqrt{-ab-4})\end{bsmallmatrix}\\[5pt]
&\w_k=\w_k(a,b)=\begin{cases}
\begin{bsmallmatrix}-bP_{k+1}(a,b)\\\sgn(b)P_k(a,b)\end{bsmallmatrix}&\text{if $k$ is even, or}\\[5pt]
\begin{bsmallmatrix}\sgn(a)P_{k+1}(a,b)\\-aP_k(a,b)\end{bsmallmatrix}&\text{if $k$ is odd.}
\end{cases}\\[5pt]
&\w_\infty=\w_\infty(a,b)=\begin{bsmallmatrix}-b(\sqrt{-ab}+\sqrt{-ab-4})\\2\sgn(b)\sqrt{-ab}\end{bsmallmatrix}.
\end{align*}}
Still assuming ${ab\le-4}$, the rays of $\F_B$ are spanned by the vectors:
\[\set{\pm\begin{bsmallmatrix*}[r]1\\0\end{bsmallmatrix*}, \pm\begin{bsmallmatrix*}[r]0\\1\end{bsmallmatrix*}}\cup\set{\v_k:k=0,\ldots}\cup\set{\v_\infty}\cup\set{\w_k:k=0,\ldots}\cup\set{\w_\infty}.\]

It is apparent that a mutation fan $\F_B$ in $\reals^2$ refines a mutation fan $\F_{B'}$ in $\reals^2$ if and only if the set of rays of $\F_{B'}$ is contained in the set of rays of $\F_B$.
Since the rays spanned by $\pm\begin{bsmallmatrix*}[r]1\\0\end{bsmallmatrix*}$ and $\pm\begin{bsmallmatrix*}[r]0\\1\end{bsmallmatrix*}$ are in every mutation fan, we can ignore these rays.
If $B'=\begin{bsmallmatrix*}[r]0&0\\0&0\end{bsmallmatrix*}$, then $\F_B$ refines $\F_{B'}$ for every $B$.
Otherwise, it is apparent that $\F_B$ cannot refine $\F_{B'}$ unless $B$ and $B'$ have weakly the same sign pattern. 
By Proposition~\ref{antipodal FB}, we need only consider the case where $B=\begin{bsmallmatrix*}[r]0&a\\b&0\end{bsmallmatrix*}$ with $a>0$ and $b<0$ and $B'$ has weakly the same sign pattern.
Having made this reduction, we also may as well consider, rather than containment of rays, containment of the set of slopes of rays.
We ignore slopes $0$ and $\infty$ and call the slopes of the remaining rays the \newword{relevant slopes}.
In finite type, the relevant slopes are shown in Table~\ref{fin slopes}.
\begin{table}[ht]\vspace{-5pt}
{\small$\begin{array}{ccc|ccc}
&B&&&\text{Slopes}&\\\hline&&&&&\\[-9pt]
&\begin{bsmallmatrix*}[r]0&1\\-1&0\end{bsmallmatrix*}&&&-1&\\[2pt]\hline&&&&&\\[-9pt]
&\begin{bsmallmatrix*}[r]0&1\\-2&0\end{bsmallmatrix*}&&&-1,-\frac12&\\[2pt]\hline&&&&&\\[-9pt]
&\begin{bsmallmatrix*}[r]0&2\\-1&0\end{bsmallmatrix*}&&&-2,-1&\\[2pt]\hline&&&&&\\[-9pt]
&\begin{bsmallmatrix*}[r]0&1\\-3&0\end{bsmallmatrix*}&&&-1,-\frac12,-\frac23,-\frac13&\\[2pt]\hline&&&&&\\[-9pt]
&\begin{bsmallmatrix*}[r]0&3\\-1&0\end{bsmallmatrix*}&&&-3,-2,-\frac32,-1&\\[2pt]\hline\\
\end{array}$}
\caption{Relevant slopes of $\F_B$ for $B$ of finite type, $a>0$ and $b<0$}
\label{fin slopes}
\end{table}

For $ab\le-4$ (and continuing with $a>0$ and $b<0$), the relevant slopes are
{\allowdisplaybreaks\begin{align*}
s_k(a,b)&=\begin{cases}
-\frac{aP_{k+1}(a,b)}{P_k(a,b)}&\text{if $k$ is even, or}\\[4pt]
\frac{P_{k+1}(a,b)}{bP_k(a,b)}&\text{if $k$ is odd.}
\end{cases}\\[2pt]
s_\infty(a,b)&=-\frac{a(\sqrt{-ab}+\sqrt{-ab-4})}{2\sqrt{-ab}}\\[2pt]
t_k(a,b)&=\begin{cases}
\frac{P_k(a,b)}{bP_{k+1}(a,b)}&\text{if $k$ is even, or}\\[4pt]
-\frac{aP_k(a,b)}{P_{k+1}(a,b)}&\text{if $k$ is odd.}
\end{cases}\\[5pt]
t_\infty(a,b)&=\frac{2\sqrt{-ab}}{b(\sqrt{-ab}+\sqrt{-ab-4})}.
\end{align*}}
All of these slopes are negative. 
The slopes $s_k$ increase and limit to $s_\infty$.
The slopes $t_k$ decrease and limit to $t_\infty$, and we have $s_\infty\le t_\infty$.
When $ab=-4$, $t_\infty$ and $s_\infty$ are equal and rational, but for $ab<-4$, $t_\infty$ and $s_\infty$ are distinct and irrational.
Some of these slopes are shown in Table~\ref{aff slopes} for the affine types.
\begin{table}[ht]
{\small$\begin{array}{ccc|ccc|ccc|c}
&B&&&s_0,s_1,s_2,s_3\ldots&&&t_0,t_1,t_2,t_3\ldots&&s_\infty=t_\infty\\\hline&&&&&&&&&\\[-9pt]
&\begin{bsmallmatrix*}[r]0&1\\-4&0\end{bsmallmatrix*}&&&-1,-\frac34,-\frac23,-\frac58,\ldots&&&-\frac14,-\frac13,-\frac38,-\frac25,\ldots&&-\frac12\\[2pt]\hline&&&&&&&&&\\[-9pt]
&\begin{bsmallmatrix*}[r]0&2\\-2&0\end{bsmallmatrix*}&&&-2,-\frac32,-\frac43,-\frac54,\ldots&&&-\frac12,-\frac23,-\frac34,-\frac45,\ldots&&-1\\[2pt]\hline&&&&&&&&&\\[-9pt]
&\begin{bsmallmatrix*}[r]0&4\\-1&0\end{bsmallmatrix*}&&&-1,-\frac43,-\frac32,-\frac85,\ldots&&&-4,-3,-\frac83,-\frac52,\ldots&&-2\\[2pt]\hline\\
\end{array}$}
\caption{Some relevant slopes of $\F_B$ for $B$ of affine type, $a>0$ and $b<0$}
\label{aff slopes}
\end{table}

We now prove the characterization of Phenomenon~\ref{ref phen} for $2\times2$ exchange matrices.

\begin{proof}[Proof of Theorem~\ref{ref phen 2x2}]
A brief inspection of Tables~\ref{fin slopes} and~\ref{aff slopes} is enough to verify Theorem~\ref{ref phen 2x2}\eqref{neither wild} and the non-wild case of Theorem~\ref{ref phen 2x2}\eqref{no dom no ref}.

We next check Theorem~\ref{ref phen 2x2}\eqref{one wild} and the case of Theorem~\ref{ref phen 2x2}\eqref{no dom no ref} where exactly one of $B$ and $B'$ is wild.
If $B'$ is wild and $B$ is not, then $\F_{B'}$ has two irrational relevant slopes, while $\F_B$ has only rational relevant slopes, so $\F_B$ does not refine $\F_{B'}$.
Now consider the case where $B=\begin{bsmallmatrix*}[r]0&a\\b&0\end{bsmallmatrix*}$ is wild and $B'$ is not.
If $B'=\begin{bsmallmatrix*}[r]0&0\\0&0\end{bsmallmatrix*}$, then $\F_B$ refines $\F_{B'}$.
Now consider the case where $B'=\begin{bsmallmatrix*}[r]0&1\\-1&0\end{bsmallmatrix*}$.
If $a>2$ and $b<-2$, then
\[s_\infty=-\frac{a(\sqrt{-ab}+\sqrt{-ab-4})}{2\sqrt{-ab}}<-1<\frac{2\sqrt{-ab}}{b(\sqrt{-ab}+\sqrt{-ab-4})}=t_\infty,\]
and thus $-1$ is not a relevant slope of $\F_B$, so $\F_B$ does not refine $\F_{B'}$.
If $a=1$, then $s_0(a,b)=-a=-1$, so $\F_B$ refines $\F_{B'}$, and similarly if $b=-1$, then $t_0(a,b)=\frac1b=-1$, so $\F_B$ refines $\F_{B'}$.
Next, consider the case where $B'=\begin{bsmallmatrix*}[r]0&1\\-2&0\end{bsmallmatrix*}$.
Since $B'$ dominates $\begin{bsmallmatrix*}[r]0&1\\-1&0\end{bsmallmatrix*}$, by Theorem~\ref{ref phen 2x2}\eqref{neither wild} we conclude that $\F_B$ does not refine $\F_{B'}$ except possibly when $a=1$ or $b=-1$.
But if $a=1$, then $b\le-5$, so
\[s_\infty=-\frac{a(\sqrt{-ab}+\sqrt{-ab-4})}{2\sqrt{-ab}}<-\frac12<\frac{2\sqrt{-ab}}{b(\sqrt{-ab}+\sqrt{-ab-4})}=t_\infty,\]
and thus $-\frac12$ is not a relevant slope of $\F_B$, so $\F_B$ does not refine $\F_{B'}$.
If $b=-1$, then $a\ge5$, and we check similarly that $s_\infty<-2<t_\infty$, so that $\F_B$ does not refine $\F_{B'}$.
The case where $B'=\begin{bsmallmatrix*}[r]0&2\\-1&0\end{bsmallmatrix*}$ is dealt with by the symmetric argument.
Since every other non-wild exchange matrix $B'$ dominates either $\begin{bsmallmatrix*}[r]0&1\\-2&0\end{bsmallmatrix*}$ or $\begin{bsmallmatrix*}[r]0&2\\-1&0\end{bsmallmatrix*}$, we see by Theorem~\ref{ref phen 2x2}\eqref{neither wild} that no non-wild $B'$ can have $\F_B$ refine $\F_{B'}$.
We have verified Theorem~\ref{ref phen 2x2}\eqref{one wild}.

Finally, suppose $B=\begin{bsmallmatrix*}[r]0&a\\b&0\end{bsmallmatrix*}$ and $B'=\begin{bsmallmatrix*}[r]0&c\\d&0\end{bsmallmatrix*}$ are distinct wild $2\times2$ exchange matrices.
Since $\F_B$ and $\F_{B'}$ each have exactly two irrational slopes, for a refinement relation to exist in either direction, in particular, these two slopes must be the same in $\F_B$ as in $\F_{B'}$.
The conditions that the two slopes are the same in each fan can be rewritten, after some manipulation, as 
\[abd-\sqrt{a^2b^2d^2+4abd^2}=bcd-\sqrt{b^2c^2d^2+4b^2cd}\]
and
\[abc+\sqrt{a^2b^2c^2+4abc^2}=acd+\sqrt{a^2c^2d^2+4a^2cd}\]
We know that these slopes are irrational (in the case $ab<-4$), so the square roots in the above equations are all irrational.
No two distinct irrational square roots of integers can differ by a rational number,
so the square roots on each side must be equal, and thus the integers on each side must be equal.
That is, $abd=bcd$ and $abc=acd$, and therefore $a=c$ and $b=d$.
We have verified Theorem~\ref{ref phen 2x2}\eqref{both wild} and the final case of Theorem~\ref{ref phen 2x2}\eqref{no dom no ref}.
\end{proof}

\subsection{Acyclic finite type}\label{bij fin sec}  
An exchange matrix $B$ is \newword{acyclic} if, after reindexing if necessary, it has the property that $b_{ij}\le0$ whenever $i>j$.
The exchange matrix $B$ is of \newword{finite type} if every cluster algebra with $B$ as its initial exchange matrix has finitely many cluster variables.
In this section, we prove Theorems~\ref{id phen finite} and~\ref{ref phen finite}, which say that Phenomena~\ref{id phen} and~\ref{ref phen} occur whenever $B$ is acyclic and of finite type.

By \cite[Theorem~10.12]{universal}, if $B$ is of finite type (whether or not it is acyclic), then $R^B$ admits a positive basis.
Thus Propositions~\ref{positive cone basis} and~\ref{refines FB} imply the following theorem, which says that Phenomena~\ref{id phen} and~\ref{ref phen} are equivalent in finite type.  
\begin{theorem}\label{finite refines FB}  
If $B$ and $B'$ are exchange matrices of the same size and $B$ is of finite type, then $\id:\reals^B\to\reals^{B'}$ is mutation-linear if and only if $\F_B$ refines~$\F_{B'}$.
\end{theorem}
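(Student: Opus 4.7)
The proof is essentially a direct assembly of results already cited in the excerpt, so the plan is short. The key observation is that Proposition~\ref{refines FB} already gives both directions of the equivalence as soon as we know that $\reals^B$ admits a cone basis: the forward direction (mutation-linearity of $\id$ implies refinement) holds unconditionally, while the reverse direction requires only the existence of a cone basis for $\reals^B$. So the only thing to verify is that finite type guarantees a cone basis.

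First I would invoke \cite[Theorem~10.12]{universal}, which is cited in the sentence just above the theorem statement: for any $B$ of finite type and any underlying ring $R$, the mutation-linear structure $R^B$ admits a \emph{positive} basis in the sense of Definition~\ref{cone R def}. Taking $R = \reals$ gives a positive basis for $\reals^B$. Second, I would apply Proposition~\ref{positive cone basis}, which says that every positive basis is automatically a cone basis. Thus $\reals^B$ admits a cone basis whenever $B$ is of finite type, with no acyclicity hypothesis required.

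With this in hand, Proposition~\ref{refines FB} finishes the argument: its second sentence explicitly states that if $\reals^B$ admits a cone basis, then $\id:\reals^B\to\reals^{B'}$ is mutation-linear if and only if $\F_B$ refines $\F_{B'}$. Note that the dimension hypothesis (that $B$ and $B'$ have the same size) is needed only to make sense of the identity map between $\reals^n$ and $\reals^n$, and plays no other role; in particular no dominance hypothesis is imposed.

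There is no real obstacle here; the substantive content is hidden inside Theorem~10.12 of \cite{universal} (construction of positive bases in finite type), which we are quoting. The only risk is a misstatement of hypotheses: one should check that \cite[Theorem~10.12]{universal} applies with $R = \reals$ and does not require $B$ to be acyclic, so that Theorem~\ref{finite refines FB} correctly reduces the \emph{acyclic} finite type statements (Theorems~\ref{id phen finite} and \ref{ref phen finite}) to a single refinement question, to be handled subsequently via the Cambrian fan.
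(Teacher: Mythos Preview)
Your proposal is correct and matches the paper's own argument essentially line for line: cite \cite[Theorem~10.12]{universal} for a positive basis in finite type, upgrade it to a cone basis via Proposition~\ref{positive cone basis}, and then invoke Proposition~\ref{refines FB}. The paper even notes explicitly that acyclicity is not needed for this step, just as you observe.
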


\begin{remark}\label{SH}
Several results from \cite{universal} refer to the ``Standard Hypotheses,'' which amount to an assertion called ``sign-coherence of $\c$-vectors,'' which is now a theorem of \cite{GHKK}.
Thus we ignore the Standard Hypotheses when we quote from \cite{universal}.
\end{remark}

The following theorem is established as part of the proof of \cite[Theorem~10.12]{universal}, but is unfortunately not stated separately as a numbered result in~\cite{universal}.
We need no details here about $\g$-vectors, because we only wish to concatenate Theorem~\ref{FB gvec finite} with a theorem below about Cambrian fans.

\begin{theorem}\label{FB gvec finite}
If $B$ is an exchange matrix of finite type, then $\F_B$ consists of the $\g$-vector cones associated to $B^T$ and their faces.
\end{theorem}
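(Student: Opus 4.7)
The plan is to show that $\F_B$ equals the $\g$-vector fan of $B^T$ via an explicit correspondence through $\c$-vectors. First, I would track the standard basis rays $\reals_{\geq 0}\e_i$ (and their negatives) under the mutation maps $\eta_\k^B$. Each such ray is a $1$-dimensional $B$-cone, and by Proposition~\ref{cones preserved}, its image under any $\eta_\k^B$ is a cone of $\F_{\mu_\k(B)}$; inverting, this identifies a corresponding ray of $\F_B$. By the standard definition of $\c$-vectors as the propagation of the initial standard basis through mutation of an extended exchange matrix with the identity block below $B$, the rays obtained this way are precisely the $\pm\c$-vectors of $B$. In finite type only finitely many arise, and sign-coherence of $\c$-vectors (the former ``Standard Hypotheses,'' now a theorem of \cite{GHKK}) implies that the $\c$-vectors within any single seed remain sign-coherent under every further sequence of mutations; hence by Proposition~\ref{contained Bcone} the simplicial cone they span is contained in a single $B$-cone.

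Next, I would invoke the duality between $\c$-vectors for $B$ and $\g$-vectors for $B^T$: the $\c$-vectors of $B$ coincide, under the appropriate sign convention, with the $\g$-vectors of cluster variables for $B^T$. Under this identification, each ``cluster cone'' spanned by the $\c$-vectors of a seed for $B$ becomes a $\g$-vector cone of a cluster for $B^T$, and in finite type the collection of all such cones forms a complete simplicial fan, namely the $\g$-vector fan of $B^T$.

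Finally, since both $\F_B$ and the $\g$-vector fan of $B^T$ are complete fans in $\reals^n$ and each $\g$-vector cone of $B^T$ sits inside a cone of $\F_B$, equality reduces to showing that $\F_B$ does not strictly subdivide any cluster cone. This is the main obstacle. The natural route is to verify that on the relative interior of any cluster cone, every mutation map $\eta_\k^B$ is given by a single global linear formula, so that none of the piecewise-linear kinks of $\F_B$ can cross it; this amounts to unpacking the piecewise-linear definition in \eqref{mutation map def} on a region where sign-coherence keeps all the relevant signs fixed throughout the entire sequence $\k$. Once this is in place, the maximal cones of the two fans agree, and passing to faces yields the full equality of fans asserted in the theorem.
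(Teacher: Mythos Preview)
The paper does not prove this theorem; it simply records that the statement is established inside the proof of \cite[Theorem~10.12]{universal} and uses it as a black box. So there is no independent argument in the paper to compare against.

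Your outline is in the right spirit, but there are two problems, one minor and one genuine.

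The minor one: the vectors $\eta_\k^B(\e_i)$ are the \emph{rows} of the $C$-matrix, not the $\c$-vectors themselves (which are its columns). Sign-coherence of $\c$-vectors is exactly the statement that this set of rows is sign-coherent in the sense of Proposition~\ref{contained Bcone}, so your application of that proposition is fine; but calling the rays you obtain ``the $\pm\c$-vectors of $B$'' is not accurate. What you actually need is the identification of $(\eta_\k^B)^{-1}(\reals_{\ge0}^n)$ with a $\g$-vector cone for $B^T$, and that is tropical duality rather than a literal equality ``$\c$-vectors of $B$ equal $\g$-vectors of $B^T$.''

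The genuine gap is in your last paragraph. Once you know each $\g$-vector cone sits inside a single $B$-cone, you have shown that the $\g$-vector fan refines $\F_B$. Your stated ``main obstacle''---that $\F_B$ does not strictly subdivide any cluster cone---is the \emph{same} assertion: a wall of $\F_B$ crossing the interior of a cluster cone is precisely what it would mean for that cluster cone not to lie in a single $B$-cone. So your proposed final step re-proves what you already have and never addresses the opposite direction. Completeness of both fans does not by itself upgrade a refinement to an equality; what is actually missing is an argument that no maximal $B$-cone contains two distinct maximal $\g$-vector cones. The usual way to get this is to show that two \emph{adjacent} $\g$-vector cones lie in distinct $B$-cones (their common wall is, after applying a suitable $\eta_\k^B$, a coordinate hyperplane, across which the piecewise-linear map $\eta_k$ genuinely changes its linear formula), and then use convexity of $B$-cones. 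Without this step, your argument only shows the $\g$-vector fan of $B^T$ refines $\F_B$, not that they coincide.
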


Given an exchange matrix $B$, replacing each $0$ on the diagonal with~$2$ and turning all positive off-diagonal entries negative results in a symmetrizable \newword{(generalized) Cartan matrix} $\Cart(B)$.
A Cartan matrix is of \newword{finite type} if and only if it is positive definite.
By results of \cite{ca2}, $\Cart(B)$ is of finite type if and only if $B$ is acyclic and of finite type.
The Coxeter element $c$ associated to $B$ and the Cambrian fan associated to $(\Cart(B),c)$ are defined in many places, including 
\cite[Section~4]{camb_fan},
\cite[Section~9]{typefree}, 
and
\cite[~Section~5]{framework}.
We need few details here, because we will only continue concatenating results.
We construct a root system $\Phi(\Cart(B))$ in $\reals^n$ in such a way that $\e_1,\ldots,\e_n$ are the simple \emph{co}-roots.
The Cambrian fan is a certain refinement of the Coxeter fan for $\Cart(B)$, the collection of hyperplanes normal to roots in $\Phi(\Cart(B))$.
If $B$ is an acyclic exchange matrix of finite type and $c$ is the associated Coxeter element, then the Cambrian fan associated to $(\Cart(B),c)$ consists of the $\g$-vector cones associated to $B$ and their faces.
(This was conjectured in \cite[Section~10]{camb_fan}, where it was proved in a special case.
It was proved in \cite[Theorem~1.10]{YZ} and later as \cite[Corollary~5.16]{framework}.)
Combining this fact with Theorem~\ref{FB gvec finite}, we obtain the following theorem.

\begin{theorem}\label{camb FB finite}
If $B$ is an acyclic exchange matrix of finite type and $c$ is the associated Coxeter element, then $\F_{B^T}$ is the Cambrian fan associated to $(\Cart(B),c)$.
\end{theorem}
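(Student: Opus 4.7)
The plan is to obtain this theorem by concatenating two earlier results, with the transpose managing the direction-of-arrow issue that separates the two conventions. Specifically, Theorem~\ref{FB gvec finite} identifies $\F_B$ with the fan of $\g$-vector cones associated to $B^T$, whereas the cited results (\cite[Theorem~1.10]{YZ}, \cite[Corollary~5.16]{framework}) identify the Cambrian fan associated to $(\Cart(B),c)$ with the fan of $\g$-vector cones associated to $B$ itself. So applying the first result with $B$ replaced by $B^T$ should bridge the gap exactly.

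First I would verify that Theorem~\ref{FB gvec finite} is legitimately applicable to $B^T$. The hypothesis needed is that $B^T$ be of finite type, and this is immediate: finite type of an exchange matrix is equivalent to positive definiteness of $\Cart(B)$, and $\Cart(B^T)$ is obtained from $\Cart(B)$ by transposition (up to symmetrization), so $\Cart(B^T)$ is positive definite precisely when $\Cart(B)$ is. Hence $B^T$ is of finite type, and Theorem~\ref{FB gvec finite}, applied to $B^T$, gives that $\F_{B^T}$ consists of the $\g$-vector cones associated to $(B^T)^T = B$ and their faces.

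Second, I would invoke the cited identification of the Cambrian fan: since $B$ is acyclic and of finite type with Coxeter element $c$, the Cambrian fan associated to $(\Cart(B),c)$ consists of the $\g$-vector cones associated to $B$ and their faces. Both fans in question are now described as the same collection of cones (the $\g$-vector cones of $B$ together with their faces), so they are equal.

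The main potential obstacle is purely a conventions check rather than a mathematical difficulty: one must confirm that the notion of ``$\g$-vector cone associated to $B$'' used in Theorem~\ref{FB gvec finite} (which, as quoted from \cite[Theorem~10.12]{universal}, involves the transpose convention) matches the notion of ``$\g$-vector cone associated to $B$'' used in \cite[Corollary~5.16]{framework}. This is a matter of tracing the mutation conventions through, and, granting the quoted theorems as stated in the paper, the two-line concatenation above completes the proof.
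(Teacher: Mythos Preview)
Your proposal is correct and matches the paper's own argument essentially verbatim: the paper obtains Theorem~\ref{camb FB finite} by combining Theorem~\ref{FB gvec finite} (applied with $B$ replaced by $B^T$) with the cited identification of the Cambrian fan for $(\Cart(B),c)$ as the fan of $\g$-vector cones for $B$. Your extra remark that $B^T$ is of finite type because $\Cart(B^T)=\Cart(B)^T$ is also positive definite is a helpful sanity check that the paper leaves implicit.
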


A Cartan matrix $A=[a_{ij}]$ \newword{dominates} a Cartan matrix $\A'=[a'_{ij}]$ if $|a_{ij}|\ge|a'_{ij}|$ for all $i$ and $j$.
The following is \cite[Proposition~1.10]{diagram}.  

\begin{prop}\label{dom subroot}
Suppose $A$ and $A'$ are symmetrizable Cartan matrices such that $A$ dominates~$A'$.
If $\Phi(A)$ and $\Phi(A')$ are both defined with respect to the same simple roots $\alpha_i$, then $\Phi(A)\supseteq\Phi(A')$ and $\Phi_+(A)\supseteq\Phi_+(A')$.
\end{prop}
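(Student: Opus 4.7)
The plan is to establish $\Phi_+(A)\supseteq\Phi_+(A')$; the full statement $\Phi(A)\supseteq\Phi(A')$ then follows since each root system is the disjoint union of its positive and negative parts. I would proceed by induction on the \emph{height} $\sum_j c_j$ of a positive root $\beta=\sum_j c_j\alpha_j\in\Phi_+(A')$. For the base case, every simple root is manifestly a positive root for any Cartan matrix sharing the same $\alpha_i$, so height-one roots present no difficulty.

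For the inductive step, let $\beta\in\Phi_+(A')$ have height greater than $1$. The standard fact for symmetrizable Kac--Moody root systems---that any non-simple positive real root admits a height-reducing simple reflection---yields an index $i$ with $c:=\langle\beta,\alpha_i\ck\rangle_{A'}>0$ and $s_i^{A'}(\beta)=\beta-c\alpha_i\in\Phi_+(A')$. By the inductive hypothesis, $\gamma:=\beta-c\alpha_i\in\Phi_+(A)$, and the goal becomes lifting this back to $\beta\in\Phi_+(A)$.

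Here the dominance hypothesis enters: writing $\gamma=\sum_j g_j\alpha_j$ with $g_j\ge 0$, and using $a_{ii}=a'_{ii}=2$ together with $a_{ij}\le a'_{ij}\le 0$ for $i\neq j$, one computes
\[
\langle\gamma,\alpha_i\ck\rangle_A-\langle\gamma,\alpha_i\ck\rangle_{A'}=\sum_{j\neq i}g_j(a_{ij}-a'_{ij})\le 0.
\]
Consequently the $A$-reflection produces $s_i^A(\gamma)=\gamma+m\alpha_i$ for some integer $m\ge c$. This vector still has nonnegative simple-root coefficients and lies in $\Phi(A)$ (because $s_i^A$ preserves $\Phi(A)$), so $s_i^A(\gamma)=\beta+(m-c)\alpha_i\in\Phi_+(A)$.

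The main obstacle---or rather, the key conceptual ingredient---is the final step, which closes the induction via the convexity of $\alpha_i$-strings. Both $\gamma$ and $s_i^A(\gamma)$ lie on the same $\alpha_i$-string in $\Phi(A)$, and the standard Kac--Moody fact that the $\alpha_i$-string through a real root is an unbroken interval $\{\gamma+j\alpha_i:-p\le j\le q\}\subseteq\Phi(A)$ forces every intermediate integer point to be a root as well. Since $\beta$ is the intermediate point at $j=c$ and has nonnegative simple-root coefficients, $\beta\in\Phi_+(A)$. Without this string-convexity input, one is left only with the strictly weaker conclusion $\beta+(m-c)\alpha_i\in\Phi_+(A)$, which is the wrong vector when $m>c$; it is the string property that converts the inequality $m\ge c$ into the precise conclusion that $\beta$ itself, and not merely a translate, lies in $\Phi(A)$.
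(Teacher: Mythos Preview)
The paper does not prove Proposition~\ref{dom subroot}; it is quoted as \cite[Proposition~1.10]{diagram}, so there is no in-paper argument to compare your proposal against. What follows is an assessment of your argument on its own.

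Your induction is correct and complete whenever every positive root of $\Phi(A')$ is real---in particular whenever $A'$ is of finite type, which is the only case actually invoked in this paper (Theorem~\ref{FB coarsen}). The height-reducing reflection exists for any non-simple positive real root; the dominance inequality $\langle\gamma,\alpha_i\ck\rangle_A\le\langle\gamma,\alpha_i\ck\rangle_{A'}$ is exactly as you computed; and the unbrokenness of $\alpha_i$-root-strings in any symmetrizable Kac--Moody root system then places $\beta=\gamma+c\alpha_i$ in $\Phi_+(A)$.

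For the proposition as literally stated, however, $A'$ may be an arbitrary symmetrizable generalized Cartan matrix, and then $\Phi_+(A')$ may contain imaginary roots. Your inductive step fails there: a positive imaginary root never admits a height-reducing simple reflection (the Weyl group permutes the positive imaginary roots among themselves and never reaches a simple root), so the recursion has nowhere to start. One uniform fix bypasses the combinatorics entirely: since $1-a'_{ij}\le 1-a_{ij}$ for $i\neq j$, each Serre relation $(\operatorname{ad} e_i)^{1-a_{ij}}e_j=0$ for $A$ lies in the ideal generated by the corresponding Serre relation for $A'$; by the Gabber--Kac theorem this yields a graded surjection $\mathfrak n_+(A)\onto\mathfrak n_+(A')$, whence $\dim\mathfrak g(A)_\beta\ge\dim\mathfrak g(A')_\beta$ for every $\beta$ and so $\Phi_+(A)\supseteq\Phi_+(A')$.
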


The following theorem, which is \cite[Theorem~1.11]{diagram}, relies of Proposition~\ref{dom subroot} as it applies to the dual root system $\Phi\ck(A)$ to $\Phi(A)$.

\begin{theorem}\label{camb fan coarsen}
Suppose $A$ and $A'$ are Cartan matrices such that $A$ dominates $A'$ and suppose $W$ and $W'$ are the associated groups, both generated by the same set $S$.
Suppose $c$ and $c'$ are Coxeter elements of $W$ and $W'$ respectively that can be written as a product of the elements of $S$ in the same order.
Choose a root system $\Phi(A)$ and a root system $\Phi(A')$ so that the simple \emph{co}-roots are the same for the two root systems.
Construct the Cambrian fan for $(A,c)$ by coarsening the fan determined by the Coxeter arrangement for $\Phi(A)$ and construct the Cambrian fan for $(A',c')$ by coarsening the fan determined by the Coxeter  arrangement for $\Phi(A')$.
Then the Cambrian fan for $(A,c)$ refines the Cambrian fan for $(A',c')$.
Whereas the codimension-$1$ faces of the Cambrian fan for $(A,c)$ are orthogonal to co-roots (i.e.\ elements of $\Phi\ck(A)$), the Cambrian fan for $(A',c')$ is obtained by removing all codimension-$1$ faces orthogonal to elements of $\Phi\ck(A)\setminus\Phi\ck(A')$.  
\end{theorem}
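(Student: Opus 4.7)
The plan is to combine Proposition~\ref{dom subroot} with the shard description of the Cambrian fan. Since dominance is a condition on absolute values, $A$ dominates $A'$ if and only if $A^T$ dominates $(A')^T$, so applying Proposition~\ref{dom subroot} to the dual Cartan matrices gives $\Phi\ck(A)\supseteq\Phi\ck(A')$, with common simple co-roots. In particular, every hyperplane in the Coxeter arrangement for $\Phi(A')$ appears in the Coxeter arrangement for $\Phi(A)$, so the baseline Coxeter fan of $\Phi(A)$ already refines that of $\Phi(A')$.

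The second ingredient is the shard description of the Cambrian fan, as developed in \cite{framework}: for each positive co-root $\alpha\ck$, the hyperplane $H_\alpha=(\alpha\ck)^\perp$ is cut into pieces, called \emph{shards}, by its intersections with certain other hyperplanes of the arrangement, with the selection of which codimension-$2$ intersections cut $H_\alpha$ governed by a rule that depends on the Coxeter element $c$. The Cambrian fan is then the fan whose maximal cones are the connected components of the complement of the union of all shards. The key claim to establish is: for each co-root $\alpha\ck\in\Phi\ck(A')$, every shard of $H_\alpha$ appearing in the construction of the Cambrian fan for $(A',c')$ is a union of shards of $H_\alpha$ appearing in the construction for $(A,c)$. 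Because $c$ and $c'$ are represented by the same word in $S$, the shard-cutting rule, which depends on this word through inversion-set conditions on rank-$2$ sub-arrangements, is compatible for cuts coming from co-roots already in $\Phi\ck(A')$. Any additional cuts from co-roots in $\Phi\ck(A)\setminus\Phi\ck(A')$ can only further subdivide shards, never merge them. This yields the desired refinement, and the final assertion of the theorem follows because hyperplanes orthogonal to co-roots in $\Phi\ck(A)\setminus\Phi\ck(A')$ simply cannot contribute codimension-$1$ faces to the Cambrian fan for $(A',c')$, and every removed codimension-$1$ face lies on such a hyperplane.

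The main obstacle is verifying the shard compatibility claim. When $A$ strictly dominates $A'$ at some pair of indices $i,j$, the rank-$2$ subsystem of $\Phi(A)$ spanned by $\alpha_i,\alpha_j$ can be substantially larger than the corresponding subsystem of $\Phi(A')$---potentially of infinite type when the latter is of finite type. One must show that in this larger rank-$2$ arrangement, the additional co-roots all lie on the $c$-orientation side that refines rather than disrupts the $A'$-shards on each $H_\alpha$. This reduces to a local analysis in rank $2$, comparing how the $c$-induced orientation interacts with the orderings of co-roots in the two root systems; the argument should lean crucially on the hypothesis that $c$ and $c'$ share a common word in $S$, so that the ``which side of $c$'' datum descends compatibly from $(A,c)$ to $(A',c')$ along the inclusion $\Phi\ck(A')\subseteq\Phi\ck(A)$.
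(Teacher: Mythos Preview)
This theorem is not proved in the present paper: it is quoted as \cite[Theorem~1.11]{diagram}, with the single remark that it ``relies on Proposition~\ref{dom subroot} as it applies to the dual root system.''  So there is no in-paper proof to compare against beyond that hint.

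Your outline is heading in a plausible direction---the dual root-system containment $\Phi\ck(A')\subseteq\Phi\ck(A)$ via Proposition~\ref{dom subroot} is indeed the starting point the paper flags, and the shard picture is a natural tool.  But what you have written is a plan, not a proof.  The crux is your ``key claim'' about shard compatibility, and you do not verify it: you say the shard-cutting rule ``is compatible'' because $c$ and $c'$ share a word, and that extra cuts ``can only further subdivide shards, never merge them,'' but these are assertions, not arguments.  The second assertion is fine (more hyperplanes can only cut more), but the first is exactly where the content lies: you must show that a shard of $H_\alpha$ that \emph{survives} in the $(A',c')$-Cambrian fan is covered by shards that \emph{survive} in the $(A,c)$-Cambrian fan.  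This is a statement about which shards the two Cambrian congruences remove, not merely about how finely the hyperplanes are cut, and it does not follow from subdivision alone.

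Your final paragraph correctly identifies the obstacle and the likely shape of the fix (a rank-$2$ local analysis using the common reduced word for $c$ and $c'$), but then stops.  The actual argument in \cite{diagram} is lattice-theoretic: it constructs a surjective lattice homomorphism between the weak orders of $W$ and $W'$ and shows it is compatible with the respective Cambrian congruences, from which the fan refinement follows.  If you want to run a purely geometric shard argument instead, you would need to carry out the rank-$2$ check explicitly---comparing, for each pair $i,j$, the $c$-sorting of the rank-$2$ sub-root-system for $A$ with that for $A'$---and then assemble the local pieces.  That is doable, but it is real work that your proposal gestures at without doing.
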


The orthogonality appearing in Theorem~\ref{camb fan coarsen} and below in Theorem~\ref{FB coarsen} is the standard inner product on $\reals^n$ defined in terms of the basis of the $\e_i$ and has nothing to do with the Euclidean inner products associated to $A$ and $A'$.

If $B$ dominates $B'$, then $\Cart(B)$ dominates $\Cart(B')$.
Thus we can translate Theorem~\ref{camb fan coarsen} into a more detailed version of Theorem~\ref{id phen finite}.
Theorem~\ref{camb FB finite} identifies the Cambrian fan as $\F_{B^T}$ but by Proposition~\ref{antipodal FB}, we can equally well consider the relationship between $\F_{-B^T}$ and $\F_{-(B')^T}$.
Passing from $B$ to $-B^T$ corresponds to passing from $\Cart(B)$ to $\Cart(B)^T$ while preserving $c$, and transposing the Cartan matrix switches the roles of $\Phi$ and $\Phi\ck$.
Thus in rewriting Theorem~\ref{camb fan coarsen} as the following theorem, we identify simple \emph{roots} with the $\e_i$ rather than simple \emph{co-roots}.

\begin{theorem}\label{FB coarsen}
Suppose $B$ and $B'$ are acyclic exchange matrices of finite type such that $B$ dominates $B'$ and let $A=\Cart(B)$ and $A'=\Cart(B')$.
Realize the root systems $\Phi(A)$ and $\Phi(A')$ in $\reals^n$ such that for each $i$, the simple root $\alpha_i$ for~$A$, the simple root $\alpha'_i$ for $A'$ and the unit basis vector $\e_i$ all coincide.
Each codimension\nobreakdash-$1$ face of $\F_B$ is orthogonal to an element of $\Phi(A)$, and $\F_{B'}$ is obtained from $\F_B$ by removing all codimension-$1$ faces orthogonal to roots in $\Phi(A)\setminus\Phi(A')$.
\end{theorem}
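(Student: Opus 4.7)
The plan is to deduce Theorem~\ref{FB coarsen} from Theorem~\ref{camb fan coarsen} by running the statement through the dictionary of Theorem~\ref{camb FB finite} between mutation fans and Cambrian fans, with Proposition~\ref{antipodal FB} used to absorb one transpose. First, I would check that the inputs to Theorem~\ref{camb fan coarsen} fit together: dominance of exchange matrices compares entries in absolute value, so $A$ dominates $A'$, and both are of finite type by hypothesis. Because dominance forces the sign of $b'_{ij}$ to agree weakly with that of $b_{ij}$, $B'$ is acyclic with the same ordering as $B$, so I may take the Coxeter elements of $A$ and $A'$ to be the same word $s_1s_2\cdots s_n$ in the shared set of simple reflections.

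The heart of the argument is an ``opposite transpose'' substitution. Theorem~\ref{camb FB finite} identifies the Cambrian fan with $\F_{B^T}$ rather than $\F_B$, so I would set $\hat B=-B^T$ and $\hat B'=-(B')^T$. A direct check using $\Cart(M)_{ij}=-|m_{ij}|$ for $i\ne j$ gives $\Cart(\hat B)=A^T$ and $\Cart(\hat B')=(A')^T$, and $\hat B,\hat B'$ remain acyclic with the same ordering and the same word for their Coxeter elements. Theorem~\ref{camb FB finite} then identifies $\F_{-B}=\F_{\hat B^T}$ with the Cambrian fan for $(A^T,c)$ and likewise for $\F_{-B'}$. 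Since $A^T$ dominates $(A')^T$, Theorem~\ref{camb fan coarsen}, applied with simple coroots identified with the $\e_i$, yields that $\F_{-B}$ refines $\F_{-B'}$, with codim-$1$ faces of $\F_{-B}$ orthogonal to $\Phi\ck(A^T)=\Phi(A)$ and $\F_{-B'}$ obtained by removing those orthogonal to $\Phi(A)\setminus\Phi(A')$. Since the simple coroots of $A^T$ are the simple roots of $A$, this matches the identification of $\e_i$ with $\alpha_i$ demanded in the theorem statement.

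Finally, I would transfer from $\F_{-B}$ to $\F_B$ via Proposition~\ref{antipodal FB}, which gives $\F_B=-\F_{-B}$ and $\F_{B'}=-\F_{-B'}$. The antipodal map preserves the linear span of each codim-$1$ cone and hence its set of orthogonal vectors, and both $\Phi(A)$ and $\Phi(A')$ are closed under negation, so the description of codim-$1$ faces carries over verbatim. The main obstacle is not a hard calculation but bookkeeping---tracking the transpose, the negation, and the swap between roots and coroots---which is exactly what the paragraph preceding the theorem is warning about; once the substitution $B\leadsto-B^T$ is made up front, everything else is a mechanical chaining of the cited results.
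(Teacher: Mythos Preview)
Your proposal is correct and follows essentially the same route as the paper: the paragraph preceding the theorem is the paper's entire proof, and you have simply unpacked it in detail---the substitution $B\mapsto -B^T$, the resulting transpose on Cartan matrices, the swap between $\Phi$ and $\Phi\ck$, the application of Theorem~\ref{camb fan coarsen}, and the final antipodal transfer via Proposition~\ref{antipodal FB} are exactly the steps the paper sketches.
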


Theorem~\ref{FB coarsen} completes the proof of Theorem~\ref{ref phen finite}.
Theorem~\ref{id phen finite} follows immediately by Proposition~\ref{refines FB}.

\subsection{Resection of marked surfaces}\label{bij surf sec}
We next show that rational versions of Phenomena~\ref{id phen} and~\ref{ref phen} occur in some cases where $B$ is the signed adjacency matrix of a surface.
For the sake of brevity, we refrain from repeating here all of the background material found in~\cite{unisurface}.
(See also the seminal works \cite{cats1,cats2} on cluster algebras and surfaces.)
We do, however, give the most basic background.
Unjustified assertions in this basic background are established in \cite{cats1}.

Let $\S$ be a surface obtained from a compact, oriented surface without boundary by deleting a finite collection of open disks whose closures are pairwise disjoint.
Usually, $\S$ is required to be connected, but this is merely an ``irreducibility'' criterion and is not essential for what we do here.
Indeed, since we will consider an operation on $\S$ that may disconnect it, we must allow disconnected surfaces.
However, compactness implies that $\S$ has finitely many components.
The boundaries of the removed disks are called \newword{boundary components}.
Choose a finite set $\M$ of points in $\S$ called \newword{marked points}.
Each boundary component must contain at least one marked point.
The marked points in the interior of $\S$ are called \newword{punctures}.
The marked points on boundary components cut the boundary components into curves called \newword{boundary segments}.  
None of the connected components of $(\S,\M)$ may be unpunctured monogons, unpunctured digons, unpunctured triangles, 
or spheres with fewer than $4$ punctures.
Typically, one excludes once-punctured monogons as well.
Here, we allow once-punctured monogons, but in many of the definitions that follow, we have to single out once-punctured monogons as a special case.

An \newword{arc} in $(\S,\M)$ is a curve in $\S$ with endpoints in $\M$, with the following restrictions:
An arc may not intersect itself, except that its endpoints may coincide.
An arc must be disjoint from $\M$ and from the boundary of $\S$, except at its endpoints.
An arc may not bound an unpunctured monogon.
Finally, we disallow arcs that define, together with a single boundary segment connecting the endpoints, an unpunctured digon.
We consider arcs up to isotopy relative to $\M$.

Arcs $\alpha$ and $\gamma$ are \newword{incompatible} if they intersect in $\S\setminus\M$, and if the intersection cannot be removed by (independently) isotopically deforming the arcs.
If two arcs are not incompatible, they are \newword{compatible}.
A \newword{triangulation} of $(\S,\M)$ is a maximal collection of distinct pairwise compatible arcs.
Each $(\S,\M)$ admits at least two triangulations, 
except a once-punctured monogon, which admits exactly one triangulation, consisting of the unique arc.
Every triangulation of a given marked surface has the same number of arcs.
These arcs divide $\S$ into \newword{triangles}, each of which has $1$, $2$, or $3$ distinct vertices and $2$ or $3$ distinct sides.
A \newword{self-folded triangle} is a triangle with 2 distinct sides.
As a surface in its own right, a self-folded triangle is a once-punctured monogon with an arc $\alpha$ connecting the vertex of the monogon to the puncture.
From the interior, this appears as a triangle, but two of the sides of the triangle are $\alpha$.
We call $\alpha$ the \newword{fold edge} of the self-folded triangle.

The \newword{signed adjacency matrix} of a triangulation $T$ of $(\S,\M)$ is a matrix $B(T)=[b_{\alpha\beta}]_{\alpha,\beta\in T}$ indexed by the arcs of $T$.
The definition of the entries $b_{\alpha,\beta}$ is complicated by the possible presence of self-folded triangles.
If $T$ has no self-folded triangles, one can simplify what follows by taking $\pi_T$ to be the identity map.
In general however, we define a map $\pi_T$ on the arcs of $T$, fixing each arc except for the arcs that are fold edges of self-folded triangles.
If $\alpha$ is the fold edge of a self-folded triangle in $T$, then $\pi_T(\alpha)$ is the other edge of the triangle.
The entry $b_{\alpha\beta}$ is the sum $\sum_\Delta b^\Delta_{\alpha\beta}$ over all triangles $\Delta$ of $T$ which are \emph{not self-folded} of the quantities
\[b^\Delta_{\alpha\beta}=
\begin{cases} 
\,\,\,\,\,1&\text{if $\Delta$ has sides $\pi_T(\alpha)$ and $\pi_T(\beta)$ such that $\pi_T(\alpha)$} \\[-2pt]&\quad\text{is immediately followed by $\pi_T(\beta)$ in \emph{clockwise} order.}\\[3pt]
\,-1&\text{if $\Delta$ has sides $\pi_T(\alpha)$ and $\pi_T(\beta)$ such that $\pi_T(\alpha)$} \\[-2pt]&\quad\text{is immediately followed by $\pi_T(\beta)$ in \emph{counterclockwise} order.}\\[3pt]
\,\,\,\,\,0&\text{otherwise.}
\end{cases}
\]
The matrix $B(T)$ is a skew-symmetric integer matrix (an exchange matrix).
If $\S$ has multiple connected components, then $B(T)$ has a block-diagonal form with a diagonal block for each component.
The matrix $B(T)$ is the zero matrix for some $T$ if and only if $B(T)$ is the zero matrix for every $T$, if and only if every connected component of $(\S,\M)$ is a \newword{null surface} (a once-punctured monogon, once-punctured digon, or unpunctured quadrilateral).

In general, one considers \emph{tagged} arcs and \emph{tagged} triangulations.
In this section, we appeal to \cite[Proposition~12.3]{cats1}, which says that every exchange matrix arising from a tagged triangulation can also be obtained from an ordinary triangulation, and we put off defining tagged arcs  and tagged triangulations until Section~\ref{resect hom sec}.

An \newword{allowable curve} is a curve in $\S$ of one of the following forms:
\begin{itemize}
\item a closed curve;
\item a curve whose two endpoints are unmarked boundary points,
\item a curve having one endpoint an unmarked boundary point, with the other end spiraling (clockwise or counterclockwise) into a puncture, or
\item a curve spiraling into (not necessarily distinct) punctures at both ends.
\end{itemize}
However, an allowable curve may not
\begin{itemize}
\item have any self-intersections,
\item be contractible in $\S\setminus\M$,
\item be contractible to a puncture,
\item have two endpoints on the boundary and be contractible to a portion of the boundary containing zero or one marked points.
\item have both endpoints are the same boundary segment and cut out, together with the portion of the boundary between its endpoints, a once-punctured disk, or
\item have two coinciding spiral points and cut out a once-punctured disk.
\end{itemize}
We consider allowable curves up to isotopy relative to $\M$.

Essentially, two allowable curves are \newword{compatible} if they are non-intersecting, but one kind of intersection is allowed.
Suppose two allowable curves are identical except that, at one (and only one) end of the curve, they spiral opposite directions into the same point.
Then these two curves are compatible unless they are contained in a component of $(\S,\M)$ that is a once-punctured monogon.
(There are exactly two allowable curves in a once-punctured monogon, and by convention the two are incompatible.)
A \newword{(rational) quasi-lamination} is a collection of pairwise compatible allowable curves and an assignment of a positive rational weight to each curve.
We require the curves in the quasi-lamination to be distinct up to isotopy.
The set of curves that appear in a quasi-lamination $L$ is called the \newword{support} of $L$ and written $\supp(L)$.

Given a quasi-lamination $L$ and a triangulation $T$,  the \newword{shear coordinate of $L$ with respect to $T$} is a vector $\b(T,L)=(b_\gamma(T,L):\gamma\in T)$ indexed by the arcs $\gamma$ in $T$.
We define $b_\gamma(T,L)$ to be the sum $\sum_{\lambda\in L}w_\lambda b_\gamma(T,\lambda)$ over the curves $\lambda$ in $L$, where $w_\lambda$ is the weight of $\lambda$ in $L$ and $b_\gamma(T,\lambda)$ is defined as follows.

First, suppose $\gamma$ is contained in two distinct triangles of $T^\circ$ (rather than being the fold edge of a self-folded triangle).
Since $\lambda$ is defined up to isotopy, we can assume that each time $\lambda$ crosses an arc $\alpha$ of $T$, it does not cross $\alpha$ again in the opposite direction until it has crossed some other arc $\beta$ of $T$.
(In the case where $\lambda$ spirals into a puncture that is incident to only one arc $\alpha$, it crosses $\alpha$ repeatedly, but always in the same direction.)
The quantity $b_\gamma(T,\lambda)$ is the sum, over each intersection of $\lambda$ with $\gamma$, of a number in $\set{-1,0,1}$ that depends on which other arcs (or boundary segments) $\lambda$ visits immediately before and after the intersection with~$\gamma$.
(The other arcs or boundary segments are the other edges of the two triangles containing $\alpha$.)
\begin{figure}[ht]
\scalebox{0.8}{\raisebox{32 pt}{$+1$}\,\,\includegraphics{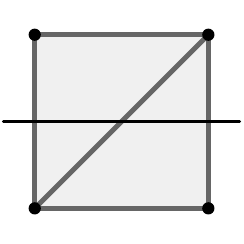}\qquad\qquad\includegraphics{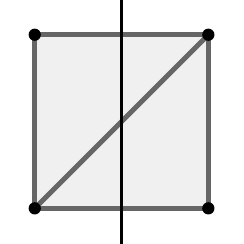}\raisebox{32 pt}{$-1$}}
\caption{Computing shear coordinates}
\label{shear fig}
\end{figure}
Figure~\ref{shear fig} shows the situations in which this number is $1$ or $-1$.
It is $0$ otherwise.
In these pictures, $\gamma$ is the diagonal of the square and $\lambda'$ is the vertical or horizonal line intersecting the square.
The quadrilaterals represented in Figure~\ref{shear fig} might have fewer than $4$ distinct vertices and $4$ distinct sides.
As a concrete example, opposite sides of the square may be identified to make a torus.
Or, one of the triangles may be self-folded (with folding edge not equal to $\gamma$). 

Next, consider the case where $\gamma$ \emph{is} the fold edge of a self-folded triangle and let $p$ be the marked point that is incident only to $\gamma$ and not to any other arc in the triangulation.
If $\lambda$ spirals into $p$, then let $\lambda'$ be the allowable curve obtained from $\lambda$ by reversing the direction of the spiral into $p$.
Otherwise let $\lambda'=\lambda$.
Let $\gamma'$ be the other edge (besides $\gamma$) of the self-folded triangle.
Define $b_\gamma(T,\lambda)=b_{\gamma'}(T,\lambda')$.
We can do this unless $\gamma$ is contained in a component of $(\S,\M)$ that is a once-punctured monogon, in which case $\gamma'$ is a boundary segment.
In this case, we define $b_\gamma(T,\lambda)$ to be $+1$ if $\lambda$ spirals into the puncture counterclockwise or $-1$ if it spirals in clockwise.

The following is \cite[Theorem~4.4]{unisurface}, a rephrasing of \cite[Theorem~13.6]{cats2}.
(In \cite[Theorem~4.4]{unisurface}, once-punctured monogons are not allowed, but the proof for once-punctured monogons is easy.)
Recall that, as sets, $\rationals^{B(T)}=\rationals^n$ when $|T|=n$.

\begin{theorem}\label{q-lam bij}
Fix a tagged triangulation $T$.
Then the map $L\mapsto\b(T,L)$ is a bijection between rational quasi-laminations and $\rationals^{B(T)}$.
\end{theorem}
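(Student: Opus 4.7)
The plan is to reduce the statement to \cite[Theorem~4.4]{unisurface} on those components of $(\S,\M)$ for which that result was originally formulated, and to supplement it with a direct verification of the bijection on any once-punctured monogon components, which are the only new case allowed here.

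First, I would observe that both sides of the claimed bijection decompose as a direct product over the connected components of $(\S,\M)$. On the left, a quasi-lamination on $\S$ is the same as the data of a quasi-lamination on each component, since each allowable curve lies in a single component and compatibility is a componentwise condition. On the right, $B(T)$ is block-diagonal with one block per component of $(\S,\M)$ (as noted just before the definition of null surface), so $\rationals^{B(T)}$ is naturally the direct product of the spaces $\rationals^{B(T_i)}$, where $T_i$ is the restriction of $T$ to the $i\th$ component. Finally, $b_\gamma(T,L)$ only depends on the component containing $\gamma$, so the shear-coordinate map respects the product structure. It therefore suffices to verify the bijection on each connected component separately.

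Second, for each connected component that is not a once-punctured monogon, the bijection is exactly the content of \cite[Theorem~4.4]{unisurface}, which is in turn the rational reformulation of \cite[Theorem~13.6]{cats2}. (In \cite[Theorem~13.6]{cats2}, shear coordinates are shown to give a bijection between integer laminations and $\integers^n$; extending to $\rationals^n$ is a matter of clearing denominators, using that the shear-coordinate map is linear in the weight assignment, as explained in \cite{unisurface}.) So the only thing left to do is the once-punctured monogon case.

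Third, I would handle the once-punctured monogon by direct inspection. Such a component admits a unique arc $\gamma$ and hence a unique triangulation $T = \{\gamma\}$, with $B(T) = [0]$ so $\rationals^{B(T)} = \rationals$. Up to isotopy, there are exactly two allowable curves $\lambda^+$ and $\lambda^-$ in the monogon, distinguished by the direction in which they spiral into the puncture, and by convention these two are declared incompatible. Hence a rational quasi-lamination on the monogon is either empty or consists of a single curve $\lambda^\pm$ with some positive rational weight $w$. Applying the fold-edge-to-boundary-segment rule in the definition of $b_\gamma(T,\lambda)$, one computes $b_\gamma(T,\lambda^+) = +1$ and $b_\gamma(T,\lambda^-) = -1$, so the shear coordinate of these quasi-laminations is respectively $0$, $+w$, and $-w$. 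This exhausts $\rationals$ bijectively, completing the proof.

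The only genuine obstacle is bookkeeping in the once-punctured monogon case: one must check that the stipulated incompatibility of $\lambda^+$ and $\lambda^-$ (peculiar to this surface) and the special fold-edge convention for shear coordinates of a self-folded triangle whose outer edge is a boundary segment together conspire to produce a bijection rather than a two-to-one map or a proper surjection. Everything else is a quotation of prior work together with the straightforward componentwise decomposition above.
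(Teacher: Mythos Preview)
Your proposal is correct and matches the paper's approach: the paper does not give a proof but simply cites \cite[Theorem~4.4]{unisurface} (itself a rephrasing of \cite[Theorem~13.6]{cats2}) and remarks that the once-punctured monogon case, the only new case allowed here, is easy. You have spelled out exactly this---the componentwise reduction, the citation for the non-monogon components, and the direct check on the monogon---so there is nothing to add.
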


A \newword{tangle} is any collection of allowable curves (with no requirement of compatibility) and an assignment of an integer weight to each curve (with no requirement of nonnegativity).
The curves must be distinct up to isotopy.
A tangle $\Xi$ can be given shear coordinates $\b(T,\Xi)$ just as a quasi-lamination can:  as a weighted sum of the shear coordinates of each curve in $\Xi$.
A \newword{null tangle} in $(\S,\M)$ is a tangle $\Xi$ with $\b(T,\Xi)=\0$ for all tagged triangulations $T$.
(Recall that we have not defined tagged triangulations.
Since we won't work closely with the Null Tangle Property in this paper, we can safely continue to put off the definition of tagged triangulations until Section~\ref{resect hom sec}.)
For technical reasons, the definition is different if $(\S,\M)$ has no boundary components and exactly one puncture.
In that case, a null tangle is a tangle $\Xi$ such that $\b(T,\Xi)$ is the zero vector for all ordinary triangulations $T$ with all tags plain.
A tangle is \newword{trivial} if all of its curves have weight zero.
We say $(\S,\M)$ has the \newword{Null Tangle Property} if every null tangle in $(\S,\M)$ is trivial.

The Null Tangle Property allows us to construct a positive basis for $\rationals^{B(T)}$ using allowable curves.
The following is \cite[Theorem~7.3]{unisurface}, restricted to ordinary triangulations (as opposed to a tagged triangulations) but allowing disconnected surfaces, with a minor correction (as explained in Remark~\ref{null tangle correction}, below).

\begin{theorem}\label{null tangle theorem}
Suppose $R$ is $\integers$ or $\rationals$ and let $T$ be a triangulation of $(\S,\M)$.
If no component of $(\S,\M)$ is a null surface, then the following are equivalent:
\begin{enumerate}[(i)]
\item \label{null tangle theorem null}
$(\S,\M)$ has the Null Tangle Property.
\item \label{null tangle theorem basis}
The shear coordinates of allowable curves form a basis for $\rationals^{B(T)}$.
\item \label{null tangle theorem pos basis}
The shear coordinates of allowable curves form a positive basis for $\rationals^{B(T)}$.
(By Proposition~\ref{positive cone basis}, this is also a cone basis for $\rationals^{B(T)}$.)
\end{enumerate}
If some component of $(\S,\M)$ is a null surface, then \eqref{null tangle theorem null} fails, but \eqref{null tangle theorem basis} and~\eqref{null tangle theorem pos basis} hold.
\end{theorem}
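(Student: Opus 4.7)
The plan is to deduce the theorem from \cite[Theorem~7.3]{unisurface}---which gives the equivalence of (i)--(iii) for connected non-null surfaces---via two structural extensions: a block-diagonal reduction to connected components, and a direct analysis of the three null surfaces.

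For the block-diagonal reduction, I would exploit the fact that $B(T)$ is block diagonal with one block $B(T_c)$ for each connected component $(\S_c,\M_c)$ of $(\S,\M)$. From \eqref{mutation map def}, a mutation $\eta^{B(T)}_k$ at an index $k$ in the block for component $c$ acts only on the coordinates indexed by arcs of $T_c$ and fixes the rest, so each $\eta^{B(T)}_\k$ factors as a product of per-block mutations. Hence both \eqref{linear eta} and \eqref{piecewise eta} decompose block by block, and a linear relation is $B(T)$-coherent if and only if its projection to each component's coordinates is $B(T_c)$-coherent. Since every allowable curve is supported in a single component, its shear coordinates vanish outside that component's block. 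These two facts together reduce each of (i), (ii), and (iii) for $(\S,\M)$ to the conjunction of the analogous statements over the components; for the non-null components, the equivalences then follow from \cite[Theorem~7.3]{unisurface}.

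For the null surface components, $B(T)$ is the zero matrix, and \eqref{mutation map def} shows that each $\eta^{B(T)}_k$ simply negates the $k$-th coordinate. Consequently $\F_{B(T)}$ is the complete fan of coordinate orthants of $\rationals^n$, and a positive basis for $\rationals^{B(T)}$ is any $\rationals$-independent set whose nonnegative $\rationals$-span contains each orthant. I would enumerate the allowable curves of each of the three null surfaces (once-punctured monogon, once-punctured digon, unpunctured quadrilateral), compute their shear coordinates with respect to a chosen triangulation, and verify directly that they form such a positive basis, establishing (ii) and (iii). To show that (i) fails, I would exhibit an explicit nontrivial null tangle in each case.

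The main obstacle is the last step: the null tangles must vanish on \emph{all} tagged triangulations, including those whose tags at punctures differ from the initial triangulation, so producing and verifying them requires care in the two punctured null surfaces. The once-punctured monogon additionally invokes the special definition of shear coordinates at a fold edge; both the positive-basis verification and the null tangle construction there must track the sign conventions set by that special case.
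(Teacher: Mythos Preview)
Your proposal is correct and aligns with the paper's treatment: the paper does not give an independent proof but quotes \cite[Theorem~7.3]{unisurface} for the equivalence (i)--(iii) in the non-null case and declares the null-surface assertions to be ``easy exercises'' (Remark~\ref{null tangle correction}). Your block-diagonal reduction to connected components and direct enumeration for the three null surfaces are exactly the details that the paper leaves implicit.
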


\begin{remark}\label{null tangle correction}
Theorem~\ref{null tangle theorem} should be taken as a correction to \cite[Theorem~7.3]{unisurface}.
Once-punctured monogons are not allowed in \cite{unisurface}, and \cite[Theorem~7.3]{unisurface} takes a \emph{tagged} triangulation $T$, but \cite[Theorem~7.3]{unisurface} should be corrected by dealing separately with the quadrilateral and digon as in Theorem~\ref{null tangle theorem}.
The assertions of Theorem~\ref{null tangle theorem} for null surfaces are easy exercises.
The correction to \cite[Theorem~7.3]{unisurface} contradicts \cite[Theorem~7.4]{unisurface} for the quadrilateral and digon, but does so in a way that preserves the truth of \cite[Corollary~7.5]{unisurface}, and therefore preserves all of the results of the paper on universal coefficients.
The error arose because the proof of \cite[Proposition~7.9]{unisurface} used \cite[Proposition~2.3]{unisurface}, whose hypotheses rule out the quadrilateral and digon (despite the assertion in the paragraph before \cite[Proposition~3.5]{unisurface}).
The main constructions and results of \cite{unisurface} are all valid, even for the quadrilateral and digon, but some some auxiliary results are wrong as stated for the quadrilateral and digon.
\end{remark}

We say $(\S,\M)$ has the \newword{Curve Separation Property} if, given incompatible allowable curves $\lambda$ and~$\lambda'$, there exists a tagged triangulation $T$ and an arc $\gamma\in T$ such that $b_\gamma(T,\lambda)$ and $b_\gamma(T,\lambda')$ have strictly opposite signs.
(Again, for technical reasons, the definition is different if $(\S,\M)$ has no boundary components and exactly one puncture.
In this case we require that $T$ be an ordinary triangulation.)
The Null Tangle Property implies the Curve Separation Property \cite[Corollary~7.14]{unisurface}.
Furthermore, every null surface satisfies the Curve Separation Property.
(We know of no surfaces that fail the Curve Separation Property, but it remains unproven for surfaces with no boundary components and exactly one puncture.)

The Curve Separation Property allows us to understand the rational part of the mutation fan $\F_B$.
(See Definition~\ref{rat part def}.)
Given a triangulation $T$ of $(\S,\M)$, the \newword{rational quasi-lamination fan} $\F_\rationals(T)$ is a fan with one cone for each set $\Lambda$ of pairwise compatible allowable curves.
Specifically, the cone associated to $\Lambda$ is $\Span_{\reals_{\ge0}}\set{\b(T,\lambda):\lambda\in\Lambda}$.
Thus the relative interior of that cone consists of the shear coordinates (with respect to $T$) of rational laminations whose support is $\Lambda$.
The following is a special case of \cite[Theorem~4.10]{unisurface}.

\begin{theorem}\label{rat FB surfaces}
If $T$ is a triangulation of $(\S,\M)$, then $\F_\rationals(T)$ is a rational simplicial fan.
It is the rational part of $\F_{B(T)}$ if and only if $(\S,\M)$ has the Curve Separation Property.
\end{theorem}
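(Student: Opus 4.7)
For the first assertion of the theorem---that $\F_\rationals(T)$ is a rational simplicial fan---the rationality of generators is immediate because shear coordinates of allowable curves are integer vectors. For simpliciality, I would argue that if the shear coordinates $\b(T,\lambda_1),\ldots,\b(T,\lambda_k)$ of a pairwise compatible collection satisfied a nontrivial linear relation $\sum c_i\b(T,\lambda_i)=\0$, then splitting the curves by the sign of their coefficient would produce two distinct rational quasi-laminations with the same shear coordinate, contradicting Theorem~\ref{q-lam bij}. The fan axioms---closure under taking faces, and that the intersection of two cones is a face of each---follow by the same bijection: the intersection of $C_\Lambda$ and $C_{\Lambda'}$ equals $C_{\Lambda\cap\Lambda'}$, since a rational point in the intersection comes from a unique quasi-lamination whose support must lie in both $\Lambda$ and $\Lambda'$.

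The second assertion rests on the correspondence between flips and mutations: if $T'$ is obtained from $T$ by flipping an arc $\alpha$, then $B(T')=\mu_\alpha(B(T))$ and the mutation map $\eta_\alpha^{B(T)}$ carries $\b(T,\lambda)$ to $\b(T',\lambda)$ for every allowable curve $\lambda$. Combined with Proposition~\ref{contained Bcone}, this translates the question of which collections of shear coordinates lie in a common $B(T)$-cone into a purely combinatorial question about sign-coherence of shear coordinates across all triangulations. I would verify condition~(ii) of Definition~\ref{rat part def} by checking that pairwise compatible curves have sign-coherent shear coordinates in every triangulation, which is visible directly from the local picture of Figure~\ref{shear fig}: strictly opposite signs at an arc arise only from crossing curves.

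For condition~(iii), assuming the Curve Separation Property, I would set $\Lambda_C=\set{\lambda:\b(T,\lambda)\in C}$ for each $B(T)$-cone $C$. The contrapositive of the Curve Separation Property forces $\Lambda_C$ to be pairwise compatible, so $C_{\Lambda_C}$ is a cone of $\F_\rationals(T)$ contained in $C$. For any rational $p\in C$, Theorem~\ref{q-lam bij} yields a unique quasi-lamination $L$ with $p=\b(T,L)$, and sign-coherence of $\supp(L)$ forces $\vsgn(\eta_\k^{B(T)}(p))=\vsgn(\eta_\k^{B(T)}(\b(T,\lambda)))$ for every $\k$ and every $\lambda\in\supp(L)$. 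Each such $\b(T,\lambda)$ therefore shares its $B(T)$-class with $p$, hence lies in $C$, so $\supp(L)\subseteq\Lambda_C$ and $p\in C_{\Lambda_C}$; maximality of $C_{\Lambda_C}$ is immediate from its definition. For the converse, if the Curve Separation Property fails, incompatible curves $\lambda,\lambda'$ whose shear coordinates never strictly disagree in sign are sign-coherent across all triangulations, hence lie in a common $B(T)$-cone by Proposition~\ref{contained Bcone}; no cone of $\F_\rationals(T)$ can contain both points, so condition~(iii) fails there.

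The main obstacle I anticipate is the technical bookkeeping in the flip--mutation correspondence, especially justifying the identity $\eta_\alpha^{B(T)}\b(T,\lambda)=\b(T',\lambda)$ uniformly across the case splits that arise from self-folded triangles and once-punctured monogons, where the definitions of shear coordinates require separate handling. A secondary delicate point is ensuring that when $p$ lies in a lower-dimensional face of a $B(T)$-cone, the inherited sign-vector comparisons still place the entire $B(T)$-class of $p$ inside $C$ rather than merely in its closure, which is what the argument for condition~(iii) genuinely needs.
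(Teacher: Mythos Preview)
The paper does not prove this theorem; it is quoted as a special case of \cite[Theorem~4.10]{unisurface}.  So there is no proof in the paper to compare against, only a citation.

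Your outline is broadly the right one and matches the argument in \cite{unisurface}: use Theorem~\ref{q-lam bij} for simpliciality and the fan axioms, use the flip--mutation/shear-coordinate compatibility $\eta_\alpha^{B(T)}\b(T,\lambda)=\b(T',\lambda)$ together with Proposition~\ref{contained Bcone} to translate $B(T)$-cones into sign-coherence statements across triangulations, and then read off conditions~(ii) and~(iii) from compatibility and the Curve Separation Property respectively.

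Two genuine soft spots deserve attention.  First, your justification of condition~(ii) (``strictly opposite signs at an arc arise only from crossing curves'') is not literally correct as stated, because the definition of compatibility for allowable curves \emph{allows} one kind of intersection: two curves identical except for opposite spiral directions at a single shared endpoint are compatible yet crossing.  You must check sign-coherence for this case separately, and it does not follow from the local picture in Figure~\ref{shear fig} alone; the argument in \cite{unisurface} handles this via the tagged-triangulation formulation of shear coordinates.  Second, your claim that each $\b(T,\lambda)$ with $\lambda\in\supp(L)$ ``shares its $B(T)$-class with $p$'' is too strong: a positive combination $p=\sum w_\lambda\b(T,\lambda)$ can have strictly more nonzero entries (after any mutation) than an individual summand, so $\b(T,\lambda)$ lies only in the \emph{closure} of the $B(T)$-class of $p$, i.e.\ in the $B(T)$-cone $C$.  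That weaker statement is exactly what you need for $\supp(L)\subseteq\Lambda_C$, and you correctly flag this in your final paragraph, but the body of the argument should be stated that way from the start.
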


We now introduce an operation, called resection, on marked surfaces that induces a dominance relation on signed adjacency matrices.

\begin{definition}[\emph{Resecting a surface at an arc}]\label{resect surface def}
The resection operation on surfaces is illustrated in Figure~\ref{resect alpha}.
\begin{figure}[ht]
\includegraphics{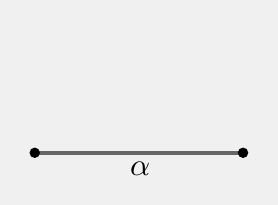}\qquad\raisebox{24pt}{\LARGE$\longrightarrow$}\qquad\includegraphics{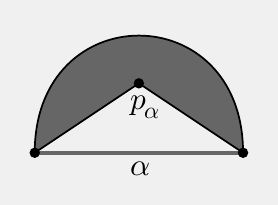}
\caption{Resection of a surface}
\label{resect alpha}
\end{figure}
Suppose $\alpha$ is an arc in $(\S,\M)$ connecting marked points $p_1$ and $p_2$.
Place a new marked point $p_\alpha$ in the interior of $\S$ close to $\alpha$.
Draw a curve in $\S$ connecting $p_1$ to $p_2$ and forming a digon with $p_\alpha$ in its interior, but containing no other marked points.
Draw two more curves inside the digon, one connecting $p_1$ to~$p_\alpha$ and the other connecting $p_2$ to $p_\alpha$.
This cuts the digon into two triangles.
Remove the interior of the one that does not have $\alpha$ as an edge.
If $p_1$ is on a boundary segment, then resulting surface must be cut at the point $p_1$ in order to satisfy the requirement that the boundary components be circles.
In this case $p_1$ becomes two marked points.
Similarly, if $p_2$ is on a boundary segment, then the surface must be cut at the point $p_2$.
The resulting marked surface is a \newword{resection of $(\S,\M)$ at $\alpha$}, and accordingly we use the verb ``resect'' and the noun ``resection'' to describe passing from $(\S,\M)$ to the resected surface.
The resected surface $\S'$ may be disconnected even if $\S$ is connected.

Typically, there are two possible resections at $\alpha$, one for each side of $\alpha$.
However, we disallow resections that create a component that is an unpunctured triangle.
\end{definition}

\begin{definition}[\emph{Resection at a collection of arcs}]\label{resect collection def}
More generally, a \newword{resection of $(\S,\M)$} is a marked surface obtained by performing any collection of resections at arcs, including possibly resecting the same arc on both sides.
This is well-defined up to isotopy as long as the arcs in question are compatible.
\end{definition}

\begin{definition}[\emph{Resection compatible with a triangulation}]\label{resect compat tri def}  
Fix a triangulation~$T$ of $(\S,\M)$.
A resection of $(\S,\M)$ is \newword{compatible with $T$} if
\begin{itemize}
\item Each arc that is resected is an arc of $T$.
\item For each arc $\alpha$ that is resected, the point $p_\alpha$ is placed in the interior of a triangle of $T$ bounded by $\alpha$ and the new curves that define the resection never leave the interior of that triangle except possibly at their endpoints.
\item If an arc $\alpha$ that is resected has an endpoint at a puncture $q$, then either both endpoints of $\alpha$ are at $q$ or at least one more arc $\beta$ with an endpoint at $q$ is resected.
In the latter case, we require that $p_\alpha$ and $p_\beta$ are in different triangles of $T$.
\end{itemize}
These requirements imply in particular that a resected arc $\alpha$ may not be the fold edge of a self-folded triangle of $T$.
Equivalently, a resected arc $\alpha$ is contained in two distinct triangles of $T$.
The resection must also satisfy the requirements of Definition~\ref{resect surface def} by not cutting off an unpunctured triangle.
\end{definition}

\begin{definition}[\emph{Triangulation induced on the resected surface}]\label{resect induce tri def}
If $(\S',\M')$ is a resection of $(\S,\M)$ that is compatible with $T$, then each arc in $T$ is also an arc in $(\S',\M')$.
These arcs cut $(\S',\M')$ into triangles, defining a triangulation $T'$ of $(\S',\M')$ called the \newword{triangulation induced on $(\S',\M')$ by $T$}.
\end{definition}

%
\begin{prop}\label{resect signed adjacency}
Given a marked surface $(\S,\M)$ with a triangulation $T$, perform a resection of $(\S,\M)$ compatible with $T$ and let $T'$ be the triangulation induced by~$T$ on the resected surface.
Then $B(T)$ dominates $B(T')$.
\end{prop}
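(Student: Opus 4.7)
The plan is to compare $B(T)$ and $B(T')$ entry-by-entry, verifying the dominance inequalities on each entry. Both matrices are indexed by the arcs of $T$, since each arc of $T$ is also an arc of $T'$. Recall that $b_{ij}(T)=\sum_\Delta b^\Delta_{ij}$, summed over non-self-folded triangles $\Delta$ of $T$ containing both $\pi_T(i)$ and $\pi_T(j)$ as sides. I would begin by describing the triangles of $T'$ in terms of those of $T$: each triangle $\Delta$ of $T$ either survives unchanged in $T'$ (when no resected arc has its $p$-point in $\Delta$), or it is cut into smaller pieces by the resections placed inside it. When $\alpha$ is resected with $p_\alpha\in\Delta$, the resulting pieces are a ``small triangle'' bounded by $\alpha$ and two new boundary segments, together with a ``remainder triangle'' bounded by the other two arcs of $\Delta$ and one new boundary segment. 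The crucial point is that in every remainder triangle, the cyclic order of the surviving arc-pairs matches their cyclic order in the original $\Delta$.

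From this I would extract the key observation: for any pair of arcs $(i,j)$ and triangle $\Delta$ of $T$ containing both, the contribution $b^\Delta_{ij}$ is preserved in $b_{ij}(T')$ if and only if neither $i$ nor $j$ is resected in $\Delta$; otherwise it is lost. Therefore $b_{ij}(T)-b_{ij}(T')$ equals the sum of lost contributions, and dominance for the pair $(i,j)$ follows once this lost sum has weakly the same sign as $b_{ij}(T)$ and is at most as large in absolute value.

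I would then classify pairs $(i,j)$ by the number of common non-self-folded triangles in $T$. If they share at most one such triangle, then $b_{ij}(T)\in\{-1,0,+1\}$ is a single contribution and dominance is immediate. The interesting case is when $i$ and $j$ lie in two common non-self-folded triangles $\Delta_1,\Delta_2$. Here I would argue topologically that $i$ and $j$ must share the same single vertex $v$ in both triangles (the alternative---parallel arcs sharing both endpoints---would force the containing triangles to be self-folded, which are excluded from the sum). Moreover, $v$ must be a puncture: the link of $v$ must contain two link-edges connecting the link-vertices $i$ and $j$ (one per triangle), forcing the link to be a bigon, which is only possible at an interior vertex. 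On the two sides of the bigon the cyclic orders flip, giving $b^{\Delta_1}_{ij}=-b^{\Delta_2}_{ij}$ and hence $b_{ij}(T)=0$.

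In this bigon case the resection's compatibility condition becomes decisive. Since $i$ and $j$ are the only arcs incident to the puncture $v$, the puncture rule of compatibility forces that if either of $i,j$ is resected then both must be, and their $p$-points must lie in distinct triangles of $T$---necessarily one in $\Delta_1$ and the other in $\Delta_2$. Consequently both contributions to $b_{ij}$ are lost, and $b_{ij}(T')=0=b_{ij}(T)$, making dominance trivial. The main obstacle I anticipate is the topological verification in this two-triangle case---in particular, carefully ruling out the pathological configuration of two non-self-folded triangles sharing all three sides (which would force a sphere-with-three-punctures component, disallowed by the definition of marked surfaces) and threading through self-folded triangles, whose fold edges cannot be resected by compatibility and whose presence affects $b_{ij}$ only via $\pi_T$.
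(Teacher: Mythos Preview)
Your triangle-by-triangle contribution accounting is a reasonable strategy and close in spirit to the paper's argument, and your identification of the once-punctured-digon case as the place where the compatibility condition is essential is exactly right.  However, there is a genuine gap in the two-triangle case.  You assert that when $i$ and $j$ lie in two common non-self-folded triangles the two contributions must cancel, giving $b_{ij}(T)=0$, and you identify this configuration as the once-punctured bigon.  That is false: on the once-punctured torus with its three-arc triangulation (Example~\ref{resect ex}), every pair of arcs lies in both triangles and $|b_{ij}(T)|=2$.  Your link-at-$v$ argument tacitly assumes that neither $i$ nor $j$ is a loop; a loop contributes two arc-ends at $v$, so the link need not be a bigon.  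Likewise, your dismissal of the parallel case is wrong: two parallel arcs can sit as adjacent edges in a non-self-folded triangle whose third side is a loop.  Relatedly, the ``two triangles sharing all three sides'' configuration is not only the three-punctured sphere; the torus already realises it.

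The repair is not difficult once the missing cases are visible.  When the two contributions have the same sign (so $b_{ij}(T)=\pm2$), losing any subset of them clearly preserves dominance.  The delicate case is precisely when they cancel (so $b_{ij}(T)=0$): there you must show resection loses both or neither, and what remains is a careful topological verification that the cancelling configuration really forces the once-punctured-digon picture (modulo $\pi_T$), so that the third compatibility bullet applies.  The paper avoids this classification entirely by handling one resection at a time, reading the four changed entries directly off the local picture (Figures~\ref{resect config} and~\ref{digon resect config}), and checking that each moves toward~$0$; you may find that route simpler than completing the loop and parallel-arc cases.
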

\begin{proof}
We will prove two assertions that together amount to a version of the proposition with weaker hypotheses.

First, if $\alpha$ is an arc in $T$ not incident to the puncture in a once-punctured digon and $T'$ is the triangulation obtained by resecting at $\alpha$ according to the rule in the second bullet point in Definition~\ref{resect compat tri def}, then $B(T)$ dominates $B(T')$.
The part of $T$ right around $\alpha$ is illustrated in Figure~\ref{resect config}.
\begin{figure}[ht]
\scalebox{0.8}{\includegraphics{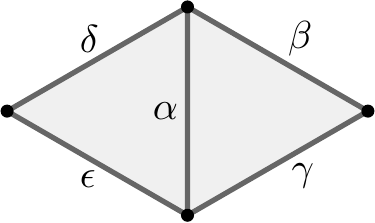}}\quad\raisebox{23pt}{\LARGE$\longrightarrow$}\quad\scalebox{0.8}{\includegraphics{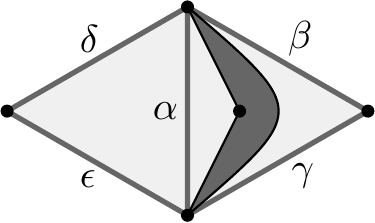}}
\caption{An illustration for the proof of Proposition~\ref{resect signed adjacency}}
\label{resect config}
\end{figure}
Since $\alpha$ is not incident to the puncture in a once-punctured digon, $\beta\neq\delta$ and $\gamma\neq\epsilon$, and $\alpha$ is distinct from the other four arcs.
The signed adjacency matrix $B(T')$ is obtained from $B(T)$ as follows.
If $\beta$ is an arc (rather than a boundary segment), then decrease the $\alpha\beta$-entry by 1 and increase the $\beta\alpha$-entry by $1$.
If $\gamma$ is an arc, then increase the $\alpha\gamma$-entry by 1 and decrease the $\gamma\alpha$-entry by $1$.
The situation may be sightly more complicated because $\beta$ may be the non-fold edge of a self-folded triangle.
If $\beta'$ is the fold edge of that triangle, then all entries of $B(T)$ and $B(T')$ indexed by $\beta'$ agree with corresponding entries indexed by $\beta$.
Similarly, $\gamma$ may be the non-fold edge of a self-folded triangle, or both $\beta$ and $\gamma$ may be.

Second, if $\alpha$ and $\beta$ are distinct arcs in $T$, both incident to the puncture in a once-punctured digon and $T'$ is obtained by resecting at $\alpha$ and at $\beta$ according to the rule in the second bullet point in Definition~\ref{resect compat tri def} and with $p_\alpha$ and $p_\beta$ in different triangles of $T$, then $B(T)$ dominates $B(T')$.
The situation is illustrated in Figure~\ref{digon resect config}.
\begin{figure}[ht]
\scalebox{0.55}{\includegraphics{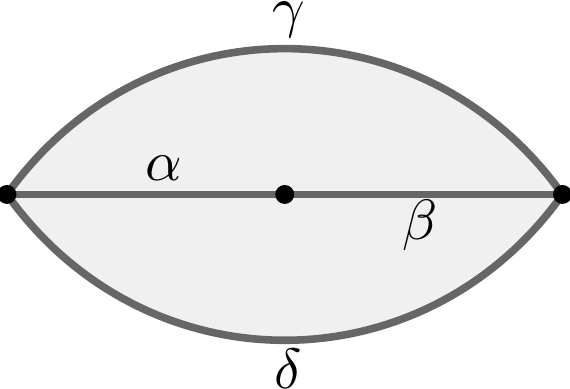}}\quad\raisebox{32pt}{\LARGE$\longrightarrow$}\quad\scalebox{0.55}{\includegraphics{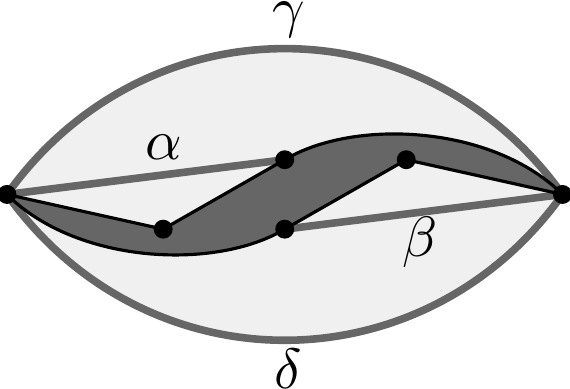}}
\caption{Another illustration for the proof of Proposition~\ref{resect signed adjacency}}
\label{digon resect config}
\end{figure}
The $\alpha\beta$-entry in the signed adjacency matrix is 0 before and after resection.
The arcs $\delta$ and $\gamma$ do not coincide, because if they do, the unresected surface is a sphere with only $3$ punctures.
Thus if $\gamma$ is an arc, resection decreases the absolute value of the $\beta\gamma$ and $\gamma\beta$ entries by $1$, and if $\delta$ is an arc, resection decreases the absolute value of the $\alpha\delta$ and $\delta\alpha$ entries by $1$.
If neither $\gamma$ nor $\delta$ is an arc, then the signed adjacency matrix is unchanged.
Either $\gamma$ or $\delta$ or both may be non-fold edges of self-folded triangles, but the dominance relation still holds, as in the previous case.
\end{proof}

\begin{example}\label{resect ex}
Figure~\ref{resect ex fig} shows two resections (each at a single arc).
The first example begins with  a torus, obtained by identifying opposite edges of a square, with one puncture at the corners of the square.
The torus is resected at the arc~$\alpha$ to obtain an annulus.
The annulus is then resected at $\gamma$ to obtain a hexagon.
\end{example}

\begin{figure}
\begin{center}
\begin{tabular}{ccccc}
\,\,\,\,\,\,\includegraphics{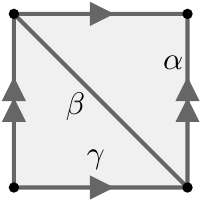}&\raisebox{27 pt}{\LARGE$\longrightarrow$}&\includegraphics{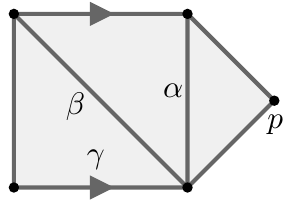}&\raisebox{27 pt}{\LARGE$=$}\quad&\raisebox{-15 pt}{\scalebox{0.75}{\includegraphics{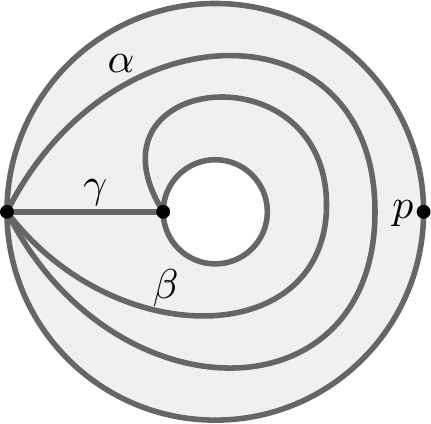}}}\\
\!$\begin{array}{c}
\mbox{\hspace{13 pt}\tiny $\alpha$ \hspace{3 pt} $\beta$\hspace{5.5 pt} $\gamma$}\\[0 pt]
\begin{smallmatrix*}[r]\\\alpha\\\beta\\\gamma\end{smallmatrix*}\begin{bsmallmatrix*}[r]
0&2&-2\\
-2&0&2\\
2&-2&0
\end{bsmallmatrix*}
\end{array}$
&&&&
\!\!\!\!$\begin{array}{c}
\mbox{\hspace{13 pt}\tiny $\alpha$ \hspace{3 pt} $\beta$\hspace{5.5 pt} $\gamma$}\\[0 pt]
\begin{smallmatrix*}[r]\\\alpha\\\beta\\\gamma\end{smallmatrix*}\begin{bsmallmatrix*}[r]
0&1&-1\\
-1&0&2\\
1&-2&0
\end{bsmallmatrix*}
\end{array}$
\end{tabular}\vspace{30pt}
\begin{tabular}{ccccc}
\raisebox{-15 pt}{\scalebox{0.75}{\includegraphics{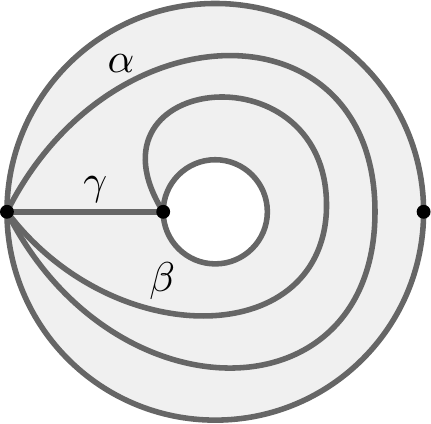}}}&\!\raisebox{27 pt}{\LARGE$\longrightarrow$}\!&\raisebox{-15 pt}{\scalebox{0.75}{\includegraphics{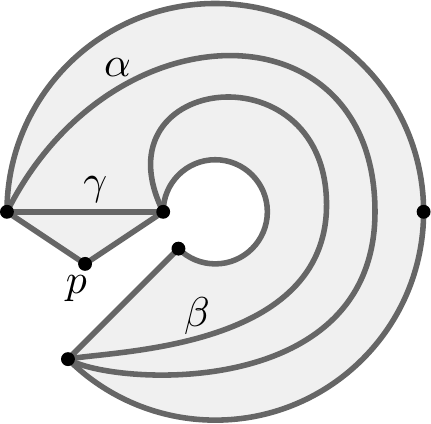}}}&\raisebox{27 pt}{\LARGE$=$}\!\!&\raisebox{-12 pt}{\scalebox{0.8}{\includegraphics{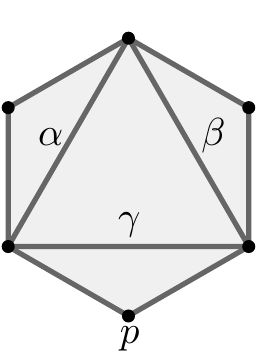}}}\\
\!\!\!\!$\begin{array}{c}
\mbox{\hspace{13 pt}\tiny $\alpha$ \hspace{3 pt} $\beta$\hspace{5.5 pt} $\gamma$}\\[0 pt]
\begin{smallmatrix*}[r]\\\alpha\\\beta\\\gamma\end{smallmatrix*}\begin{bsmallmatrix*}[r]
0&1&-1\\[1pt]
-1&0&2\\[2pt]
1&-2&0
\end{bsmallmatrix*}
\end{array}$
&&&&
\!\!\!\!$\begin{array}{c}
\mbox{\hspace{13 pt}\tiny $\alpha$ \hspace{3 pt} $\beta$\hspace{5.5 pt} $\gamma$}\\[0 pt]
\begin{smallmatrix*}[r]\\\alpha\\\beta\\\gamma\end{smallmatrix*}\begin{bsmallmatrix*}[r]
0&1&-1\\[1pt]
-1&0&1\\[2pt]
1&-1&0
\end{bsmallmatrix*}
\end{array}$
\end{tabular}
\end{center}
\caption{Examples of resection}
\label{resect ex fig}
\end{figure}

\begin{remark}\label{not every dom}
Not every exchange matrix dominated by $B(T)$ can be obtained by resecting the associated surface (even for the broader class of resections allowed in the proof of Proposition~\ref{resect signed adjacency}).
Not even every skew-symmetric exchange matrix dominated by $B(T)$ can be obtained.
This is because, in some cases, resection must change four or more entries of $B(T)$.
For example, in the labelling shown in Figure~\ref{resect config}, when neither $\beta$ nor $\gamma$ is a boundary segment, the entries $b_{\alpha\beta}$, $b_{\alpha\gamma}$, $b_{\beta\alpha}$, and $b_{\gamma\alpha}$ are all changed.
In this case, it is impossible to change only the entries $b_{\alpha\beta}$ and $b_{\beta\alpha}$, even though the result would be an exchange matrix dominated by $B(T)$.
\end{remark}

We next prove Theorems~\ref{id phen resect} and~\ref{ref phen resect}, starting with the latter.

\begin{proof}[Proof of Theorem~\ref{ref phen resect}]
Theorem~\ref{q-lam bij} says that the maps $L\mapsto\b(T,L)$ and $L'\mapsto\b(T',L')$ are bijections from rational quasi-laminations (on $(\S,\M)$ and $(\S',\M')$ respectively) to $\rationals^n$.
Thus there is a bijection from rational quasi-laminations $L$ on $(\S,\M)$ to rational quasi-laminations $L'$ on $(\S',\M')$ such that $\b(T,L)=\b(T',L')$.

For each $C\in\F_\rationals(T)$, the set $C\cap\rationals^n$ is the set of shear coordinates of all nonnegative rational weightings on some collection $\Lambda$ of pairwise-compatible allowable curves in $(\S,\M)$.
We prove the theorem by showing that, for each such $\Lambda$, there is a collection $\Lambda'$ of pairwise-compatible allowable curves in $(\S',\M')$ such that for any nonnegative rational weighting on $\Lambda$, there is a nonnegative rational weighting on $\Lambda'$ giving the same shear coordinates.
Thus $C\cap\rationals^n$ is contained in the cone of $\F_\rationals(T')$ consisting of shear coordinates of nonnegative rational weightings on $\Lambda'$.

We can almost, but not quite, construct $\Lambda'$ by resecting at one arc at a time; we will need to appeal to the third condition in Definition~\ref{resect compat tri def}.
Given a resection at~$\alpha$, let $\alpha\beta\gamma$ be the triangle where the point $p_\alpha$ is placed in the process of resecting at~$\alpha$, as illustrated in Figure~\ref{situated}.
\begin{figure}[ht]
\scalebox{0.8}{\includegraphics{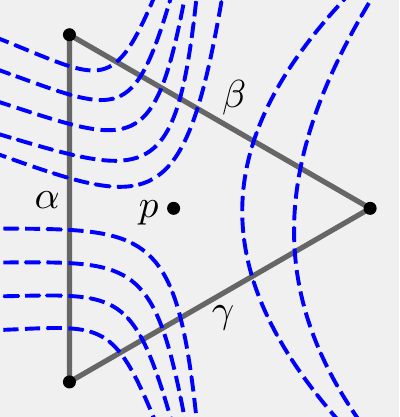}}\qquad\quad\scalebox{0.8}{\includegraphics{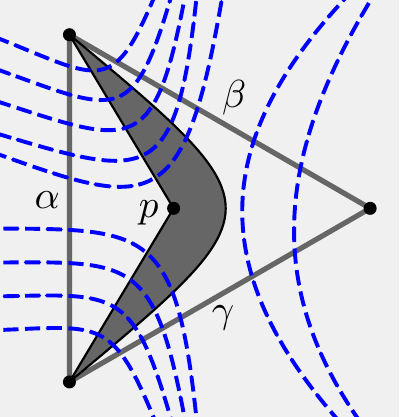}}\qquad\quad\scalebox{0.8}{\includegraphics{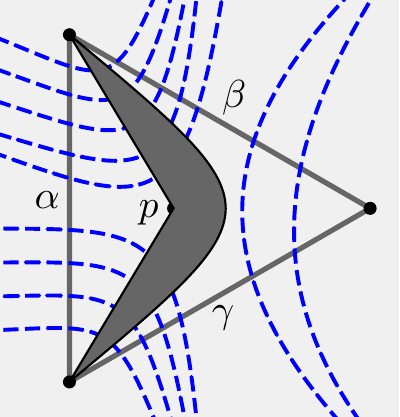}}
\caption{The identity map in terms of quasi-laminations}
\label{situated}
\end{figure}
The arc $\alpha$ cannot coincide with $\beta$ or with $\gamma$, because if so, then $\alpha$ has an endpoint incident to only one arc, violating the third condition in Definition~\ref{resect compat tri def}.
Possibly $\beta$ and $\gamma$ coincide, so that this is a self-folded triangle, but for now we assume not.

Assume as in the definition of shear coordinates that pairwise compatible isotopy representatives of the curves in $\Lambda$ have been chosen so that no curve crosses an arc and then immediately doubles back to cross in the opposite direction. 
Consider all intersections of the curves in $\Lambda$ with the triangle $\alpha\beta\gamma$.
A single curve in the quasi-lamination may intersect this triangle many times, and even infinitely many times if it spirals into a vertex of the triangle.

In the first step of constructing $\Lambda'$ from $\Lambda$, we only alter the curves inside the triangle.
Up to isotopy of curves in $(\S,\M)$, we can assume that no curve connecting $\alpha$ and $\beta$ (inside the triangle) separates the point $p$ from the arc $\gamma$.
Making that assumption for all cyclic permutations of $\alpha$, $\beta$, and $\gamma$, we can take $p$ to be situated with respect to the curves as illustrated in the left picture of Figure~\ref{situated}.
Furthermore, we can assume that the curves intersect the new boundary component as illustrated in the middle picture of Figure~\ref{situated}.
In particular, the boundary component does not intersect any of the curves connecting $\beta$ to $\gamma$.
Now remove from each curve its intersection with the new boundary component, as illustrated in the right picture of Figure~\ref{situated}.
This cuts many curves in $\Lambda$ into smaller pieces.
The resulting collection $\Lambda'$ of curves may not be a quasi-lamination because it may contain some curves that are not allowable (hereafter, ``bad curves'') and because we have not yet checked that the curves are pairwise compatible.
However, if each piece inherits a weight from the original weighted collection and we compute shear coordinates of the new weighted collection, including the bad curves, we obtain again the shear coordinates of the original collection.

We have constructed $\Lambda'$ so that each bad curve in $\Lambda'$ fits one of the following two descriptions:
\begin{itemize}
\item It has two endpoints on a boundary segment and is contractible to a portion of the boundary containing one marked point.
\item It has both endpoints on the same boundary segment and, with the portion of the boundary between its endpoints, cuts out a once-punctured disk.
\end{itemize}
The first type of bad curve can occur infinitely many times when one or more curves in $\Lambda$ spirals around an endpoint of $\alpha$.
Each such curve has all shear coordinates zero, we can delete all such curves from $\Lambda'$ without changing the shear coordinates.
The second type of bad curve makes a nonzero contribution to the shear coordinates of~$\Lambda'$.
We replace each such bad curve by two curves that start on the same boundary component as the bad curve and spiral in opposite directions around the puncture that the bad curve encloses, as illustrated in Figure~\ref{replace fig}.
\begin{figure}[ht]
\scalebox{1}{\includegraphics{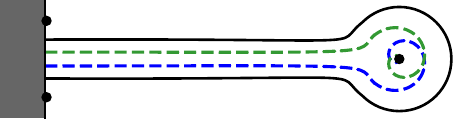}}
\caption{Replacing a bad curve with two compatible allowable curves}
\label{replace fig}
\end{figure}
(The bad curve is the solid curve, and the others are dashed.)
After these modifications, distinct curves in $\Lambda'$ may coincide up to isotopy, but if so, we delete repetitions of curves and adjust weights accordingly.

By construction, $\Lambda'$ has the same shear coordinates as $\Lambda$, and it remains only to show that the curves in $\Lambda'$ are pairwise compatible.
If all curves in $\Lambda$ are pairwise non-intersecting, then all pairs of curves in $\Lambda'$ are either non-intersecting or are compatible because they arose as in Figure~\ref{replace fig}.
Consider two curves in $\Lambda$ that are compatible because they are identical expect for spiraling in the opposite direction at exactly one of their endpoints.
If they spiral into a point other than an endpoint of $\alpha$, then they are cut into one or more curves in $\Lambda'$ that remain compatible.
If they spiral into an endpoint of $\alpha$, but not just after passing through the triangle $\alpha\beta\gamma$, then they are cut into infinitely many pieces, most of which are discarded, and the remaining pieces are non-intersecting.
However, if they pass through the triangle $\alpha\beta\gamma$ just before spiraling into an endpoint of $\alpha$ (and if we forget the third condition in Definition~\ref{resect compat tri def}), then the resulting non-discarded pieces may intersect, as illustrated in the left two pictures of Figure~\ref{situated spiral}.
\begin{figure}[ht]
\scalebox{0.8}{\includegraphics{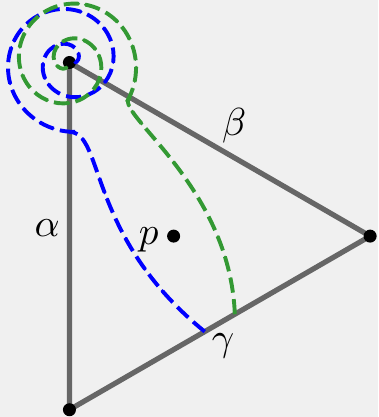}}\qquad\quad\scalebox{0.8}{\includegraphics{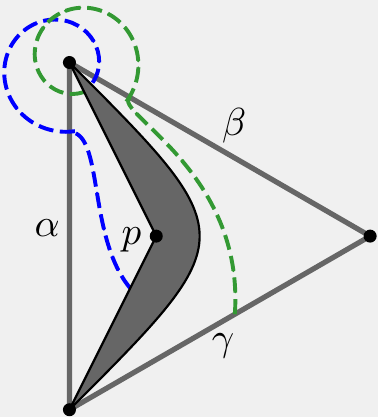}}\qquad\quad\scalebox{0.8}{\includegraphics{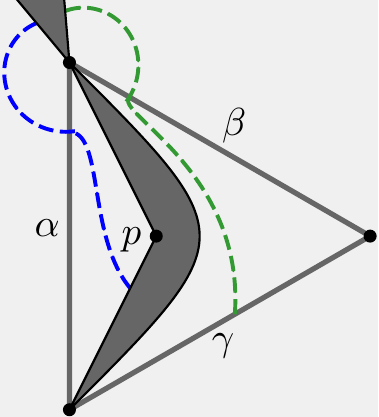}}
\caption{A problem ruled out by Definition~\ref{resect compat tri def}}
\label{situated spiral}
\end{figure}
However, the third condition in Definition~\ref{resect compat tri def} requires that some other arc incident to the endpoint of $\alpha$ is also resected, so this problem never occurs.
See the right picture of Figure~\ref{situated spiral}.

The preceding argument assumed that $\beta$ and $\gamma$ do not coincide.
Assuming now that $\beta$ and $\gamma$ coincide, we can't compute shear coordinates at $\beta=\gamma$ without invoking the special rule for fold edges of self-folded triangles.
The resection cuts off a once-punctured digon (the self-folded triangle), triangulated by the single arc $\beta=\gamma$.
The other triangle (besides the self-folded $\alpha\beta\gamma$) having $\alpha$ as an edge is not self-folded, because if so, $(\S,\M)$ is a three-times-punctured sphere.
Thus the situation is as pictured in Figure~\ref{betagamma}.
\begin{figure}[ht]
\scalebox{0.75}{\includegraphics{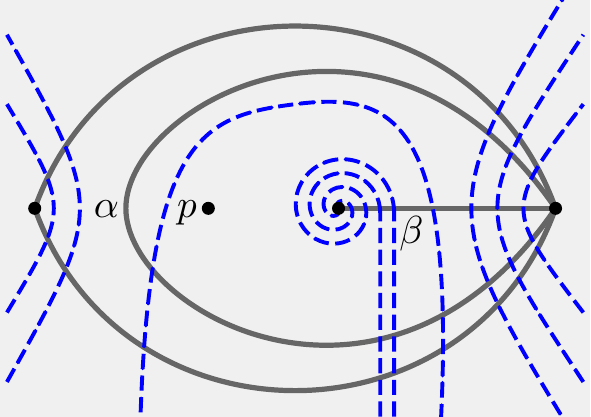}}\qquad\quad\scalebox{0.75}{\includegraphics{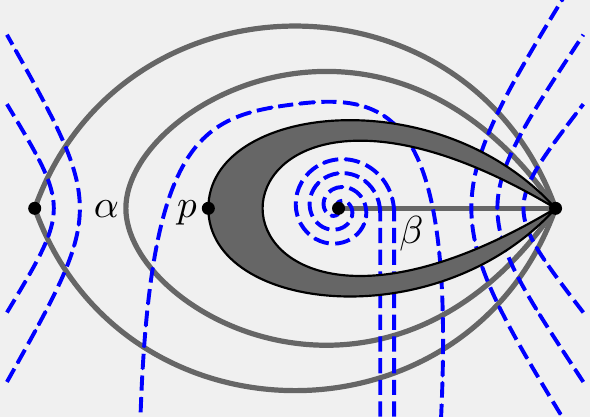}}
\caption{The identity map on quasi-laminations for $\beta=\gamma$}
\label{betagamma}
\end{figure}
In the left picture of the figure, the curves that come from below and spiral counterclockwise into the puncture contribute positively to the shear coordinate of $\beta$ with respect to $T$.
The curves that come from below, go around the puncture, and return downwards also contribute positively.
Curves that come from above and spiral clockwise (not pictured) or curves that come from above, go around the puncture and return upwards (also not pictured) would contribute negatively.
(Curves from below that spiral clockwise and curves from above that spiral counterclockwise contribute zero.)

We place the point $p$ as shown in the figure (or similarly if instead there are curves from above that go around the puncture and return upwards).
By the same construction outlined above, we construct a pairwise compatible collection of allowable curves in the components of $(\S',\M')$ aside from the once-punctured monogon cut off by the resection.
The construction leaves some pieces of curves in the once-punctured monogon, as indicated in the right picture of Figure~\ref{betagamma}.
There are 7 types of curves, and we need to distinguish them according to their behavior in the digon of $T$ from which the monogon was cut.

First, there are 4 types of curves with spirals: two spiral directions, with curves originating from the top or bottom of the digon.
We delete curves from below that spiral clockwise and curves from above that spiral counterclockwise, and retain the other two types.
(Since the original collection of curves was pairwise compatible, at most one of the other two types is present.)
Second, there are 3 types of curves that enter the monogon, cross $\beta$, and leave the digon.
The first type come from curves that cross the digon from top to bottom;  we delete these.
The second type come from curves that come from the bottom of the digon, go around the puncture, and exit the bottom of the digon.
We replace each such curve with a new curve (with the same weight) that enters the monogon and spirals counterclockwise into the puncture.
The third type come from curves that come from the top of the digon, go around the puncture, and exit the top of the digon.
Each of these is replaced with a curve that enters the monogon and spirals clockwise into the puncture.
Again, we delete repetitions of curves and adjust weights accordingly.

These weighted curves, together with the curves we constructed outside of the monogon, are pairwise compatible and have the same shear coordinates as the original weighed collection.
This is the desired new collection $\Lambda'$.

We have proved the first assertion of Theorem~\ref{ref phen resect}.
If also every component of $(\S,\M)$ and $(\S',\M')$ has the Curve Separation Property, then Theorem~\ref{rat FB surfaces} implies the second assertion of Theorem~\ref{ref phen resect}.
\end{proof}


\begin{figure}
\scalebox{0.4}{\includegraphics{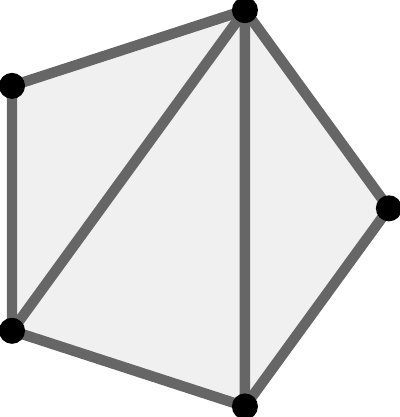}}\quad\raisebox{21pt}{\LARGE$\longrightarrow$}\quad\scalebox{0.4}{\includegraphics{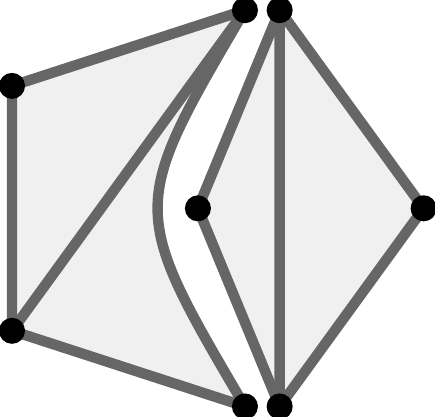}}\qquad\raisebox{21pt}{\LARGE$=$}\qquad\!\scalebox{0.4}{\includegraphics{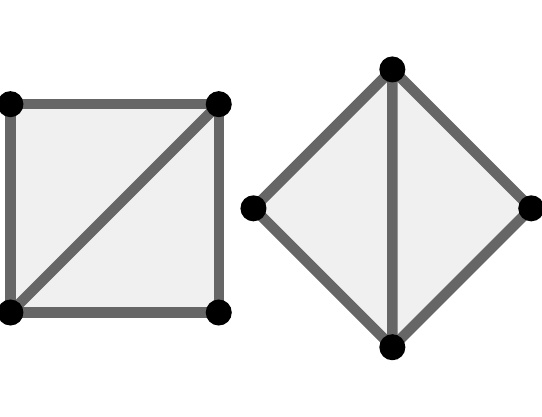}}
\caption{A resection of the pentagon}
\label{pent resect fig}
\end{figure}
\begin{figure}  
\begin{tabular}{cccc}
\scalebox{0.5}{\includegraphics{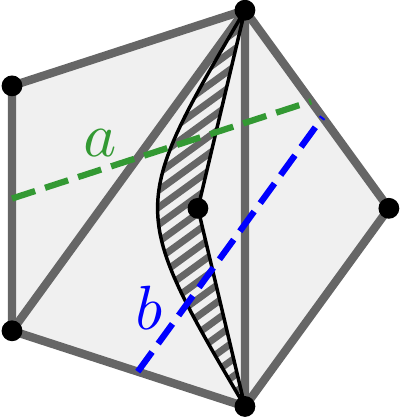}}&&\raisebox{26pt}{\LARGE$\mapsto$}&\raisebox{4pt}{\quad\scalebox{0.6}{\includegraphics{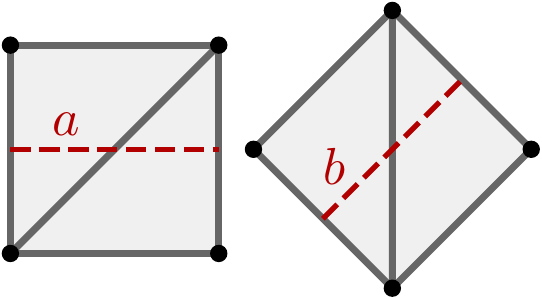}}}\\\hline\\[-8pt]
\scalebox{0.5}{\includegraphics{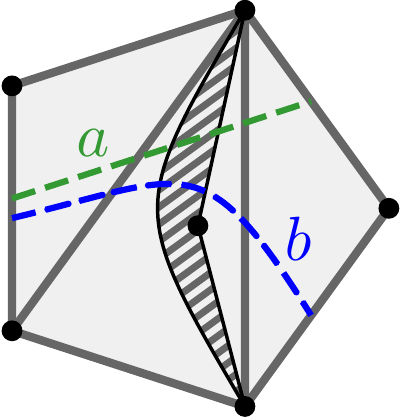}}&&\raisebox{26pt}{\LARGE$\mapsto$}&\raisebox{4pt}{\quad\scalebox{0.6}{\includegraphics{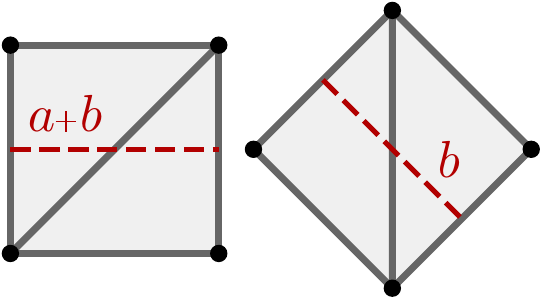}}}\\\hline\\[-8pt]
\scalebox{0.5}{\includegraphics{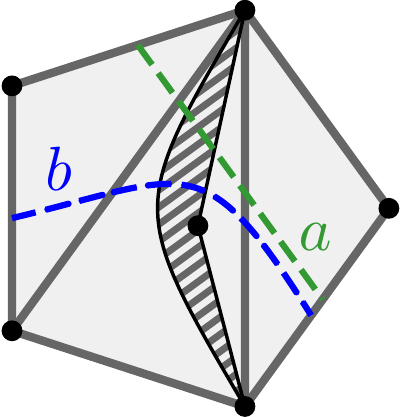}}&&\raisebox{26pt}{\LARGE$\mapsto$}&\raisebox{4pt}{\quad\scalebox{0.6}{\includegraphics{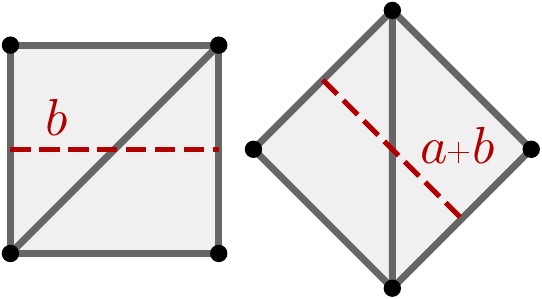}}}\\\hline\\[-8pt]
\scalebox{0.5}{\includegraphics{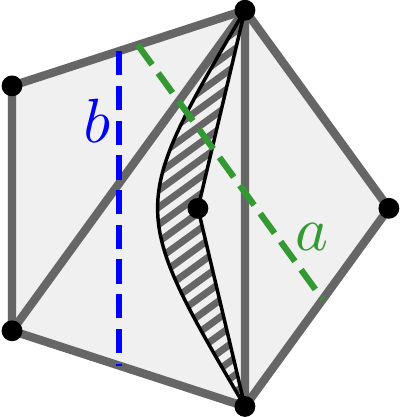}}&&\raisebox{26pt}{\LARGE$\mapsto$}&\raisebox{4pt}{\quad\scalebox{0.6}{\includegraphics{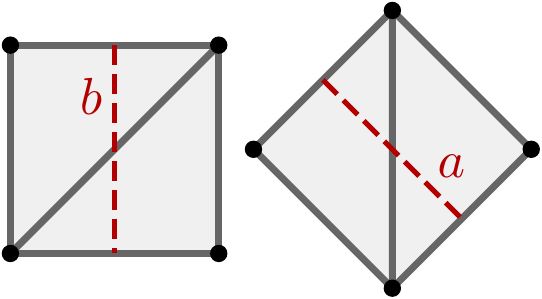}}}\\\hline\\[-8pt]
\scalebox{0.5}{\includegraphics{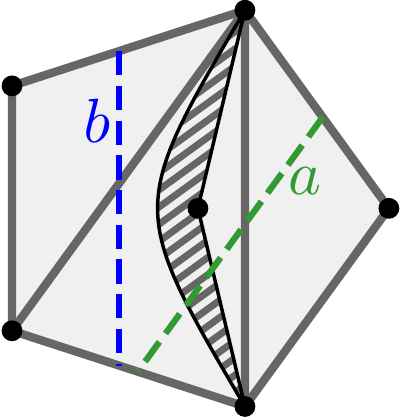}}&&\raisebox{26pt}{\LARGE$\mapsto$}&\raisebox{4pt}{\quad\scalebox{0.6}{\includegraphics{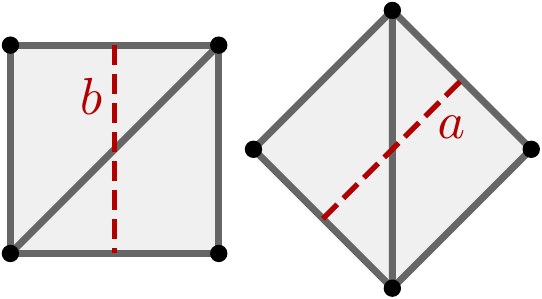}}}\\
\end{tabular}
\caption{The bijection between quasi-laminations on the pentagon and quasi-laminations on the union of two squares}
\label{simple fig}
\end{figure}

\begin{proof}[Proof of Theorem~\ref{id phen resect}]
By Theorem~\ref{null tangle theorem} and Proposition~\ref{positive cone basis}, $\rationals^{B(T)}$ admits a cone basis.
Theorem~\ref{rat FB surfaces} says that $\F_\rationals(T)$ is the rational part of $\F_{B(T)}$ and $\F_\rationals(T')$ is the rational part of $\F_{B(T')}$.
By Theorem~\ref{ref phen resect} and Proposition~\ref{rat part refine mulin}, the identity map $\rationals^B\to\rationals^{B'}$ is mutation-linear.
\end{proof}

\begin{example}\label{simple resection}  
We illustrate the shear-coordinate-preserving bijection from rational quasi-laminations in a surface to rational quasi-laminations in a resected surface for the case of resecting an arc in a triangulated pentagon.
The resection is shown in Figure~\ref{pent resect fig}.
The quasi-laminations of the pentagon are represented in the left column of Figure~\ref{simple fig}.
Each is given by two curves, each with a weight (labeled $a$ or $b$ in the figure).
The right column shows the image of each quasi-lamination under the bijection.
The five rows represent the five maximal cones in the mutation fan for the pentagon.
The mutation fan for the union of two squares has four maximal cones, one of which is the union of two cones of the fan for the pentagon.
\end{example}

\begin{remark}\label{works}
The proof of Theorem~\ref{id phen resect} works because the bijection on quasi-laminations is simple in the direction we described.
Specifically, the curves appearing in the quasi-lamination $L'$ depend only on the curves in $L$.
By contrast, in the inverse direction, weights on curves in $L'$ must be taken into account in order to determine what curves appear in $L$.
For example, it seems nearly hopeless to describe the inverse maps in the following example, which continues Example~\ref{resect ex}.
\end{remark}

\begin{example}\label{resect fan ex}
Figures~\ref{torusfan fig}, \ref{annulusfan fig}, and~\ref{hexagonfan fig} show the mutation fans associated to the three triangulated surfaces of Example~\ref{resect ex}.
\begin{figure}
\scalebox{0.8}{\includegraphics{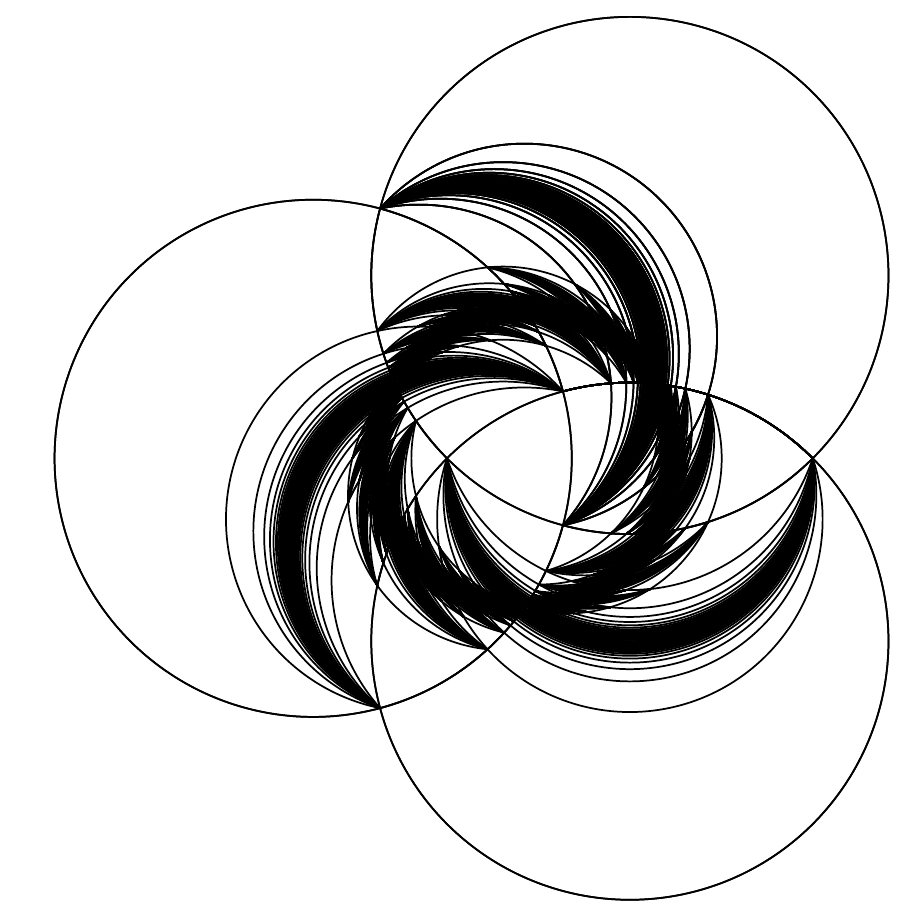}}
\caption{The mutation fan for the once-punctured torus}
\label{torusfan fig}
\end{figure}
Each figure is a projection of a fan in $\reals^3$.
The intersection of the fan with a sphere about the origin is a decomposition of the sphere, which is projected stereographically to the plane to produce the picture shown.
The figures are placed so that paging through the electronic version of this paper shows the refinement relationship guaranteed by Theorem~\ref{id phen resect}.
\end{example}

\section{Refinement of scattering fans}\label{scat sec}
In the introduction, we mentioned some examples of Phenomenon~\ref{ref scat phen} that follow from examples of Phenomenon~\ref{ref phen}.
In this section, we establish an example of Phenomenon~\ref{ref scat phen} that does not follow from an example of Phenomenon~\ref{ref phen}.

We will not define the scattering fan completely, but instead, we will indicate what kind of an object it is and quote results that describe its properties.
Scattering fans are defined by scattering diagrams, as we now describe.
For more on scattering diagrams, see \cite{GHKK}.
For an exposition more suited to the goals of this paper, see~\cite{scatfan}.

\begin{figure}
\scalebox{0.8}{\includegraphics{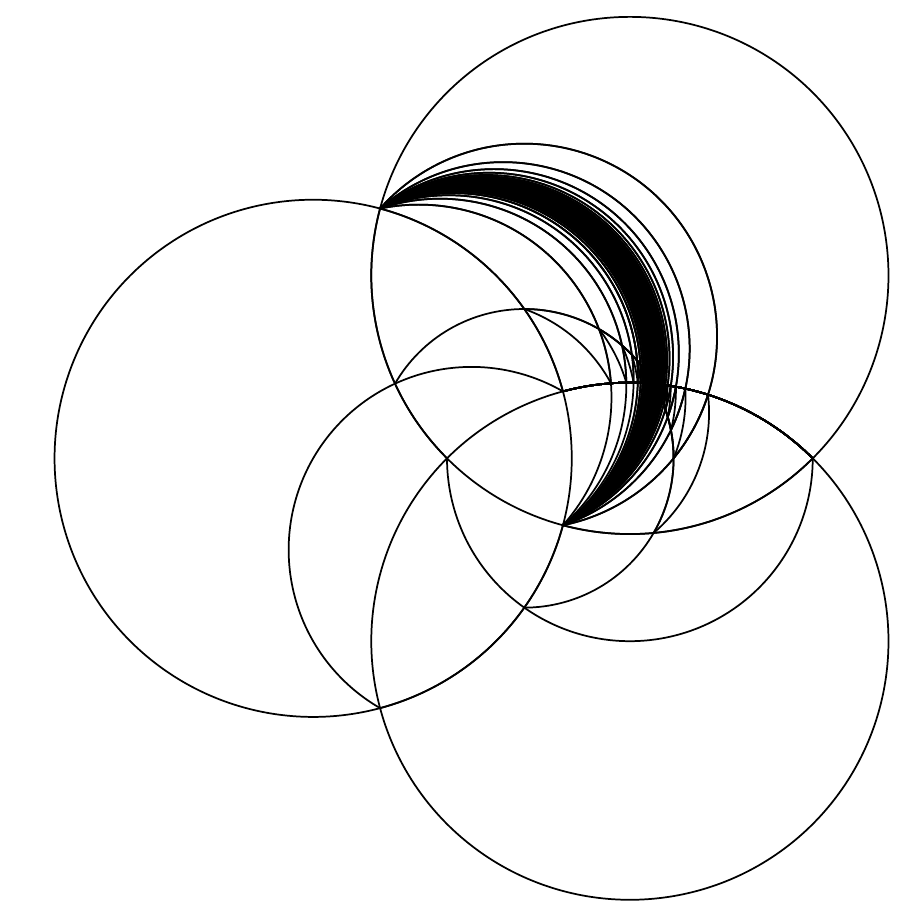}}
\caption{The mutation fan for the annulus}
\label{annulusfan fig}
\end{figure}

A scattering diagram $\D$ is a collection of \newword{walls}.
Each wall is a pair $(\d,f_\d)$, where $\d$ is a codimension-$1$ rational cone in $\reals^n$, normal to a nonzero vector with nonnegative entries and $f_\d$ is a formal power series in $n$ variables with constant term~$1$.
The scattering diagram may have infinitely many walls, but for each $k\ge 1$ only finitely many walls have nonzero coefficients on terms of degree $d$ with $0<d\le k$.
To each sufficiently generic path $\gamma$ in $\reals^n$, we associate the sequence of walls crossed by $\gamma$, and using this sequence and the exchange matrix $B$, we define a certain automorphism of a multivariate formal power series ring.
(Some issues arise when $\D$ is infinite, but these are resolved using the finiteness requirement for each $k$ and taking a limit.)
The \newword{support} of $\D$ is the union of its walls.
A scattering diagram is \newword{consistent} if the automorphism arising from a path depends only on the endpoints of the path.
Two scattering diagrams $\D$ and $\D'$ are \newword{equivalent} if any (sufficiently generic) path defines the same automorphism with respect to both scattering diagrams.
We only care about scattering diagrams up to equivalence.

Two equivalent scattering diagrams may have different supports.
(For example, one can add any wall $(\d,f_\d)$ with $f_\d=1$ to a scattering diagram, and the new scattering diagram is equivalent to the old one, but the support may have changed.
For less trivial ways that two scattering diagrams may have different supports, see the paragraph before \cite[Proposition~2.8]{scatfan}.)
A scattering diagram has \newword{minimal support} if its support is minimal (under containment) among scattering diagrams in its equivalence class.
Every scattering diagram is equivalent to a scattering diagram with minimal support \cite[Proposition~2.8]{scatfan}.
Two different scattering diagrams in the same equivalence class may have minimal support, but if so, their support is the same.

Given a scattering diagram $\D$ with minimal support and a vector~$\n$, the \newword{rampart} associated to $\n$ is the union of all walls of $\D$ that are normal to~$\n$.
Given $p\in\reals^n$, let $\Ram_\D(p)$ be the set of ramparts of $\D$ containing $p$.
We say points $p$ and $q$ in $\reals^n$ are \newword{$\D$-equivalent} if there is a path from $p$ to $q$ on which $\Ram_\D(\,\cdot\,)$ is constant.
The closures of $\D$-equivalence classes are called \newword{$\D$-cones}.
The set $\Fan(\D)$ of all $\D$-cones and their faces is a complete fan \cite[Theorem~3.1]{scatfan}.  

The \newword{cluster scattering diagram} associated to an exchange matrix $B$ is the unique (up to equivalence) consistent scattering diagram satisfying certain conditions.
We do not need full details here, but one of the conditions is that each coordinate hyperplane in $\reals^n$ is a wall of the cluster scattering diagram.
The other condition uses $B$ to place limits on the other walls that can appear.
The \newword{scattering fan} $\ScatFan(B)$ is the fan defined by the cluster scattering diagram for~$B$.

\begin{figure}
\scalebox{0.8}{\includegraphics{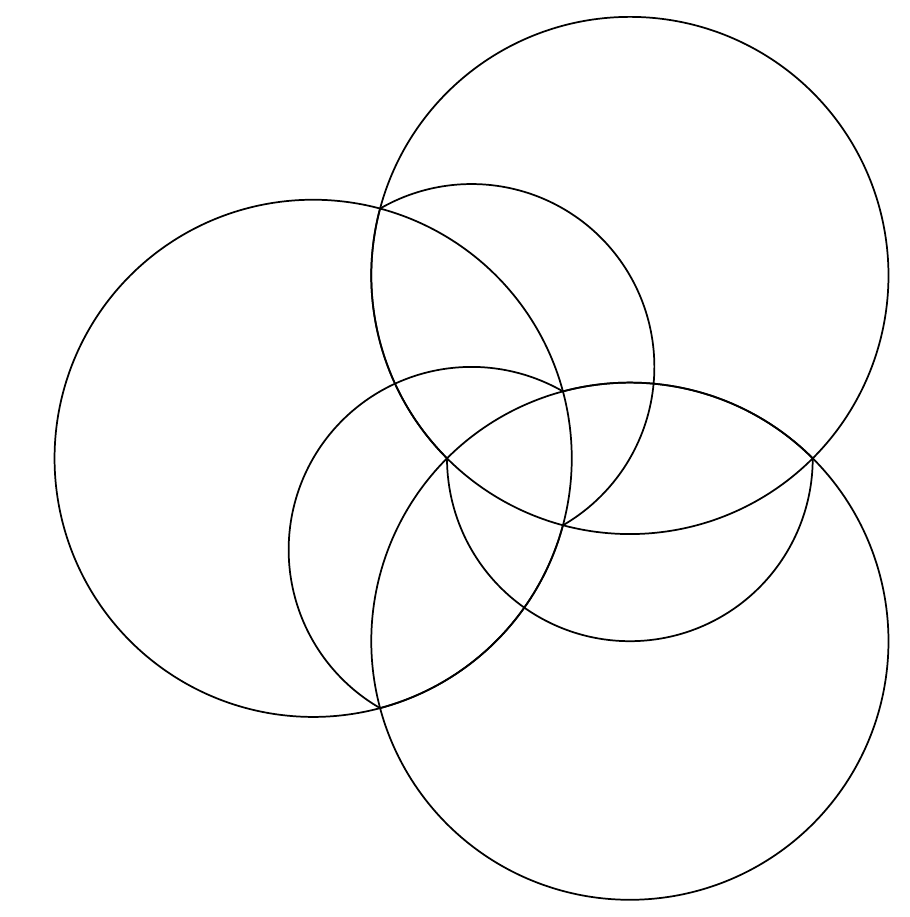}}
\caption{The mutation fan for the hexagon}
\label{hexagonfan fig}
\end{figure}

We now prove Theorem~\ref{ref scat phen 2x2}. 
We make use of the background from \cite[Section~9]{universal} (already quoted in Section~\ref{2x2 ref sec}) on the mutation fan $\F_B$ for a $2\times2$ exchange matrix $B=\begin{bsmallmatrix*}[r]0&a\\b&0\end{bsmallmatrix*}$. 
A slope that is not between $s_\infty(a,b)$ and $t_\infty(a,b)$ is a relevant slope for $\ScatFan(B)$ if and only if it is a relevant slope for $\F_B$.
The Discreteness Hypothesis is the assertion that, for $ab<-4$, every slope weakly between $s_\infty(a,b)$ and $t_\infty(a,b)$ is a relevant slope (in the sense of Section~\ref{2x2 ref sec}) for the scattering fan $\ScatFan(B)$.

\begin{proof}[Proof of Theorem~\ref{ref scat phen 2x2}]
Suppose $B=\B ab$ and $B'=\B cd$ are $2\times2$ exchange matrices.
If neither is wild, then since the scattering fan coincides with the mutation fan in the non-wild $2\times2$ case, Theorem~\ref{ref phen 2x2}\eqref{neither wild} implies that $\ScatFan(B)$ refines $\ScatFan(B')$ if and only if $B$ dominates $B'$.
If $B'$ is wild and $B$ is not, then $\ScatFan(B')$ has irrational relevant slopes, while $\ScatFan(B)$ has only rational slopes, so $\ScatFan(B)$ does not refine $\ScatFan(B')$.

If neither is of finite type, we first show that dominance is necessary for refinement.
Indeed, suppose that $B$ does not dominate $B'$, so that either $a<c$ or $b>d$.
If $a<c$, then the smallest relevant slope of $\ScatFan(B)$ is $s_0(a,b)=-a$, while the smallest relevant slope of $\ScatFan(B')$ is $s_0(c,d)=-c$.
Thus the smallest relevant slope of $\ScatFan(B')$ is not a relevant slope of $\ScatFan(B)$, so $\ScatFan(B)$ does not refine $\ScatFan(B')$.
Similarly, if $b>d$, then the two largest relevant slopes are $t_0(a,b)=\frac1b$ and $t_0(c,d)=\frac1d$, and thus $\ScatFan(B)$ does not refine $\ScatFan(B')$.

Continuing in the case where neither $B$ nor $B'$ is of finite type, we show that dominance is sufficient for refinement.
It is enough to consider two cases:  the case where $c=a$ and $d=b+1$ and the case where $c=a-1$ and $d=b$.
First, consider the case where $c=a$ and $d=b+1$.
In this case, $s_0(a,b)=-a=-c=s_0(c,d)$.
If $a>1$, then one can show that $s_\infty(a,b)<s_1(c,d)$ and $t_0(c,d)<t_\infty(a,b)$.
(Using the fact that $a>0$ and $b<b+1<0$, both of these inequalities can be shown to be equivalent to $(1-a)b>a$, which holds because $a>1$ and $b<-1$.)
Now the Discreteness Hypothesis implies that every relevant slope of $\ScatFan(B')$ is a relevant slope of $\ScatFan(B)$ as desired.

If on the other hand, $a=1$, then $s_2(a,b)=-a\frac{-ab-2}{-ab-1}=\frac{-b-2}{b+1}$ and $s_1(c,d)=\frac{-cd-1}{d}=\frac{-a(b+1)-1}{b+1}=\frac{-b-2}{b+1}$.
Furthermore, $t_1(a,b)=-a\frac1{-ab-1}=\frac1{b+1}$ and $t_0(c,d)=\frac1{b+1}$.
To complete the proof in the case where $a>1$, we show that $s_\infty(a,b)<s_2(c,d)$ and $t_1(c,d)<t_\infty(a,b)$.
(Both equations are shown to be equivalent to $b<-4$, which holds because $a=1$ and $a(b+1)\le-4$.)
Again the Discreteness Hypothesis now implies that every relevant ray of $\ScatFan(B')$ is a relevant ray of $\ScatFan(B)$.

In the case where $c=a-1$ and $d=b$, we argue similarly or appeal to symmetry.

We have proven the theorem in the case where both $B$ and $B'$ are non-wild, in the case where both $B$ and $B'$ are of infinite type, and in the case where $B'$ is of wild type but $B$ is not. 
It thus remains to prove the theorem in the case where~$B'$ is of finite type and $B$ is of wild type.
If $B$ dominates $B'$, then there is an exchange matrix $B''$ of affine type such that $B''$ dominates $B'$ and $B$ dominates~$B''$.
Concatenating the results we have proved, we see that $\ScatFan(B)$ refines $\ScatFan(B')$.
If $B=\B ab$ does not dominate $B'=\B cd$, then there are two possibilities:
Either $a<c$ or $b>d$.
Notice that in every case (as shown in Table~\ref{fin slopes}), the smallest relevant slope of $\ScatFan(B')$ is $-c$.
The smallest relevant slope of $\ScatFan(B)$ is $s_0(a,b)=-a$.
Thus if $a<c$, $\ScatFan(B)$ does not refine $\ScatFan(B')$.
Similarly, the largest relevant slope of $\ScatFan(B')$ is $\frac1d$ and the largest relevant slope of $\ScatFan(B)$ is $\frac1b$.
Thus also if $b>d$, $\ScatFan(B)$ does not refine $\ScatFan(B')$.
We have checked that in every $2\times2$ case, $\ScatFan(B)$ refines $\ScatFan(B')$ if and only if $B$ dominates $B'$.
\end{proof}

\section{Morphisms of cluster algebras}\label{morph sec}
In this section, we prove results and describe computer verifications that provide examples of Phenomenon~\ref{hom phen}.

\subsection{General considerations}\label{easy obs}
Let $B$ be an $n\times n$ exchange matrix and let $\x=x_1,\ldots,x_n$ and $\y=y_1,\ldots,y_n$ be indeterminates.
Let $L$ stand for $\integers[\x^{\pm1},\y]=\integers[x_1^{\pm1},\ldots,x_n^{\pm1},y_1,\ldots,y_n]$, the ring of Laurent polynomials in $\x$ with coefficients integer polynomials in $\y$.
Let $K$ stand for the field of rational functions in $\x$ with coefficients integer polynomials in~$\y$.
Importantly to what follows, we do \emph{not} invert the $\y$.
(That is, we work with ordinary polynomials in $\y$, not Laurent polynomials.)

A \newword{seed} is a pair $(\tB,(v_1,\ldots,v_n))$, where $\tB$ is a $2n\times n$ integer matrix (an \newword{extended exchange matrix}) whose top $n$ rows are an exchange matrix and $(v_1,\ldots,v_n)$ is an ordered $n$-tuple of elements of $K$.
The tuple $(v_1,\ldots,v_n)$ is called a \newword{cluster}.
(This is a special case of the definition of a seed of geometric type \cite{ca4}.)
Given a seed $(\tB,(v_1,\ldots,v_n))$ and $k\in\set{1,\ldots,n}$, we can \newword{mutate} to obtain a new seed $\mu_k(\tB,(v_1,\ldots,v_n))=(\tB',(v'_1,\ldots,v'_n))$, where $\tB'=\mu_k(\tB)$ is described in \eqref{b mut}, $v'_i=v_i$ for $i\neq k$, and $v_k$ is described by the \newword{exchange relation}
\begin{equation}\label{x mut}
v_kv'_k=\prod_{i=1}^nv_i^{[b_{ik}]_+}\prod_{i=1}^ny_i^{[b_{(n+i)k}]_+}+\prod_{i=1}^nv_i^{[-b_{ik}]_+}\prod_{i=1}^ny_i^{[-b_{(n+i)k}]_+},
\end{equation}
where the $b_{ik}$ are entries of $B_t$ and $[b]_+$ means $\max(0,b)$.

We take as an \newword{initial seed} the pair $\bigl(\begin{bsmallmatrix}B\\I\end{bsmallmatrix},(x_1,\ldots,x_n)\bigr)$, where $B$ is an exchange matrix and $I$ is the identity matrix.
A \newword{cluster variable} with respect to this initial seed is an entry in a cluster that can be obtained from $\bigl(\begin{bsmallmatrix}B\\I\end{bsmallmatrix},(x_1,\ldots,x_n)\bigr)$ by an arbitrary sequence of mutations.
Typically there are infinitely many cluster variables. 
If not, then $B$ is of \newword{finite type}.
The \newword{principal coefficients cluster algebra} $\A_\bullet(B)$ is the subring of $K$ generated by all cluster variables and the $\y$.
By the Laurent Phenomenon \cite[Theorem~3.1]{ca1}, the cluster algebra $\A_\bullet(B)$ is a subring of $L$.
Write $\Var_\bullet(B)$ for the set of cluster variables in $\A_\bullet(B)$.

Consider a $\integers^n$-grading of $L=\integers[\x^\pm,\y]$ given by setting the degree of each $x_k$ to be the standard basis vector $\e_k$ and setting the degree of each $y_k$ to be the negative of the $k\th$ column of $B$. 
If an element $x$ of $L$ is homogeneous with respect to this grading, then its $\integers^n$-degree is called the \newword{$\g$-vector} of $x$.
Each cluster variable in $\A_\bullet(B)$ is homogeneous in this grading and thus has a $\g$-vector \cite[Proposition~6.1]{ca4}.

We construct another cluster algebra, just as above, but with primes on every symbol.
Thus $B'$ is an $n'\times n'$ matrix, $L'$ is the ring of Laurent polynomials in $\x'=x'_1,\ldots,x'_{n'}$ with coefficients polynomials in $\y'=y'_1,\ldots,y'_{n'}$, and we define $\A_\bullet(B')$ with initial seed $\bigl(\begin{bsmallmatrix}B\\I\end{bsmallmatrix},\x\bigr)$ and cluster variables $\Var_\bullet(B')$, etc.
 
Given a set map $\nu$ from $\set{\x',\y'}$ to $L$, there is a unique ring homomorphism from $L'$ to $K$ agreeing with $\nu$ on $\set{\x',\y'}$.
We reuse the symbol $\nu$ for the homomorphism.
The following observation is immediate from the definition of~$\A_\bullet(B')$.

\begin{prop}\label{hom from initial}
Suppose $\nu:\set{\x',\y'}\to L$ is a set map and reuse the symbol $\nu$ for the extension to a ring homomorphism from $L'$ to $K$.
If $\nu(\Var_\bullet(B'))\subset\A_\bullet(B)$, then $\nu$ restricts to a ring homomorphism from $\A_\bullet(B')$ to $\A_\bullet(B)$.
\end{prop}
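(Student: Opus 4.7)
The plan is to reduce the conclusion to a statement about generators. Since $\A_\bullet(B')$ is, by definition, the subring of $K'$ generated by $\Var_\bullet(B')$ together with $\y'=y'_1,\ldots,y'_{n'}$, and since $\nu$ is already a ring homomorphism from $L'$ to $K$, the conclusion that $\nu$ restricts to a homomorphism $\A_\bullet(B')\to\A_\bullet(B)$ is equivalent to the assertion that $\nu$ sends each generator into $\A_\bullet(B)$. The hypothesis directly handles $\Var_\bullet(B')$ (which includes the initial cluster variables $x'_1,\ldots,x'_{n'}$), so the real content of the proposition is to check that $\nu(y'_k)\in\A_\bullet(B)$ for every $k$.

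To obtain this, I would use the exchange relation at $k$ applied to the initial seed of $B'$. Let $(x'_k)^{*}$ denote the cluster variable produced by a single mutation of the initial seed at index $k$; since we are in principal coefficients, the coefficient row indexed by $i$ at the initial seed is $\e_i$, so \eqref{x mut} specializes to
\begin{equation*}
x'_k\,(x'_k)^{*}\;=\;\Bigl(\prod_i (x'_i)^{[b'_{ik}]_+}\Bigr)\,y'_k\;+\;\prod_i (x'_i)^{[-b'_{ik}]_+}.
\end{equation*}
This is an identity in $L'$ between elements of the subring $\integers[\Var_\bullet(B')\cup\y']=\A_\bullet(B')$. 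Applying the ring homomorphism $\nu$ and rearranging yields
\begin{equation*}
\Bigl(\prod_i\nu(x'_i)^{[b'_{ik}]_+}\Bigr)\cdot\nu(y'_k)\;=\;\nu(x'_k)\,\nu\bigl((x'_k)^{*}\bigr)\;-\;\prod_i\nu(x'_i)^{[-b'_{ik}]_+}.
\end{equation*}
The right-hand side lies in $\A_\bullet(B)$, because every $\nu(x'_i)$ and $\nu((x'_k)^{*})$ is an element of $\Var_\bullet(B')$ and the hypothesis forces all such images into $\A_\bullet(B)$. Thus the monomial $M_k:=\prod_i\nu(x'_i)^{[b'_{ik}]_+}$ times $\nu(y'_k)$ lies in $\A_\bullet(B)$.

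The main obstacle is the last step: passing from $M_k\cdot\nu(y'_k)\in\A_\bullet(B)$ to $\nu(y'_k)\in\A_\bullet(B)$. Since $\nu(y'_k)\in L$ by construction of the set map $\nu$, we already know the division is valid in the Laurent ring $L$, so the remaining question is one of closure of $\A_\bullet(B)$ under dividing out cluster-variable factors that are known to divide within $L$. To close this gap I would invoke the $\integers^n$-grading by $\g$-vectors (homogeneity of the exchange relation under the grading determines $\nu(y'_k)$ as a homogeneous element of $L$) together with an expansion of $\nu((x'_k)^{*})\nu(x'_k)-\prod_i\nu(x'_i)^{[-b'_{ik}]_+}$ in terms of the $F$-polynomial and $\hy$-variables described before the proposition; one checks that the cancellation of the monomial $M_k$ happens already at the level of the Laurent-polynomial expansion of a cluster monomial, so that the quotient is visibly a $\integers$-linear combination of cluster monomials of $\A_\bullet(B)$. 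Once $\nu(y'_k)\in\A_\bullet(B)$ is established for each $k$, the proposition follows immediately from the generating property of $\Var_\bullet(B')\cup\y'$ and the fact that $\nu$ is a ring homomorphism on $L'$.
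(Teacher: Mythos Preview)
The paper gives no argument beyond the sentence ``immediate from the definition of~$\A_\bullet(B')$.'' The intended reasoning is simply that $\A_\bullet(B')$ is the subring of $L'$ generated by $\Var_\bullet(B')$ together with $y'_1,\ldots,y'_{n'}$, so once those generators land in $\A_\bullet(B)$, so does everything.

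You have correctly noticed that, read literally, the stated hypothesis only controls $\nu(\Var_\bullet(B'))$ and says nothing about $\nu(y'_k)$, which \emph{a priori} lies only in $L$. In every use of the proposition in the paper, however, the values $\nu(y'_k)$ are given explicitly as elements of $\A_\bullet(B)$ (for the map $\nu_\z$ one has $\nu_\z(y'_k)=y_kz_k$ with $z_k$ a cluster monomial or theta function), so the ``immediate'' argument applies with that tacit extra datum. The honest fix is to add $\nu(\{\y'\})\subset\A_\bullet(B)$ to the hypotheses, after which the proof really is one line.

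Your attempt to \emph{deduce} $\nu(y'_k)\in\A_\bullet(B)$ from $\nu(\Var_\bullet(B'))\subset\A_\bullet(B)$ alone does not go through. The exchange-relation manipulation is fine and gives $M_k\cdot\nu(y'_k)\in\A_\bullet(B)$ with $M_k=\prod_i\nu(x'_i)^{[b'_{ik}]_+}$, but the division step is a genuine obstacle: nothing in the hypothesis forces the $\nu(x'_i)$ to be cluster variables, to be homogeneous for the $\g$-vector grading, or to have any relation to $F$-polynomials or $\hy$-variables, so the appeal to those structures at the end is not justified. Cluster algebras are not in general closed under dividing by an element that happens to divide in the ambient Laurent ring, and you have not supplied a substitute mechanism. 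So the last paragraph of your proposal is a gap, not a proof sketch; the correct resolution is to strengthen the hypothesis rather than to attempt this division.
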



Given a set map $\nu:\Var_\bullet(B')\cup\set{\y}\to\A_\bullet(B)$, each exchange relation in $\A_\bullet(B')$ is mapped to a equation (valid or not) relating elements of $\A_\bullet(B)$.

\begin{prop}\label{hom from cl vars}
Suppose $\nu$ is a set map from $\Var_\bullet(B')\cup\set{\y'}$ to $\A_\bullet(B)$.
If $\nu$ maps every exchange relation of $\A_\bullet(B')$ to a valid relation in $\A_\bullet(B)$, then $\nu$ extends to a ring homomorphism from $\A_\bullet(B')$ to $\A_\bullet(B)$.
\end{prop}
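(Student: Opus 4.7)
The plan is to realize $\A_\bullet(B')$ as a quotient of a polynomial ring on the generators $\Var_\bullet(B') \cup \set{\y'}$ by the ideal of exchange polynomials, and then apply the universal property of quotients to the ring homomorphism out of the polynomial ring determined by $\nu$. Since $\A_\bullet(B')$ is generated as a $\integers$-algebra by $\Var_\bullet(B') \cup \set{\y'}$, a ring homomorphism extending $\nu$ is unique if it exists, so only existence requires proof.

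First I would introduce the polynomial ring $P = \integers[\{X_{v'} : v' \in \Var_\bullet(B')\} \cup \{Y'_1, \ldots, Y'_{n'}\}]$ and the natural surjection $\pi : P \twoheadrightarrow \A_\bullet(B')$ sending $X_{v'} \mapsto v'$ and $Y'_j \mapsto y'_j$. The set map $\nu$ determines a ring homomorphism $\tilde\nu : P \to \A_\bullet(B)$ via $X_{v'} \mapsto \nu(v')$ and $Y'_j \mapsto \nu(y'_j)$. The hypothesis on exchange relations says precisely that $\tilde\nu$ vanishes on every \emph{exchange polynomial} in $P$, that is, every element of the form $X_{v'} X_{u'} - M_+ - M_-$ encoding an exchange relation $v' \cdot u' = M_+ + M_-$ of $\A_\bullet(B')$. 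Letting $I \subseteq P$ be the ideal generated by all exchange polynomials, the objective is to show $\ker(\pi) = I$; once established, $\tilde\nu$ factors through $P/I \cong \A_\bullet(B')$ to yield the required ring homomorphism extending $\nu$.

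To prove $\ker(\pi) = I$, I would fix the initial cluster $(x'_1, \ldots, x'_{n'})$ of $\A_\bullet(B')$ and invoke the Laurent phenomenon, which assigns each cluster variable $v'$ a canonical Laurent polynomial expression $F_{v'} \in L' = \integers[\x'^{\pm 1}, \y']$ obtained by iterated exchange relations starting from the initial cluster. By induction on the number of mutations separating a cluster variable from the initial cluster, using exchange polynomials to eliminate non-initial cluster variables one at a time, I would show that for any $p \in P$ there is a monomial $m$ in $X_{x'_1}, \ldots, X_{x'_{n'}}$ and a polynomial $q \in \integers[X_{x'_1}, \ldots, X_{x'_{n'}}, Y'_1, \ldots, Y'_{n'}]$ with $m p \equiv q \pmod{I}$. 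If $p \in \ker(\pi)$, then $\pi(q) = \pi(m)\pi(p) = 0$ in $L'$, and algebraic independence of $x'_1, \ldots, x'_{n'}, y'_1, \ldots, y'_{n'}$ in $L'$ forces $q$ to be identically zero. Combined with the fact that the $X_{x'_i}$ act as non-zero-divisors modulo $I$, this places $p$ in $I$, as desired.

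The main obstacle will be controlling the denominator bookkeeping: exchange relations replace non-initial cluster variables by rational expressions whose denominators are other cluster variables, while $I$ is an ideal of the polynomial ring $P$ rather than of a localization, so one must track monomial clearing factors carefully and verify the non-zero-divisor property modulo $I$. A cleaner alternative I would consider is a chart-by-chart construction: for each cluster $C'$ where the $\nu$-images of the cluster variables are nonzero in $\mathrm{Frac}(\A_\bullet(B))$, define a ring homomorphism on the Laurent polynomial ring $L'_{C'}$ associated to $C'$, restrict to $\A_\bullet(B') \subseteq L'_{C'}$, and use the exchange-relation hypothesis to verify both that the restriction is independent of the choice of $C'$ and that the image lies in $\A_\bullet(B)$. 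This reorganization replaces explicit polynomial-ring denominator tracking with a compatibility argument across cluster charts.
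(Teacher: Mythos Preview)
Your proposal takes a substantially more complicated route than the paper's proof, and the claimed key step---that the initial-cluster indeterminates $X_{x'_i}$ are non-zero-divisors modulo the exchange ideal $I$---is a genuine gap. Proving $\ker(\pi)=I$ is strictly stronger than the proposition: it asserts that $\A_\bullet(B')$ is \emph{presented} by the exchange relations, which is a nontrivial structural statement about cluster algebras, not a consequence of the Laurent phenomenon alone. Your own diagnosis that the non-zero-divisor property is ``the main obstacle'' is correct, but you give no argument for it; the natural attempt (pushing forward along $P/I\to L'$, where $x'_i$ is a unit) only shows $p\in\ker(\pi)$, not $p\in I$, so it is circular.

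The paper sidesteps all of this. Rather than presenting $\A_\bullet(B')$ as a quotient of a polynomial ring on \emph{all} cluster variables, it restricts $\nu$ to the initial data $\{\x',\y'\}$, extends to a ring homomorphism $\overline\nu:L'\to K$ into the ambient field of rational functions, and proves by induction on mutation distance from the initial cluster that $\overline\nu(z')=\nu(z')$ for every cluster variable $z'$. The induction step uses the last exchange relation on a shortest mutation path to $z'$: since $\nu$ sends that relation to a valid identity in $\A_\bullet(B)$ and, by induction, $\overline\nu$ and $\nu$ already agree on the nearer variables appearing in it, the two agree on $z'$. Working in the field $K$ makes all divisions legal, so no zero-divisor bookkeeping is needed; Proposition~\ref{hom from initial} then finishes. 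Your chart-by-chart alternative is in the right spirit, but the paper needs only the single initial chart.
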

\begin{proof}
Restrict the set map $\nu$ to $\set{\x',\y'}$ and consider the extension of $\nu$ to a ring homomorphism $\overline\nu:L'\to F$.
We claim that each $z'\in\Var_\bullet(B')$ has $\nu(z')=\overline\nu(z')$.
We argue by induction on the smallest number $k$ of mutation steps required to obtain $z'$ from the initial cluster $\x'$.
Given a sequence of $k$ mutations producing $z'$ from $\x'$, the last mutation defines an exchange relation writing $z'$ in terms of the $y_i$ and in terms of cluster variables $w_i'$ that can be reached from $\x'$ in fewer mutations.
Since $\nu$ maps each exchange relation to a valid relation, we obtain an expression for $\nu(z')$ in terms of the quantities $\nu(w_i')$.
By induction, each $w_i$ has $\nu(w_i')=\overline\nu(w_i')$.
Thus $\nu(w_i')$ can be obtained from any expression for $w_i'$ by replacing each $x'_i$ and $y'_j$ by $\nu(x_i)$ and $\nu(y_j)$.
We have found an expression for $z'$ such that replacing each $x'_i$ and $y'_j$ by $\nu(x_i)$ and $\nu(y_j)$ yields $\nu(z')$.
Therefore $\nu(z')=\overline\nu(z')$.

Since we know that $\nu$ sends every cluster variable of $\A_\bullet(B')$ to an element of $\A_\bullet(B)$, we have verified the hypotheses of Proposition~\ref{hom from initial} and thus obtained a homomorphism from $\A_\bullet(B')$ to $\A_\bullet(B)$.
Furthermore, this homomorphism agrees with the given set map from $\Var_\bullet(B')\cup\set{\y'}$ to $\A_\bullet(B)$.
\end{proof}

We will often use the following criterion to checking that $\nu_\z$ is injective.
\begin{prop}\label{one antip}
Suppose $B$ dominates $B'$.
If there is at most one index $k$ such that there exists $i$ with $b_{ik}<b'_{ik}\le0$, then the homomorphism $\nu_\z$ is injective.
\end{prop}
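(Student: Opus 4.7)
The plan is to extend $\nu_\z$ to a ring homomorphism $L' := \integers[\x'^{\pm}, \y'] \to L := \integers[\x^{\pm}, \y]$ (sending $x'_k \mapsto x_k$ and $y'_k \mapsto y_k z_k$) and show this extension is already injective; since $\A_\bullet(B') \subseteq L'$, this gives injectivity of $\nu_\z$ on $\A_\bullet(B')$. First I would unpack the hypothesis: because $B$ dominates $B'$, the entries $b_{ik}$ and $b'_{ik}$ always share a sign with $|b_{ik}| \ge |b'_{ik}|$, so the strict inequality $b_{ik} < b'_{ik}$ forces both to be nonpositive. Thus ``$b_{ik} < b'_{ik} \le 0$'' is equivalent to ``the $i$th entry of the $\g$-vector $g_k := B_k - B'_k$ of $z_k$ is negative'', and the hypothesis says that for every $k$ other than (at most) one exceptional index $k_0$, the vector $g_k$ lies in the nonnegative orthant.

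For such $k \ne k_0$, the nonnegative orthant is the full-dimensional cone of the transposed scattering fan corresponding to the initial cluster, so by uniqueness of the ray theta function (or cluster monomial) with a given $\g$-vector in a full-dimensional cone, $z_k$ is simply the initial Laurent monomial $x^{g_k} := \prod_i x_i^{(g_k)_i}$. Hence $\nu_\z(y'_k) = y_k x^{g_k}$ is $y_k$ times a unit of $L$ for every $k \ne k_0$, while $\nu_\z(y'_{k_0}) = y_{k_0} z_{k_0}$ is the only complicated image. Since $\nu_\z$ already restricts to the obvious isomorphism $\integers[\x'^{\pm}] \to R := \integers[\x^{\pm}]$, injectivity on $L'$ reduces to showing that $\nu_\z(y'_1), \ldots, \nu_\z(y'_n)$ are algebraically independent over $R$.

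The elements $\{y_k x^{g_k} : k \ne k_0\}$ are unit multiples of the distinct variables $y_k$ for $k \ne k_0$, so they freely generate the polynomial subring $R' := R[y_k : k \ne k_0]$ of $L$. It only remains to check that $y_{k_0} z_{k_0}$ is transcendental over $R'$. Viewing $L = R'[y_{k_0}]$ as a polynomial ring in $y_{k_0}$ over the integral domain $R'$, I would write $z_{k_0}$ as a polynomial in $y_{k_0}$ of some degree $d \ge 0$ with nonzero leading coefficient $c \in R'$, so that $y_{k_0} z_{k_0}$ has $y_{k_0}$-degree $d+1 \ge 1$ with leading coefficient $c$. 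A standard top-degree argument (any putative relation $\sum_j q_j (y_{k_0} z_{k_0})^j = 0$ with $q_j \in R'$ has top $y_{k_0}$-coefficient $q_J c^J$, which must vanish in the domain $R'$, forcing $q_J = 0$ and then by descent all $q_j = 0$) then concludes the argument. The one nontrivial input, and the only place the hypothesis is used in an essential way, is the identification $z_k = x^{g_k}$ for $k \ne k_0$; this is what trivializes all but one of the generators and is the main step requiring genuine scattering-diagram input rather than polynomial algebra.
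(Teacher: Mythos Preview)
Your proof is correct and shares its essential idea with the paper's: both first observe that for every index $k$ other than the exceptional $k_0$, the $\g$-vector $g_k$ lies in the nonnegative orthant, so $z_k$ is simply the initial-cluster monomial $x^{g_k}$; this trivializes all but one of the images $\nu_\z(y'_k)$ and reduces injectivity to a one-variable statement about $y_{k_0}z_{k_0}$.

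The difference is in how that last step is executed. You argue algebraic independence directly by a leading-term argument in the variable $y_{k_0}$, using only that $z_{k_0}$ is a nonzero polynomial in $y_{k_0}$ over the integral domain $R'$. The paper instead computes the Jacobian matrix $\bigl[\partial\nu(x'_i)/\partial x_j\bigr]$ (listing the $y$'s after the $x$'s), notes it is triangular with diagonal entries $1,\ldots,1,z_1,\ldots,z_{n-1},\partial(y_nz_n)/\partial y_n$, and invokes the classical Jacobian criterion for algebraic independence (citing \cite[Theorem~2.2]{ER}). The two arguments are essentially equivalent; yours is a touch more self-contained, while the paper's packages the computation into a single determinant. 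One small overstatement in your write-up: the identification $z_k=x^{g_k}$ for $g_k$ in the nonnegative orthant is immediate from the definition of cluster monomials (or of theta functions), not a deep scattering-diagram fact.
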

\begin{proof}
Up to reindexing, we may as well assume that $n$ is the unique index such that there exists $i$ with $b_{in}<b'_{in}\le0$.
(If there is no pair $ik$ such that $b_{ik}<b'_{ik}\le0$, then an easier version of the following argument works.)
For all $k<n$, we can take $z_k$ to be a cluster monomial, specifically, a monomial in the initial cluster variables $x_1,\ldots,x_n$.
Writing $x_{n+i}$ for $y_i$ and $x_{n+i}'$ for $y_i'$, the Jacobian matrix $\Bigl[\pd{\nu(x_i')}{x_j}\Bigr]$ is upper-triangular.
The first $n$ diagonal entries are $1$.
The next $n-1$ diagonal entries are (possibly trivial) monomials in $x_1,\ldots,x_n$.
The last diagonal entry is $\pd{}{y_n}(y_nz_n)$. 
That entry is never zero because $z_n$ is Laurent in $\x$ but polynomial in $\y$.
The Jacobian determinant is the product of these diagonal entries, and is therefore not zero.
Non-vanishing of the Jacobian determinant is a well-known criterion for injectivity (see, for example, \cite[Theorem~2.2]{ER}).
\end{proof}

\subsection{\texorpdfstring{The $2\times2$ case}{The 2 by 2 case}}\label{2x2 hom sec}
We now prove Theorems~\ref{hom phen 2x2 part 2} and~\ref{hom phen 2x2 part 1}, which provide examples of Phenomenon~\ref{hom phen} in the $2\times2$ case.
Although there is some overlap in the two theorems, we prove them mostly separately.
The advantage is that we prove Theorem~\ref{hom phen 2x2 part 2} (almost) entirely using the basic definition of cluster variables, with (almost) no theta functions (and without the need for the Discreteness Hypothesis).

Theorem~\ref{hom phen 2x2 part 2} establishes Phenomenon~\ref{hom phen} when $B$ is $2\times2$ and of finite or affine type.
We will reduce Theorem~\ref{hom phen 2x2 part 2} to the following proposition.

\begin{prop}\label{implies hom phen 2x2 part 2}
Under the hypotheses of Theorem~\ref{hom phen 2x2 part 2}, the map $\nu_\z$ sends each cluster variable of $\A_\bullet(B')$ to a cluster variable of $\A_\bullet(B)$ or, in the case where $B$ is of affine type, possibly to the theta function of the limiting ray.
\end{prop}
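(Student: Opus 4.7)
The plan is to induct on the length of a mutation sequence producing each cluster variable of $\A_\bullet(B')$ from the initial cluster $\x'$. The base case is immediate, since $\nu_\z(x'_k)=x_k$ is already an initial cluster variable of $\A_\bullet(B)$. For the inductive step, I would pass to the $F$-polynomial reformulation $\nu_\z(\hy'_k)=\hy_k\,F_{z_k}$ described in the introduction. Each cluster variable $z'\in\A_\bullet(B')$ takes the form $z'=(x'_1)^{g_1}(x'_2)^{g_2}F_{z'}(\hy'_1,\hy'_2)$ with $\g$-vector $(g_1,g_2)$, so
\[
\nu_\z(z')=x_1^{g_1}x_2^{g_2}\,F_{z'}\bigl(\hy_1 F_{z_1},\,\hy_2 F_{z_2}\bigr).
\]

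By Theorem~\ref{ref phen 2x2}, since $B$ dominates $B'$ and is of finite or affine type, $\F_B$ refines $\F_{B'}$. Thus every $\g$-vector of a cluster variable of $\A_\bullet(B')$ coincides with the $\g$-vector either of a cluster variable $w$ of $\A_\bullet(B)$ (always in finite type, and for all non-limiting rays in affine type) or of the theta function $\vartheta$ attached to an integer point on the limiting ray (only in affine type). The task then reduces to verifying the $F$-polynomial identity
\[
F_{z'}\bigl(\hy_1 F_{z_1},\,\hy_2 F_{z_2}\bigr) = F_w(\hy_1,\hy_2),
\]
with $F_\vartheta$ in place of $F_w$ in the limiting-ray case. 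In the $2\times 2$ setting, cluster variables of both $\A_\bullet(B')$ and $\A_\bullet(B)$ come in two one-parameter families indexed by nonnegative integers, with $F$-polynomials governed by Chebyshev-like recursions tied to the polynomials $P_m$ of~\eqref{Pm recur}. I would verify the identity via these recursions, reducing to a finite base computation together with propagation along each family; the dominance hypothesis enters concretely through the relation between $P_m(a,b)$ and $P_m(c,d)$, where $B=\B ab$ and $B'=\B cd$.

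The main obstacle will be the affine-type limiting ray, where $\vartheta$ is an infinite broken-line sum rather than a cluster monomial and the identity ceases to be a polynomial one. I expect to handle this either through the explicit product formula for the affine-type theta function (see \cite[Example~1.15]{GHKK}) or by passing to a limit of the identity along a sequence of cluster variables whose $\g$-vectors approach the limiting ray, controlled in the formal power-series topology on the coefficient ring. A subsidiary technicality is that for pairs $B,B'$ at greater ``distance'' from one another, the cluster monomials $z_k$ need not be initial cluster variables, so $F_{z_k}$ is a genuine polynomial; this will be dispatched by direct $2\times 2$ computation using the explicit formulas for the $z_k$ determined by the column-difference $\g$-vectors of $B$ and $B'$.
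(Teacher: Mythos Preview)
Your proposal differs substantially from the paper's approach and contains some genuine gaps.

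The paper's proof is a direct finite case analysis. The key observation you are missing is that when $B$ is of finite or affine type and strictly dominates $B'$, the matrix $B'$ is necessarily of \emph{finite} type (in the $2\times2$ setting, affine means $ab=-4$ exactly, so any strictly dominated matrix has $cd\ge-3$). Hence $\A_\bullet(B')$ has at most eight cluster variables. The paper then uses a composition argument (if the result holds for $B$ dominating $B'$ and for $B'$ dominating $B''$, it holds for $B$ dominating $B''$) together with symmetry to reduce to nine explicit pairs $(B,B')$, and checks each by direct computation using closed-form expressions for $x_{-4},\ldots,x_5$ and for the limiting-ray theta functions $x_\infty(a,b)$. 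There is no induction and no recursion on families.

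Your proposal, by contrast, sets up an induction on mutation length and then pivots to an $F$-polynomial identity to be verified via ``Chebyshev-like recursions'' governed by the $P_m$. This has several problems. First, the $P_m$ of~\eqref{Pm recur} are defined only for $ab\le-4$; they do not govern the finite-type $F$-polynomials of $\A_\bullet(B')$, so it is unclear what recursion you intend on the $B'$ side. Second, the ``propagation along each family'' is vague: you have not said what identity is being propagated or why the inductive step closes. Third, your worry that the affine limiting-ray theta function is an ``infinite broken-line sum'' is mistaken; in the $2\times2$ affine case it is a finite Laurent polynomial (see~\eqref{x-22 eq}--\eqref{x-41 eq}), so no limit argument is needed. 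Finally, your appeal to Theorem~\ref{ref phen 2x2} to match $\g$-vectors requires the transposed fans (the mutation fan encodes $\g$-vectors for $B^T$, cf.\ Theorem~\ref{FB gvec finite}); this is a minor point but should be noted.

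In short: the paper exploits the finiteness of $\Var_\bullet(B')$ to replace your proposed recursion with a handful of explicit checks. Your $F$-polynomial reformulation $\nu_\z(\hy'_k)=\hy_k F_{z_k}$ is correct and could in principle organize those checks, but the recursive scheme you outline is both unnecessary and, as stated, incomplete.
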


When $B$ is of affine type, the only ray theta function that is not a cluster variable is the ray theta function for the limiting ray.
Furthermore, since $B'$ is strictly dominated by $B$, it is of finite type, so all of its ray theta functions are cluster variables.
Thus Proposition~\ref{implies hom phen 2x2 part 2} verifies that $\nu_\z$ takes each ray theta function in $\ScatFan^T(B')$ to the theta function for the same ray in $\ScatFan^T(B)$.
Necessarily, each $z_k$ is a power of a cluster variable with $\g$-vector $\begin{bsmallmatrix*}[r]\pm1&0\end{bsmallmatrix*}$or $\begin{bsmallmatrix*}[r]0&\pm1\end{bsmallmatrix*}$.
Assuming Proposition~\ref{implies hom phen 2x2 part 2}, Proposition~\ref{hom from initial} implies that the map $\nu_\z$ is a ring homomorphism from $\A_\bullet(B')$ to $\A_\bullet(B)$ preserving $\g$-vectors and Proposition~\ref{one antip} implies that $\nu_\z$ is injective.
We have verified that Proposition~\ref{implies hom phen 2x2 part 2} implies Theorem~\ref{hom phen 2x2 part 2}.

To prove Proposition~\ref{implies hom phen 2x2 part 2}, we can, by symmetry (transposing both $B$ and $B'$), assume that $B=\B ba$ with $a<0$ and $b>0$ and $B'=\B dc$ with $c\le0$ and $d\ge0$.
Furthermore, if $B$ dominates $B'$, which dominates $B''$ and we check the proposition for $B$ and $B'$, and also check it for $B'$ and $B''$, then in particular the map from $\A_\bullet(B')$ to $\A_\bullet(B)$ will take the cluster variables of $\A_\bullet(B')$ with $\g$-vectors $\begin{bsmallmatrix*}[r]\pm1&0\end{bsmallmatrix*}$ and $\begin{bsmallmatrix*}[r]0&\pm1\end{bsmallmatrix*}$ to the cluster variables of $\A_\bullet(B)$ with the same $\g$-vectors.
Thus the composition of the maps from $\A_\bullet(B'')$ to $\A_\bullet(B')$ and from $\A_\bullet(B')$ to $\A_\bullet(B)$ will coincide with the map directly from $\A_\bullet(B'')$ to $\A_\bullet(B)$, and will affect cluster variables of $\A_\bullet(B'')$ as desired.
In light of these facts and Theorem~\ref{ref phen 2x2}, we need to check the following cases.
\begin{enumerate}[\,\,{Case }1:]
\item $B'=\B00$ and $B=\B ba$.
\item $B'=\B1{-1}$ and $B=\B b{-1}$.
\item $B'=\B1{-1}$ and $B=\B 1a$.
\item $B'=\B2{-1}$ and $B=\B 3{-1}$.
\item $B'=\B2{-1}$ and $B=\B 2{-2}$.
\item $B'=\B1{-2}$ and $B=\B 1{-3}$.
\item $B'=\B1{-2}$ and $B=\B 2{-2}$.
\item $B'=\B3{-1}$ and $B=\B 4{-1}$.
\item $B'=\B1{-3}$ and $B=\B 1{-4}$.
\end{enumerate}

We define cluster variables indexed by integers, starting with $x_1$ and $x_2$ and defining the remaining $x_i$ so that each pair $x_i,x_{i+1}$ forms a cluster.
In particular, each $x_i$ is related to $x_{i+2}$ by the exchange relation \ref{x mut}.

{\allowdisplaybreaks
Since we are assuming that $a<0$ and $b>0$, we have the following formulas for cluster variables:
\begin{align}
\label{x0 eq}
x_0&=\frac{x_1^by_2+1}{x_2}\displaybreak[1]\\
\label{x-1 eq}
x_{-1}&=\frac{x_0^{-a}y_1+1}{x_1}\displaybreak[1]\\
\label{x-2 eq}
x_{-2}&=\frac{x_{-1}^b+y_2}{x_0}\displaybreak[1]\\
\label{x-3 eq}
x_{-3}&=\frac{x_{-2}^{-a}+y_1y_2^{-a}}{x_{-1}}\displaybreak[1]\\
\label{x-4 eq}
x_{-4}&=\frac{x_{-3}^b+y_1^by_2^{-ab-1}}{x_{-2}}\displaybreak[1]\\
\label{x3 eq}
x_3&=\frac{y_1+x_2^{-a}}{x_1}\displaybreak[1]\\
\label{x4 eq}
x_4&=\frac{y_1^by_2+x_3^b}{x_2}\displaybreak[1]\\
\label{x5 eq}
x_5&=\frac{y_1^{-ab-1}y_2^{-a}+x_4^{-a}}{x_3}
\end{align}
In fact, all of these formulas except the formulas for $x_{-4}$ and $x_5$ are valid under the weaker assumption that $a\le0$ and $b\ge0$.
Thus, since we also have $c\le0$ and $d\ge0$, those formulas hold for cluster variables $x'_i$ with $-3\le i\le4$, replacing $a$ by~$c$, replacing $b$ by $d$, and replacing each $x_j$ by $x'_j$ in the formulas.
}

We will use the notation $x_\infty(a,b)$ for the ray theta function of the limiting ray when $ab=-4$.
Simple computations (quoted later as Propositions~\ref{theta m1 b} and~\ref{theta m2 a}) in the transposed cluster scattering diagram yields the following formulas.
\begin{align}
\label{x-22 eq}
x_\infty(-2,2)&=\frac{y_1+y_1y_2x_1^2+x_2^2}{x_1x_2}  
\displaybreak[1]\\
\label{x-14 eq}
x_\infty(-1,4)&=\frac{x_1^4y_1^2y_2+x_2^2+2x_2y_1+y_1^2}{x_1^2x_2} 
\displaybreak[1]\\
\label{x-41 eq}
x_\infty(-4,1)&=\frac{y_1y_2^2x_1^2+y_1+2y_1y_2x_1+x_2^4}{x_1x_2^2} 
\end{align}
(The formula for $x_\infty(-2,2)$ is the subject of \cite[Example~3.8]{CGMMRSW}, but due to the global transpose, we must switch the indices $1$ and $2$ to relate \eqref{x-22 eq} to \cite[Example~3.8]{CGMMRSW}.)

The proof of Proposition~\ref{implies hom phen 2x2 part 2} (and thus Theorem~\ref{hom phen 2x2 part 2}) follows from a sequence of lemmas that describe where $\nu_\z$ sends various cluster variables.
Each lemma follows from a computation that can be checked by hand or with a computer-algebra system.
We omit the details.
In every case, $z_1=x_0^{c-a}$ and $z_2=x_1^{b-d}$.

\begin{lemma}\label{00 lemma}
If $B=\B ba$ and $B'=\B dc$ with $a\le c\le0$ and $b\ge d\ge0$, then $\nu_\z(x'_0)=x_0$ and $\nu_\z(x'_{-1})=x_{-1}$.
\end{lemma}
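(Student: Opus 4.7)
The plan is to proceed by direct computation from the exchange relations, taking advantage of the fact that the corrective cluster monomials $z_1=x_0^{c-a}$ and $z_2=x_1^{b-d}$ are engineered so as to cancel the discrepancy between the exchange relations for $B$ and $B'$ at the very first mutation beyond the initial seed. Under the standing hypothesis $a\le c\le 0$ and $b\ge d\ge 0$, all exponents that will appear ($c-a$, $b-d$, $-c$, and $d$) are nonnegative, so $z_1$ and $z_2$ are genuine cluster monomials (in fact monomials in the clusters $\{x_0,x_1\}$ and $\{x_1,x_2\}$ respectively), and the text's remark after \eqref{x5 eq} already ensures that formulas \eqref{x0 eq} and \eqref{x-1 eq} apply verbatim to $B'$ with $a,b$ replaced by $c,d$ and each $x_j$ primed.

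First I would verify the identity for $x'_0$. Applying the primed version of \eqref{x0 eq} to $B'$ gives
\[
x'_0 \;=\; \frac{(x'_1)^d\, y'_2 + 1}{x'_2}.
\]
Applying the ring homomorphism $\nu_\z$ termwise and substituting $\nu_\z(x'_1)=x_1$, $\nu_\z(x'_2)=x_2$, and $\nu_\z(y'_2)=y_2\,x_1^{b-d}$, the numerator collapses to $x_1^d\cdot y_2\,x_1^{b-d}+1 = x_1^b y_2+1$. This is exactly the numerator in \eqref{x0 eq} for $B$, hence $\nu_\z(x'_0)=x_0$.

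Next I would handle $x'_{-1}$ in the same way, starting from the primed version of \eqref{x-1 eq}:
\[
x'_{-1} \;=\; \frac{(x'_0)^{-c}\, y'_1 + 1}{x'_1}.
\]
Using the identity just established together with $\nu_\z(y'_1)=y_1\,x_0^{c-a}$, applying $\nu_\z$ yields
\[
\nu_\z(x'_{-1}) \;=\; \frac{x_0^{-c}\cdot y_1\,x_0^{c-a}+1}{x_1} \;=\; \frac{x_0^{-a} y_1+1}{x_1},
\]
which is $x_{-1}$ by \eqref{x-1 eq}.

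There is no real obstacle here: the lemma is arranged so that the twist built into $\nu_\z$ on the coefficients exactly absorbs the difference between the two matrices in these two initial exchanges. The content is entirely that the $z_k$ have been chosen so that $y'_k\mapsto y_k z_k$ converts the $B'$-exchange relation into the $B$-exchange relation after clearing the compensating powers of the initial cluster variables that appear in $z_1$ and $z_2$.
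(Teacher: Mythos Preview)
Your proof is correct and is precisely the direct computation the paper alludes to when it says each lemma ``follows from a computation that can be checked by hand or with a computer-algebra system'' and omits the details. The two-step verification using \eqref{x0 eq} and \eqref{x-1 eq}, together with the paper's identification $z_1=x_0^{c-a}$ and $z_2=x_1^{b-d}$, is exactly the intended argument.
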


Lemma~\ref{00 lemma} is enough for Case 1 because for $B'=\B00$, the cluster variables in $\A_\bullet(B')$ are $\set{x'_{-1},x'_0,x'_1,x'_2}$.
Lemma~\ref{00 lemma} also reduces the checking for other cases.

\begin{lemma}\label{a same lemma}
If $B=\B ba$ and $B'=\B da$ with $b\ge d\ge0$, then $\nu_\z(x'_3)=x_3$.
\end{lemma}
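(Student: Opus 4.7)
The plan is to compute $\nu_\z(x'_3)$ directly from the exchange relation that produces $x'_3$ in $\A_\bullet(B')$ and recognize the result as $x_3$ via formula~\eqref{x3 eq}. The whole point is that, because the first columns of $B$ and $B'$ coincide, the mutation at index $1$ is essentially the same in both cluster algebras, and the element $z_1$ that $\nu_\z$ inserts into $y'_1$ is trivial.

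First I would identify $z_1$ and $z_2$. Since the first columns of $B$ and $B'$ both equal $\begin{bsmallmatrix*}[r]0\\a\end{bsmallmatrix*}$, their difference is $\0$, so $z_1$ is the cluster monomial with $\g$-vector $\0$, which is $1$. The difference of the second columns is $\begin{bsmallmatrix*}[r]b-d\\0\end{bsmallmatrix*}=(b-d)\e_1$, and since $\e_1$ is the $\g$-vector of $x_1$ and $b-d\ge0$, we get $z_2=x_1^{b-d}$. Consequently $\nu_\z(y'_1)=y_1$ and $\nu_\z(y'_2)=y_2x_1^{b-d}$, while $\nu_\z(x'_k)=x_k$ for $k=1,2$.

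Next, since the first column of $B'$ matches that of $B$, applying formula~\eqref{x3 eq} to the primed data (with $c=a$) gives
\[
x'_3=\frac{y'_1+(x'_2)^{-a}}{x'_1}.
\]
Applying the ring homomorphism $\nu_\z$ then yields
\[
\nu_\z(x'_3)=\frac{y_1+x_2^{-a}}{x_1},
\]
which equals $x_3$ by~\eqref{x3 eq} again.

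There is essentially no obstacle in this case; the underlying principle is that whenever the $k$\th{} columns of $B$ and $B'$ coincide, the element $z_k$ is trivial and the exchange relation at index $k$ transports verbatim under $\nu_\z$. The real computational work in the sequence of lemmas behind Proposition~\ref{implies hom phen 2x2 part 2} shows up only when the columns genuinely differ, forcing nontrivial cluster monomials (or theta functions) $z_k$ into the substitution and requiring Laurent-polynomial simplifications; those are the cases where one has to use the explicit formulas~\eqref{x0 eq}--\eqref{x-41 eq}.
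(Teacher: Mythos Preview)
Your proof is correct and is exactly the direct computation the paper alludes to when it says ``Each lemma follows from a computation that can be checked by hand or with a computer-algebra system. We omit the details.'' In particular, your identification $z_1=1$ (since $c=a$) and $z_2=x_1^{b-d}$ is precisely what the paper records just before Lemma~\ref{00 lemma}, and applying \eqref{x3 eq} to both $B'$ and $B$ finishes the argument.
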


\begin{lemma}\label{b same lemma}
If $B=\B ba$ and $B'=\B bc$ with $a\le c\le0$, then $\nu_\z(x'_{-2})=x_{-2}$.
\end{lemma}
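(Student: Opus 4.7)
The plan is short: reduce to the exchange relation \eqref{x-2 eq} on the primed side, apply $\nu_\z$, and use Lemma~\ref{00 lemma}. Since $B$ and $B'$ share their top-right entry $b$, and since $c\le 0 \le b$ is the sign pattern under which the formula \eqref{x-2 eq} was derived, that formula also applies to the primed cluster and specializes to $x'_{-2} = \bigl((x'_{-1})^b + y'_2\bigr)/x'_0$. Lemma~\ref{00 lemma} gives $\nu_\z(x'_{-1}) = x_{-1}$ and $\nu_\z(x'_0) = x_0$, so the only remaining step is to compute $\nu_\z(y'_2)$.

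Here the hypothesis that $B$ and $B'$ share the same first row (so $d=b$) is decisive: the second column of $B-B'$ is the zero vector, so $z_2$ is the unique cluster monomial with trivial $\g$-vector, namely $z_2 = 1$. Hence $\nu_\z(y'_2) = y_2 z_2 = y_2$, and plugging into the displayed equation above yields $\nu_\z(x'_{-2}) = (x_{-1}^b + y_2)/x_0$, which equals $x_{-2}$ by a second application of \eqref{x-2 eq}.

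I foresee no real obstacle; the lemma simply records that, when only the $a$-entry of the exchange matrix is modified, the one-step exchange relation producing $x_{-2}$ from $x_0$ and $x_{-1}$ is formally identical for $B$ and $B'$ and involves none of the generators (in particular, not $y'_1$) whose images under $\nu_\z$ would actually differ from their unprimed counterparts. The only point requiring care is to recognize that the sign pattern inherited by $B'$ is compatible with \eqref{x-2 eq}, so that the formula can be used on both sides of the homomorphism.
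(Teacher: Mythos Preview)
Your proof is correct and is exactly the direct computation the paper alludes to but omits (``Each lemma follows from a computation that can be checked by hand or with a computer-algebra system.  We omit the details.''). Your organization---using $d=b$ to get $z_2=1$, invoking Lemma~\ref{00 lemma} for $x'_0$ and $x'_{-1}$, and then matching \eqref{x-2 eq} on both sides---is precisely the intended verification.
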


When $B'=\B1{-1}$, the only cluster variable not accounted for in Lemma~\ref{00 lemma} is $x'_{-2}=x'_3$.
Thus Cases 2--3 are handled by Lemmas~\ref{00 lemma}, \ref{a same lemma}, and~\ref{b same lemma}.

The cluster variables for $B'=\B2{-1}$ not accounted for by Lemma~\ref{00 lemma} are $x'_{-2}$ and $x'_3$.
In Case 4, where $B=\B3{-1}$, the variable $x'_3$ is accounted for by Lemma~\ref{a same lemma}.
The variable $x_{-2}$ is accounted for by the following lemma, and we are finished with Case 4.

\begin{lemma}\label{d+1 1 lemma}
If $B=\begin{bsmallmatrix*}[r]0&d+1\\-1&0\,\,\,\end{bsmallmatrix*}$ and $B'=\B d{-1}$ with $d\ge 1$, then $\nu_\z(x'_{-2})=x_{-3}$.
\end{lemma}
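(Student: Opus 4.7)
The plan is a direct computation that bootstraps from Lemma~\ref{00 lemma}. First, since $B$ and $B'$ agree in the bottom-left entry ($a=c=-1$) and differ only in the top-right entry ($b=d+1$ versus $d$), the column differences give $z_1 = x_0^{c-a} = 1$ and $z_2 = x_1^{b-d} = x_1$. Thus $\nu_\z$ fixes $x'_1, x'_2, y'_1$ (mapping them to $x_1, x_2, y_1$) and sends $y'_2 \mapsto y_2 x_1$. The hypotheses of Lemma~\ref{00 lemma} are satisfied, so also $\nu_\z(x'_0) = x_0$ and $\nu_\z(x'_{-1}) = x_{-1}$.

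Second, I apply $\nu_\z$ to the exchange relation that defines $x'_{-2}$ in $\A_\bullet(B')$, namely $x'_0 x'_{-2} = (x'_{-1})^{d} + y'_2$ (from \eqref{x-2 eq} with the top-right entry of $B'$ equal to $d$ and $-c=1$). This yields
\[
x_0 \cdot \nu_\z(x'_{-2}) \;=\; x_{-1}^{d} + y_2 x_1.
\]
So it suffices to check that $x_0 x_{-3} = x_{-1}^{d} + y_2 x_1$ holds inside $\A_\bullet(B)$.

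Third, I verify this identity using the exchange relations for $B$. From \eqref{x-3 eq} (with $-a=1$) I get $x_{-1} x_{-3} = x_{-2} + y_1 y_2$, and from \eqref{x-2 eq} (with $b = d+1$) I get $x_0 x_{-2} = x_{-1}^{d+1} + y_2$. Multiplying the first relation by $x_0$ and substituting yields
\[
x_0 x_{-1} x_{-3} \;=\; x_{-1}^{d+1} + y_2 + y_1 y_2 x_0.
\]
The initial exchange relation $x_1 x_{-1} = x_0 y_1 + 1$ (which is \eqref{x-1 eq} with $-a=1$) rewrites $y_2 + y_1 y_2 x_0$ as $y_2 \cdot x_1 x_{-1}$, so the right-hand side factors as $x_{-1}(x_{-1}^{d} + y_2 x_1)$, and cancelling $x_{-1}$ gives the claim.

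The only real obstacle is bookkeeping: one must keep straight the two different exponents ($d$ from $B'$ versus $d+1$ from $B$) and the shift between $-a$, $-c$ in the two sets of exchange relations. Once the indices are lined up, the entire verification collapses to the single identity $x_1 x_{-1} = x_0 y_1 + 1$, which is the initial exchange relation of $\A_\bullet(B)$.
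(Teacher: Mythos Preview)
Your proof is correct and is exactly the kind of direct computation the paper alludes to but omits. The paper simply states that each of these lemmas ``follows from a computation that can be checked by hand or with a computer-algebra system,'' so your argument---applying $\nu_\z$ to the exchange relation defining $x'_{-2}$ and then verifying the resulting identity in $\A_\bullet(B)$ via the relations \eqref{x-1 eq}, \eqref{x-2 eq}, \eqref{x-3 eq}---supplies precisely those details.
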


In Case 5, the variable $x'_{-2}$ is accounted for by Lemma~\ref{b same lemma}, and we complete the verification of Case 5 by proving the following lemma.
\begin{lemma}\label{22 21 lemma}
If $B=\B2{-2}$ and $B'=\B2{-1}$, then $\nu_\z(x'_3)=x_\infty(2,-2)$.
\end{lemma}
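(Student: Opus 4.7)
The plan is a direct substitution. First, identify the cluster monomials $z_1$ and $z_2$. Since $B=\B2{-2}$ and $B'=\B2{-1}$ differ only in the $(2,1)$-entry, the first-column difference of $B$ and $B'$ is $(0,-1)^T$ while the second-column difference is $\mathbf{0}$. Hence $z_2=1$, while $z_1$ must be the cluster monomial in $\A_\bullet(B)$ of $\g$-vector $(0,-1)$. From \eqref{x0 eq} with $b=2$, one has $x_0=(x_1^2y_2+1)/x_2$, which is readily seen to be homogeneous of $\g$-degree $(0,-1)$ (using that $\deg y_i$ is the negative of the $i$th column of $B$). Thus $z_1=x_0$, consistent with the general formula $z_1=x_0^{c-a}$ since $c-a=-1-(-2)=1$. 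Consequently $\nu_\z$ acts on the initial data by $x'_1\mapsto x_1$, $x'_2\mapsto x_2$, $y'_1\mapsto y_1x_0$, and $y'_2\mapsto y_2$.

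Second, apply \eqref{x3 eq} inside $\A_\bullet(B')$, replacing $a$ by $c=-1$, to obtain $x'_3=(y'_1+x'_2)/x'_1$. Applying $\nu_\z$ and substituting the formula for $x_0$ yields
\[
\nu_\z(x'_3)\;=\;\frac{y_1x_0+x_2}{x_1}\;=\;\frac{y_1x_1^2y_2+y_1+x_2^2}{x_1x_2}.
\]
Comparison with \eqref{x-22 eq} shows that this is precisely the limiting theta function of $\A_\bullet(B)$ attached to $B=\B2{-2}$, which is the right-hand side of the lemma.

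There is essentially no obstacle: the computation is a one-step substitution once $z_1$ is correctly identified. The only delicate point is verifying that $x_0$ is the cluster monomial of the prescribed $\g$-vector, which is immediate from inspection of \eqref{x0 eq}. As a sanity check, both sides of the claimed equality are homogeneous of $\g$-degree $(-1,1)$, consistent with the general fact that $\nu_\z$ preserves $\g$-vectors.
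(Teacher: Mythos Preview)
Your proof is correct and is exactly the direct computation the paper alludes to when it says ``Each lemma follows from a computation that can be checked by hand or with a computer-algebra system. We omit the details.'' Your identification of $z_1=x_0$ and $z_2=1$ matches the paper's general formula $z_1=x_0^{c-a}$, $z_2=x_1^{b-d}$, and the resulting expression indeed agrees with \eqref{x-22 eq}.
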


Similarly, the cluster variables for $B'=\B1{-2}$ not accounted for by Lemma~\ref{00 lemma} are $x'_{-2}$ and $x'_3$.
In Case 6, $B=\B1{-3}$, so $x'_{-2}$ is accounted for by Lemma~\ref{b same lemma}.
The following lemma finishes Case 6 by accounting for $x'_3$.
\begin{lemma}\label{1 c-1 lemma}
If $B=\begin{bsmallmatrix*}[c]0&1\\c-1&0\end{bsmallmatrix*}$ and $B'=\B 1c$ with $c\le-1$, then $\nu_\z(x'_3)=x_4$.
\end{lemma}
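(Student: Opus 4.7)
The plan is a direct verification: identify the multipliers $z_1$ and $z_2$ appearing in $\nu_\z$, then apply $\nu_\z$ to the $\A_\bullet(B')$-exchange relation defining $x_3'$ and compare the result with the Laurent expansion of $x_4$ in $\A_\bullet(B)$.

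For the first step, observe that $B-B'$ has a single nonzero entry, namely $-1$ in position $(2,1)$. By the definition preceding~\eqref{natural candidate}, this forces $z_2=1$, while $z_1$ must be the cluster monomial of $\g$-vector $(0,-1)^T$ in $\A_\bullet(B)$. Since the second column of $B$ is $(1,0)^T$, the coefficient $y_2$ has $\g$-vector $(-1,0)$, and a one-line check then shows that the cluster variable $x_0 = (x_1 y_2 + 1)/x_2$ from~\eqref{x0 eq} (with $b=1$) has $\g$-vector $(0,-1)^T$. Hence we may take $z_1 = x_0$; consequently $\nu_\z(y_1') = y_1 x_0$, $\nu_\z(y_2') = y_2$, and $\nu_\z(x_i') = x_i$ for $i=1,2$.

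For the second step, apply $\nu_\z$ to the $\A_\bullet(B')$-exchange relation for $x_3'$ (equation~\eqref{x3 eq} with $a$ replaced by $c$ and every variable primed), and separately expand $x_4$ in $\A_\bullet(B)$ by substituting~\eqref{x3 eq} into~\eqref{x4 eq} with $a=c-1$ and $b=1$. Both sides simplify to the common Laurent polynomial $(x_1 y_1 y_2 + y_1 + x_2^{1-c})/(x_1 x_2)$, establishing the claim.

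No genuine obstacle arises; the verification is routine cross-multiplication, in the same spirit as Lemmas~\ref{00 lemma}--\ref{22 21 lemma}. The only point requiring attention is the identification of $z_1$ via its $\g$-vector: here $(0,-1)^T$ is realized not by a monomial in the initial cluster variables (so Proposition~\ref{one antip}'s construction does not apply to this column) but by the cluster variable $x_0$ obtained by a single mutation from the initial cluster.
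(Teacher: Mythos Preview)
Your proof is correct and follows exactly the approach the paper intends: the paper states that each of these lemmas ``follows from a computation that can be checked by hand or with a computer-algebra system'' and omits the details, so your explicit verification that $z_1=x_0$, $z_2=1$, and that both $\nu_\z(x_3')$ and $x_4$ equal $(x_1y_1y_2+y_1+x_2^{1-c})/(x_1x_2)$ is precisely what is required.
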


In Case 7, the variable $x'_3$ is accounted for by Lemma~\ref{a same lemma}, and we complete the case with the following lemma.
\begin{lemma}\label{22 12 lemma}
If $B=\B2{-2}$ and $B'=\B1{-2}$, then $\nu_\z(x'_{-2})=x_\infty(2,-2)$.
\end{lemma}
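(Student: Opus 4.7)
The plan is to reduce the computation of $\nu_\z(x'_{-2})$ to the exchange relation defining $x'_{-2}$, then verify directly that the result matches the explicit formula \eqref{x-22 eq} for the limiting theta function. In this case, since $B=\B 2{-2}$ has $(a,b)=(-2,2)$ and $B'=\B 1{-2}$ has $(c,d)=(-2,1)$, we have $z_1=x_0^{\,c-a}=x_0^{\,0}=1$ and $z_2=x_1^{\,b-d}=x_1$, so $\nu_\z$ acts by $x'_i\mapsto x_i$ on initial cluster variables and by $y'_1\mapsto y_1$, $y'_2\mapsto y_2x_1$.

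First I invoke Lemma~\ref{00 lemma} (whose hypotheses $a\le c\le 0$ and $b\ge d\ge 0$ hold here) to conclude $\nu_\z(x'_0)=x_0$ and $\nu_\z(x'_{-1})=x_{-1}$. Next, the exchange relation \eqref{x-2 eq} applied to $B'$ (so with $(c,d)=(-2,1)$) gives
\[
x'_{-2}=\frac{x'_{-1}+y'_2}{x'_0}.
\]
Applying the ring homomorphism $\nu_\z$ and using the values just determined yields
\[
\nu_\z(x'_{-2})=\frac{x_{-1}+y_2x_1}{x_0}.
\]

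Now I substitute the explicit formulas $x_{-1}=\frac{x_0^{2}y_1+1}{x_1}$ (from \eqref{x-1 eq} with $a=-2$) and $x_0=\frac{x_1^{2}y_2+1}{x_2}$ (from \eqref{x0 eq} with $b=2$) into this expression. After clearing denominators one obtains
\[
\nu_\z(x'_{-2})=\frac{(x_1^{2}y_2+1)^{2}\,y_1+x_2^{2}(x_1^{2}y_2+1)}{x_1x_2(x_1^{2}y_2+1)},
\]
and the factor $(x_1^{2}y_2+1)$ cancels, leaving $\dfrac{y_1+y_1y_2x_1^{2}+x_2^{2}}{x_1x_2}$, which is precisely $x_\infty(2,-2)$ by \eqref{x-22 eq}.

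This is essentially a mechanical check, so there is no genuine obstacle; the one point requiring care is verifying that the cancellation actually occurs—that is, that the numerator of the intermediate expression is divisible by $x_1^{2}y_2+1$. That divisibility is not automatic from the definition of $\nu_\z$, and is the content of the lemma: it reflects the fact that the theta function $x_\infty(2,-2)$, although a priori only a rational function in the $x$'s and $y$'s, actually lies in the Laurent polynomial ring $L$ and is picked out by this particular combination under the identification $y_2\mapsto y_2x_1$.
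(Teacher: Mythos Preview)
Your proof is correct and is exactly the kind of direct computation the paper has in mind; the paper itself omits the details, saying only that the lemma ``follows from a computation that can be checked by hand or with a computer-algebra system.'' Your use of Lemma~\ref{00 lemma} to reduce to the exchange relation for $x'_{-2}$, followed by explicit substitution and cancellation of the factor $x_1^2y_2+1$, is precisely the expected verification.
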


In Case 8, when $B'=\B3{-1}$ and $B=\B4{-1}$, the only cluster variables in $\A_\bullet(B')$ not accounted for by Lemmas~\ref{00 lemma}, \ref{a same lemma}, and~\ref{d+1 1 lemma} are $x'_{-3}$ and $x'_4$.
The following two lemmas account for these.

\begin{lemma}\label{31 41 lemma 1}
If $B=\B4{-1}$ and $B'=\B3{-1}$, then $\nu_\z(x'_4)=x_5$.
\end{lemma}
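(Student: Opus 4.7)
The plan is to directly compute $\nu_\z(x'_4)$ using the exchange relations for $\A_\bullet(B')$, then verify it equals $x_5$ using the exchange relations for $\A_\bullet(B)$. This is a routine identity check that boils down to one simple algebraic identity.

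First I would record the setup. With $B=\B4{-1}$ and $B'=\B3{-1}$, the parameters are $a=c=-1$, $b=4$, $d=3$, so $z_1 = x_0^{c-a} = x_0^0 = 1$ and $z_2 = x_1^{b-d} = x_1$. By the definition \eqref{natural candidate} of $\nu_\z$, we get $\nu_\z(x'_1)=x_1$, $\nu_\z(x'_2)=x_2$, $\nu_\z(y'_1)=y_1$, and $\nu_\z(y'_2)=y_2x_1$. From Lemma~\ref{a same lemma} (or a direct application of \eqref{x3 eq} to $B'$), we have $\nu_\z(x'_3)=x_3$.

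Next I would apply $\nu_\z$ to the exchange relation \eqref{x4 eq}, which for $B'$ reads $x'_4 = \bigl((y'_1)^3 y'_2 + (x'_3)^3\bigr)/x'_2$, yielding
\[
\nu_\z(x'_4) \;=\; \frac{y_1^3(y_2x_1) + x_3^3}{x_2}.
\]
Meanwhile, from \eqref{x4 eq} and \eqref{x5 eq} applied to $B$ (with $a=-1$, $b=4$, so $-ab-1=3$ and $-a=1$), one obtains $x_4 = (y_1^4 y_2 + x_3^4)/x_2$ and then
\[
x_5 \;=\; \frac{y_1^3 y_2 + x_4}{x_3} \;=\; \frac{y_1^3 y_2 x_2 + y_1^4 y_2 + x_3^4}{x_2 x_3}.
\]

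Finally I would verify the equality $\nu_\z(x'_4)=x_5$ by clearing denominators: the desired identity reduces to
\[
x_3\bigl(y_1^3 y_2 x_1 + x_3^3\bigr) \;=\; y_1^3 y_2 x_2 + y_1^4 y_2 + x_3^4,
\]
which in turn reduces, after cancelling $x_3^4$ and dividing by $y_1^3 y_2$, to the single identity $x_1 x_3 = x_2 + y_1$. This is exactly the exchange relation \eqref{x3 eq} for $B$. There is no real obstacle here; the entire proof is a two-line computation. The only minor point to note is that the calculation works because the ``extra'' factor $z_2 = x_1$ introduced into $\nu_\z(y'_2)$ combines with the exchange relation $x_1x_3 = x_2+y_1$ to exactly bridge the gap between the degree-$3$ exchange for $B'$ and the degree-$4$ exchange for $B$.
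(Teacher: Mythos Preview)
Your computation is correct and is precisely the kind of direct verification the paper intends: the paper states that each of these lemmas ``follows from a computation that can be checked by hand or with a computer-algebra system'' and explicitly omits the details. Your reduction to the exchange relation $x_1x_3=x_2+y_1$ via \eqref{x3 eq}--\eqref{x5 eq} is exactly such a check.
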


\begin{lemma}\label{31 41 lemma 2}
If $B=\B4{-1}$ and $B'=\B3{-1}$, then $\nu_\z(x'_{-3})=x_\infty(4,-1)$.
\end{lemma}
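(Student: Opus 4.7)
The plan is to verify the identity by direct computation, reducing to the two preceding lemmas concerning $\nu_\z$ on $x'_{-1}$ and $x'_{-2}$ and then matching the resulting Laurent polynomial against the explicit formula~\eqref{x-14 eq}. Since $B'=\B3{-1}$ has $c=-1$, the exchange relation \eqref{x-3 eq} in $\A_\bullet(B')$ reads $x'_{-3}=(x'_{-2}+y'_1 y'_2)/x'_{-1}$. For this dominance pair, $z_1=x_0^{c-a}=1$ and $z_2=x_1^{b-d}=x_1$, so $\nu_\z$ acts by $y'_1\mapsto y_1$ and $y'_2\mapsto x_1 y_2$. Combining with Lemma~\ref{00 lemma} (giving $\nu_\z(x'_{-1})=x_{-1}$) and Lemma~\ref{d+1 1 lemma} in the case $d=3$ (giving $\nu_\z(x'_{-2})=x_{-3}$) yields
\[
\nu_\z(x'_{-3})=\frac{x_{-3}+x_1 y_1 y_2}{x_{-1}}.
\]

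Next I would simplify using the recurrences \eqref{x0 eq}--\eqref{x-3 eq} for $B=\B4{-1}$ (so $a=-1$, $b=4$). Substituting $x_{-3}=(x_{-2}+y_1 y_2)/x_{-1}$ from \eqref{x-3 eq}, multiplying numerator and denominator by $x_0$, and using $x_0 x_{-2}=x_{-1}^4+y_2$ from \eqref{x-2 eq} together with $x_1 x_{-1}=x_0 y_1+1$ from \eqref{x-1 eq} (so $(1+x_0 y_1)^2=x_1^2 x_{-1}^2$) collapses the expression to the compact intermediate form
\[
\nu_\z(x'_{-3})=\frac{x_{-1}^2+x_1^2 y_2}{x_0}.
\]
Substituting the Laurent expressions $x_0=(x_1^4 y_2+1)/x_2$ and $x_{-1}=(x_1^4 y_1 y_2+y_1+x_2)/(x_1 x_2)$ and clearing denominators then reduces the lemma to the polynomial identity
\[
(x_1^4 y_1 y_2+y_1+x_2)^2+x_1^4 x_2^2 y_2=(x_1^4 y_1^2 y_2+x_2^2+2 x_2 y_1+y_1^2)(x_1^4 y_2+1),
\]
whose right-hand side, by \eqref{x-14 eq}, is exactly $x_1^2 x_2\,(x_1^4 y_2+1)\cdot x_\infty(-1,4)$. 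Since $x_\infty(-1,4)$ is the same ray theta function denoted $x_\infty(4,-1)$ in the lemma statement (the labeling in the lemma uses $(b_{12},b_{21})$ while the formula display uses $(a,b)=(b_{21},b_{12})$), this establishes the claim.

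The main obstacle is bookkeeping rather than anything conceptual: the reduction to $(x_{-1}^2+x_1^2 y_2)/x_0$ requires tracking cancellations through several nested applications of the recurrence in the correct order, and the final polynomial identity involves expanding a perfect square and matching seven monomial terms on each side. This fits the paradigm that the author has adopted for Lemmas~\ref{d+1 1 lemma}, \ref{22 21 lemma}, \ref{1 c-1 lemma}, \ref{22 12 lemma}, and~\ref{31 41 lemma 1}, each of which is asserted to follow from a computation that can be carried out by hand or, more reliably, with a computer-algebra system. A useful consistency check, performable before embarking on the substitution, is that both $x'_{-3}\in\A_\bullet(B')$ and $x_\infty(-1,4)\in\A_\bullet(B)$ carry $\g$-vector $(-2,1)$ under the respective gradings determined by the columns of $B'$ and $B$, as demanded by $\nu_\z$ being $\g$-vector preserving.
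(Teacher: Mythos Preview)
Your proof is correct and follows precisely the approach the paper intends: the paper states that each of these lemmas ``follows from a computation that can be checked by hand or with a computer-algebra system'' and omits the details entirely, so your explicit reduction via Lemmas~\ref{00 lemma} and~\ref{d+1 1 lemma}, the clean intermediate form $(x_{-1}^2+x_1^2 y_2)/x_0$, and the final polynomial verification are exactly what the paper leaves to the reader. Your observation about the argument ordering in $x_\infty(\cdot,\cdot)$ between the lemma statement and the display~\eqref{x-14 eq} is also correct---this is a genuine notational inconsistency in the paper that you have handled appropriately.
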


Finally, in Case 9, when $B'=\B1{-3}$ and $B=\B1{-4}$, Lemmas~\ref{00 lemma}, \ref{b same lemma} and~\ref{1 c-1 lemma} account for all of the cluster variables except $x'_{-3}$ and $x'_4$.
We complete the case with the following two lemmas.
\begin{lemma}\label{13 14 lemma 1}
If $B=\B1{-4}$ and $B'=\B1{-3}$, then $\nu_\z(x'_{-3})=x_{-4}$.
\end{lemma}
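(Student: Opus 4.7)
The plan is to apply $\nu_\z$ to the exchange-relation expression for $x'_{-3}$, reduce using earlier lemmas, and verify the result equals $x_{-4}$ via the exchange relations for $B$. First I record the data: here $a=-4$, $b=1$, $c=-3$, $d=1$, so $z_1 = x_0^{c-a} = x_0$ and $z_2 = x_1^{b-d} = 1$, giving $\nu_\z(y'_1) = y_1 x_0$ and $\nu_\z(y'_2) = y_2$. Lemma~\ref{00 lemma} yields $\nu_\z(x'_{-1}) = x_{-1}$, and Lemma~\ref{b same lemma} (applicable since $b=d=1$) yields $\nu_\z(x'_{-2}) = x_{-2}$.

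The formula \eqref{x-3 eq}, valid for $B'$ because $c\le 0$ and $d\ge 0$, reads
\begin{equation*}
x'_{-3} = \frac{(x'_{-2})^3 + y'_1 (y'_2)^3}{x'_{-1}}.
\end{equation*}
Applying $\nu_\z$ and using the reductions above gives
\begin{equation*}
\nu_\z(x'_{-3}) = \frac{x_{-2}^3 + y_1 y_2^3 x_0}{x_{-1}}.
\end{equation*}
From \eqref{x-4 eq} with $a=-4$ and $b=1$ (so $-ab-1 = 3$), we have $x_{-4} = (x_{-3} + y_1 y_2^3)/x_{-2}$. The desired identity $\nu_\z(x'_{-3}) = x_{-4}$ is therefore equivalent to
\begin{equation*}
x_{-2}^4 + y_1 y_2^3 x_0 x_{-2} = x_{-1} x_{-3} + y_1 y_2^3 x_{-1}.
\end{equation*}

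Substituting $x_0 x_{-2} = x_{-1} + y_2$ from \eqref{x-2 eq} on the left, and $x_{-1} x_{-3} = x_{-2}^4 + y_1 y_2^4$ from \eqref{x-3 eq} on the right, both sides collapse to $x_{-2}^4 + y_1 y_2^3 x_{-1} + y_1 y_2^4$, which completes the verification. This is a routine calculation with no substantive obstacle; the only structural point is that $z_2 = 1$ (because $b=d$), so $\nu_\z$ modifies only $y'_1$, and the resulting identity reduces immediately via two standard exchange relations for $B$.
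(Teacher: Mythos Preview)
Your proof is correct and follows essentially the same approach as the paper, which states that each of these lemmas follows from a direct computation (omitting the details). Your explicit verification via the exchange relations \eqref{x-2 eq}--\eqref{x-4 eq} is exactly the intended computation.
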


\begin{lemma}\label{13 14 lemma 2}
If $B=\B1{-4}$ and $B'=\B1{-3}$, then $\nu_\z(x'_4)=x_\infty(1,-4)$.
\end{lemma}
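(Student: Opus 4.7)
The plan is to verify the lemma by direct computation, following the same template as Lemmas~\ref{22 21 lemma}, \ref{22 12 lemma}, \ref{31 41 lemma 2}, and \ref{13 14 lemma 1}. First I would identify the monomials $z_1$ and $z_2$. Since $B$ and $B'$ differ only in the $(2,1)$ entry (namely $-4$ versus $-3$), the difference of $k$\th columns is $\begin{bsmallmatrix*}[r]0\\-1\end{bsmallmatrix*}$ for $k=1$ and $\mathbf{0}$ for $k=2$. Thus $z_2=1$, and $z_1$ is the unique cluster monomial whose $\g$-vector is $(0,-1)$; expanding \eqref{x0 eq} with $b=1$ shows that $x_0=(x_1y_2+1)/x_2$ has $\g$-vector $(0,-1)$, so $z_1=x_0$. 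Consequently $\nu_\z(x'_1)=x_1$, $\nu_\z(x'_2)=x_2$, $\nu_\z(y'_1)=y_1x_0$, and $\nu_\z(y'_2)=y_2$.

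Next I would expand $x'_4$ as a Laurent polynomial in $x'_1,x'_2$ with coefficients in $y'_1,y'_2$, using the primed versions of \eqref{x3 eq} and \eqref{x4 eq} with $c=-3$ and $d=1$: first $x'_3=(y'_1+(x'_2)^3)/x'_1$ from \eqref{x3 eq}, then $x'_4=(y'_1y'_2+x'_3)/x'_2$ from \eqref{x4 eq}, which combines to
\[
x'_4=\frac{y'_1y'_2\,x'_1+y'_1+(x'_2)^3}{x'_1x'_2}.
\]
Applying $\nu_\z$ and substituting $x_0=(x_1y_2+1)/x_2$ from \eqref{x0 eq} to clear the denominator hiding in $x_0$ yields a Laurent polynomial in $x_1,x_2$ with coefficients in $y_1,y_2$.

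The final step is a routine simplification: after clearing $x_0$, the numerator collects to $y_1y_2^2x_1^2+2y_1y_2x_1+y_1+x_2^4$ over denominator $x_1x_2^2$. This is exactly the right-hand side of \eqref{x-41 eq}, so $\nu_\z(x'_4)=x_\infty(-4,1)=x_\infty(1,-4)$, completing the proof. There is no real obstacle: the hard work has already been done in establishing \eqref{x-41 eq} via the theta function computation in the transposed cluster scattering diagram, and all the necessary cluster variable formulas for $B$ and $B'$ (both having $b=d=1$) are already tabulated in \eqref{x0 eq}--\eqref{x5 eq}. The only point requiring a little care is to confirm that $z_1$ is indeed uniquely determined as $x_0$; this holds because in affine type the map from cluster variables to $\g$-vectors is injective on the sequence $\{x_i\}$, and $(0,-1)$ lies in a rational maximal cone of $\ScatFan^T(B)$.
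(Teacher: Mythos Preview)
Your proposal is correct and follows exactly the approach the paper intends: the paper states that each of these lemmas ``follows from a computation that can be checked by hand or with a computer-algebra system'' and omits the details, while you have supplied those details accurately (using the paper's formula $z_1=x_0^{c-a}$, $z_2=x_1^{b-d}$, expanding $x'_4$ via \eqref{x3 eq}--\eqref{x4 eq}, and matching against \eqref{x-41 eq}). One small remark: you need not worry about uniqueness of $z_1$, since the paper has already recorded $z_1=x_0^{c-a}=x_0$ directly; and your final equation $x_\infty(-4,1)=x_\infty(1,-4)$ is just reconciling the paper's own harmless inconsistency in argument order between the displayed formulas and the lemma statements.
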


We have completed the proof of Proposition~\ref{implies hom phen 2x2 part 2} and thus of Theorem~\ref{hom phen 2x2 part 2}.
We now turn to Theorem~\ref{hom phen 2x2 part 1}, which establishes the first part of Phenomenon~\ref{hom phen} for $B'$ of finite type and, assuming the Discreteness Hypothesis, describes when the second part holds.
We will reduce Theorem~\ref{hom phen 2x2 part 1} to the following proposition.

\begin{proposition}\label{implies hom phen 2x2 part 1}
If $B$ and $B'$ are $2\times2$ exchange matrices such that $B$ dominates $B'$, with $B'$ of finite type, then $\nu_\z$ takes every cluster variable of $\A_\bullet(B')$ to a theta function for $B$ plus an element of $\A_\bullet(B)$.
In fact, $\nu_\z$ takes every cluster variable of $\A_\bullet(B')$ to a theta function for $B$ unless $B=\B ba$ and $B'=\B dc$ with $cd=-3$ and $1\not\in\set{|a|,|b|}$
\end{proposition}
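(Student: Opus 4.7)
The plan is to proceed by cases on the finitely many $2 \times 2$ exchange matrices $B'$ of finite type. Writing $B = \B ba$ and $B' = \B dc$ as in the statement, we have $cd \in \set{0,-1,-2,-3}$, so there is a short checklist, which reduces further---via the antipodal and reindexing symmetries of Propositions~\ref{antip iso} and~\ref{pi iso}---to an even shorter list. The initial cluster variables $x'_1, x'_2$ are sent by $\nu_\z$ to $x_1, x_2$, which are cluster monomials and hence theta functions, so only the non-initial cluster variables of $\A_\bullet(B')$ require examination. If $B$ is itself non-wild, Theorem~\ref{hom phen 2x2 part 2} already yields the stronger conclusion, so I would henceforth restrict to $B$ of wild type, $ab \le -5$. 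Several lemmas from the proof of Theorem~\ref{hom phen 2x2 part 2}---namely Lemmas~\ref{00 lemma}, \ref{a same lemma}, and~\ref{b same lemma}---are proved under hypotheses that do not require $B$ to be non-wild, and they dispatch $x'_{-1}$ and $x'_0$ always, $x'_3$ when $a = c$, and $x'_{-2}$ when $b = d$, sending each to a cluster variable (hence a theta function) of $\A_\bullet(B)$.

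For each of the few cluster variables of $\A_\bullet(B')$ remaining after this reduction, I would compute $\nu_\z(x'_i)$ directly from the primed versions of the exchange relations \eqref{x0 eq}--\eqref{x5 eq}. Since $\nu_\z$ respects the $\g$-vector grading on generators, $\nu_\z(x'_i)$ is homogeneous of the same $\g$-vector $\gamma_i$ as $x'_i$. By Theorem~\ref{ref scat phen 2x2} applied to $B^T$ and $(B')^T$, together with the Discreteness Hypothesis, the ray spanned by $\gamma_i$ is a rational ray of $\ScatFan^T(B)$, and hence carries a unique ray theta function $\theta_{\gamma_i}$. Outside the exception regime---that is, when $cd \ne -3$ or when $1 \in \set{|a|,|b|}$---the ray $\gamma_i$ lies on the boundary of, or outside, the irrational wild cone of $\ScatFan^T(B)$, and $\theta_{\gamma_i}$ coincides with an explicit cluster monomial such as those given by \eqref{x-4 eq} or \eqref{x5 eq}. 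I would then match $\nu_\z(x'_i)$ with $\theta_{\gamma_i}$ by direct Laurent-polynomial comparison, in the same style as Lemmas~\ref{d+1 1 lemma}, \ref{1 c-1 lemma}, \ref{31 41 lemma 1}, and \ref{13 14 lemma 1}.

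The main obstacle is the exception regime itself, $cd = -3$ with both $|a|, |b| \ge 2$. There the $\g$-vectors of the remaining cluster variables of $\A_\bullet(B')$ land strictly inside the irrational wild cone of $\ScatFan^T(B)$, where $\theta_{\gamma_i}$ is an infinite sum of broken-line contributions with no closed form; consequently $\nu_\z(x'_i)$ cannot be matched to $\theta_{\gamma_i}$ on the nose, and the proposition correspondingly weakens its conclusion to equality modulo $\A_\bullet(B)$. To establish that weaker statement, I would exploit the $F$-polynomial factorization of $\nu_\z(x'_i)$ (inherited from that of $x'_i$) and the analogous factorization $\theta_{\gamma_i} = \x^{\gamma_i} F_{\theta_{\gamma_i}}(\hy)$: after cancelling the shared leading monomial $\x^{\gamma_i}$, the difference is a combination of higher-order $\hy$-monomials times $\x^{\gamma_i}$, which a grading argument shows lies in $\A_\bullet(B)$. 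Making this comparison rigorous in the $G_2$-type cases, and confirming that no stray monomial escapes $\A_\bullet(B)$, will be the most delicate step.
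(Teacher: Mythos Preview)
Your case-by-case framework matches the paper's, but the plan for the individual computations has a genuine gap in both regimes.

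In the non-exception regime you claim $\theta_{\gamma_i}$ coincides with a cluster monomial of $\A_\bullet(B)$, citing formulas like \eqref{x-4 eq} and \eqref{x5 eq}. For wild $B$ this is generally false: with $B'=\B1{-1}$ and $B=\B ba$, $a\le-2$, the diagonal $\g$-vector is $[-1,1]$, which is not the $\g$-vector of any cluster monomial for $B$ (the cluster variable $x_3$ has $\g$-vector $[-1,-a]\ne[-1,1]$, and no compatible pair of cluster variables sums to $[-1,1]$). The paper never attempts such an identification. Instead it computes $\thet_{[m_1,m_2]}$ directly from the closed broken-line formulas of Propositions~\ref{theta m1 b}--\ref{specific}, which apply because the relevant $[m_1,m_2]$ lie in the range $-b\le m_1\le0$, $m_2\ge0$ (or the companion range of Proposition~\ref{theta m2 a}), and then checks $\nu_\z(\thet'_{[m_1,m_2]})=\thet_{[m_1,m_2]}$ as a short Laurent-polynomial identity (Lemmas~\ref{thet -11 lem}--\ref{thet -13 lem}).

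In the exception regime $cd=-3$ with $|a|,|b|\ge2$, your premise that $\theta_{\gamma_i}$ is an infinite broken-line sum with no closed form is simply wrong: the same Propositions~\ref{theta m1 b} and~\ref{theta m2 a} still apply to $[-3,2]$ (since $b\ge d\ge3$ forces $-b\le-3$) and to $[-2,3]$ (since $a\le c\le-3$, and $b\ge2$ in the exception case), yielding finite explicit Laurent polynomials. The paper's Lemmas~\ref{thet -32 lem} and~\ref{thet -23 lem} carry out the subtraction and find that $\nu_\z(\thet')-\thet$ is a \emph{single monomial}, namely $3y_1y_2x_0^{-2-a}x_1^{b-3}$ (respectively $3y_1y_2x_0^{-3-a}x_1^{b-2}$), which is visibly in $\A_\bullet(B)$ because $x_0$ is a cluster variable and all exponents are nonnegative. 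The $F$-polynomial and grading argument you sketch is aimed at a nonexistent obstacle and would not by itself produce this explicit correction term; the ``most delicate step'' you anticipate is in fact a two-line computation once the right theta-function formula is in hand.
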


Suppose we have proven Proposition~\ref{implies hom phen 2x2 part 1}.
It is not true in general that every ray theta function for $B$ is an element of $\A_\bullet(B)$, but combining \cite[Theorem~1.18]{ca3}, \cite[Theorem~0.12]{GHKK}, and \cite[Proposition~0.14]{GHKK}, we see that it is true in this case.
Thus in particular $\nu_\z$ takes every cluster variable of $\A_\bullet(B')$ to an element of $\A_\bullet(B)$.
We apply Proposition~\ref{hom from initial} to see that $\nu_\z$ is a ring homomorphism from $\A_\bullet(B')$ to $\A_\bullet(B)$ preserving $\g$-vectors.
Again, Proposition~\ref{one antip} implies that $\nu_\z$ is injective.
To prove the assertion about ray theta functions mapping to ray theta functions, first note that, since $B'$ is of finite type, all of its ray theta functions are cluster variables.
Thus Proposition~\ref{implies hom phen 2x2 part 1} verifies (except in the excluded cases) that $\nu_\z$ takes each ray theta function in $\ScatFan^T(B')$ to the theta function for the same ray in $\ScatFan^T(B)$.
Since we assume the Discreteness Hypothesis, Theorem~\ref{ref scat phen 2x2} implies that these theta functions are in fact \emph{ray} theta functions.
We see that Proposition~\ref{implies hom phen 2x2 part 1} implies Theorem~\ref{hom phen 2x2 part 1}.

To prove Proposition~\ref{implies hom phen 2x2 part 1}, again by symmetry, we take $B=\B ba$ with $a<0$, $b>0$ and $B'=\B dc$ with $c\le0$ and $d\ge0$ and $cd\ge-3$.
We must check six cases:
\begin{enumerate}[\,\,{Case }1:]
\item $B'=\B00$.
\item $B'=\B1{-1}$.
\item $B'=\B2{-1}$.
\item $B'=\B1{-2}$.
\item $B'=\B3{-1}$.
\item $B'=\B1{-3}$.
\end{enumerate}
We write $\thet_{[m_1,m_2]}$ for the theta function with respect to $B$ associated to the integer vector $[m_1,m_2]$.
We write $\thet'_{[m_1,m_2]}$ for the theta function with respect to $B'$.
In each of the six cases, the cluster variables $x'_{-1},x'_0,x'_1,x'_2$, are sent (according to Lemma~\ref{00 lemma} and by definition) by $\nu_\z$ to  $x_{-1},x_0,x_1,x_2$, which in turn are $\thet_{[\pm1,0]}$ and $\thet_{[0,\pm1]}$.
This observation completes Case 1 and leaves from 1 to 4 remaining cluster variables to check in the other cases.
For convenience, we call these remaining cluster variables \newword{diagonal cluster variables} here.
We handle the remaining cases in several lemmas, some of which are slightly more general:  Unless explicitly stated, the lemmas do not need the assumption that $cd\ge-3$.

The lemmas are proved using the following theta function computations, which are \cite[Proposition~3.16]{scatcomb}, \cite[Proposition~3.18]{scatcomb}, and \cite[Example~3.20]{scatcomb}.
As before, the notation $[b]_+$ means $\max(0,b)$.

\begin{prop}\label{theta m1 b}
If $-b\le m_1\le0$ and $m_2\ge0$, then 
\[\thet_{[m_1,m_2]}=x_1^{m_1}\sum_{i=0}^{-m_1}\binom{-m_1}{i}y_1^ix_0^{[-m_2-ai]_+}x_2^{[m_2+ai]_+}\,.\]
\end{prop}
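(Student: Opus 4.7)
The plan is to compute $\thet_{[m_1,m_2]}$ by direct enumeration of broken lines in the transposed cluster scattering diagram $\ScatFan^T(B)$. Recall that $\thet_{[m_1,m_2]}$ is the sum, over all broken lines $\gamma$ with asymptotic direction $-[m_1,m_2]$ and endpoint at a generic point $Q$ in a chosen target chamber, of the Laurent monomial attached to $\gamma$ at $Q$. I would fix the target chamber to be the initial all-positive chamber, so that monomials are Laurent in $x_1, x_2$ with polynomial dependence on $y_1, y_2$ and $\hy_1 = y_1 x_2^{a}$. The straight broken line (no bends) lies in the initial chamber and contributes $x_1^{m_1} x_2^{m_2}$, which is the $i=0$ term of the claimed formula, since then $[-m_2-ai]_+=0$ and $[m_2+ai]_+=m_2$.

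Next I would classify nontrivial broken lines by the number $i \ge 1$ of bends they undergo at walls supported on the vertical axis. The relevant walls are the initial wall on $\mathbb{R}_{\ge 0}\e_2$, with scattering function $1+\hy_1$, and its opposite on $\mathbb{R}_{\ge 0}(-\e_2)$, arising as a boundary of the chamber of the cluster $(x_1,x_0)$. Each bend multiplies the attached monomial by $\hy_1$ and increments the direction by a positive multiple of $\e_1$. Since the asymptotic direction has first coordinate $-m_1$ and the endpoint lies in the target chamber (with positive first coordinate), the number of bends satisfies $0 \le i \le -m_1$. The hypothesis $-b \le m_1 \le 0$ is crucial here: it restricts the relevant portion of the scattering diagram so that broken lines with up to $-m_1$ bends never reach deeper walls (for instance, those supported on the ray spanned by the $\g$-vector of $x_{-1}$). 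A local enumeration of admissible bend patterns in this strip then shows that exactly $\binom{-m_1}{i}$ distinct broken lines contribute with $i$ bends.

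Finally, each such broken line contributes $\hy_1^{\,i} \cdot x_1^{m_1} x_2^{m_2} = y_1^i x_1^{m_1} x_2^{m_2+ai}$ when it does not cross into the $(x_1,x_0)$-chamber, which happens precisely when $m_2+ai \ge 0$. When $m_2+ai < 0$, the broken line dips into the $(x_1,x_0)$-chamber before returning to the target, and tracking the monomial through this wall using the exchange relation $x_0 x_2 = 1+\hy_2$ replaces $x_2^{m_2+ai}$ by $x_0^{-m_2-ai}$, with the $\hy_2$-correction terms cancelling against contributions from broken lines with different bend patterns on the lower wall. The two cases combine into the single expression $x_0^{[-m_2-ai]_+} x_2^{[m_2+ai]_+}$. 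Summing over $i$ from $0$ to $-m_1$ and factoring out $x_1^{m_1}$ yields the claimed formula.

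The main obstacle in carrying out this plan is the monomial bookkeeping in the last paragraph: one must verify both the count $\binom{-m_1}{i}$ and that the $\hy_2$-corrections from broken lines entering the $(x_1,x_0)$-chamber cancel correctly to give the clean $x_0^{[\,\cdot\,]_+}x_2^{[\,\cdot\,]_+}$ form. By contrast, restricting to the strip $-b \le m_1 \le 0$ to isolate which walls are relevant, and matching the $i=0$ term, are essentially formal.
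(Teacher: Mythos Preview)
The paper does not prove this proposition; it simply quotes it as \cite[Proposition~3.16]{scatcomb}.  So there is no ``paper's own proof'' to compare against, only the computation in that reference, which is indeed a broken-line computation in spirit.

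Your outline is the natural one, but two steps are misdescribed in ways that matter.  First, a broken line bends at most once at any wall it crosses.  What is indexed by $i$ is not ``the number of bends at walls on the vertical axis'' but the choice of monomial $\binom{-m_1}{i}\hy_1^{\,i}$ from the expansion $(1+\hy_1)^{-m_1}$ at the \emph{single} crossing of that wall, where $-m_1$ is the transversality.  Correspondingly there is exactly one broken line for each $i$, carrying coefficient $\binom{-m_1}{i}$; it is not that $\binom{-m_1}{i}$ distinct broken lines each contribute a unit.  The binomial coefficient is a coefficient, not a count.

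Second, broken lines do not ``dip into a chamber and return,'' and the appearance of $x_0$ is not a cancellation phenomenon.  With the target in the all-positive chamber, every broken-line contribution is a Laurent monomial in $x_1,x_2$; the $x_0$'s in the stated formula are a repackaging via $x_0=x_2^{-1}(1+\hy_2)$.  When $m_2+ai<0$, expanding $x_0^{-(m_2+ai)}$ shows precisely the extra Laurent monomials that must be accounted for by a second wall crossing (at the $\hy_2$-wall).  Making this precise is exactly the bookkeeping you flag as the main obstacle, and your sketch does not carry it out: you would need to track those secondary crossings explicitly and match them term-by-term to the expansion of the relevant power of $x_0$, rather than asserting a cancellation.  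The hypothesis $-b\le m_1$ is what guarantees no \emph{further} walls enter, so that this two-wall analysis suffices.
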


\begin{prop}\label{theta m2 a}
If $m_1<-b$ and $0\le m_2<-a$, then 
\[\thet_{[m_1,m_2]}=x_1^{m_1}x_2^{m_2}+\sum_{i=1}^{-m_1}\sum_{j=0}^{m_2}\binom{-m_1-bj}i\binom{m_2}j y_1^iy_2^jx_0^{-m_2-ai}x_1^{m_1+bj}.\]
\end{prop}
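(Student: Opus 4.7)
The plan is to compute $\thet_{[m_1,m_2]}$ directly from its definition as a sum of monomial contributions coming from broken lines in the transposed cluster scattering diagram $\ScatFan^T(B)$. This follows the same strategy as the proof of Proposition~\ref{theta m1 b}, which handles the adjacent regime $-b\le m_1\le 0$. The new feature of the regime $m_1<-b$ is that broken lines can now bend at an additional wall beyond those accessible before, which accounts for the second summation index $j$ appearing in the formula.

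First I would fix a generic endpoint $Q$ in the chamber of $\ScatFan^T(B)$ containing the ray through $[m_1,m_2]$; the hypotheses $m_1<-b$ and $0\le m_2<-a$ identify this chamber unambiguously. The trivial (non-bending) broken line then contributes the leading monomial $x_1^{m_1}x_2^{m_2}$. Every other broken line must bend one or more times at walls, and I would parametrize such broken lines by pairs $(i,j)$, where $j$ records the number of bends at the wall associated with the exchange relation~\eqref{x0 eq} and $i$ records the number of subsequent bends at the wall associated with~\eqref{x-1 eq}. Tracking exponents through the wall-crossing automorphisms, one checks that the monomial carried to $Q$ by the $(i,j)$-broken line equals $y_1^iy_2^jx_0^{-m_2-ai}x_1^{m_1+bj}$.

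Second I would determine the multiplicity with which each such monomial contributes. Each wall-crossing expands as a binomial power series, and at a wall whose function carries exponent $N$ the broken line picks $k$ terms with multiplicity $\binom{N}{k}$. The relevant exponents grow linearly in the number of prior crossings: after $j$ bends at the first wall, the second wall presents exponent $-m_1-bj$, while the first wall had initial exponent $m_2$. This yields the two binomial factors $\binom{-m_1-bj}{i}$ and $\binom{m_2}{j}$ in the formula.

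The main obstacle will be verifying that the enumeration of broken lines is exhaustive. In the wild case $ab<-4$ the scattering diagram has infinitely many walls accumulating toward the limiting rays, and one must argue that no broken line ending at $Q$ can reach those accumulation walls. The hypotheses $m_1<-b$ and $0\le m_2<-a$ are precisely the conditions that confine the relevant broken-line combinatorics to finitely many walls, and thereby make a closed-form answer possible. The detailed verification is the content of \cite[Proposition~3.18]{scatcomb}, from which this statement is quoted.
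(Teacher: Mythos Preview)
The paper does not prove this proposition; it is quoted verbatim as \cite[Proposition~3.18]{scatcomb}, and you correctly identify that citation at the end of your proposal. So there is no discrepancy in the ultimate justification.

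Your broken-line sketch is in the right spirit but has a genuine error in the setup. The endpoint $Q$ must be taken in the \emph{positive} chamber (the chamber of the initial cluster $(x_1,x_2)$), not in the chamber containing $[m_1,m_2]$. If $Q$ lay in the same chamber as $[m_1,m_2]$, the only broken line would be the straight one, and the theta function would collapse to the single monomial $x_1^{m_1}x_2^{m_2}$ with no correction terms. The nontrivial sum in the formula arises precisely because a broken line with asymptotic direction $[m_1,m_2]$ must cross walls on its way to a generic point in the positive chamber. Relatedly, the inequalities $m_1<-b$ and $0\le m_2<-a$ do not single out a chamber of $\ScatFan^T(B)$; their role is rather to guarantee that only the two specific walls you name are relevant to the computation and that the binomial exponents $m_2$ and $-m_1-bj$ are nonnegative, so that the sum is finite and the stated closed form holds. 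With $Q$ placed correctly, the rest of your outline (parametrizing the broken lines by the degrees $i$ and $j$ selected at the two wall-crossings, producing the binomial coefficients $\binom{m_2}{j}$ and $\binom{-m_1-bj}{i}$) matches the argument in \cite{scatcomb}.
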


\begin{prop}\label{specific}
If $a=-3$ and $b=1$, then 
\[\thet_{[-2,3]}=x_1^{-2}x_2^3+2y_1x_1^{-2}+3y_1y_2x_1^{-1}+y_1^2x_0^3x_1^{-2}.\]
\end{prop}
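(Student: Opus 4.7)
Plan: Since $a=-3$ and $b=1$ give $ab=-3$, the matrix $B$ is of finite type $G_2$. In finite type, the transposed scattering fan $\ScatFan^T(B) = \ScatFan(B^T) = \F_{B^T}$ coincides with the $\g$-vector fan of cluster monomials in $\A_\bullet(B)$ (using Theorem~\ref{FB gvec finite} applied to $B^T$, together with the coincidence of $\ScatFan$ and $\F$ in finite type). Hence every integer vector lies in a $\g$-vector cone and $\thet_{[-2,3]}$ is a cluster monomial. The first task is to identify it: using $\g(y_1)=[0,3]$, $\g(y_2)=[-1,0]$ and the exchange relations \eqref{x0 eq}--\eqref{x-3 eq}, a routine grading computation yields $\g(x_0)=[0,-1]$, $\g(x_{-1})=[-1,0]$, $\g(x_{-2})=[-1,1]$, and $\g(x_{-3})=3\g(x_{-2})-\g(x_{-1})=[-2,3]$. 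So $\thet_{[-2,3]} = x_{-3}$, and the task reduces to evaluating $x_{-3}$ in the initial seed.

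Setting $A := x_1 y_2 + 1$ (so $x_0 = A/x_2$), equation \eqref{x-1 eq} gives $x_{-1} = (y_1 A^3 + x_2^3)/(x_1 x_2^3)$, and equation \eqref{x-2 eq} simplifies via the factorization $y_1 A^3 + x_2^3 + y_2 x_1 x_2^3 = A(y_1 A^2 + x_2^3)$ to yield $x_{-2} = (y_1 A^2 + x_2^3)/(x_1 x_2^2)$. Substituting into \eqref{x-3 eq} reduces $x_{-3}$ to a single rational expression with numerator $(y_1 A^2 + x_2^3)^3 + y_1 y_2^3 x_1^3 x_2^6$ and denominator $x_1^2 x_2^3 (y_1 A^3 + x_2^3)$.

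The main obstacle will be executing this polynomial division cleanly. Writing $C := y_1 A^2 + x_2^3$ and $D := y_1 A^3 + x_2^3$ (so that $C = D - y_1 x_1 y_2 A^2$), I would expand $C^3$ by the binomial theorem and collect. Two collapses, both following from the trivial identity $A - x_1 y_2 = 1$, make the computation tractable: the identity $A^3 - 3 x_1 y_2 A^2 + 3 x_1^2 y_2^2 A - x_1^3 y_2^3 = 1$ reduces all the $y_1^2 A^i$ contributions to a single $y_1^2 A^3$ term, and $2 A^3 - 3 x_1 y_2 A^2 + x_1^3 y_2^3 = 3 x_1 y_2 + 2$ reduces the $y_1 x_2^3$ contributions to $3 y_1 y_2 x_1 x_2^3 + 2 y_1 x_2^3$. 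After division the quotient is $y_1^2 A^3 + 3 y_1 y_2 x_1 x_2^3 + 2 y_1 x_2^3 + x_2^6$, and dividing through by $x_1^2 x_2^3$ and rewriting $A^3/x_2^3 = x_0^3$ produces the claimed four-term formula. The temptation to grind out the degree-nine expansion directly is the real pitfall; organizing the arithmetic around the two collapses above keeps the computation manageable.
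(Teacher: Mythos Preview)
Your proof is correct. The paper does not prove this proposition at all; it simply quotes it as \cite[Example~3.20]{scatcomb}, where it is obtained by a direct broken-line computation in the transposed cluster scattering diagram. Your route is genuinely different: you observe that in finite $G_2$ type the theta function $\thet_{[-2,3]}$ must be a cluster monomial, match its $\g$-vector to that of $x_{-3}$ via the exchange relations \eqref{x0 eq}--\eqref{x-3 eq}, and then compute $x_{-3}$ explicitly. The two collapses you isolate (both instances of $(A-x_1y_2)^3=1$ and its variant) are exactly what make the division by $D=y_1A^3+x_2^3$ clean, and your quotient $y_1^2A^3+3y_1y_2x_1x_2^3+2y_1x_2^3+x_2^6$ checks out. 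Your argument has the advantage of being self-contained within the cluster-algebra formalism already set up in the paper, at the cost of a somewhat longer polynomial manipulation; the scattering-diagram approach in \cite{scatcomb} is more uniform across all $[m_1,m_2]$ but requires the broken-line machinery.
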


We use $\nu_\z(x'_0)=x_0$ (Lemma~\ref{00 lemma}) throughout the proofs.
Most of the lemmas below are simple theta function computations using Proposition~\ref{theta m1 b}, and we omit the details of those lemmas.
Again, in every case, $z_1=x_0^{c-a}$ and $z_2=x_1^{b-d}$.

The only diagonal cluster variable for $B'=\B1{-1}$ has $\g$-vector $[-1,1]$ and this equals $\thet'_{[-1,1]}$.
The following lemma thus completes Case 2.

\begin{lemma}\label{thet -11 lem}
If $B=\B ba$ and $B'=\B dc$ with $a\le c\le-1$ and $b\ge d\ge1$, then $\nu_\z(\thet'_{[-1,1]})=\thet_{[-1,1]}$.
\end{lemma}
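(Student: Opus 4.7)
The plan is a direct computation of both sides using Proposition~\ref{theta m1 b}, followed by substitution via $\nu_\z$, using the recorded values $z_1=x_0^{c-a}$, $z_2=x_1^{b-d}$ and Lemma~\ref{00 lemma}.

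First I would apply Proposition~\ref{theta m1 b} to $B=\B ba$ at $(m_1,m_2)=(-1,1)$. The hypothesis $-b\le m_1\le 0$ holds since $b\ge 1$, and $m_2\ge 0$ trivially. The sum collapses to the two terms $i=0$ and $i=1$: for $i=0$ we get $x_2$, and for $i=1$, using $a\le-1$ (so $[-1-a]_+=-1-a$ and $[1+a]_+=0$), we get $y_1x_0^{-1-a}$. Hence
\[
\thet_{[-1,1]}=\frac{x_2+y_1x_0^{-1-a}}{x_1}.
\]
The identical computation with $B'=\B dc$ (valid because $d\ge 1$ and $c\le-1$) yields
\[
\thet'_{[-1,1]}=\frac{x'_2+y'_1(x'_0)^{-1-c}}{x'_1}.
\]

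Next I would apply $\nu_\z$ termwise. By the definition \eqref{natural candidate}, $\nu_\z(x'_1)=x_1$ and $\nu_\z(x'_2)=x_2$, while $\nu_\z(y'_1)=y_1z_1=y_1x_0^{c-a}$. By Lemma~\ref{00 lemma}, $\nu_\z(x'_0)=x_0$. Substituting,
\[
\nu_\z(\thet'_{[-1,1]})=\frac{x_2+y_1x_0^{c-a}\cdot x_0^{-1-c}}{x_1}=\frac{x_2+y_1x_0^{-1-a}}{x_1}=\thet_{[-1,1]},
\]
which is the claim.

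There is no serious obstacle: the only arithmetic point is that the exponent adjustment $(c-a)+(-1-c)=-1-a$ exactly cancels the dependence on $c$, so that $\nu_\z$ converts the $B'$-theta formula into the $B$-theta formula. The role of the hypotheses $b\ge d\ge 1$ and $a\le c\le-1$ is only to make both applications of Proposition~\ref{theta m1 b} legal and to guarantee the sign assumptions used in simplifying $[-1-a]_+$, $[1+a]_+$, $[-1-c]_+$, and $[1+c]_+$.
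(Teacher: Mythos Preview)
Your proof is correct and is exactly the computation the paper has in mind: the paper states that this lemma (along with several neighboring ones) is a simple theta function computation via Proposition~\ref{theta m1 b}, using $\nu_\z(x'_0)=x_0$ from Lemma~\ref{00 lemma}, and omits the details. You have supplied precisely those details.
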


When $B'=\B2{-1}$, there is exactly one diagonal cluster variable not covered by Lemma~\ref{thet -11 lem}, with $\g$-vector $[-2,1]$.
The following lemma completes Case 3.

\begin{lemma}\label{thet -21 lem}
If $B=\B ba$ and $B'=\B dc$ with $a\le c\le-1$ and $b\ge d\ge2$, then $\nu_\z(\thet'_{[-2,1]})=\thet_{[-2,1]}$.
\end{lemma}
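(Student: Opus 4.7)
The plan is to mimic the strategy (omitted in the paper) used for Lemma~\ref{thet -11 lem}: expand both $\thet'_{[-2,1]}$ and $\thet_{[-2,1]}$ explicitly via Proposition~\ref{theta m1 b} and then compare term by term after applying $\nu_\z$. Since $b\ge d\ge2$, the vector $[-2,1]$ lies in the range $-b\le m_1\le 0$, $m_2\ge 0$ required by Proposition~\ref{theta m1 b}, both with respect to $B$ and with respect to $B'$.

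First I would unfold $\thet_{[-2,1]}$. Using $a\le -1$, for every $i\ge 1$ the inequality $1+ai\le 0$ forces $[1+ai]_+=0$ and $-1-ai\ge 0$ gives $[-1-ai]_+=-1-ai$, while for $i=0$ the roles reverse. This yields
\[\thet_{[-2,1]}=x_1^{-2}\bigl(x_2+2y_1x_0^{-1-a}+y_1^2x_0^{-1-2a}\bigr).\]
Exactly the same computation, run with $c$ in place of $a$ (which is legitimate because $c\le -1$ by hypothesis), gives
\[\thet'_{[-2,1]}=(x'_1)^{-2}\bigl(x'_2+2y'_1(x'_0)^{-1-c}+(y'_1)^2(x'_0)^{-1-2c}\bigr).\]

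Next I would apply $\nu_\z$. By definition $\nu_\z(x'_k)=x_k$ for $k=1,2$, by Lemma~\ref{00 lemma} we have $\nu_\z(x'_0)=x_0$, and by construction $\nu_\z(y'_1)=y_1z_1=y_1x_0^{c-a}$. The key cancellation is in the exponent of $x_0$: for $i=1,2$, the contribution from $\nu_\z((y'_1)^i)$ is $x_0^{i(c-a)}$, which multiplied by $(x'_0)^{-1-ic}\mapsto x_0^{-1-ic}$ gives $x_0^{-1-ia}$. Thus each term of the expansion of $\thet'_{[-2,1]}$ is sent to the corresponding term of $\thet_{[-2,1]}$, establishing $\nu_\z(\thet'_{[-2,1]})=\thet_{[-2,1]}$.

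The only real care needed is bookkeeping with the positive-part symbols $[\,\cdot\,]_+$, but once the hypothesis $a,c\le -1$ is used to resolve those (uniformly for all $i\in\{0,1,2\}$), the computation is mechanical and the match is exact, so there is no genuine obstacle beyond careful arithmetic.
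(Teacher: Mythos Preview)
Your proof is correct and follows exactly the approach the paper indicates (it explicitly says these lemmas are ``simple theta function computations using Proposition~\ref{theta m1 b}'' with details omitted). The hypotheses $b\ge d\ge2$ ensure Proposition~\ref{theta m1 b} applies to $[-2,1]$ for both $B$ and $B'$, and your handling of the $[\cdot]_+$ terms and the exponent cancellation $i(c-a)+(-1-ic)=-1-ia$ is accurate.
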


When $B'=\B1{-2}$, the one diagonal cluster variable not covered by Lemma~\ref{thet -11 lem} has $\g$-vector $[-1,2]$.
The following lemma completes Case 4.
\begin{lemma}\label{thet -12 lem}
If $B=\B ba$ and $B'=\B dc$ with $a\le c\le-2$ and $b\ge d\ge1$, then $\nu_\z(\thet'_{[-1,2]})=\thet_{[-1,2]}$.
\end{lemma}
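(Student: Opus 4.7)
The plan is to compute both theta functions directly from Proposition~\ref{theta m1 b} and then match them after applying $\nu_\z$.

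First, observe that the hypotheses of Proposition~\ref{theta m1 b} are satisfied for $\thet'_{[-1,2]}$ relative to $B'=\B dc$: we need $-d\le m_1\le 0$ and $m_2\ge 0$, which hold because $d\ge 1$, $m_1=-1$, $m_2=2$. The sum has only the two terms $i=0,1$, and the condition $c\le -2$ forces $[-2-c]_+=-2-c$ and $[2+c]_+=0$, so the formula collapses to
\[
\thet'_{[-1,2]} \;=\; (x'_1)^{-1}(x'_2)^{2} \;+\; (x'_1)^{-1}\,y'_1\,(x'_0)^{-2-c}.
\]
The identical computation for $B$ (using $b\ge 1$ and $a\le -2$) gives
\[
\thet_{[-1,2]} \;=\; x_1^{-1}x_2^{2} \;+\; y_1\,x_1^{-1}\,x_0^{-2-a}.
\]

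Next, apply $\nu_\z$ to the expression for $\thet'_{[-1,2]}$. The initial variables satisfy $\nu_\z(x'_k)=x_k$ for $k=1,2$, and Lemma~\ref{00 lemma} gives $\nu_\z(x'_0)=x_0$. The relevant coefficient map is $\nu_\z(y'_1)=y_1z_1=y_1\,x_0^{c-a}$, where $c-a\ge 0$ because $a\le c$, so $z_1$ is a genuine cluster monomial whose $\g$-vector equals the first column of $B$ minus the first column of $B'$, namely $(0,\,a-c)$ (which matches the $\g$-vector of $x_0^{c-a}$ since $x_0$ has $\g$-vector $(0,-1)$).

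Substituting these into the formula for $\thet'_{[-1,2]}$, the second term becomes $x_1^{-1}\,(y_1\,x_0^{c-a})\,x_0^{-2-c}=y_1\,x_1^{-1}\,x_0^{-2-a}$, because the exponent of $x_0^{c-a}$ introduced by $\nu_\z$ exactly cancels the $-c$ contribution in $-2-c$, leaving $-2-a$. The first term maps to $x_1^{-1}x_2^2$ on the nose. Comparing with the closed form for $\thet_{[-1,2]}$ establishes the equality. There is no genuine obstacle; the only thing to verify carefully is that the sign conditions $a\le c\le -2$ and $b\ge d\ge 1$ place all exponents in the regime where Proposition~\ref{theta m1 b} applies with the positive-part brackets taking their simple values.
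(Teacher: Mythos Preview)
Your proof is correct and follows exactly the approach the paper indicates: the paper states that this lemma is a simple theta function computation using Proposition~\ref{theta m1 b} and omits the details. Your application of Proposition~\ref{theta m1 b}, the verification that $c\le -2$ and $a\le -2$ force the positive-part brackets into the simple regime, and the cancellation $x_0^{c-a}\cdot x_0^{-2-c}=x_0^{-2-a}$ under $\nu_\z$ are precisely what is needed.
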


When $B'=\B3{-1}$, the diagonal cluster variables have $\g$-vectors $[-1,1]$, $[-2,1]$, $[-3,1]$, and $[-3,2]$.
The first two of these are covered by Lemmas~\ref{thet -11 lem} and~\ref{thet -21 lem}, and the other two are covered by the following two lemmas, where we also encounter the first excluded case of Theorem~\ref{hom phen 2x2 part 1}.

\begin{lemma}\label{thet -31 lem}
If $B=\B ba$ and $B'=\B dc$ with $a\le c\le-1$ and $b\ge d\ge3$, then $\nu_\z(\thet'_{[-3,1]})=\thet_{[-3,1]}$.
\end{lemma}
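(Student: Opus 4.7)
My plan is to verify the equality $\nu_\z(\thet'_{[-3,1]})=\thet_{[-3,1]}$ by writing both theta functions out explicitly via Proposition~\ref{theta m1 b} and matching them term-by-term. Since $d\ge 3$, the hypothesis $-d\le m_1\le 0$ is met for $m_1=-3$, so the primed version of the proposition (with $a,b$ replaced by $c,d$) gives
\[\thet'_{[-3,1]}=(x_1')^{-3}\sum_{i=0}^{3}\binom{3}{i}(y_1')^i(x_0')^{[-1-ci]_+}(x_2')^{[1+ci]_+}.\]
Since $c\le -1$, for each $i\ge 1$ we have $-1-ci\ge i-1\ge 0$ and $1+ci\le 1-i\le 0$, so the sum collapses to
\[\thet'_{[-3,1]}=(x_1')^{-3}\Bigl[x_2'+3y_1'(x_0')^{-1-c}+3(y_1')^2(x_0')^{-1-2c}+(y_1')^3(x_0')^{-1-3c}\Bigr].\]
The analogous computation, using $b\ge 3$ and $a\le -1$, yields
\[\thet_{[-3,1]}=x_1^{-3}\Bigl[x_2+3y_1x_0^{-1-a}+3y_1^2 x_0^{-1-2a}+y_1^3 x_0^{-1-3a}\Bigr].\]

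Next I would apply $\nu_\z$ to the formula for $\thet'_{[-3,1]}$, using $z_1=x_0^{c-a}$ and $z_2=x_1^{b-d}$, the definitions $\nu_\z(x_k')=x_k$ and $\nu_\z(y_k')=y_kz_k$, and the identity $\nu_\z(x_0')=x_0$ already recorded in Lemma~\ref{00 lemma}. For $i\ge 1$ the $i$-th term becomes
\[\binom{3}{i}(y_1 x_0^{c-a})^i x_0^{-1-ci}=\binom{3}{i}y_1^i x_0^{-1-ai},\]
matching the $i$-th term of $\thet_{[-3,1]}$ exactly; the $i=0$ term is simply $x_2$ in both formulas. This establishes the lemma.

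The only subtlety is tracking the $[\,\cdot\,]_+$ operations in Proposition~\ref{theta m1 b}, but these collapse uniformly under the hypothesis $c\le -1$. The key algebraic fact driving the whole calculation is the exponent cancellation $i(c-a)+(-1-ci)=-1-ai$, which is exactly what the choice $z_1=x_0^{c-a}$ is engineered to produce, so I do not anticipate any substantive obstacle.
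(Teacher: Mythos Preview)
Your proof is correct and follows exactly the approach the paper indicates: the paper states that this lemma (along with its neighbors) is a simple theta function computation using Proposition~\ref{theta m1 b} and omits the details. Your write-up supplies precisely those details, correctly handling the $[\,\cdot\,]_+$ collapse under $c\le -1$ and the exponent cancellation $i(c-a)+(-1-ci)=-1-ai$.
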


\begin{lemma}\label{thet -32 lem}
If $B=\B ba$ and $B'=\B d{-1}$ with $a\le-1$ and $b\ge d\ge3$, then 
\[\nu_\z(\thet'_{[-3,2]})=\begin{cases}
\thet_{[-3,2]}&\text{if }a=-1,\text{ or}\\
\thet_{[-3,2]}+3y_1y_2x_0^{-2-a}x_1^{b-3}&\text{if }a\le-2.
\end{cases}\]
\end{lemma}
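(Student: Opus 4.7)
The plan is a direct theta-function computation in the style of the preceding lemmas. Since $B'=\B{d}{-1}$ with $d\ge 3$, Proposition~\ref{theta m1 b} applies to $\thet'_{[-3,2]}$ and expresses it as the sum over $i=0,1,2,3$ of $\binom{3}{i}(y_1')^i$ times an appropriate monomial in $x_0'$, $x_1'$, $x_2'$. Note that $y_2'$ does not appear, so the value of $z_2$ will be immaterial here; only $z_1=x_0^{-1-a}$ enters. I would then apply $\nu_\z$ term-by-term, substituting $x_k'\mapsto x_k$ and $y_1'\mapsto y_1 x_0^{-1-a}$.

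In parallel, I would expand $\thet_{[-3,2]}$ using Proposition~\ref{theta m1 b} applied to $B=\B{b}{a}$ (valid because $b\ge 3$), again as a sum of four terms. The positive-part brackets $[-2-ai]_+$ and $[2+ai]_+$ behave qualitatively differently depending on whether $a=-1$ or $a\le -2$, and this is precisely what forces the case split in the lemma.

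For $a=-1$, every bracket evaluates to the analogous bracket for $B'$, and the two expressions match term-by-term, giving $\nu_\z(\thet'_{[-3,2]})=\thet_{[-3,2]}$ with no correction. For $a\le -2$, a short inspection shows that the terms for $i=0$, $i=2$, and $i=3$ still agree on the two sides: the $i=0$ contribution is $x_2^2$ in both; for $i=2,3$ the exponent of $x_2$ vanishes and the $x_0$ exponents supplied by $\nu_\z$ match those supplied by Proposition~\ref{theta m1 b} applied directly to $B$.

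The only real work is in the $i=1$ term, which contributes $3y_1 x_0^{-1-a}x_2$ to $\nu_\z(\thet'_{[-3,2]})$ but $3y_1 x_0^{-2-a}$ to $\thet_{[-3,2]}$. Factoring $3y_1 x_1^{-3}x_0^{-2-a}$ from the difference leaves $x_0x_2-1$, and the initial exchange relation~\eqref{x0 eq} gives $x_0x_2-1=x_1^by_2$. Substitution then produces exactly the claimed extra term $3y_1y_2x_0^{-2-a}x_1^{b-3}$. The only step that is not pure bookkeeping of positive parts is this final appeal to the exchange relation, so I do not anticipate any serious obstacles.
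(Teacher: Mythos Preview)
Your proposal is correct and follows essentially the same approach as the paper: compute $\thet'_{[-3,2]}$ via Proposition~\ref{theta m1 b}, apply $\nu_\z$ term-by-term using $\nu_\z(x'_0)=x_0$ and $z_1=x_0^{-1-a}$, compare with $\thet_{[-3,2]}$ computed by the same proposition for $B$, and in the case $a\le-2$ reduce the $i=1$ discrepancy via $x_0x_2-1=x_1^by_2$ from~\eqref{x0 eq}. The paper's proof is identical in structure and detail.
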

\begin{proof}
We compute $\thet'_{[-3,2]}=(x'_1)^{-3}\bigl((x'_2)^2+3y'_1x'_2+3(y'_1)^2+(y'_1)^3x'_0\bigr)$, so that 
\[\nu_\z(\thet'_{[-3,2]})=x_1^{-3}(x_2^2+3y_1x_0^{-1-a}x_2+3y_1^2x_0^{-2-2a}+y_1^3x_0^{-2-3a}).\]
We also compute
\begin{align*}
\thet_{[-3,2]}&=x_1^{-3}\sum_{i=0}^{3}\binom{3}{i}y_1^ix_0^{[-2-ai]_+}x_2^{[2+ai]_+}\\
&=\begin{cases}
x_1^{-3}(x_2^2+3y_1x_2+3y_1^2+y_1^3x_0)&\text{if }a=-1,\text{ or}\\
x_1^{-3}(x_2^2+3y_1x_0^{-2-a}+3y_1^2x_0^{-2-2a}+y_1^3x_0^{-2-3a})&\text{if }a\le-2.
\end{cases}
\end{align*}
We see that if $a=-1$ then $\nu_\z(\thet'_{[-3,2]})=\thet_{[-3,2]}$ and if $a\le-2$ then $\nu_\z(\thet'_{[-3,2]})-\thet_{[-3,2]}$
is 
\[3y_1x_1^{-3}(x_0^{-1-a}x_2-x_0^{-2-a})=3y_1x_0^{-2-a}x_1^{-3}(x_0x_2-1)=3y_1y_2x_0^{-2-a}x_1^{b-3}.\qedhere\]
\end{proof}

This completes Case 5, leaving only Case 6:  $B'=\B1{-3}$.
In this case, the $\g$-vectors of diagonal cluster variables are $[-1,1]$, $[-1,2]$, $[-1,3]$, and $[-2,3]$.
Lemmas~\ref{thet -11 lem} and~\ref{thet -12 lem}, together with the following two lemmas, take care of Case 6 and complete the proof of Proposition~\ref{implies hom phen 2x2 part 1} and thus Theorem~\ref{hom phen 2x2 part 1}.

\begin{lemma}\label{thet -13 lem}
If $B=\B ba$ and $B'=\B dc$ with $a\le c\le-3$ and $b\ge d\ge1$, then $\nu_\z(\thet'_{[-1,3]})=\thet_{[-1,3]}$.
\end{lemma}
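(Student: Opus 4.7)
The plan is to compute both sides directly using Proposition~\ref{theta m1 b} and observe that the exponents line up after applying $\nu_\z$. First I would verify that the vector $[-1,3]$ satisfies the hypothesis of Proposition~\ref{theta m1 b} for \emph{both} $B$ and $B'$: the inequality $-b \le -1 \le 0$ holds because $b \ge d \ge 1$, the analogous inequality $-d \le -1 \le 0$ holds for the same reason, and $m_2 = 3 \ge 0$. Thus the proposition applies in each case.

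Next I would expand the sums. Since only $i = 0$ and $i = 1$ contribute, I get
\[
\thet_{[-1,3]} = x_1^{-1}\bigl(x_2^3 + y_1\, x_0^{[-3-a]_+} x_2^{[3+a]_+}\bigr),
\]
and analogously for $\thet'_{[-1,3]}$ in $B'$. The condition $a \le c \le -3$ forces $-3-a \ge -3-c \ge 0$ and $3+a \le 3+c \le 0$, so both $[\,\cdot\,]_+$ expressions simplify: the $x_2$ exponents in the $i=1$ terms drop out, and the $x_0$ exponents become $-3-a$ and $-3-c$ respectively. This gives
\[
\thet_{[-1,3]} = x_1^{-1}(x_2^3 + y_1 x_0^{-3-a}), \qquad \thet'_{[-1,3]} = (x'_1)^{-1}\bigl((x'_2)^3 + y'_1 (x'_0)^{-3-c}\bigr).
\]

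Finally, applying $\nu_\z$ with $\nu_\z(x'_1) = x_1$, $\nu_\z(x'_2) = x_2$, $\nu_\z(x'_0) = x_0$ (from Lemma~\ref{00 lemma}), and $\nu_\z(y'_1) = y_1 z_1 = y_1 x_0^{c-a}$, the only nontrivial point is that the two $x_0$-factors combine correctly:
\[
x_0^{c-a} \cdot x_0^{-3-c} = x_0^{-3-a}.
\]
This is exactly the exponent appearing in $\thet_{[-1,3]}$, so $\nu_\z(\thet'_{[-1,3]}) = \thet_{[-1,3]}$ as desired.

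There is no real obstacle here; the argument is a bookkeeping exercise. The only place that requires care is making sure that the hypotheses $a \le c \le -3$ and $b \ge d \ge 1$ are exactly the conditions under which (i) Proposition~\ref{theta m1 b} applies in both algebras, and (ii) the positive-part truncations behave the same way on both sides so that the $c$ dependence cancels when combined with the $x_0^{c-a}$ factor built into $\nu_\z(y'_1)$. This matches the pattern seen in the preceding lemmas (\ref{thet -11 lem}, \ref{thet -21 lem}, \ref{thet -12 lem}, \ref{thet -31 lem}), where the $c$-dependent pieces similarly cancel; the present case is simpler than Lemma~\ref{thet -32 lem} because no extra ``correction'' term appears.
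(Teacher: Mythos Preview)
Your proof is correct and follows exactly the approach indicated in the paper, which says these lemmas are simple computations using Proposition~\ref{theta m1 b} and omits the details. Your bookkeeping with the exponents is accurate, and the use of Lemma~\ref{00 lemma} for $\nu_\z(x'_0)=x_0$ together with $z_1=x_0^{c-a}$ is exactly what is needed.
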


\begin{lemma}\label{thet -23 lem}
If $B=\B ba$ and $B'=\B 1{-3}$ with $a\le-3$ and $b\ge1$, then 
\[\nu_\z(\thet'_{[-2,3]})=\begin{cases}
\thet_{[-2,3]}&\text{if }b=1,\text{ or}\\
\thet_{[-2,3]}+3y_1y_2x_0^{-3-a}x_1^{b-2}&\text{if }b\ge2.\end{cases}\]
\end{lemma}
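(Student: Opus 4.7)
The plan is to expand $\thet'_{[-2,3]}$ using Proposition~\ref{specific} applied to $B'=\B1{-3}$, apply $\nu_\z$ using $z_1=x_0^{-3-a}$ and $z_2=x_1^{b-1}$, and then compare with $\thet_{[-2,3]}$ computed via the appropriate one of Propositions~\ref{theta m1 b}, \ref{theta m2 a}, or~\ref{specific}, according to the value of $b$.

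First, Proposition~\ref{specific} gives
\[
\thet'_{[-2,3]}=(x'_1)^{-2}(x'_2)^3+2y'_1(x'_1)^{-2}+3y'_1y'_2(x'_1)^{-1}+(y'_1)^2(x'_0)^3(x'_1)^{-2}.
\]
Applying $\nu_\z$ (using Lemma~\ref{00 lemma} for $\nu_\z(x'_0)=x_0$, the defining formulas $\nu_\z(x'_k)=x_k$ for $k=1,2$, and $\nu_\z(y'_1)=y_1x_0^{-3-a}$, $\nu_\z(y'_2)=y_2x_1^{b-1}$) gives
\[
\nu_\z(\thet'_{[-2,3]})=x_1^{-2}x_2^3+2y_1x_0^{-3-a}x_1^{-2}+3y_1y_2x_0^{-3-a}x_1^{b-2}+y_1^2x_0^{-3-2a}x_1^{-2}.
\]

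For $b\ge2$ we have $-b\le-2\le0$, so Proposition~\ref{theta m1 b} applies with $m_1=-2$ and $m_2=3$. Since $a\le-3$, for each $i\in\{0,1,2\}$ the signs of $-3-ai$ and $3+ai$ are determined, and the sum collapses to
\[
\thet_{[-2,3]}=x_1^{-2}\bigl(x_2^3+2y_1x_0^{-3-a}+y_1^2x_0^{-3-2a}\bigr).
\]
Subtracting yields $\nu_\z(\thet'_{[-2,3]})-\thet_{[-2,3]}=3y_1y_2x_0^{-3-a}x_1^{b-2}$, matching the second case of the claim.

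For $b=1$ and $a=-3$, Proposition~\ref{specific} applies directly to $B$ and gives exactly the four monomials of $\nu_\z(\thet'_{[-2,3]})$ specialized at $b=1$, $a=-3$. For $b=1$ and $a\le-4$, we instead use Proposition~\ref{theta m2 a} with $m_1=-2<-1=-b$ and $0\le m_2=3<-a$; the coefficient $\binom{-m_1-bj}{i}=\binom{2-j}{i}$ vanishes for $j\ge3$ (under the combinatorial convention that $\binom{n}{k}=0$ for $n<0$) and for $j=2,\,i\ge1$, so only $j\in\{0,1\}$ contribute. Working out those two values of $j$ produces precisely the same four monomials as $\nu_\z(\thet'_{[-2,3]})$ at $b=1$, so again the difference is zero. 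The main ``obstacle'' is purely bookkeeping: keeping track of which case of the theta-function formula applies (separating $b\ge2$ from the two subcases of $b=1$) and carefully handling the truncated binomial sums in Proposition~\ref{theta m2 a}; once that is done the three computations are each a routine comparison of a handful of monomials.
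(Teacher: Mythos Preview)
Your proof is correct and follows essentially the same approach as the paper's: expand $\thet'_{[-2,3]}$ via Proposition~\ref{specific}, apply $\nu_\z$, and compare with $\thet_{[-2,3]}$ computed from Proposition~\ref{theta m1 b} when $b\ge2$ and from Proposition~\ref{theta m2 a} (with the $a=-3$ subcase a tautology) when $b=1$. The only expository difference is that you make explicit the vanishing of the $j\ge2$ terms in the double sum via the convention $\binom{n}{k}=0$ for $n<0$, whereas the paper simply records the surviving terms.
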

\begin{proof}
Proposition~\ref{specific} says that 
\[\thet'_{[-2,3]}=(x'_1)^{-2}(x'_2)^3+2y'_1(x'_1)^{-2}+3y'_1y'_2(x'_1)^{-1}+(y'_1)^2(x'_0)^3(x'_1)^{-2}\text{, so}\]
\[\nu_\z(\thet'_{[-2,3]})=x_1^{-2}x_2^3+2y_1x_0^{-3-a}x_1^{-2}+3y_1y_2x_0^{-3-a}x_1^{b-2}+y_1^2x_0^{-3-2a}x_1^{-2}.\]
If $b=1$, then (since the case where $a=-3$ is a tautology) we apply Proposition~\ref{theta m2 a} to compute
\begin{align*}
\thet_{[-2,3]}&=x_1^{-2}x_2^{3}+\sum_{i=1}^{2}\sum_{j=0}^{3}\binom{2-j}i\binom{3}j y_1^iy_2^jx_0^{-3-ai}x_1^{-2+j}\\
&=x_1^{-2}x_2^{3}+2y_1x_0^{-3-a}x_1^{-2}+3y_1y_2x_0^{-3-a}x_1^{-1}+y_1^2x_0^{-3-2a}x_1^{-2}\\
&=\nu_\z(\thet'_{[-2,3]}).
\end{align*}
If $b\ge2$, then we apply Proposition~\ref{theta m1 b} to compute
\[\thet_{[-2,3]}=x_1^{-2}(x_2^3+2y_1x_0^{-3-a}+y_1^2x_0^{-3-2a})=\nu_\z(\thet'_{[-2,3]})-3y_1y_2x_0^{-3-a}x_1^{b-2}.\qedhere\]
\end{proof}

\subsection{Acyclic finite type}\label{acyc fin sec}
In this section, we point out a simplification in the description of the map $\nu_\z$ in the cases where $B$ is acyclic.
We then describe how the simpler description of $\nu_\z$ allows for shortcuts in the computations that verify Theorem~\ref{hom phen fin conj} (Phenomenon~\ref{hom phen} for acyclic finite-type exchange matrices) up to $8\times8$ matrices.
The following easy observation is a well-known feature of the acyclic case.

\begin{prop}\label{feature}
Suppose $B$ is an acyclic exchange matrix.
For each ${i=1,\ldots,n}$ there exists a cluster variable $a_i$ with $\g$-vector $-\e_i$.
For each ${k=1,\ldots,n}$, there exists a cluster $X$ with $a_i\in X$ if $b_{ik}>0$ and $x_i\in X$ if $b_{ik}<0$.
\end{prop}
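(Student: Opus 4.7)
For part~(1), after reindexing so that $b_{ij}\le 0$ whenever $i>j$ (possible by acyclicity), observe that $n$ is a sink of $B$: $b_{jn}\ge 0$ for all $j$. I argue by induction on $n$. The base step applies $\mu_n$ to the initial seed: since $[-b_{jn}]_+=0$ for every $j$, the exchange relation reduces to
\[
x_n'=\tfrac{1}{x_n}\Bigl(y_n\textstyle\prod_j x_j^{b_{jn}}+1\Bigr),
\]
and a direct check shows that each of the two monomials has $\integers^n$-degree $-\e_n$ in the $\g$-vector grading, so $a_n:=x_n'$ has $\g$-vector $-\e_n$. The restriction $\mu_n(B)_{[1,n-1]}$ equals $B_{[1,n-1]}$ and is again acyclic in the inherited reindexing (every $[b_{in}b_{nj}]_+$ with $i,j<n$ vanishes under the sign pattern). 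The induction hypothesis, with careful tracking of how the principal-coefficient rows evolve under $\mu_n$, then produces cluster variables $a_1,\ldots,a_{n-1}$ in $\A_\bullet(B)$ with $\g$-vectors $-\e_1,\ldots,-\e_{n-1}$. Equivalently, the sink sequence $\mu_1\mu_2\cdots\mu_n$ (applied right-to-left, so $\mu_n$ first and $\mu_1$ last) carries the initial seed to the ``negative cluster'' $(a_1,\ldots,a_n)$.

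For part~(2), fix $k$. The acyclic reindexing forces $I_+:=\{i:b_{ik}>0\}\subseteq\{1,\ldots,k-1\}$ and $I_-:=\{i:b_{ik}<0\}\subseteq\{k+1,\ldots,n\}$. I appeal to the Cambrian description of $\F_{B^T}$ already invoked in the proof of Theorem~\ref{ref phen finite}: by Theorem~\ref{camb FB finite}, the maximal cones of $\F_{B^T}$ are exactly the $\g$-vector cones of clusters of $\A_\bullet(B)$ and coincide with the maximal Cambrian cones for $(\Cart(B),c)$, where $c$ is the Coxeter element selected by the acyclic ordering. The rays $\{-\e_i:i\in I_+\}\cup\{\e_j:j\in I_-\}$ are pairwise compatible: the two subsets are each internally compatible (being contained in the negative and initial clusters respectively), and a cross-pair $(-\e_i,\e_j)$ with $i\ne j$ has Fomin--Zelevinsky compatibility degree zero, since the simple (co)root $\e_j$ does not support the index~$i$. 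These rays therefore lie inside a common maximal Cambrian cone, which by Theorem~\ref{camb FB finite} is the $\g$-vector cone of some cluster $X\subset\A_\bullet(B)$. By uniqueness of cluster variables per $\g$-vector \cite{GHKK}, the variables of $X$ with $\g$-vectors $-\e_i$ for $i\in I_+$ are the $a_i$ from part~(1), and those with $\g$-vectors $\e_j$ for $j\in I_-$ are the $x_j$.

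The principal obstacle is the pairwise-compatibility step in part~(2), which relies on external cluster-categorical or root-system machinery. A concrete alternative is to construct $X$ by mutation: starting from the initial seed, apply $\mu_i$ at the indices in $I_+$ in a carefully chosen sink-order, interleaved with auxiliary mutations at indices in $\{1,\ldots,n\}\setminus I_-$ so that each $i\in I_+$ becomes a sink of the current exchange matrix at the moment it is mutated. Because no mutation ever acts at an index in $I_-$, the cluster variables at those positions remain equal to the initial $x_j$ throughout; the sink argument of part~(1) then guarantees that each newly introduced cluster variable at position $i\in I_+$ has $\g$-vector $-\e_i$ and thus equals~$a_i$.
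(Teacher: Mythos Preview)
The paper does not prove this proposition, calling it an ``easy observation'' and ``well-known,'' so there is no argument to compare against. Your part~(1) via the sink sequence $\mu_n,\ldots,\mu_1$ is correct and is the standard construction.

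Your part~(2), however, fails at the compatibility step---and in fact the statement as written is false. Take the linear $A_3$ matrix $B=\begin{bsmallmatrix*}[r]0&1&0\\-1&0&1\\0&-1&0\end{bsmallmatrix*}$ and $k=2$, so $I_+=\{1\}$ and $I_-=\{3\}$. Tracing the sink sequence shows that $a_1$ has denominator vector $\alpha_1+\alpha_2+\alpha_3$, whence the Fomin--Zelevinsky compatibility degree $(-\alpha_3\|\alpha_1+\alpha_2+\alpha_3)=1\neq 0$; one checks directly that the link of the ray $-\e_1$ in the $\g$-vector fan consists of $-\e_2,-\e_3,(0,-1,1),(-1,1,0),(-1,0,1)$ and does not contain $\e_3$, so no cluster contains both $a_1$ and $x_3$. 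Your error is conflating $\g$-vectors with the almost-positive-roots labeling: the initial variable $x_j$ (with $\g$-vector $\e_j$) corresponds to the \emph{negative} simple root $-\alpha_j$, whereas $a_i$ (with $\g$-vector $-\e_i$) corresponds to a positive root that is generally not $\alpha_i$. Thus ``the simple (co)root $\e_j$ does not support the index~$i$'' computes the wrong pair.

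What the application in Proposition~\ref{factor nu z} actually requires is the version with $a_i$ and $x_i$ interchanged---a cluster containing $x_i$ when $b_{ik}>0$ and $a_j$ when $b_{jk}<0$---so that $\g(z_k)$, which is nonnegative in the $I_+$-coordinates and nonpositive in the $I_-$-coordinates, lies in a cluster cone. That corrected statement has exactly the direct proof your final paragraph gestures toward, with no interleaving needed: since $I_+\subseteq\{1,\ldots,k-1\}$ and $I_-\subseteq\{k+1,\ldots,n\}$, the partial sink sequence $\mu_n,\mu_{n-1},\ldots,\mu_{k+1}$ yields the cluster $\{x_1,\ldots,x_k,a_{k+1},\ldots,a_n\}$, which contains every required variable.
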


Here $\e_i$ is the $i\th$ standard unit basis vector in $\reals^n$.
Recall that the $\g$-vector is the $\integers^n$-grading of $\A_\bullet(B)$ such that the $\g$-vector of $x_k$ is $\e_k$ and the $\g$-vector of $y_k$ is the negative of the $k\th$ column of $B$.
A \newword{cluster monomial} is a monomial in the cluster variables in some cluster.

Using Proposition~\ref{feature}, we prove the following fact, which lets us factor the map $\nu_\z$ in the acyclic case.
Our notation $\nu_\z$ does not explicitly show the dependence on $B$ and $B'$.
We temporarily make the notation more explicit by writing $\nu^{B',B}_\z$.

\begin{prop}\label{factor nu z}
Suppose $B$ is acyclic and dominates $B'$, which dominates $B''$.
If $\nu^{B'',B'}_\z$ sends cluster variables of $\A_\bullet(B'')$ to cluster variables of $\A_\bullet(B')$ and $\nu^{B',B}_\z$ sends cluster variables of $\A_\bullet(B'')$ to cluster variables of $\A_\bullet(B')$, then the composition $\A_\bullet(B'')\toname{\nu_\z}\A_\bullet(B')\toname{\nu_\z}\A_\bullet(B)$ sends cluster variables of $\A_\bullet(B'')$ to cluster variables of $\A_\bullet(B)$.
This composition equals $\nu_\z^{B'',B}:\A_\bullet(B'')\to\A_\bullet(B)$.
\end{prop}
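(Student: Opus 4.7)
The plan is to verify, on the generators $x''_k$ and $y''_k$ of $\A_\bullet(B'')$, that the ring homomorphisms $\nu^{B',B}_\z\circ\nu^{B'',B'}_\z$ and $\nu^{B'',B}_\z$ coincide. By the hypothesis together with Proposition~\ref{hom from initial}, both composed maps are ring homomorphisms, so the composition is a ring homomorphism from $\A_\bullet(B'')$ to $\A_\bullet(B)$, and the ``cluster variables to cluster variables'' conclusion is immediate by stringing together the two hypotheses. Thus the whole proposition reduces to checking the identity of the two ring homomorphisms on generators.

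First I would observe that dominance transfers acyclicity: if $b_{ij}\le 0$ for $i>j$ in some ordering, the sign-agreement clause of dominance forces $b'_{ij}\le 0$ and $b''_{ij}\le 0$ in the same ordering. Hence Proposition~\ref{feature} applies to all three matrices, supplying cluster variables $a_i,a'_i,a''_i$ with $\g$-vectors $-\e_i$ together with the cluster at each index $k$ whose $\g$-vectors form a basis adapted to the sign pattern of column $k$. Next, each map $\nu_\z$ preserves the $\g$-vector grading: this is visible on generators (the $\g$-vector of $y_kz_k$ in $\A_\bullet(B)$ equals $-(\text{column }k\text{ of }B')$, matching that of $y'_k$) and extends throughout by the grading. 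Consequently $\nu^{B',B}_\z(a'_i)$ is a cluster variable of $\A_\bullet(B)$ of $\g$-vector $-\e_i$, which, in a cluster provided by Proposition~\ref{feature}, is uniquely $a_i$.

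On $x''_k$ the composition and $\nu^{B'',B}_\z$ both produce $x_k$. For $y''_k$, writing $z_k^{(B',B)}$ for the $z_k$ built from the pair $(B,B')$ and similarly for the other pairs, the composition gives
\[
y''_k\mapsto y'_k\,z_k^{(B'',B')}\mapsto y_k\,z_k^{(B',B)}\cdot\nu^{B',B}_\z(z_k^{(B'',B')}),
\]
which must be compared with $y_k\,z_k^{(B'',B)}$. Using Proposition~\ref{feature}, I would realize each of $z_k^{(B',B)}$, $z_k^{(B'',B)}$, and $\nu^{B',B}_\z(z_k^{(B'',B')})$ as an explicit cluster monomial in the $x_i$ and $a_i$ of a single cluster of $\A_\bullet(B)$, with the exponent of each letter determined coordinate-wise by $b_{ik}-b'_{ik}$, $b_{ik}-b''_{ik}$, and $b'_{ik}-b''_{ik}$ respectively (using $\nu^{B',B}_\z(x'_i)=x_i$ and $\nu^{B',B}_\z(a'_i)=a_i$). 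The exponents on the left-hand product telescope coordinate-wise as $(b_{ik}-b'_{ik})+(b'_{ik}-b''_{ik})=b_{ik}-b''_{ik}$, so by uniqueness of monomial expressions inside a fixed cluster, that product equals $z_k^{(B'',B)}$. The main obstacle is precisely this last arrangement: using Proposition~\ref{feature} to house all three cluster monomials inside a single cluster of $\A_\bullet(B)$, compatibly with the sign patterns. Dominance is exactly what makes the sign patterns of columns of $B$, $B'$, and $B''$ align, so one cluster of $\A_\bullet(B)$ suffices for all three; once that setup is in place, the rest is bookkeeping with exponents.
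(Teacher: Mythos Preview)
Your approach is correct and is essentially the same as the paper's, only spelled out in more detail. The paper's proof is a two-sentence sketch: Proposition~\ref{feature} makes each $z_k$ a cluster monomial in the variables $x_i$ and $a_i$, and $\nu_\z^{B',B}$ ``acts on the cluster monomials $z_i^{B'',B'}$ simply by mapping each cluster variable to the corresponding cluster variable.'' Your write-up unpacks exactly what ``corresponding'' means ($x'_i\mapsto x_i$, $a'_i\mapsto a_i$), why that is forced ($\g$-vector preservation plus the hypothesis that cluster variables go to cluster variables), and why the exponents telescope. Your observation that dominance propagates acyclicity to $B'$ and $B''$ is a point the paper leaves implicit but is needed to invoke Proposition~\ref{feature} for all three matrices.

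One small wrinkle: your sentence ``which, in a cluster provided by Proposition~\ref{feature}, is uniquely $a_i$'' does not quite close the loop, since you do not know \emph{a priori} that $\nu^{B',B}_\z(a'_i)$ lies in that particular cluster. What you actually need is that distinct cluster variables have distinct $\g$-vectors, so any cluster variable with $\g$-vector $-\e_i$ must equal $a_i$. That fact is standard (it follows, e.g., from cluster monomials being theta functions, or from injectivity of the $\g$-vector map on cluster variables), and the paper is equally terse here, so this is not a real gap---just tighten the justification to cite $\g$-vector injectivity rather than membership in a specific cluster.
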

\begin{proof}
The first statement is immediate.
Proposition~\ref{feature} implies that the elements $z_i^{B'',B'}$ in the definition of $\nu_\z^{B'',B'}$ are cluster monomials, and similarly for the elements $z_i^{B',B}$ in the definition of $\nu_\z^{B',B}$.
Furthermore, $\nu_\z^{B',B}$ acts on the cluster monomials $z_i^{B'',B'}$ simply by mapping each cluster variable to the corresponding cluster variable.
\end{proof}

Suppose that $B$ is an acyclic and of finite type and that $B$ dominates~$B'$.
Theorem~\ref{hom phen fin conj} asserts that $\nu_\z$ is an injective, $\g$-vector-preserving ring homomorphism from $\A_\bullet(B')$ to $\A_\bullet(B)$, sending each cluster variable in $\A_\bullet(B')$ to a cluster variable in $\A_\bullet(B)$.
To verify the theorem, we can first reduce to the case of an irreducible exchange matrix $B$.
(An exchange matrix is reducible if it can be written in block-diagonal form with more than one block.)
Furthermore, we will see that it is enough to check the special case where $B$ dominates $B'$ but they differ either in exactly one position or they differ in that $B$ has $\pm1$ in a pair of symmetric positions where $B'$ has zero.
To check the special case, we need only check that $\nu_\z$ sends each cluster variable to a cluster variable and then apply Propositions~\ref{hom from initial} and~\ref{one antip}.

If we have checked the special case, then in general, we can find a sequence of exchange matrices, starting at $B'$ and ending at $B$ such that each two adjacent matrices in the sequence belong to the special case.  
Proposition~\ref{factor nu z} ensures that, composing the maps $\nu_\z$ at each step in the sequence, we obtain the map $\nu_\z^{B',B}$, which therefore has the desired properties.

We have verified the special case computationally (and thus we deduce the general case) for exchange matrices $B$ that are $8\times8$ or smaller.  
Recall that acyclic exchange matrices of finite type are exactly those such that $\Cart(B)$ is a Cartan matrix of finite type.
Since we have proved the theorem up to $8\times8$ exchange matrices, the theorem is proved whenever $\Cart(B)$ is of exceptional finite type (E, F, or G).
Below, we prove the theorem in types A and D using the surfaces model.
As already mentioned,  \cite[Theorem~4.1.5]{Viel} completes the proof of Theorem~\ref{hom phen fin conj} by handling types B and C.

\subsection{Resection of surfaces}\label{resect hom sec}
In this section, we give more background on the surfaces model (building on Section~\ref{bij surf sec}) and prove Theorem~\ref{hom phen surf thm}.

A \newword{tagged arc} is an arc that does not cut out a once-punctured monogon and that, at each endpoint incident to a puncture, is marked (or ``tagged'') either \newword{notched} or \newword{plain}, with the condition that if both ends of the arc are at the same puncture, they must have the same tagging.
Two tagged arcs are \newword{compatible} if either (1) the corresponding untagged arcs are distinct and compatible and the two arcs have the same tagging at any endpoint they have in common, or (2) both correspond to the same untagged arc, which has two distinct endpoints and is not contained in a component of $(\S,\M)$ that is a once-punctured monogon, and the taggings of the two tagged arcs disagree at exactly one endpoint.
(The two tagged arcs in a once-punctured monogon are not compatible.)
A \newword{tagged triangulation} is a maximal collection of pairwise compatible tagged arcs.
Each tagged triangulation has the same number of tagged arcs.
The operation of reversing all taggings at a given puncture sends a tagged triangulation to a tagged triangulation.
By iterating this operation, we can remove all notched taggings except that some punctures are incident to two tagged arcs with opposite taggings at that puncture (compatible as in (2) above).
When notches have been thus maximally removed, the resulting tagged triangulation corresponds to an ordinary triangulation as follows:
Each arc without notches becomes an ordinary arc.
Each arc with a notch (necessarily only on one end) becomes a loop with endpoints at the unnotched end, tracing around the tagged arc.
This loop becomes the non-fold edge of a self-folded triangle, as shown in Figure~\ref{tau fig}.
\begin{figure}[ht]
\scalebox{0.8}{\includegraphics{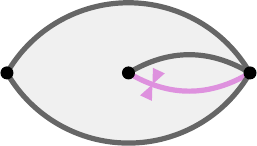}}\qquad\raisebox{16pt}{\LARGE$\longrightarrow$}\qquad\scalebox{0.8}{\includegraphics{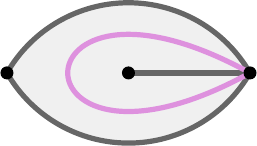}}
\caption{The last step in making an ordinary triangulation from a tagged triangulation}
\label{tau fig}
\end{figure}

Passing from a tagged triangulation to an ordinary triangulation as described above does not change the associated exchange matrix \cite[Definition~9.6]{cats1}.
Thus in proving statements about exchange matrices arising from surfaces, we may as well assume that $T$ is an ordinary triangulation, but we pass freely between $T$ and the corresponding tagged triangulation as in Figure~\ref{tau fig}.

The definition of shear coordinates can be extended to define shear coordinates of a quasi-lamination $L$ with respect to a tagged triangulation $T$.  
To compute $b(T,L)$, we perform the operation of reversing all taggings at a given puncture until as many notches are removed as possible.
For each reversal of taggings, the quasi-lamination $L$ is also altered by reversing the directions of all spirals into that puncture, obtaining a new quasi-lamination $L'$.
The new tagged triangulation corresponds to an ordinary triangulation $T'$ as in Figure~\ref{tau fig}.
The shear coordinate $b_\gamma(T,L)$ for a tagged arc $\gamma$ in $T$ is defined to be the shear coordinate $b_{\gamma'}(T',L')$, where $\gamma'$ is the arc in $T'$ corresponding to $\gamma$.

In a marked surface $(\S,\M)$ with tagged triangulation $T$, the cluster variables in $\A_\bullet(B(T))$ are in bijection with tagged arcs.
(In fact, when $\S$ has no boundary and $|\M|=1$, the cluster variables are in bijection with the tagged arcs having only plain tags, but we will not need to consider that case in this paper.)
We write $x_\gamma$ for the cluster variable associated to a tagged arc $\gamma$.

The coefficients $y_i$ can be identified with the certain allowable curves obtained from the arcs in the triangulation $T$.
Specifically, if $\gamma$ is a tagged arc in $T$, then the \newword{elementary lamination} $L_\gamma$ associated to $\gamma$ is a curve that agrees with $\gamma$ except very close to the endpoints.
If $\gamma$ has an endpoint that is on a boundary component, then as one follows $L_\gamma$ toward the endpoint, it misses the endpoint by veering slightly to the right.  
If $\gamma$ is tagged plain at a puncture, then $L_\gamma$ again misses to the right and then spirals counterclockwise into the puncture.
If $\gamma$ is tagged notched at a puncture, then $L_\gamma$ misses to the left and spirals clockwise.
We write $L_T$ for the set $\set{L_\gamma:\gamma\in T}$ of elementary laminations associated to $T$.
It is easily verified that, for each tagged arc $\gamma$ in $T$, the shear coordinates of $L_\gamma$ are $1$ in the $\gamma$ position and $0$ elsewhere.

Since cluster variables are in bijection with tagged arcs, elements of $\A_\bullet(B(T))$ can be represented (non-uniquely) as sums of terms, each term being a monomial in the $x_\alpha$ for tagged arcs $\alpha$ (tagged plain when $(\S,\M)$ is a once-punctured surface with no boundary components) times a monomial in the $L_T$.

The tagged arcs are also in bijection with non-closed allowable curves, and we write $\kappa$ for this bijection.
The map $\kappa$ is like the map taking $\gamma$ to the elementary lamination $L_\gamma$, except with right and left reversed.
The curve $\kappa(\gamma)$ agrees with the tagged arc $\gamma$ except very close to the endpoints.
At an endpoint of $\gamma$ on the boundary or at a puncture tagged plain, $\kappa(\gamma)$ misses to the left and either hits the boundary or spirals clockwise.
At a puncture tagged notched, $\kappa(\gamma)$ misses to the right and spirals counterclockwise.
The map $\kappa$ induces a bijection from tagged triangulations to maximal sets of pairwise-compatible non-closed allowable curves, which index the full-dimensional cones in the rational quasi-lamination fan $\F_\rationals(T)$).

By \cite[Proposition~5.2]{unisurface}, for a tagged arc $\gamma$, the $\g$-vector of the cluster variable~$x_\gamma$ is $-\b(T,\kappa(\gamma))$, the negative of the shear coordinates of $\kappa(\gamma)$ with respect to $T$.
Suppose $(\S,\M)$ is a marked surface with a triangulation $T$, suppose $(\S',\M')$ is obtained by a resection compatible with $T$, and suppose $T'$ is the triangulation induced by $T$.
Supposing also that every component of $(\S,\M)$ and $(\S',\M')$ either has the Null Tangle Property or is a null surface, Theorem~\ref{ref phen resect} implies that every rational ray of the rational quasi-lamination fan $\F_\rationals(T')$ is also a ray of $\F_\rationals(T)$.
Since rational rays of the rational quasi-lamination fan are spanned by the shear coordinates of allowable curves, we see that for every allowable curve $\lambda'$ in $(\S',\M')$, there is a (unique) allowable curve $\lambda$ in $(\S,\M)$ such that $\b(T,\lambda)=\b(T',\lambda')$.

Our goal is to prove Theorem~\ref{hom phen surf thm}, and accordingly we now restrict our attention to the case where $(\S,\M)$ is a once-punctured or unpunctured disk and $T$ is a triangulation such that $B(T)$ is acyclic.
Thus also $(\S',\M')$ is a union of once-punctured or unpunctured disks and $B(T')$ is acyclic.
This simplifies the situation considerably, not least because in this case there are no closed allowable curves.
In particular, the allowable curves are in bijection with the tagged arcs by the map $\kappa$ described above.
We define a map $\chi$ on tagged arcs by letting $\chi$ send a tagged arc $\gamma'$ in $(\S',\M')$ to the tagged arc $\gamma$ in $(\S,\M)$ such that $\kappa(\gamma')$ and $\kappa(\gamma)$ have the same shear coordinates.
Equivalently, $\chi$ sends $\gamma'$ to the tagged arc $\gamma$ such that the $\g$-vector of the cluster variable $x_\gamma$ in $\A_\bullet(B(T))$ equals the $\g$-vector of $x_{\gamma'}$ in $\A_\bullet(B(T'))$.
Accordingly, we re-use the symbol $\chi$ for the map on cluster variables in $\A_\bullet(B(T))$ sending $x_\gamma'$ to $x_{\chi(\gamma')}$ for a tagged arc $\gamma'$ in $(\S',\M')$.
Furthermore, we extend $\chi$ to a map on $\Var_\bullet(B(T'))\cup\set{\y'}$ by letting it agree with $\nu_\z$ on $\set{\y'}$.

The requirement that $B(T)$ is acyclic implies that (1) the puncture (if there is one) is contained in a once-punctured digon of $T$, with one side of the digon on the boundary and (2) every triangle of $T$ has at least one edge on the boundary or has two edges inside the once-punctured digon.
To prove Theorem~\ref{hom phen surf thm}, we do not need to check every possible resection.
Instead, since every resection gives a dominance relation and since Theorem~\ref{hom phen surf thm} is an assertion about matrices dominated by $B(T)$, it is enough to check every matrix $B'$ dominated by $B(T)$.
Furthermore, by Proposition~\ref{factor nu z}, we need only check for each pair $b_{\alpha\beta}=-b_{\beta\alpha}=\pm1$ of entries in $B(T)$, that the theorem holds when $B'$ is obtained by setting $b_{\alpha\beta}$ and $b_{\beta\alpha}$ to zero.
To accomplish this, it is enough to consider three cases:

\medskip\noindent 
\textbf{Case 1.}
$(\S',\M')$ is obtained by resecting a single arc $\alpha\in T$ with two distinct endpoints, both on the boundary and the puncture in $(\S',\M')$ is in the component not containing $\alpha$.

\medskip\noindent 
\textbf{Case 2.} 
$(\S',\M')$ is obtained by resecting an arc $\alpha\in T$ that is the non-fold edge of a self-folded triangle in $T$ and the point $p_\alpha$ is in the self-folded triangle.

\medskip\noindent 
\textbf{Case 3.} 
$(\S',\M')$ is obtained by resecting two arcs incident to the puncture in the once-punctured digon of $T$.
(The resection must be as pictured in Figure~\ref{digon resect config}, or the mirror image of that picture.)
\medskip

In each of the three cases, we accomplish more than what is stated in Theorem~\ref{hom phen surf thm}:
We make no global requirement of acyclicity, but rather disallow erasing edges that are contained in oriented cycles.
In each case, the outline is as follows:
\begin{enumerate}
\item Explicitly describe the map $\chi$ on tagged arcs.
(In every case, $\chi$ maps each tagged arc in the triangulation $T'$ to the corresponding tagged arc in $T$, and we won't mention these cases separately in each proof.)
\item Explicitly describe the map $\chi$ (i.e.\ $\nu_\z$) on $\set{\y'}=\set{L'_\zeta:\zeta\in T'}$.
Here $L'_\zeta$ means the elementary lamination in $(\S',\M')$ as opposed to in $(\S,\M)$.
Recall that $\nu_\z(L'_\zeta)=L_\zeta z_\zeta$, where $z_\zeta$ is the cluster monomial whose $\g$-vector is the $\zeta$-column of $B(T)$ minus the $\zeta$-column of $B(T')$.
\item Check that $\chi$ takes every exchange relation in $\A_\bullet(B(T'))$ to an exchange relation in $\A_\bullet(B(T))$.
\item Conclude by Proposition~\ref{hom from cl vars} that $\chi$ extends to a ring homomorphism.  (Since $\chi$ agrees with $\nu_\z$ on $\Var_\bullet(B(T'))\cup\set{\y'}$, it coincides with $\nu_\z$.)
\item Apply Proposition~\ref{one antip} to conclude that $\nu_\z$ is injective.
\end{enumerate}

We begin with the following proposition, which proves Theorem~\ref{hom phen surf thm} in Case 1.

\begin{prop}\label{case 1 prop}
Suppose $(\S,\M)$ is a once-punctured or unpunctured disk and suppose $(\S',\M')$ and $T'$ are obtained from $(\S,\M)$ and $T$ by a resection compatible with $T$, along a single arc $\alpha$ with distinct endpoints, both on the boundary. 
Suppose also that the point $p_\alpha$ used to construct the resection is in a triangle of $T$ having exactly one edge on the boundary and that the puncture in $(\S',\M')$ is in the component not containing the arc $\alpha$.
Then both parts of Phenomenon~\ref{hom phen} occur.
\end{prop}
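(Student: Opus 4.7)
The plan is to execute the five-step outline presented just before the proposition. Since $\alpha$ has both endpoints on the boundary, the cuts at $p_1$ and $p_2$ during the resection disconnect the disk, so $(\S',\M')$ is a disjoint union of two components: one containing $\alpha$ and one containing the puncture (if there is one) together with the arc $\gamma$ that is the third edge of the triangle $\alpha\beta\gamma$ in which $p_\alpha$ is placed (with $\beta$ the boundary edge). By Proposition~\ref{resect signed adjacency}, $B(T)$ and $B(T')$ agree except in the $(\alpha,\gamma)$ and $(\gamma,\alpha)$ entries, so for $k\notin\set{\alpha,\gamma}$ the $k\th$ columns of $B(T)$ and $B(T')$ coincide and hence $\nu_\z(y'_k) = y_k$. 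Because the edge between $\alpha$ and $\gamma$ is assumed not contained in an oriented cycle of $B(T)$, a cluster variable $a_\alpha \in \A_\bullet(B(T))$ with $\g$-vector $-\e_\alpha$ exists; together with $x_\gamma$, a direct computation of $B(T)-B(T')$ gives $\nu_\z(y'_\alpha) = y_\alpha x_\gamma$ and $\nu_\z(y'_\gamma) = y_\gamma a_\alpha$.

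For Step 1, I would define $\chi$ on a tagged arc $\delta'$ of $(\S',\M')$ by embedding its component back into $(\S,\M)$ via the inverse of the surgery: a tagged arc avoiding $p_\alpha$ extends to the same tagged arc, while a tagged arc with an endpoint at $p_\alpha$ is pulled back along one of the two boundary segments of $(\S',\M')$ meeting $p_\alpha$ to obtain a tagged arc in $(\S,\M)$ ending at $p_1$ or $p_2$, with tagging determined by how it approached $p_\alpha$. That this coincides with the $\g$-vector-preserving map implicit in $\nu_\z$ follows from the identification of $\g$-vectors as negative shear coordinates of $\kappa$-images, combined with Theorem~\ref{ref phen resect}. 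For Step 2, a shear-coordinate computation verifies $\chi(L'_\zeta) = L_\zeta$ for $\zeta\notin\set{\alpha,\gamma}$ and $\chi(L'_\alpha) = L_\alpha\cdot x_\gamma$, $\chi(L'_\gamma) = L_\gamma\cdot a_\alpha$; the extra cluster variable arises because the elementary lamination at $\alpha$ in $(\S',\M')$ pushes into $p_\alpha$ and, when translated back to $(\S,\M)$, picks up an additional piece homotopic to $\kappa(\gamma)$ (and analogously for the other identity).

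Step 3 is the main obstacle: verifying that $\chi$ sends each exchange relation in $\A_\bullet(B(T'))$ to a valid relation in $\A_\bullet(B(T))$. In the surfaces model each exchange relation is a Ptolemy-style identity associated to a flip $\delta'\leftrightarrow(\delta')^*$ of a tagged arc, with the two monomials on the right-hand side indexed by the two triangles of $T'$ meeting the flipped arc. I would split the flips into three types: flips entirely inside one component of $(\S',\M')$ and not meeting $p_\alpha$ (where $\chi$ takes the exchange relation to the identical relation in $(\S,\M)$); flips internal to a component but meeting the boundary at $p_\alpha$ (where the extra factor of $x_\gamma$ or $a_\alpha$ inserted by $\nu_\z$ into adjacent $y'$-variables exactly restores the missing contribution from the triangle $\alpha\beta\gamma$ that is present in $T$ but collapsed by the resection); and the flip of the image of $\alpha$ or $\gamma$ itself (reducing to a direct shear-coordinate comparison). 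Each case is local and finite, and the difficulty is to organize the bookkeeping cleanly so that signs and tag directions are tracked correctly through the surgery.

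Once Step 3 is complete, Proposition~\ref{hom from cl vars} yields a ring homomorphism $\A_\bullet(B(T'))\to\A_\bullet(B(T))$ extending $\chi$, which coincides with $\nu_\z$ and preserves $\g$-vectors by the construction of $\chi$. Injectivity follows from Proposition~\ref{one antip}: the unique index $k$ for which there exists $i$ with $b_{ik}<b'_{ik}\le 0$ is $k=\gamma$. Finally, since $(\S,\M)$ and $(\S',\M')$ are unions of once-punctured and unpunctured disks, they are of finite type; every ray theta function is therefore a cluster variable, and $\chi$ sends cluster variables to cluster variables by construction, verifying both parts of Phenomenon~\ref{hom phen}.
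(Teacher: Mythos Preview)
Your proposal follows the same five-step outline as the paper, and Steps~4 and~5 are correct as stated.  However, there are genuine gaps in Steps~1--3.

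First, you collapse the two orientations of the resected triangle into one by declaring ``$\beta$ the boundary edge,'' but the paper must distinguish whether $\beta$ or $\gamma$ is the boundary segment: the formulas for the $z_k$ are different in the two cases (in one case $z_\alpha=x_\gamma$ and $z_\gamma=x_{\tilde\alpha}$; in the other $z_\alpha=x_{\tilde\gamma}$ and $z_\beta=x_\alpha$), and the action of $\chi$ on arcs with an endpoint at the cut point $q_2$ is genuinely different in the two cases.  Your description of $\chi$ on tagged arcs is also incomplete: you only discuss arcs touching $p_\alpha$, but arcs in the \emph{other} component touching the cut point $q_2$ must be handled too, and in the paper they map not to $p_1$ or $p_2$ but to a neighboring marked point $r$ or $s$ on the boundary of $(\S,\M)$.

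Second, and more seriously, Step~3 is only a sketch.  You outline a three-way split and say ``the difficulty is to organize the bookkeeping cleanly so that signs and tag directions are tracked correctly,'' but this \emph{is} the proof.  The paper carries this out by explicit case analysis: exchanges in the component containing $\alpha$ where $\alpha$ itself is flipped (producing an extra factor $x_{\tilde\gamma}$), and exchanges in the other component where the presence or absence of $L'_\gamma$ (respectively $L'_\beta$) depends on exactly which of the marked points $p_1,p_2,r,s$ are endpoints of the exchanged arcs.  Each case must be matched to a specific exchange relation in $(\S,\M)$, and the extra cluster-variable factors arising from $\nu_\z$ on $y'_\alpha$ and $y'_\gamma$ must be shown to coincide with the extra arc appearing in the larger quadrilateral in $(\S,\M)$.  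Without carrying out these identifications, the claim that $\chi$ sends exchange relations to exchange relations remains unverified.
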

\begin{proof}  
We will refer to the labels on some arcs in $T$, and corresponding arcs in $T'$, shown in the first row of Figure~\ref{chi fig}.
For generality, neither $\beta$ nor $\gamma$ is shown as a boundary segment, but by hypothesis exactly one of them them is.
\begin{figure}[ht]
\begin{tabular}{ccc}
\scalebox{0.75}{\includegraphics{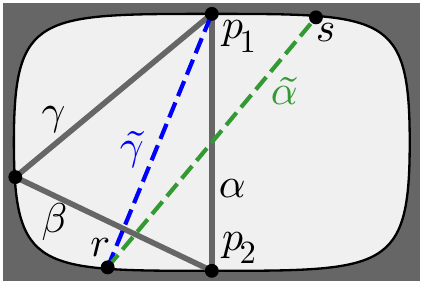}}&\raisebox{29pt}{\LARGE$\xrightarrow{\text{\small resect}}$}&\scalebox{0.75}{\includegraphics{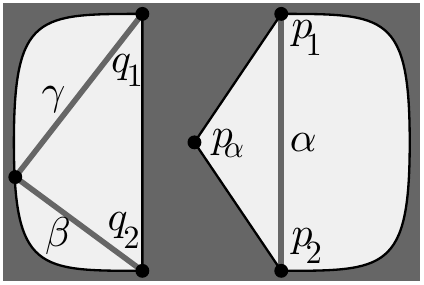}}\\[2pt]
\scalebox{0.75}{\includegraphics{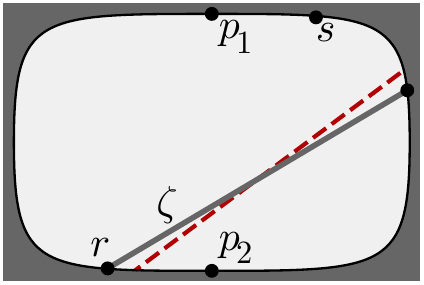}}&\raisebox{29pt}{\LARGE$\xleftarrow{\chi}$}&\scalebox{0.75}{\includegraphics{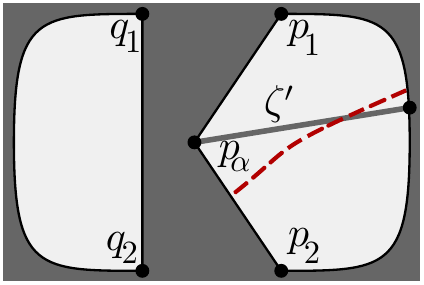}}\\[2pt]
\scalebox{0.75}{\includegraphics{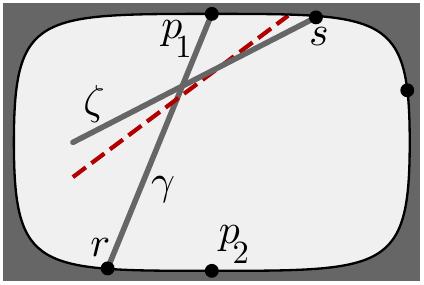}}&\raisebox{29pt}{\LARGE$\xleftarrow{\chi}$}&\scalebox{0.75 }{\includegraphics{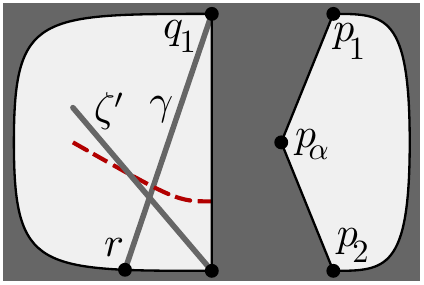}}
\end{tabular}
\caption{Illustrations of the map $\chi$ on tagged arcs, Case 1}
\label{chi fig}
\end{figure}

Let $\zeta'$ be a tagged arc in $(\S',\M')$.
If neither endpoint of $\zeta'$ is $p_\alpha$ or~$q_2$, then $\zeta$ is obtained from $\zeta'$ by the natural inclusion (preserving taggings) taking $(\S',\M')$ minus the triangle $p_1p_2p_\alpha$ into $(\S,\M)$.
If an endpoint of $\zeta'$ is $p_\alpha$, then we delete the part of $\zeta'$ contained in the triangle $p_1p_2p_\alpha$ and include what remains of $\zeta'$ into $(\S,\M)$
The included arc is attached to the point $r$ that is closest to $p_2$ (moving from $p_2$ while keeping the surface on the right), as illustrated at the second row of Figure~\ref{chi fig}.
The curves $\kappa(\zeta')$ and $\kappa(\zeta)$ are shown dotted in the figure.
If an endpoint of $\zeta'$ is at $q_2$, then there are two cases, depending on whether $\beta$ or $\gamma$ is a boundary segment.
If $\gamma$ is a boundary segment, then $\zeta$ is the inclusion of $\zeta'$.
If $\beta$ is a boundary segment, then the inclusion of $\zeta'$ is cut where it crosses $\beta$, the piece incident to $p_2$ is discarded, and the remaining piece is connected to $s$, the point that is closest to $p_1$, moving from $p_1$ keeping the surface on the right.
(This case is illustrated in the third row of Figure~\ref{chi fig}.
In the pictures, the curves $\zeta$, $\chi(\zeta)$, $\zeta'$, and $\chi(\zeta')$ are shown ``dangling'' because they might end at the boundary or at the marked point.)

We next describe how $\chi$ (or $\nu_\z$) acts on $\set{\y'}=\set{L'_\zeta:\zeta\in T'}$ by describing the elements $z_\zeta$.
As before, $r\in\M$ is the marked point closest to $p_2$ along the boundary and $s\in\M$ is the marked point closest to $p_1$ along the boundary.
If $\beta$ is a boundary segment, then $\tilde\gamma=\gamma$ and $z_\alpha=x_\gamma$ and $z_\gamma=x_{\tilde\alpha}$.
If $\gamma$ is a boundary segment, then $z_\alpha=x_{\tilde\gamma}$ and $z_\beta=x_\alpha$.

\begin{figure}[ht]
\begin{tabular}{ccc}
\scalebox{0.75}{\includegraphics{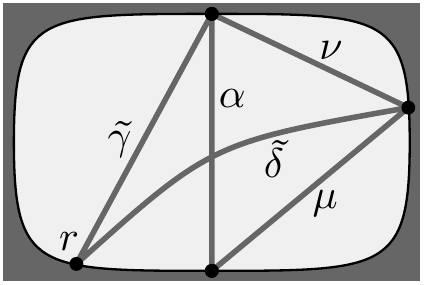}}&\raisebox{29pt}{\LARGE$\xleftarrow{\chi}$}&\scalebox{0.75}{\includegraphics{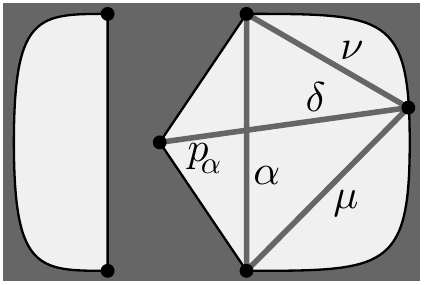}}
\end{tabular}
\caption{Arcs in exchange relations, Case 1}
\label{chi single}
\end{figure}
We now show that $\chi$ takes every exchange relation of $\A_\bullet(B(T'))$ to an exchange relation in $\A_\bullet(B(T))$.
We can deal with such an exchange relation without specifying whether $\beta$ or $\gamma$ is a boundary segment, because in any case, neither $L'_\beta$ nor $L'_\gamma$ is involved in the exchange relation.
If neither of the tagged arcs being exchanged is $\alpha$, then $L'_\alpha$ is also not involved.
Mapping the exchange relation to $(\S,\M)$ by $\chi$, we obtain precisely the exchange relation that exchanges the corresponding arcs in $(\S,\M)$.
If one of the arcs being exchanged is $\alpha$, then the situation is as illustrated in the right picture of Figure~\ref{chi single}. 
The exchange relation is
\[x'_\alpha x'_\delta=x'_\mu\prod_{\zeta\in T'}(L'_\zeta)^{[b_\alpha(\widebar T',L'_\zeta)]_+}+x'_\nu\prod_{\zeta\in T'}(L'_\zeta)^{[-b_\alpha(\widebar T',L'_\zeta)]_+},\]
where $\widebar T'$ is any triangulation of $(\S',\M')$ containing the arcs $\alpha$ and whichever of $\mu$ and $\nu$ are not boundary segments.
(If $\mu$ and/or $\nu$ is a boundary segment, then we set $x_\mu$ and/or $x_\nu$ equal to $1$.)
We have $b_\alpha(\widebar T',L'_\alpha)=1$.
Since the allowable curves in $L_{T'}$ are pairwise compatible, no curve $\zeta\in L'_{T'}$ has $b_\alpha(\widebar T',L'_\zeta)<0$.
Thus we can rewrite the exchange relation as
\[x'_\alpha x'_\delta=x'_\mu L'_\alpha \prod_{\zeta\in T'\setminus\set{\alpha}}\!\!(L'_\zeta)^{[b_\alpha(\widebar T',L'_\zeta)]_+}\,+\,x'_\nu.\]
Since $L'_\beta$ and $L'_\gamma$ do not appear in the product, applying $\chi$ yields
\[x_\alpha x_{\tilde\delta}=x_\mu x_{\tilde\gamma} L_\alpha \prod_{\zeta\in T'\setminus\set{\alpha}}\!\!L_\zeta^{[b_\alpha(\widebar T',L'_\zeta)]_+}\,+\,x_\nu.\]
Take $\widebar T$ to be a triangulation of $(\S,\M)$ agreeing with $\widebar T'$ on the inclusion of the right component of $(\S',\M')$ minus the triangle $p_1p_2p$ into $(\S,\M)$ and containing $\gamma'$.
We see that $b_\alpha(\widebar T',L'_\zeta)=b_\alpha(\widebar T,L_\zeta)$ for every $\zeta\in T'$, so that the relation becomes 
\[x_\alpha x_{\tilde\delta}=x_\mu x_{\tilde\gamma}\prod_{\zeta\in T}L_\zeta^{[b_\alpha(\widebar T,L_\zeta)]_+}\,+\,x_\nu,\]
which is an exchange relation in $\A_\bullet(B(T))$, as shown in the left picture of Figure~\ref{chi single}.

To assist in considering further cases, we recast the above treatment pictorially.
The top row of Figure~\ref{exch pic} shows the exchange relation in $(\S',\M')$ in the case where one of the arcs being exchanged is $\alpha$, with each element $x'_\zeta$ represented by the arc $\zeta$.
\begin{figure}
\begin{tabular}{ccccc}
\scalebox{0.75}{\includegraphics{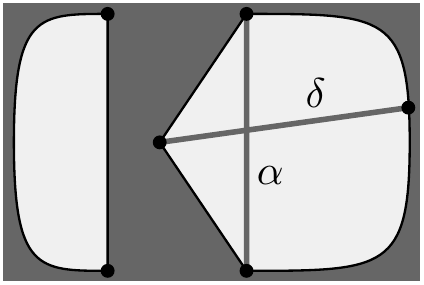}}&\raisebox{30pt}{$=$}&\scalebox{0.75}{\includegraphics{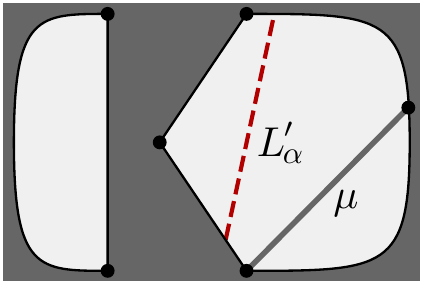}}&\raisebox{30pt}{$+$}&\scalebox{0.75}{\includegraphics{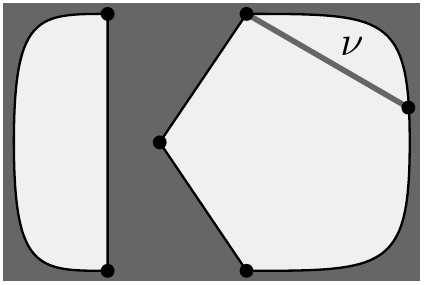}}\\[-3pt]
$\chi\downarrow$&&$\chi\downarrow$&&$\chi\downarrow$\\[1pt]
\scalebox{0.75}{\includegraphics{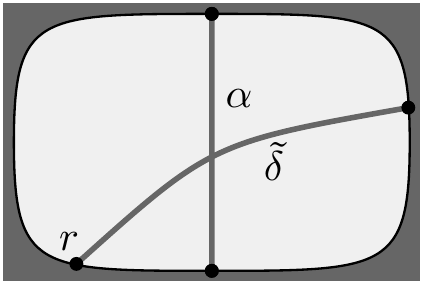}}&\raisebox{30pt}{$=$}&\scalebox{0.75}{\includegraphics{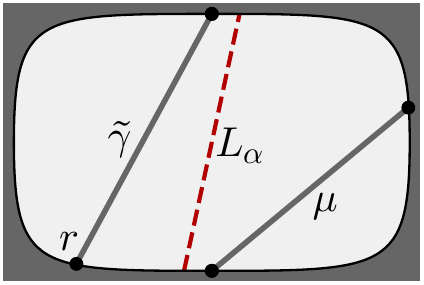}}&\raisebox{30pt}{$+$}&\scalebox{0.75}{\includegraphics{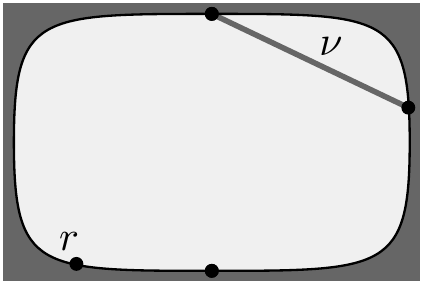}}\\[3pt]\hline\hline\\[-5pt]
\scalebox{0.75}{\includegraphics{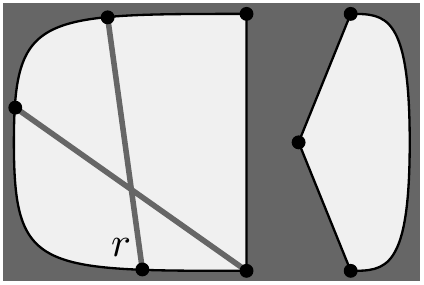}}&\raisebox{30pt}{$=$}&\scalebox{0.75}{\includegraphics{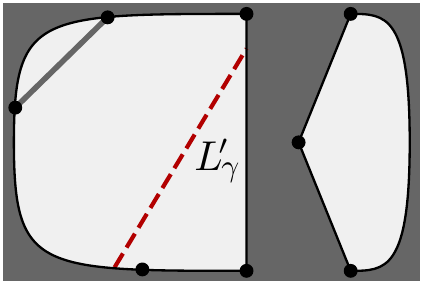}}&\raisebox{30pt}{$+$}&\scalebox{0.75}{\includegraphics{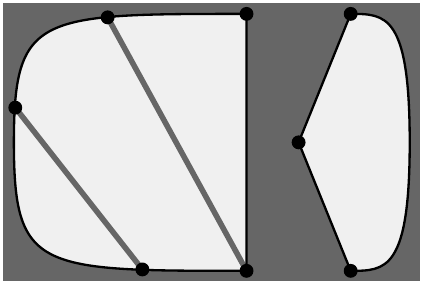}}\\[-3pt]
$\chi\downarrow$&&$\chi\downarrow$&&$\chi\downarrow$\\[1pt]
\scalebox{0.75}{\includegraphics{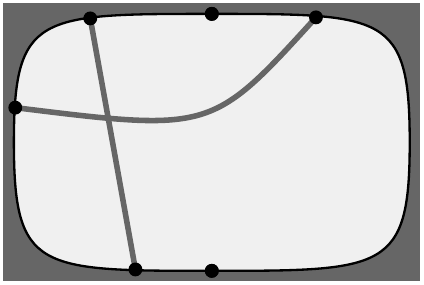}}&\raisebox{30pt}{$=$}&\scalebox{0.75}{\includegraphics{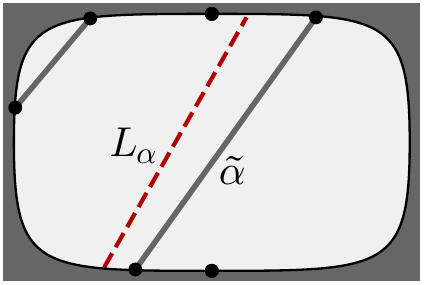}}&\raisebox{30pt}{$+$}&\scalebox{0.75}{\includegraphics{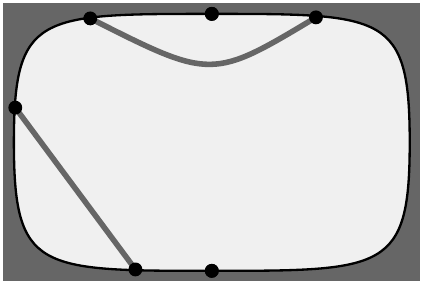}}\\[3pt]\hline\hline\\[-5pt]
\scalebox{0.75}{\includegraphics{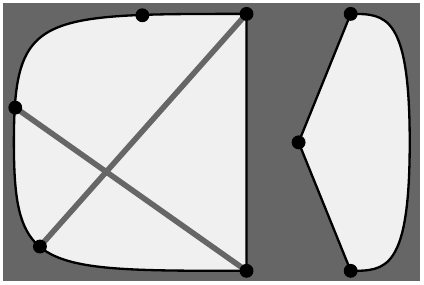}}&\raisebox{30pt}{$=$}&\scalebox{0.75}{\includegraphics{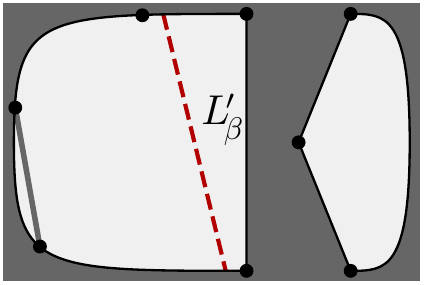}}&\raisebox{30pt}{$+$}&\scalebox{0.75}{\includegraphics{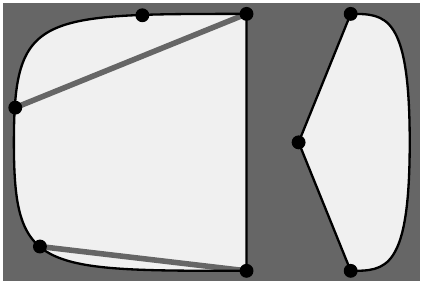}}\\[-3pt]
$\chi\downarrow$&&$\chi\downarrow$&&$\chi\downarrow$\\[1pt]
\scalebox{0.75}{\includegraphics{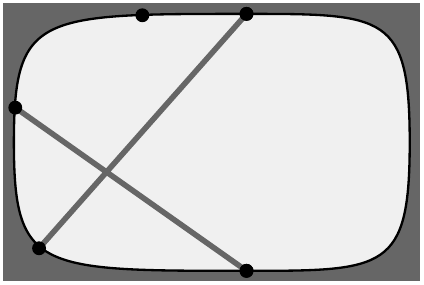}}&\raisebox{30pt}{$=$}&\scalebox{0.75}{\includegraphics{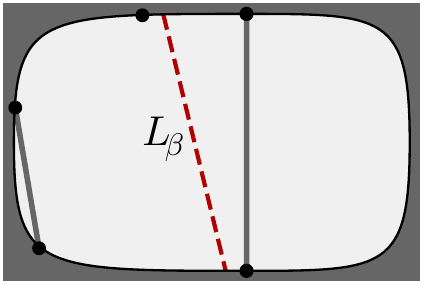}}&\raisebox{30pt}{$+$}&\scalebox{0.75}{\includegraphics{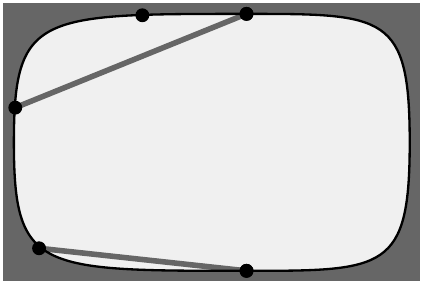}}
\end{tabular}
\caption{The action of $\chi$ on some exchange relations, Case 1}
\label{exch pic}
\end{figure}
Aside from $L'_\alpha$ (shown dotted), the elementary laminations involved in the relation are not shown.
The next row of the figure shows the analogous representation of the image of the relation under $\chi$.

Now consider an exchange relation in the component of $(\S',\M')$ not containing~$p$.

First suppose that $\beta$ is a boundary segment.
If neither tagged arc being exchanged is incident to $p_2$, then $L'_\gamma$ does not appear in the exchange relation and $\chi$ acts by inclusion on the tagged arcs involved in the exchange relation and acts trivially on each elementary lamination in the exchange relation.
The result is an exchange relation in $\A_\bullet(B(T))$.
If one of the tagged arcs being exchanged is incident to $p_2$, then $L'_\gamma$ appears in the exchange relation if and only if the other arc being exchanged is incident to $r$.
When the other arc being exchanged is not incident to $r$, as $\chi$ moves the endpoints of arcs at $p_2$ to $s$, the exchange relation is taken to an exchange relation in $\A_\bullet(B(T))$.
(The exchange relation in $\A_\bullet(B(T))$ still does not involve $L_\gamma$, and also does not involve $L_\zeta$ for any arc $\zeta$ from the other component of $\S'$.)
When the other arc being exchanged \emph{is} incident to $r$, as $\chi$ moves the endpoints of arcs at $p_2$ to $s$, the exchange relation picks up a factor $x_{\tilde\alpha}$ in one of its right-side terms, as illustrated in the third and fourth rows of Figure~\ref{exch pic}.
(In $(\S',\M')$ the boundary segment $\beta$ is positioned relative to the exchange relation exactly where $\tilde\alpha$ is positioned relative to the exchange relation in $(\S,\M)$.)
In all of these cases where $\beta$ is a boundary segment, there are various forms the exchange relations can take, depending on how the exchanged arcs are positioned relative to the puncture and the boundary.
(For example, one or more of the arcs appearing on the right side may be a boundary segment, thus disappearing from the relation.
Or, one of the arcs on the right side may be replaced by a pair of arcs to the puncture coinciding except for opposite taggings at the puncture.
The latter happens when the two arcs being exchanged share an endpoint.)
These details are preserved when $\chi$ is applied.
We see (as illustrated in the fourth row of Figure~\ref{exch pic} that $\chi$ maps the exchange relation in $\A_\bullet(B(T'))$ to an exchange relation in $\A_\bullet(B(T))$.

Next, suppose $\gamma$ is a boundary segment.
If neither tagged arc being exchanged is incident to $p_2$, then $L_\beta$ does not appear in the exchange relation and $\chi$ acts by inclusion on the tagged arcs involved in the exchange relation and acts trivially on each elementary lamination in the exchange relation.
The result is again an exchange relation in $\A_\bullet(B(T))$.
If one of the tagged arcs being exchanged is incident to $p_2$, then $L_\gamma$ appears in the exchange relation if and only if the other arc being exchanged is incident to $p_1$.
This time, $\chi$ does not move the endpoints of arcs at $p_2$, but again $\chi$ takes the exchange relation to an exchange relation in $\A_\bullet(B(T))$, as illustrated in the fifth and sixth rows of Figure~\ref{exch pic}.

\begin{figure}
\begin{tabular}{ccc}
\scalebox{0.75}{\includegraphics{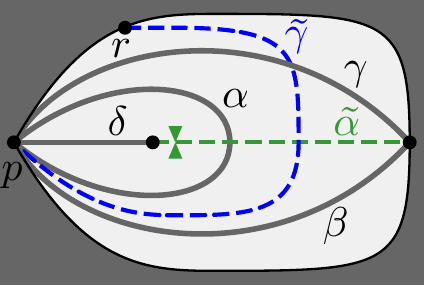}}&\raisebox{29pt}{\LARGE$\xrightarrow{\text{\small resect}}$}&\scalebox{0.75}{\includegraphics{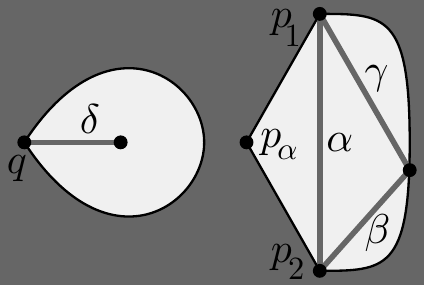}}\\[2pt]
\scalebox{0.75}{\includegraphics{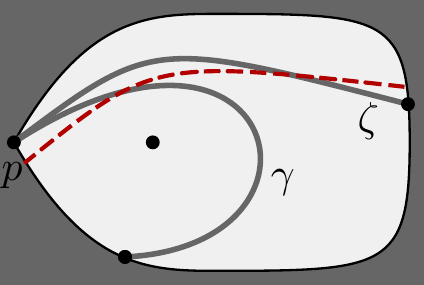}}&\raisebox{29pt}{\LARGE$\xleftarrow{\chi}$}&\scalebox{0.75}{\includegraphics{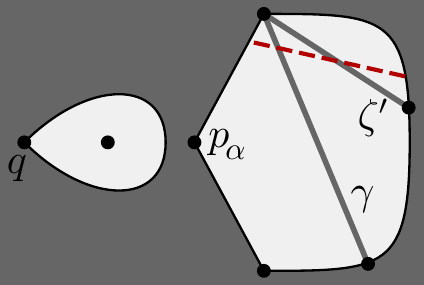}}\\[2pt]
\scalebox{0.75}{\includegraphics{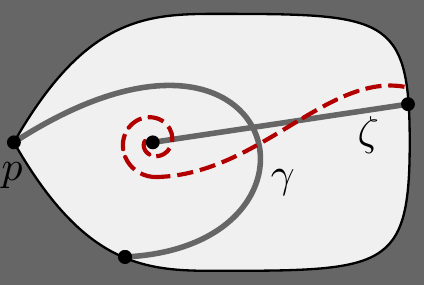}}&\raisebox{29pt}{\LARGE$\xleftarrow{\chi}$}&\scalebox{0.75}{\includegraphics{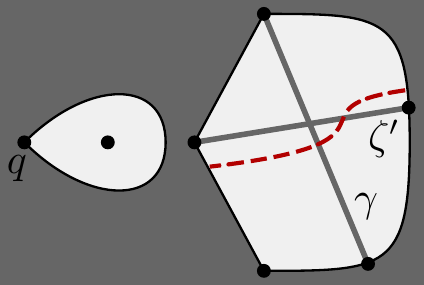}}\\[2pt]
\scalebox{0.75}{\includegraphics{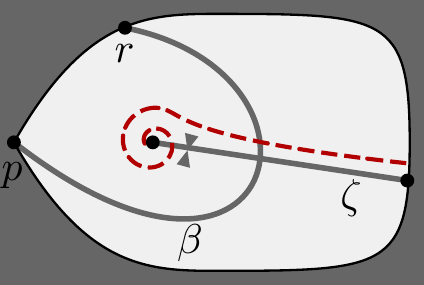}}&\raisebox{29pt}{\LARGE$\xleftarrow{\chi}$}&\scalebox{0.75}{\includegraphics{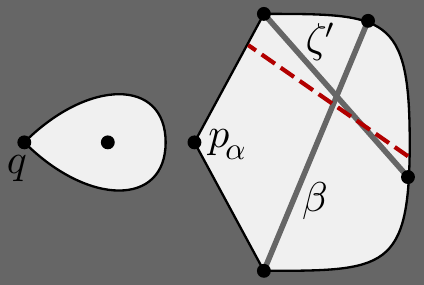}}\\[2pt]
\scalebox{0.75}{\includegraphics{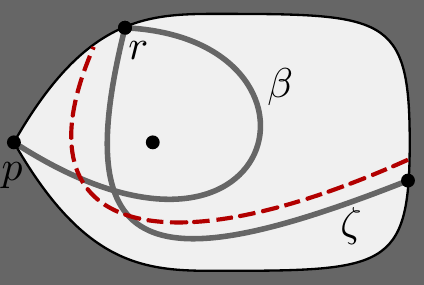}}&\raisebox{29pt}{\LARGE$\xleftarrow{\chi}$}&\scalebox{0.75}{\includegraphics{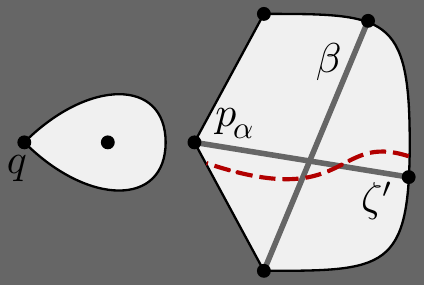}}\\[2pt]
\scalebox{0.75}{\includegraphics{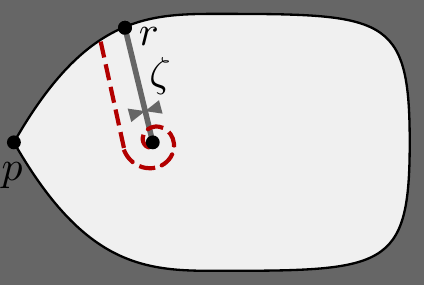}}&\raisebox{29pt}{\LARGE$\xleftarrow{\chi}$}&\scalebox{0.75}{\includegraphics{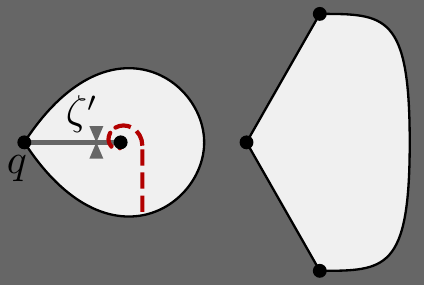}}
\end{tabular}
\caption{Illustrations of the map $\chi$ on tagged arcs, Case 2}
\label{chi mon fig}
\end{figure}

The rest of the proof follows the outline given above.
(We can apply Proposition~\ref{one antip} because of the hypothesis that either $\beta$ or $\gamma$ is a boundary segment.)
\end{proof}

The following proposition proves Theorem~\ref{hom phen surf thm} in Case 2.

\begin{prop}\label{case 2 prop}
Suppose $(\S,\M)$ is a once-punctured disk and $(\S',\M')$ is obtained by resecting an arc $\alpha\in T$ that is the non-fold edge of a self-folded triangle in $T$ that is in turn contained in a once-punctured digon having exactly one edge on the boundary.
Suppose also that the point $p_\alpha$ is in the self-folded triangle.
Then both parts of Phenomenon~\ref{hom phen} occur.
\end{prop}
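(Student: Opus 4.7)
The plan is to follow the five-step template given before Proposition~\ref{case 1 prop}, adapted to the self-folded-triangle configuration. Write $\alpha$ for the non-fold edge of the self-folded triangle (a loop at some vertex $p$ enclosing the puncture $q$), $\tau$ for the fold edge (an arc from $p$ to $q$), and $\beta$ for the other, non-boundary side of the once-punctured digon of $T$ containing the self-folded triangle. Resecting $\alpha$ with $p_\alpha$ placed inside the self-folded triangle separates off a component of $(\S',\M')$ that is a once-punctured digon with vertices $p$ and $p_\alpha$ and puncture $q$; the remaining component is unpunctured, and the restriction of $T$ gives all of its original arcs except $\alpha$ and $\tau$. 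In the new once-punctured digon, the induced triangulation $T'$ has two arcs joining $p$ and $p_\alpha$ to $q$, which under the tag-conversion of Figure~\ref{tau fig} correspond to the pair of tagged arcs at $q$ in $T$ (one plain, one notched) associated to the self-folded triangle.

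The first step is to describe $\chi$ on tagged arcs, recorded in Figure~\ref{chi mon fig}. Outside a neighborhood of the self-folded triangle, $\chi$ is the natural inclusion preserving tags. A tagged arc with an endpoint at $p_\alpha$ in $T'$ corresponds under $\chi$ to the tagged arc of $(\S,\M)$ obtained by continuing through the region formerly occupied by the self-folded triangle to land at $p$, and the tagged arcs at $q$ in $(\S',\M')$ correspond under $\chi$ to the pair of tagged arcs at $q$ in $T$ that the self-folded triangle represents. In each case the identification is forced by the requirement $\b(T,\kappa(\chi(\zeta')))=\b(T',\kappa(\zeta'))$, which is then verified directly from the shear-coordinate rules for ordinary edges and for fold edges. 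The second step is to describe $\nu_\z$ on $\set{\y'}$. Since dominance alters only the columns of the exchange matrix indexed by $\alpha$, $\tau$, and $\beta$, every $z_\zeta$ with $\zeta\not\in\set{\alpha,\tau,\beta}$ is trivial; for $\zeta$ in this set, the explicit signed-adjacency changes underlying Proposition~\ref{resect signed adjacency} express $z_\zeta$ as a cluster monomial in $x_\alpha$, $x_\tau$, $x_\beta$, whose exponents come from the column differences of $B(T)$ and $B(T')$ and which, by the acyclic local structure near the self-folded triangle together with Proposition~\ref{feature}, are indeed cluster monomials in $\A_\bullet(B(T))$.

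The substantive step is to verify that $\chi$ sends each exchange relation in $\A_\bullet(B(T'))$ to a valid relation in $\A_\bullet(B(T))$. Exchanges taking place entirely in the unpunctured component of $(\S',\M')$ and not involving any of $\alpha$, $\tau$, or $\beta$ map identically, since no lamination modified by $\chi$ appears. Exchanges local to the new once-punctured digon of $(\S',\M')$ correspond to flips inside the once-punctured digon of $T$ under the tag-conversion of Figure~\ref{tau fig}, modulo careful tag tracking at $q$. The remaining exchanges---those involving $\alpha$, $\tau$, $\beta$, or a tagged arc incident to $p_\alpha$---pick up exactly the factors $x_\alpha$, $x_\tau$, or $x_\beta$ encoded in $z_\zeta$ when passing from $(\S',\M')$ to $(\S,\M)$, and this matches the change between the two exchange relations. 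As in Proposition~\ref{case 1 prop}, each verification reduces to drawing the flip quadrilateral and computing products of $L_\zeta$'s from the relevant shear coordinates, a direct but lengthy case analysis based on Figure~\ref{chi mon fig}.

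The main obstacle I expect is the bookkeeping around tags at the puncture $q$: the pair of tagged arcs at $q$ in $T$ (one plain, one notched) plays the role of the single fold edge $\tau$ in the ordinary triangulation, and matching this pair with the corresponding arcs in $T'$ under $\chi$ requires attention to the shear-coordinate definition at fold edges and at arcs spiraling into punctures. Once these verifications are done, Proposition~\ref{hom from cl vars} yields a ring homomorphism $\A_\bullet(B(T'))\to\A_\bullet(B(T))$ that agrees with $\nu_\z$ on initial data and therefore equals $\nu_\z$; this homomorphism preserves $\g$-vectors by construction of $\chi$ via the $\kappa$-bijection and sends cluster variables to cluster variables. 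For injectivity, the acyclicity hypothesis together with the local structure of the self-folded triangle limit the columns of $B(T)$ whose negative entries strictly decrease under dominance to a single one (the column indexed by $\alpha$, using $\pi_T(\tau)=\alpha$), so Proposition~\ref{one antip} applies to give injectivity of $\nu_\z$, completing the verification of both parts of Phenomenon~\ref{hom phen}.
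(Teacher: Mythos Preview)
Your five-step outline is exactly the paper's approach, and the plan to verify exchange relations and then invoke Propositions~\ref{hom from cl vars} and~\ref{one antip} is correct.  However, your geometric setup for the resection is wrong, and the errors would propagate through the case analysis.  Since $\alpha$ is the non-fold edge of a self-folded triangle, it is a \emph{loop} based at the single boundary vertex $p$.  Resecting it with $p_\alpha$ inside the self-folded triangle cuts off a once-punctured \emph{monogon}, not a digon; the monogon carries exactly one arc of $T'$, namely the fold edge $\delta$ (your $\tau$), and hence exactly two tagged arcs (the plain and notched versions of $\delta$).  The arc $\alpha$ remains in the \emph{unpunctured} component, together with all other arcs of $T$ except $\delta$.

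This matters for the rest of the argument.  The $\alpha$-column of the exchange matrix is unchanged under the resection (so $z_\alpha=1$), contrary to what you write; the nontrivial $z_\zeta$ are $z_\delta$ and whichever of $z_\beta,z_\gamma$ corresponds to the non-boundary side of the ambient digon.  The paper keeps both sides $\beta,\gamma$ in the notation and splits into two subcases according to which is the boundary segment, obtaining $z_\gamma=x_\delta$, $z_\delta=x_{\tilde\gamma}$ in one case and $z_\beta=x_{\tilde\alpha}$, $z_\delta=x_\beta$ in the other.  Correspondingly, the column relevant for Proposition~\ref{one antip} is the $\delta$-column, not the $\alpha$-column.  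With the geometry corrected, the exchange-relation check proceeds as you outline: the single exchange relation in the monogon (swapping the two taggings of $\delta$) maps to an exchange relation in $(\S,\M)$, and the relations in the unpunctured component are handled case by case according to whether $L'_\alpha$ and $L'_\beta$ (or $L'_\gamma$) participate.
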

\begin{proof}  
We describe $\chi$ on tagged arcs.
The situation is shown in the first row of Figure~\ref{chi mon fig}, 
where again for generality neither $\gamma$ nor $\beta$ is pictured as a boundary arc, although by hypothesis, one of them is. 
The point labeled $r$ is the closest marked point to $p$ (in that direction) on the boundary.

A tagged arc $\zeta'$ in the component of $(\S',\M')$ containing $\alpha$ is mapped into $(\S,\M)$ by the natural inclusion unless it has an endpoint at $p_1$ or $p_\alpha$.
If $\zeta'$ has an endpoint at $p_1$ or $p_\alpha$, then the inclusion is extended to $p$ or $r$ or the puncture, tagged plain or notched there.
The details depend also on whether $\beta$ or $\gamma$ is a boundary segment, as illustrated in the second through fifth rows of Figure~\ref{chi mon fig}.
(In the fifth row, if $\beta$ and $\zeta'$ share an endpoint, then $\zeta$ is an arc from $r$ to the puncture, tagged plain.)

There are two tagged arcs in the once-punctured monogon.
The one that is tagged plain maps into $(\S,\M)$ by the natural inclusion and remains tagged plain at the puncture.
The one that is tagged notched maps to an arc from $r$ to the puncture, still tagged notched, as shown in the last row of Figure~\ref{chi mon fig}.

If $\beta$ is a boundary segment, then $z_\gamma=x_\delta$ and $z_\delta=x_{\tilde\gamma}$.
If $\gamma$ is a boundary segment, then $\tilde\gamma=\beta$, so that $z_\beta=x_{\tilde\alpha}$ and $z_\delta=x_{\tilde\gamma}=x_\beta$.

\begin{figure}
\begin{tabular}{ccccc}
\scalebox{0.75}[0.69]{\includegraphics{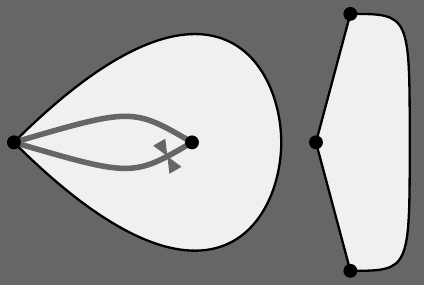}}&\raisebox{30pt}{$=$}&\scalebox{0.75}[0.69]{\includegraphics{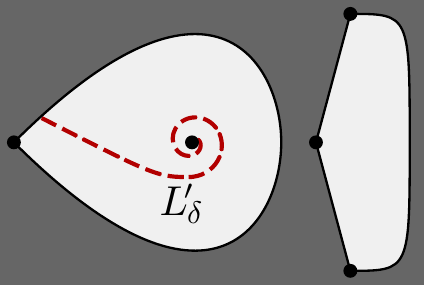}}&\raisebox{30pt}{$+$}&\scalebox{0.75}[0.69]{\includegraphics{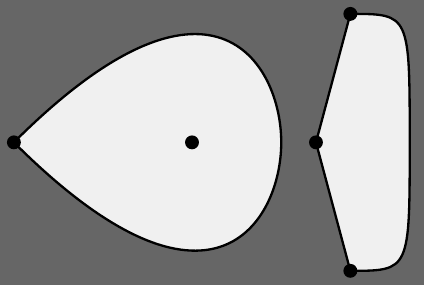}}\\[-3pt]
$\chi\downarrow$&&$\chi\downarrow$&&$\chi\downarrow$\\[1pt]
\scalebox{0.75}[0.69]{\includegraphics{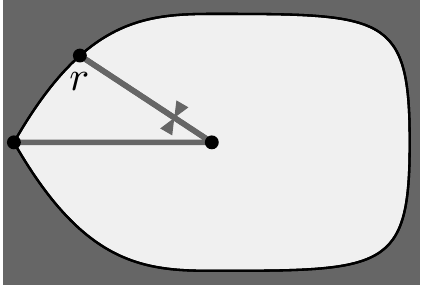}}&\raisebox{30pt}{$=$}&\scalebox{0.75}[0.69]{\includegraphics{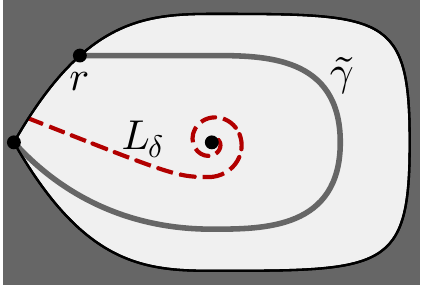}}&\raisebox{30pt}{$+$}&\scalebox{0.75}[0.69]{\includegraphics{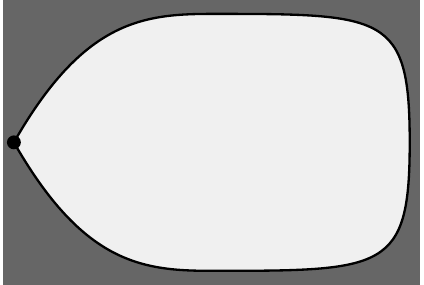}}\\[3pt]\hline\hline\\[-5.3pt]
\scalebox{0.75}[0.69]{\includegraphics{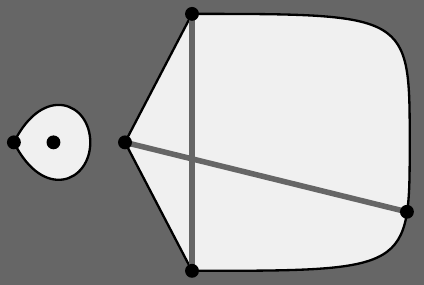}}&\raisebox{30pt}{$=$}&\scalebox{0.75}[0.69]{\includegraphics{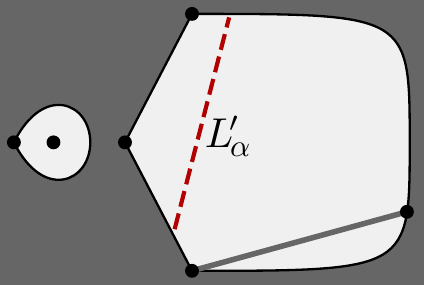}}&\raisebox{30pt}{$+$}&\scalebox{0.75}[0.69]{\includegraphics{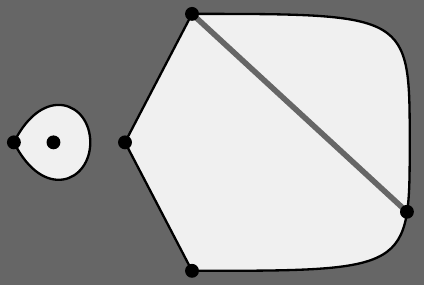}}\\[-3pt]
$\chi\downarrow$&&$\chi\downarrow$&&$\chi\downarrow$\\[1pt]
\scalebox{0.75}[0.69]{\includegraphics{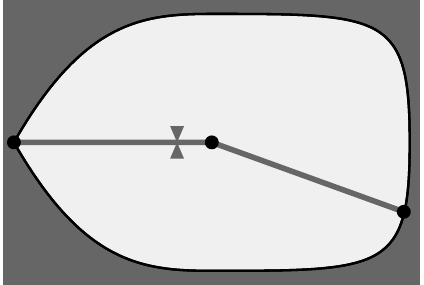}}&\raisebox{30pt}{$=$}&\scalebox{0.75}[0.69]{\includegraphics{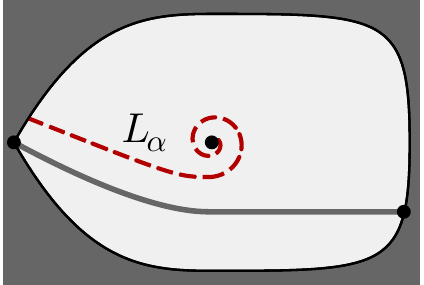}}&\raisebox{30pt}{$+$}&\scalebox{0.75}[0.69]{\includegraphics{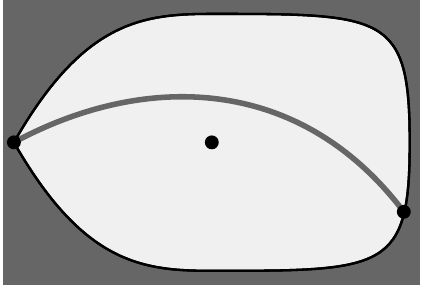}}\\[3pt]\hline\hline\\[-5.3pt]
\scalebox{0.75}[0.69]{\includegraphics{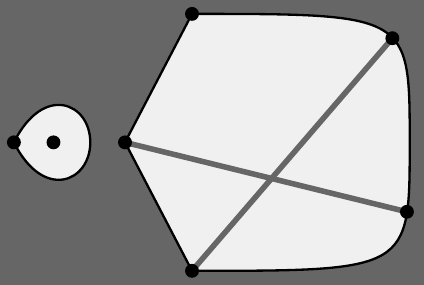}}&\raisebox{30pt}{$=$}&\scalebox{0.75}[0.69]{\includegraphics{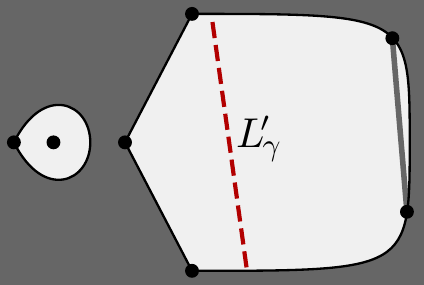}}&\raisebox{30pt}{$+$}&\scalebox{0.75}[0.69]{\includegraphics{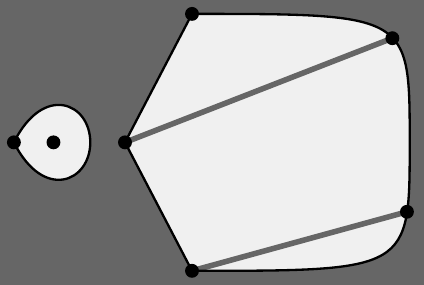}}\\[-3pt]
$\chi\downarrow$&&$\chi\downarrow$&&$\chi\downarrow$\\[1pt]
\scalebox{0.75}[0.69]{\includegraphics{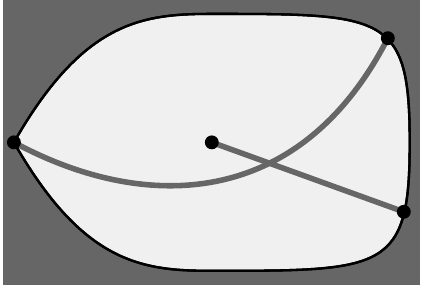}}&\raisebox{30pt}{$=$}&\scalebox{0.75}[0.69]{\includegraphics{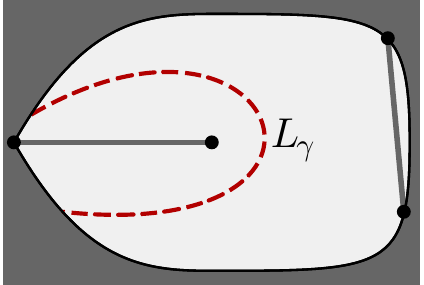}}&\raisebox{30pt}{$+$}&\scalebox{0.75}[0.69]{\includegraphics{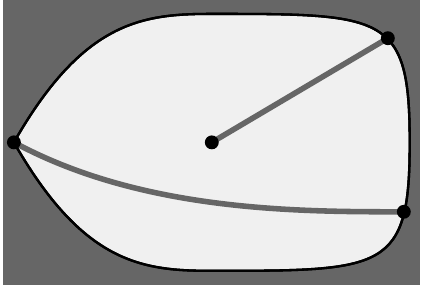}}\\[3pt]\hline\hline\\[-5.3pt]
\scalebox{0.75}[0.69]{\includegraphics{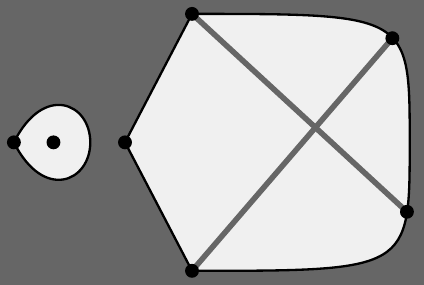}}&\raisebox{30pt}{$=$}&\scalebox{0.75}[0.69]{\includegraphics{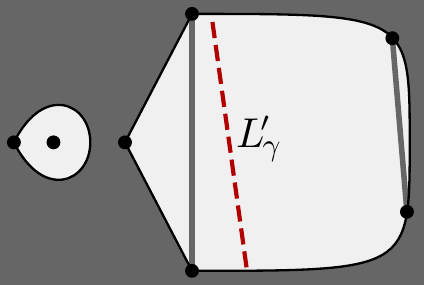}}&\raisebox{30pt}{$+$}&\scalebox{0.75}[0.69]{\includegraphics{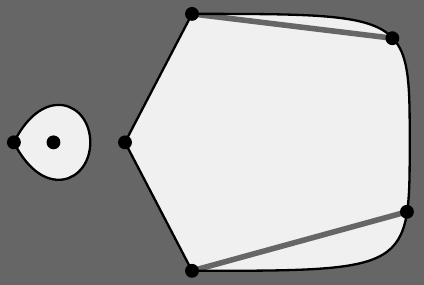}}\\[-3pt]
$\chi\downarrow$&&$\chi\downarrow$&&$\chi\downarrow$\\[1pt]
\scalebox{0.75}[0.69]{\includegraphics{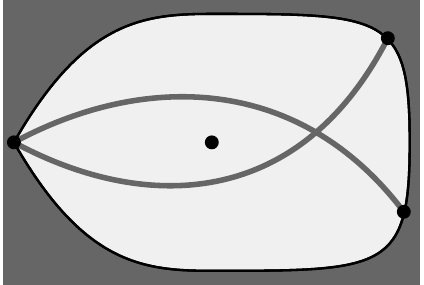}}&\raisebox{30pt}{$=$}&\scalebox{0.75}[0.69]{\includegraphics{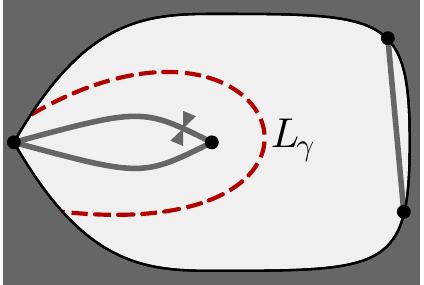}}&\raisebox{30pt}{$+$}&\scalebox{0.75}[0.69]{\includegraphics{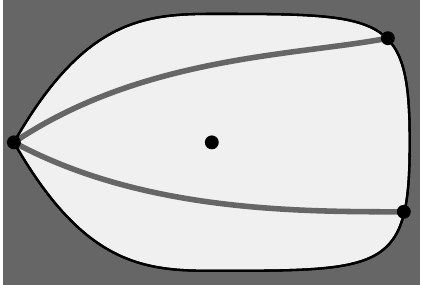}} 
\end{tabular}
\caption{Exchange relations, Case 2}
\label{exch case 2 1}
\end{figure}
\begin{figure}
\begin{tabular}{ccccc}
\scalebox{0.75}[0.69]{\includegraphics{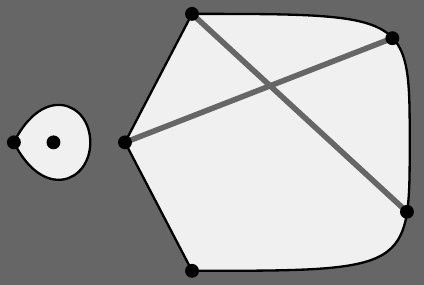}}&\raisebox{30pt}{$=$}&\scalebox{0.75}[0.69]{\includegraphics{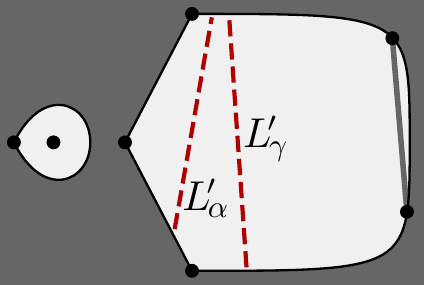}}&\raisebox{30pt}{$+$}&\scalebox{0.75}[0.69]{\includegraphics{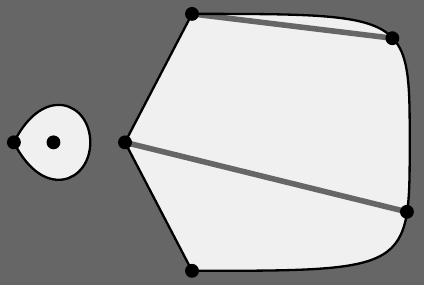}}\\[-3pt]
$\chi\downarrow$&&$\chi\downarrow$&&$\chi\downarrow$\\[1pt]
\scalebox{0.75}[0.69]{\includegraphics{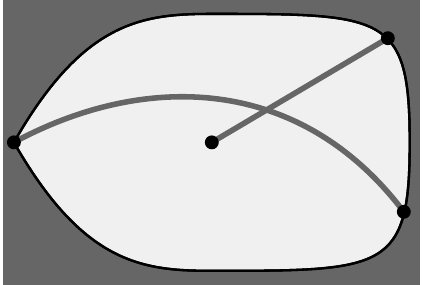}}&\raisebox{30pt}{$=$}&\scalebox{0.75}[0.69]{\includegraphics{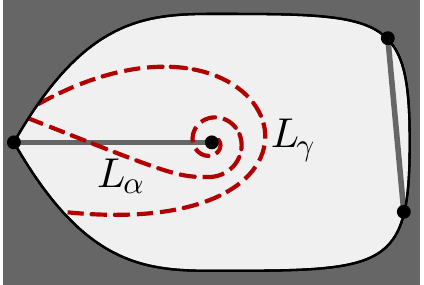}}&\raisebox{30pt}{$+$}&\scalebox{0.75}[0.69]{\includegraphics{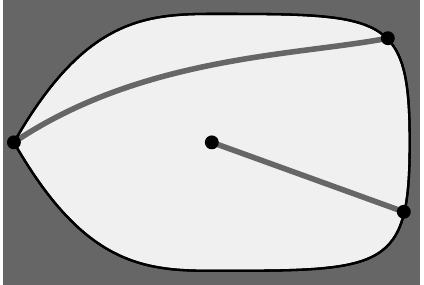}}\\[3pt]\hline\hline\\[-5.3pt]
\scalebox{0.75}[0.69]{\includegraphics{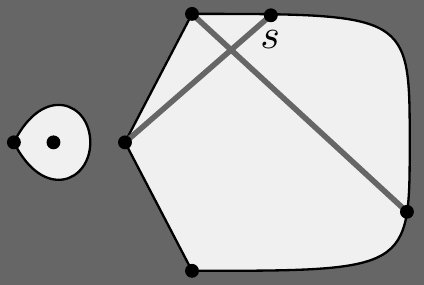}}&\raisebox{30pt}{$=$}&\scalebox{0.75}[0.69]{\includegraphics{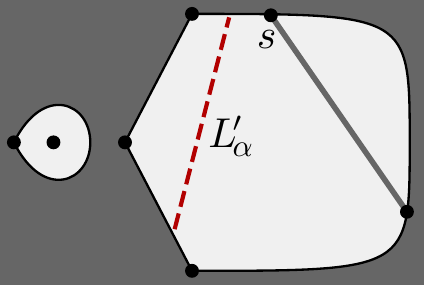}}&\raisebox{30pt}{$+$}&\scalebox{0.75}[0.69]{\includegraphics{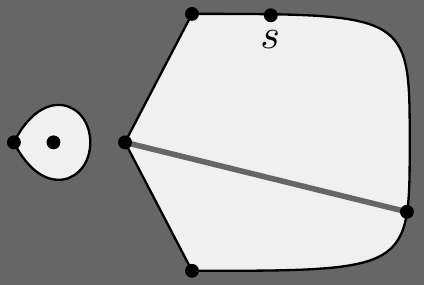}}\\[-3pt]
$\chi\downarrow$&&$\chi\downarrow$&&$\chi\downarrow$\\[1pt]
\scalebox{0.75}[0.69]{\includegraphics{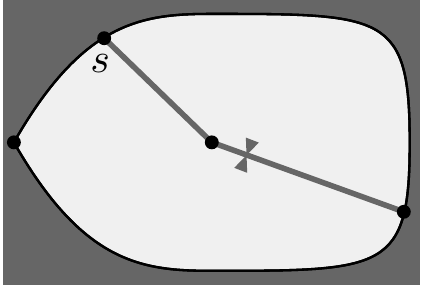}}&\raisebox{30pt}{$=$}&\scalebox{0.75}[0.69]{\includegraphics{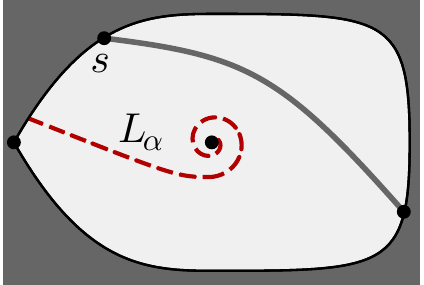}}&\raisebox{30pt}{$+$}&\scalebox{0.75}[0.69]{\includegraphics{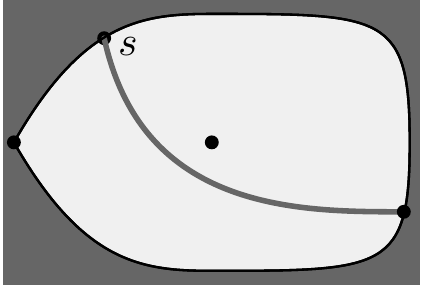}}\\[3pt]\hline\hline\\[-5.3pt]
\scalebox{0.75}[0.69]{\includegraphics{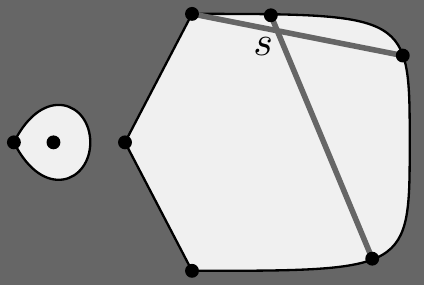}}&\raisebox{30pt}{$=$}&\scalebox{0.75}[0.69]{\includegraphics{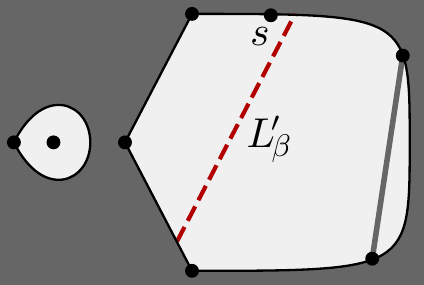}}&\raisebox{30pt}{$+$}&\scalebox{0.75}[0.69]{\includegraphics{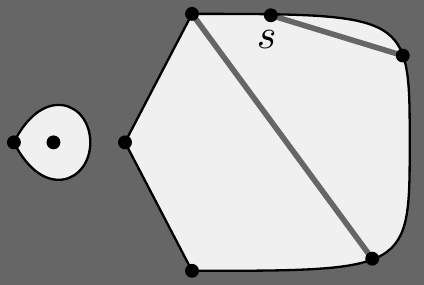}}\\[-3pt]
$\chi\downarrow$&&$\chi\downarrow$&&$\chi\downarrow$\\[1pt]
\scalebox{0.75}[0.69]{\includegraphics{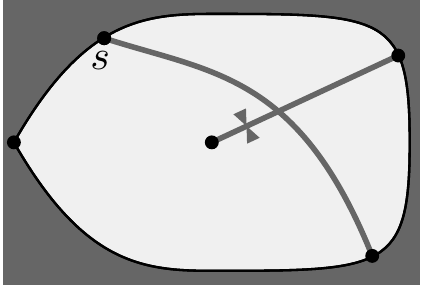}}&\raisebox{30pt}{$=$}&\scalebox{0.75}[0.69]{\includegraphics{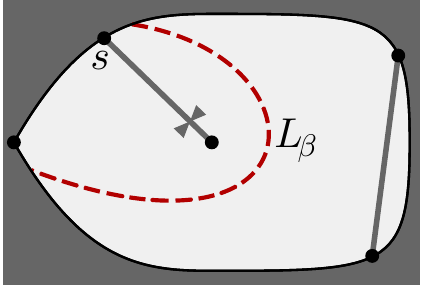}}&\raisebox{30pt}{$+$}&\scalebox{0.75}[0.69]{\includegraphics{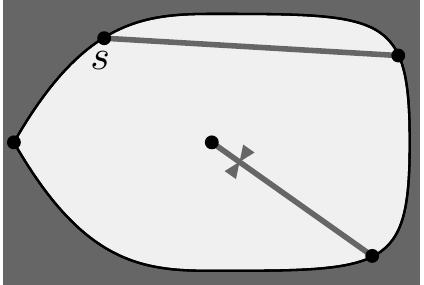}}\\[3pt]\hline\hline\\[-5.3pt]
\scalebox{0.75}[0.69]{\includegraphics{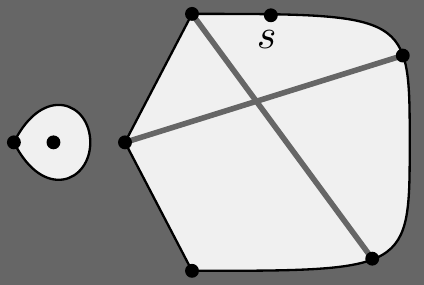}}&\raisebox{30pt}{$=$}&\scalebox{0.75}[0.69]{\includegraphics{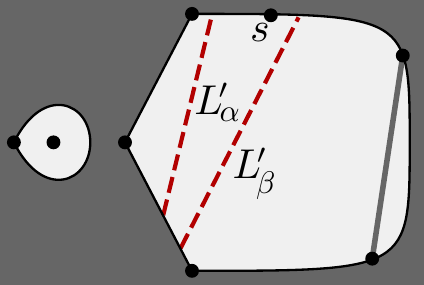}}&\raisebox{30pt}{$+$}&\scalebox{0.75}[0.69]{\includegraphics{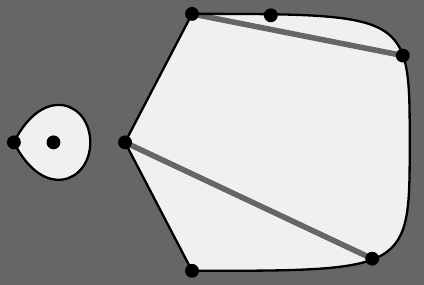}}\\[-3pt]
$\chi\downarrow$&&$\chi\downarrow$&&$\chi\downarrow$\\[1pt]
\scalebox{0.75}[0.69]{\includegraphics{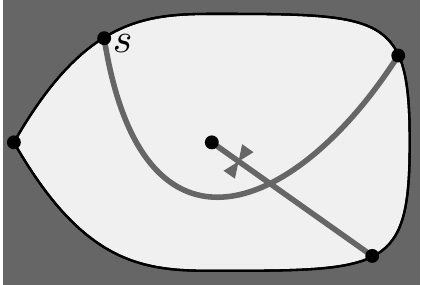}}&\raisebox{30pt}{$=$}&\scalebox{0.75}[0.69]{\includegraphics{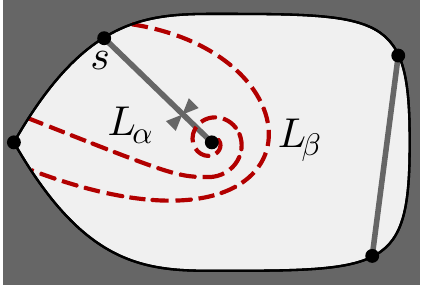}}&\raisebox{30pt}{$+$}&\scalebox{0.75}[0.69]{\includegraphics{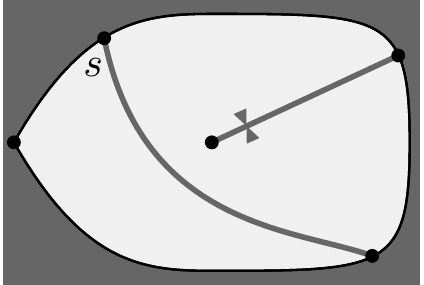}}\end{tabular}
\caption{More exchange relations, Case 2}
\label{exch case 2 2}
\end{figure}
We now show that $\chi$ takes every exchange relation of $\A_\bullet(B(T'))$ to an exchange relation in $\A_\bullet(B(T))$.
The only exchange relation in the once-punctured monogon is shown in the first row of Figure~\ref{exch case 2 1}.
In either case (whether $\beta$ or $\gamma$ is a boundary segment), $\chi$ sends this exchange relation to an exchange relation as illustrated in the second row of Figure~\ref{exch case 2 1}.

Now suppose $\beta$ is a boundary segment and consider an exchange relation in the component containing $\alpha$.
The elementary lamination $L'_\delta$ does not participate in the exchange relation.
There are four cases, based on whether $L'_\alpha$ and (independently) $L'_\gamma$ participate in the exchange relation.
We observe that $L'_\alpha$ participates if and only if an arc incident to $p_\alpha$ is exchanged with an arc incident to $p_1$.
Similarly, $L'_\gamma$ participates if and only if exactly two of $p_1$, $p_\alpha$, and $p_2$ occur as endpoints of the two arcs being exchanged.
The cases where neither $L'_\alpha$ nor $L'_\gamma$ appears is easy. 
The remaining cases are illustrated in the third through eighth rows of Figure~\ref{exch case 2 1} and in the first two rows of Figure~\ref{exch case 2 2}.
\begin{figure}
\begin{tabular}{ccccc}
\scalebox{0.75}[0.69]{\includegraphics{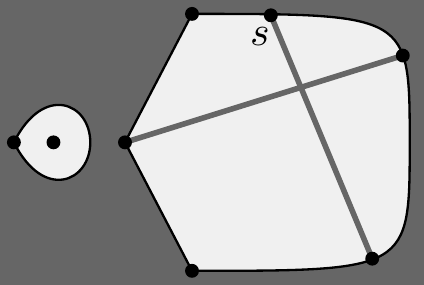}}&\raisebox{30pt}{$=$}&\scalebox{0.75}[0.69]{\includegraphics{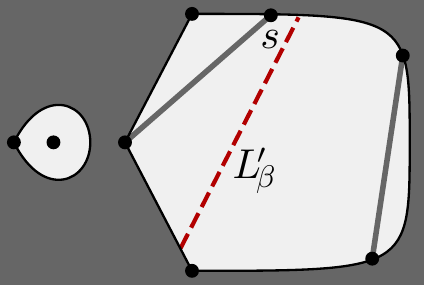}}&\raisebox{30pt}{$+$}&\scalebox{0.75}[0.69]{\includegraphics{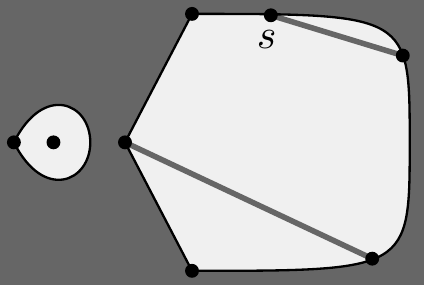}}\\[-2pt]
$\chi\downarrow$&&$\chi\downarrow$&&$\chi\downarrow$\\[1pt]
\scalebox{0.75}[0.69]{\includegraphics{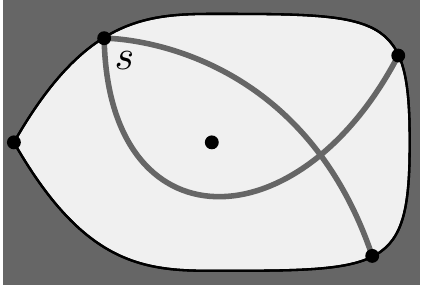}}&\raisebox{30pt}{$=$}&\scalebox{0.75}[0.69]{\includegraphics{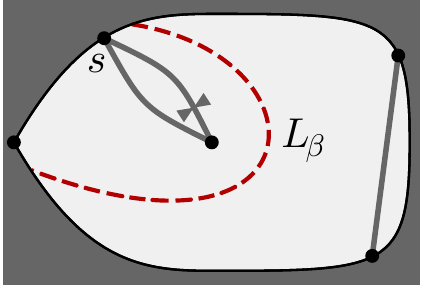}}&\raisebox{30pt}{$+$}&\scalebox{0.75}[0.69]{\includegraphics{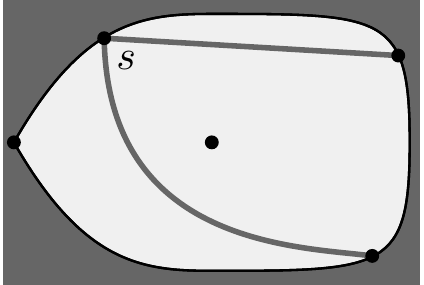}}
\end{tabular}
\caption{More exchange relations, Case 2}
\label{exch case 2 3}
\end{figure}

Finally, suppose $\gamma$ is a boundary segment and consider an exchange relation in the component containing $\alpha$.
Again, $L'_\delta$ does not participate in the exchange relation, and we break into cases based on whether $L'_\alpha$ and $L'_\beta$ participate in the exchange relation.
Again, $L'_\alpha$ participates if and only if an arc incident to $p_\alpha$ is exchanged with an arc incident to $p_1$.
We observe that $L'_\gamma$ participates if and only if exactly two of $s$, $p_1$, and $p_\alpha$, occur as endpoints of the two arcs being exchanged, where as before $s$ is the marked point closest to $p_1$.
Again, the case where neither $L'_\alpha$ nor $L'_\beta$ appears is easy, and the remaining cases are illustrated in the third through eighth rows of Figure~\ref{exch case 2 2} and in Figure~\ref{exch case 2 3}.

The rest of the proof follows the outline given above.
\end{proof}

Finally, we prove a proposition that covers Case 3.
In the proof, we will consider two subcases, depending on which of the two arcs in the digon is cut off from the rest of the disk.
We will refer to the case where $\alpha$ is cut off as Case 3a and refer to the other case as Case 3b, as illustrated in Figure~\ref{case 3a 3b}.
\begin{figure}[ht]
\begin{tabular}{ccc}
\scalebox{0.86}{\includegraphics{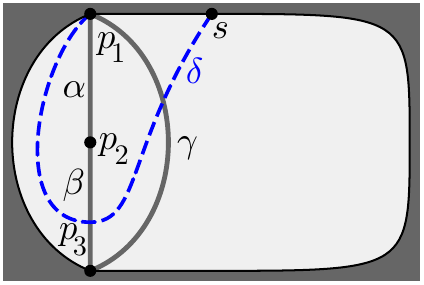}}&\raisebox{29pt}{\LARGE$\xrightarrow{\text{\small resect}}$}&\scalebox{0.86}{\includegraphics{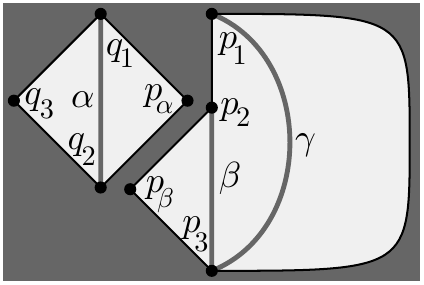}}\\[5pt]
\scalebox{0.86}{\includegraphics{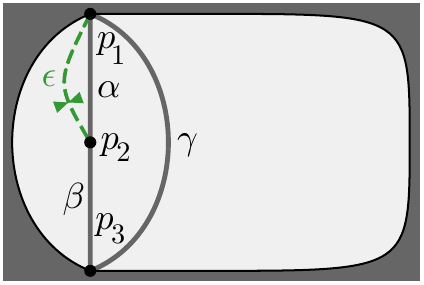}}&\raisebox{29pt}{\LARGE$\xrightarrow{\text{\small resect}}$}&\scalebox{0.86}{\includegraphics{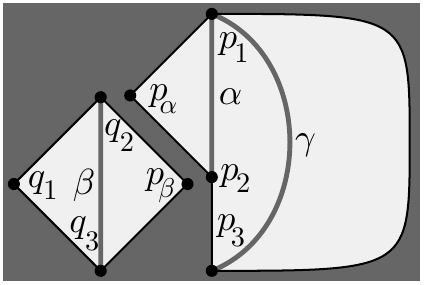}}
\end{tabular}
\caption{Resections, Cases 3a and 3b}
\label{case 3a 3b}
\end{figure}
(Compare Figure~\ref{digon resect config}.)


\begin{prop}\label{case 3 prop}
Suppose $(\S,\M)$ is a once-punctured disk with at least 3 marked points on the boundary with triangulation $T$.
Suppose $(\S',\M')$ is obtained by a resection compatible with $T$, resecting two arcs incident to the puncture in a once-punctured digon of $T$, with one of the edges of the digon being a boundary segment.
Then both parts of Phenomenon~\ref{hom phen} occur.
\end{prop}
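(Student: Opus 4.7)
The plan is to mirror the argument used for Proposition~\ref{case 2 prop}, since the resection again produces a once-punctured monogon component and the structural outline is identical. After relabeling if necessary, let $\gamma$ be the digon edge that is a boundary segment and $\delta$ the other digon edge (an arc of $T$). By the proof of Proposition~\ref{resect signed adjacency}, the only pair of entries of $B(T)$ altered by the resection is $b_{\alpha\delta}, b_{\delta\alpha}$, which become zero in $B(T')$. I would split into two subcases according to Figure~\ref{case 3a 3b}: Case 3a in which $\alpha$ lies in the once-punctured monogon component, and Case 3b in which $\beta$ does; the two subcases are handled by the same argument up to relabeling, so I would present Case 3a in detail.

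In each subcase I would carry out the same five-step outline used in Propositions~\ref{case 1 prop} and~\ref{case 2 prop}. First, describe $\chi$ explicitly on tagged arcs: arcs whose support avoids the resected region map by natural inclusion; arcs incident to the new boundary marked point $p_\alpha$ or $p_\beta$ are extended to the appropriate marked point of $(\S,\M)$ (as in Figures~\ref{chi fig} and~\ref{chi mon fig}); and the two tagged arcs of the monogon (one plain, one notched) are mapped to the corresponding pair of tagged arcs between a specific boundary marked point $r$ and the puncture of $(\S,\M)$, matching the last row of Figure~\ref{chi mon fig}. The choice of $r$ and the extension rules are forced by the $\g$-vector identification $\g(x_\gamma)=-\b(T,\kappa(\gamma))$ coming from Theorem~\ref{q-lam bij}. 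Second, describe $\chi$ on coefficients by specifying the cluster monomials $z_\zeta$: one has $z_\zeta=1$ for $\zeta\notin\{\alpha,\delta\}$, while $z_\alpha$ and $z_\delta$ are identifiable cluster variables, whose existence as cluster variables (rather than arbitrary cluster monomials) is guaranteed by acyclicity of $B(T)$ via Proposition~\ref{feature}.

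Third, and most substantially, verify that every exchange relation of $\A_\bullet(B(T'))$ is sent by $\chi$ to an exchange relation of $\A_\bullet(B(T))$. The exchanges split into three types: (i) the unique exchange of the two tagged arcs inside the once-punctured monogon, handled exactly as the top two rows of Figure~\ref{exch case 2 1}; (ii) exchanges in the main component involving no arc near the cut, for which $\chi$ is the natural inclusion and there is nothing to check; and (iii) exchanges in the main component involving at least one arc incident to $p_\alpha$, $p_\beta$, or to the severed adjacency of $\delta$, in which the laminations $L'_\alpha$ and/or $L'_\delta$ may appear. For type (iii), the key observation is that the cluster monomials $z_\alpha, z_\delta$ introduced by $\nu_\z(L'_\zeta)=L_\zeta z_\zeta$ are precisely the cluster variables for the tagged arcs that bridge the gap between the rerouted endpoints of $\chi(\zeta)$ and the original endpoints in $(\S,\M)$. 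A case-by-case pictorial check in the style of Figures~\ref{exch case 2 1}--\ref{exch case 2 3} confirms that each image is an exchange relation in $(\S,\M)$.

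Finally, Proposition~\ref{hom from cl vars} promotes $\chi$ to a $\g$-vector-preserving ring homomorphism that agrees with $\nu_\z$ on $\{\x',\y'\}$ and hence equals $\nu_\z$, and Proposition~\ref{one antip} gives injectivity because only the $\delta$-column is altered in the relevant way. Since $(\S,\M)$ is a once-punctured or unpunctured disk, every allowable curve corresponds to a tagged arc and every ray theta function is a cluster variable, so both parts of Phenomenon~\ref{hom phen} follow simultaneously. The main obstacle is the type~(iii) case analysis: one must enumerate configurations of exchanged arcs that straddle the cut and verify that the rerouting performed by $\chi$ combines correctly with the substitutions $L'_\zeta \mapsto L_\zeta z_\zeta$ to reproduce the surface-level exchange relation in $(\S,\M)$.
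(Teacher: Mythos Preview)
Your five-step outline is exactly the paper's approach, and the appeal to Propositions~\ref{hom from cl vars} and~\ref{one antip} at the end is correct. However, there is a genuine geometric error at the start that undermines the details.

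You claim that the resection ``again produces a once-punctured monogon component'', and you model $\chi$ on that component as mapping a plain/notched pair of tagged arcs to arcs at the puncture of $(\S,\M)$. This is wrong: in Case~3 we resect \emph{both} arcs incident to the puncture, and doing so converts the puncture into a boundary marked point. The small component that is cut off is therefore an \emph{unpunctured quadrilateral} (containing $\alpha$ in Case~3a, or $\beta$ in Case~3b), not a once-punctured monogon; this is explicit in the paper's proof. In particular $(\S',\M')$ has no punctures at all, so there are no notched tags anywhere in $(\S',\M')$, and the unique exchange relation in the small component is a diagonal flip in a square, not the plain/notched swap from Figure~\ref{exch case 2 1}.

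This means your description of $\chi$ on the small component and your treatment of its exchange relation need to be redone. The rest of your plan---identifying which $z_\zeta$ are nontrivial (your $z_\alpha$ and $z_\delta$, with your $\delta$ being the non-boundary digon edge, which is the paper's $\gamma$), and the case analysis of exchange relations in the large component according to which elementary laminations participate---matches the paper's argument. Once you correct the geometry of the small component, the remaining checks (illustrated in the paper by Figures~\ref{exch3}--\ref{exch3 3}) go through along the lines you describe.
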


\begin{proof}
The map on tagged arcs in Cases 3a and 3b is illustrated in Figure~\ref{case3 chi}.
In Case 3a, the element $z_\alpha$ is $x_\delta$ and $z_\gamma$ is $x_\alpha$.
In Case 3b, $z_\beta$ is $x_\gamma$ and $z_\gamma$ is $x_\epsilon$.
\begin{figure}[ht]
\begin{tabular}{ccc}
\scalebox{0.75}[0.69]{\includegraphics{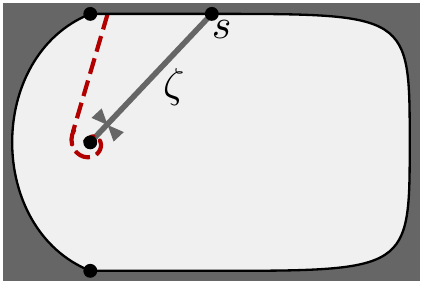}}&\raisebox{29pt}{\LARGE$\xleftarrow{\chi}$}&\scalebox{0.75}[0.69]{\includegraphics{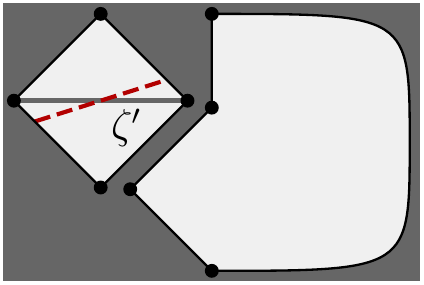}}\\[5pt]
\scalebox{0.75}[0.69]{\includegraphics{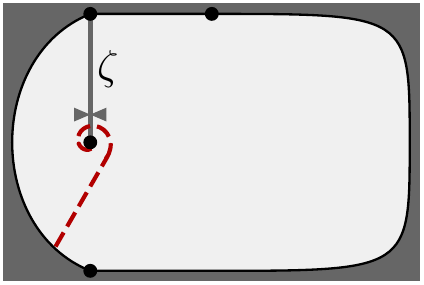}}&\raisebox{29pt}{\LARGE$\xleftarrow{\chi}$}&\scalebox{0.75}[0.69]{\includegraphics{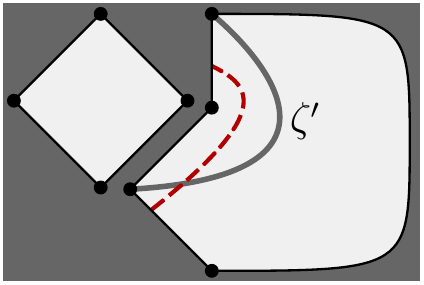}} \\[5pt]
\scalebox{0.75}[0.69]{\includegraphics{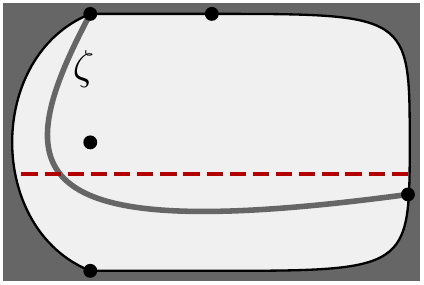}}&\raisebox{29pt}{\LARGE$\xleftarrow{\chi}$}&\scalebox{0.75}[0.69]{\includegraphics{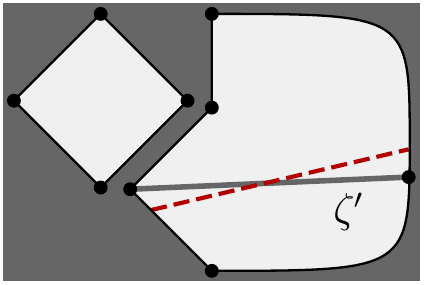}}\\[5pt]
\scalebox{0.75}[0.69]{\includegraphics{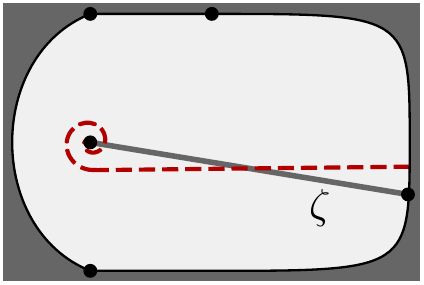}}&\raisebox{29pt}{\LARGE$\xleftarrow{\chi}$}&\scalebox{0.75}[0.69]{\includegraphics{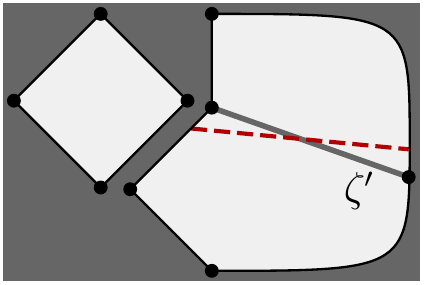}}\\[5pt]
\scalebox{0.75}[0.69]{\includegraphics{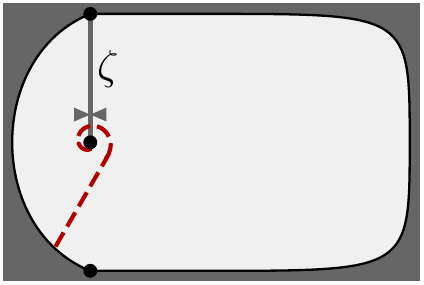}}&\raisebox{29pt}{\LARGE$\xleftarrow{\chi}$}&\scalebox{0.75}[0.69]{\includegraphics{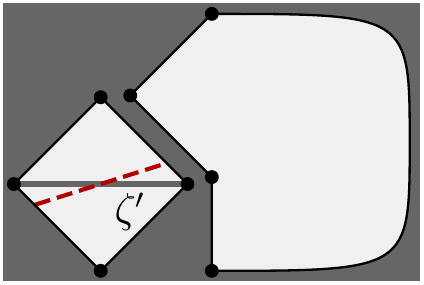}}\\[5pt]
\scalebox{0.75}[0.69]{\includegraphics{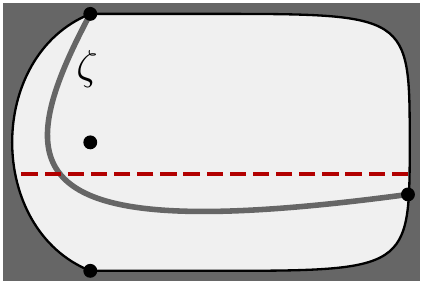}}&\raisebox{29pt}{\LARGE$\xleftarrow{\chi}$}&\scalebox{0.75}[0.69]{\includegraphics{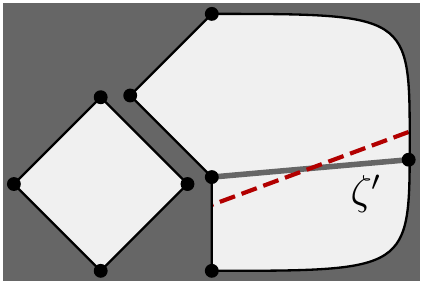}}\\[5pt]
\scalebox{0.75}[0.69]{\includegraphics{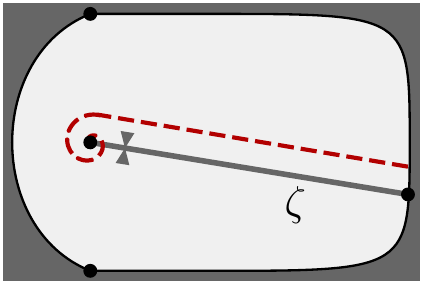}}&\raisebox{29pt}{\LARGE$\xleftarrow{\chi}$}&\scalebox{0.75}[0.69]{\includegraphics{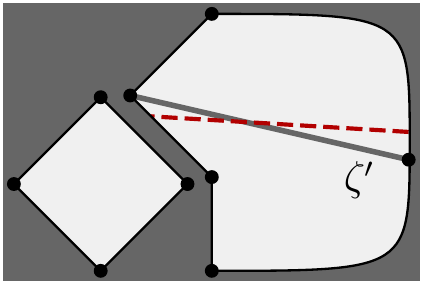}}
\end{tabular}
\caption{Illustrations of the map $\chi$ on tagged arcs, Case 3}
\label{case3 chi}
\end{figure}

\begin{figure}
\begin{tabular}{ccccc}
\scalebox{0.75}[0.69]{\includegraphics{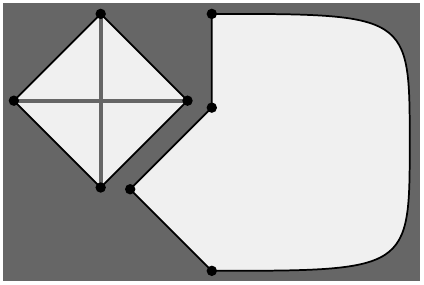}}&\raisebox{30pt}{$=$}&\scalebox{0.75}[0.69]{\includegraphics{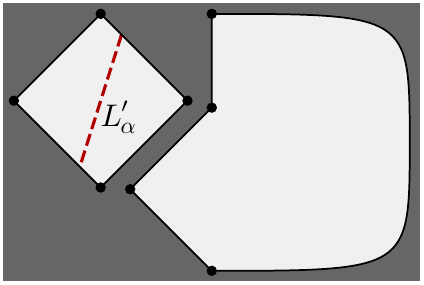}}&\raisebox{30pt}{$+$}&\scalebox{0.75}[0.69]{\includegraphics{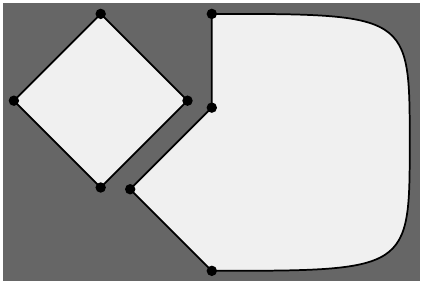}}\\[-3pt]
$\chi\downarrow$&&$\chi\downarrow$&&$\chi\downarrow$\\[1pt]
\scalebox{0.75}[0.69]{\includegraphics{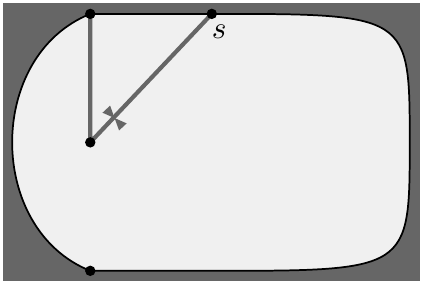}}&\raisebox{30pt}{$=$}&\scalebox{0.75}[0.69]{\includegraphics{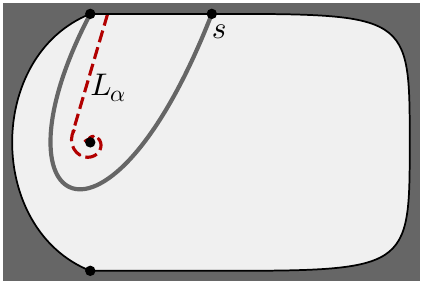}}&\raisebox{30pt}{$+$}&\scalebox{0.75}[0.69]{\includegraphics{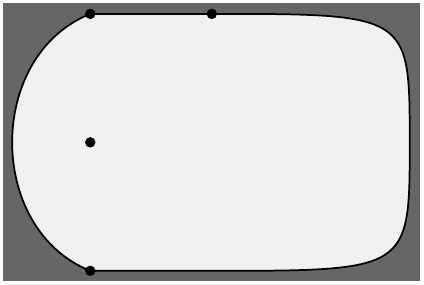}}\\[3pt]\hline\hline\\[-5.3pt]
\scalebox{0.75}[0.69]{\includegraphics{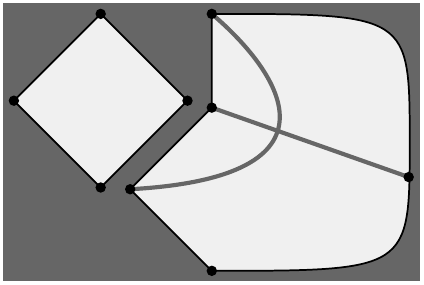}}&\raisebox{30pt}{$=$}&\scalebox{0.75}[0.69]{\includegraphics{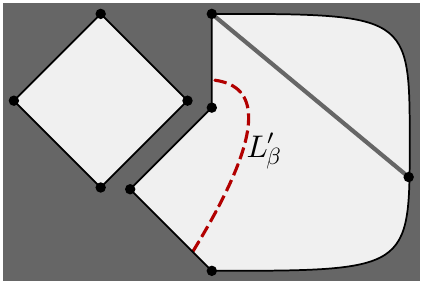}}&\raisebox{30pt}{$+$}&\scalebox{0.75}[0.69]{\includegraphics{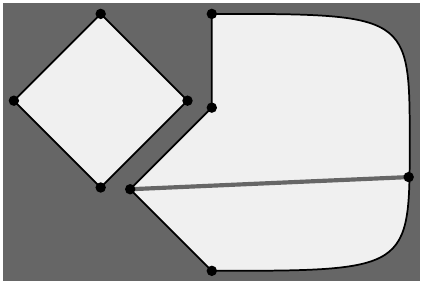}}\\[-3pt]
$\chi\downarrow$&&$\chi\downarrow$&&$\chi\downarrow$\\[1pt]
\scalebox{0.75}[0.69]{\includegraphics{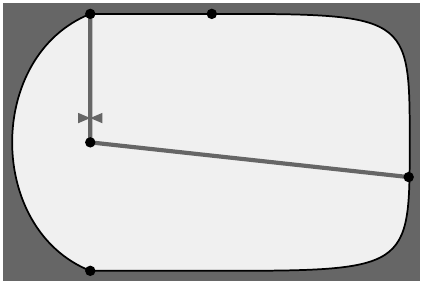}}&\raisebox{30pt}{$=$}&\scalebox{0.75}[0.69]{\includegraphics{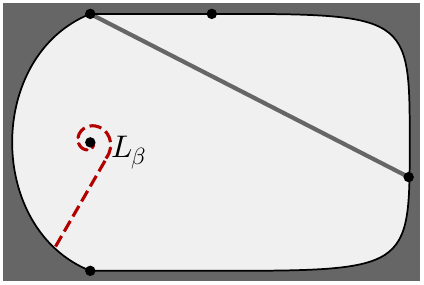}}&\raisebox{30pt}{$+$}&\scalebox{0.75}[0.69]{\includegraphics{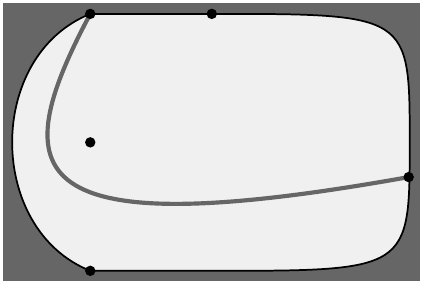}}\\[3pt]\hline\hline\\[-5.3pt]
\scalebox{0.75}[0.69]{\includegraphics{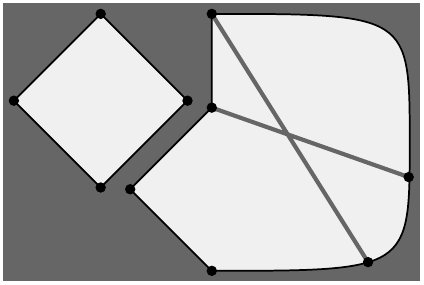}}&\raisebox{30pt}{$=$}&\scalebox{0.75}[0.69]{\includegraphics{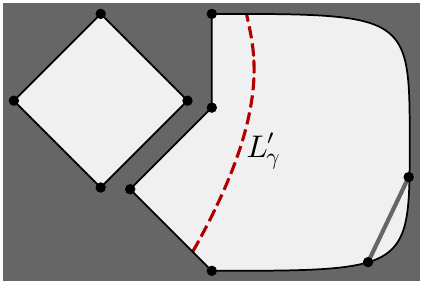}}&\raisebox{30pt}{$+$}&\scalebox{0.75}[0.69]{\includegraphics{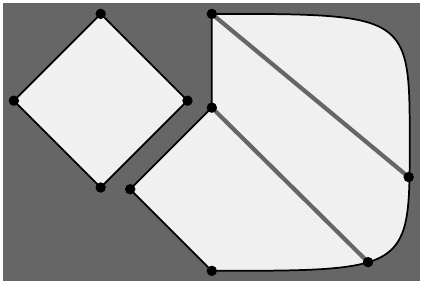}}\\[-3pt]
$\chi\downarrow$&&$\chi\downarrow$&&$\chi\downarrow$\\[1pt]
\scalebox{0.75}[0.69]{\includegraphics{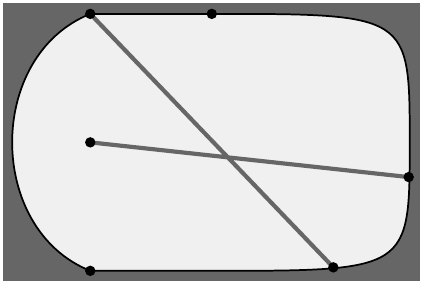}}&\raisebox{30pt}{$=$}&\scalebox{0.75}[0.69]{\includegraphics{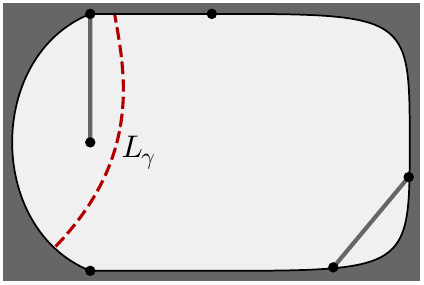}}&\raisebox{30pt}{$+$}&\scalebox{0.75}[0.69]{\includegraphics{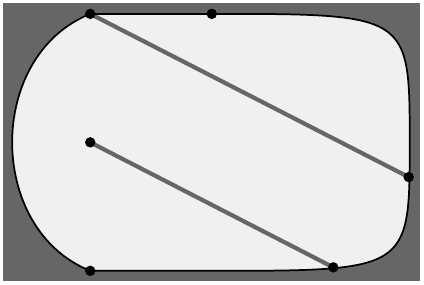}}\\[3pt]\hline\hline\\[-5.3pt]
\scalebox{0.75}[0.69]{\includegraphics{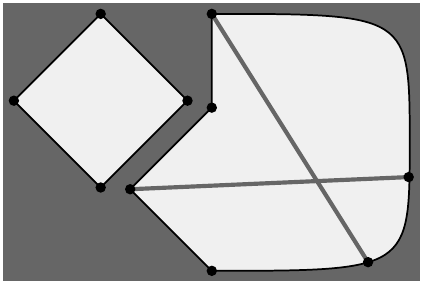}}&\raisebox{30pt}{$=$}&\scalebox{0.75}[0.69]{\includegraphics{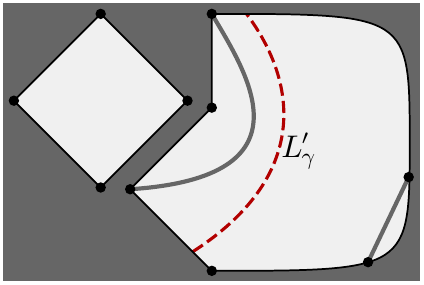}}&\raisebox{30pt}{$+$}&\scalebox{0.75}[0.69]{\includegraphics{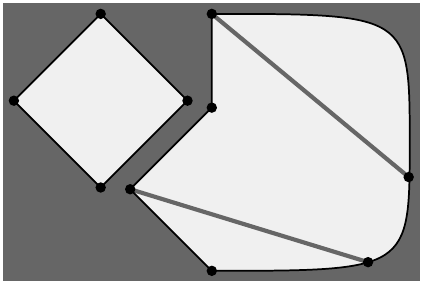}}\\[-3pt]
$\chi\downarrow$&&$\chi\downarrow$&&$\chi\downarrow$\\[1pt]
\scalebox{0.75}[0.69]{\includegraphics{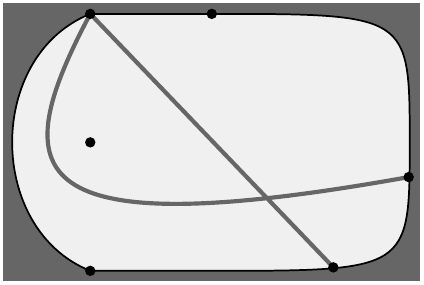}}&\raisebox{30pt}{$=$}&\scalebox{0.75}[0.69]{\includegraphics{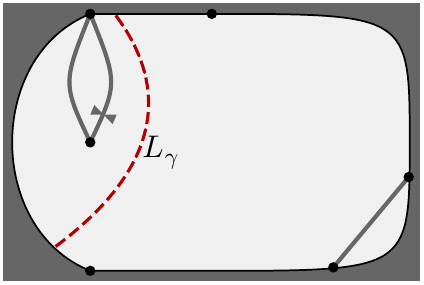}}&\raisebox{30pt}{$+$}&\scalebox{0.75}[0.69]{\includegraphics{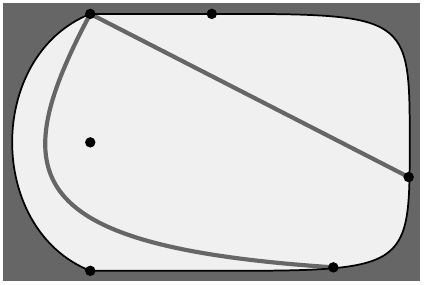}}
\end{tabular}
\caption{Exchange relations, Case 3}
\label{exch3}
\end{figure}
In Case 3a, $\chi$ takes the unique exchange relation involving arcs in the quadrilateral component to an exchange relation in $\A_\bullet(B(T))$, as illustrated in the first two lines of Figure~\ref{exch3}.
An exchange relation in $\A_\bullet(B(T'))$ involves $L'_\beta$ if and only if it exchanges an arc with endpoint $p_2$ and an arc with endpoint $p_\beta$.
An exchange relation involves $L'_\gamma$ if and only if exactly two of $p_1$, $p_2$, and $p_\beta$ occur as endpoints of the two arcs being exchanged.
In each case, $\chi$ maps the exchange relation to an exchange relation, as illustrated in the last six lines of Figure~\ref{exch3} and the first two lines of Figure~\ref{exch3 2}.
\begin{figure}
\begin{tabular}{ccccc}
\scalebox{0.75}[0.69]{\includegraphics{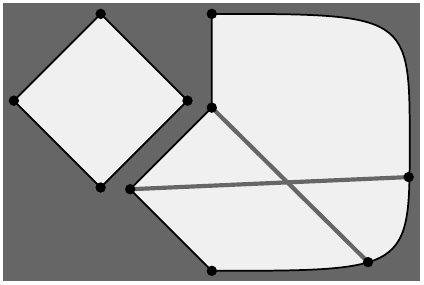}}&\raisebox{30pt}{$=$}&\scalebox{0.75}[0.69]{\includegraphics{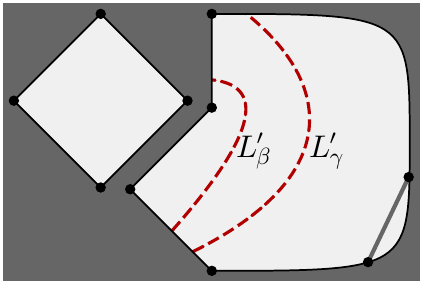}}&\raisebox{30pt}{$+$}&\scalebox{0.75}[0.69]{\includegraphics{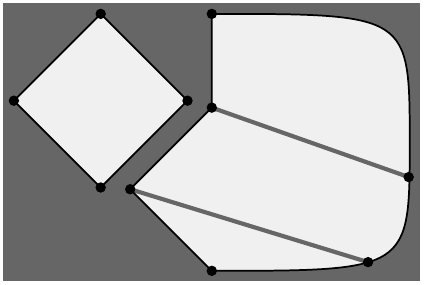}}\\[-3pt]
$\chi\downarrow$&&$\chi\downarrow$&&$\chi\downarrow$\\[1pt]
\scalebox{0.75}[0.69]{\includegraphics{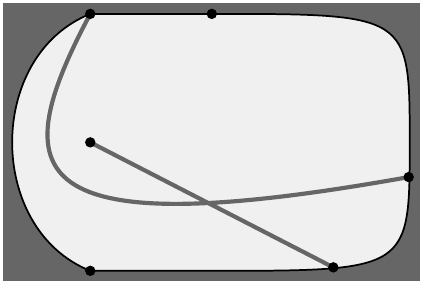}}&\raisebox{30pt}{$=$}&\scalebox{0.75}[0.69]{\includegraphics{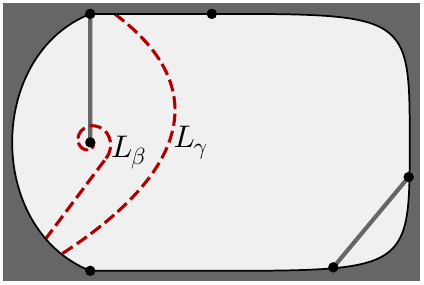}}&\raisebox{30pt}{$+$}&\scalebox{0.75}[0.69]{\includegraphics{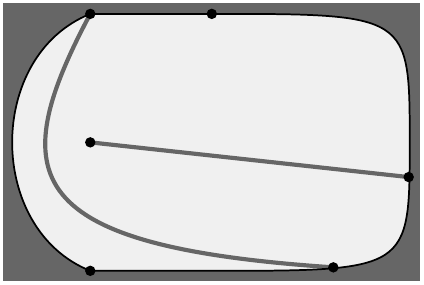}}\\[3pt]\hline\hline\\[-5.3pt]
\scalebox{0.75}[0.69]{\includegraphics{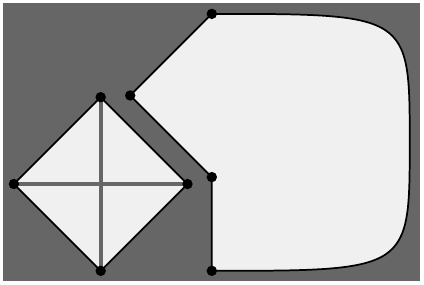}}&\raisebox{30pt}{$=$}&\scalebox{0.75}[0.69]{\includegraphics{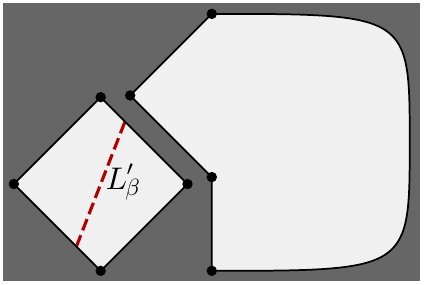}}&\raisebox{30pt}{$+$}&\scalebox{0.75}[0.69]{\includegraphics{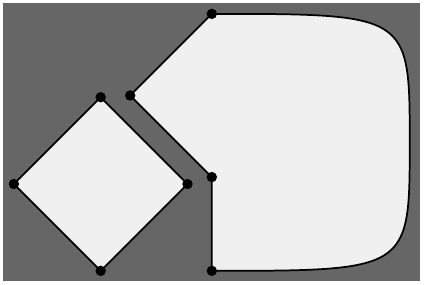}}\\[-3pt]
$\chi\downarrow$&&$\chi\downarrow$&&$\chi\downarrow$\\[1pt]
\scalebox{0.75}[0.69]{\includegraphics{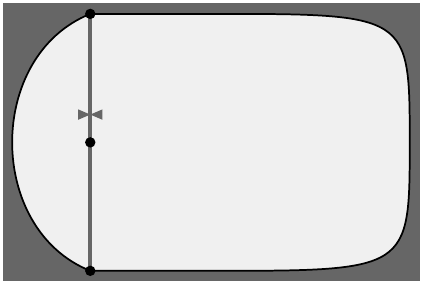}}&\raisebox{30pt}{$=$}&\scalebox{0.75}[0.69]{\includegraphics{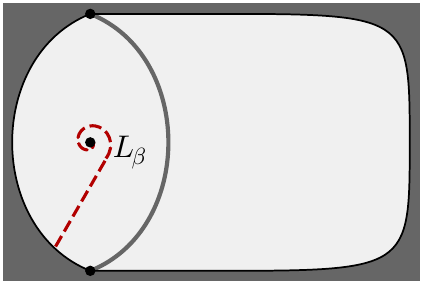}}&\raisebox{30pt}{$+$}&\scalebox{0.75}[0.69]{\includegraphics{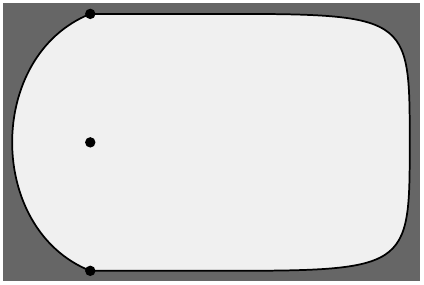}} \\[3pt]\hline\hline\\[-5pt]
\scalebox{0.75}[0.69]{\includegraphics{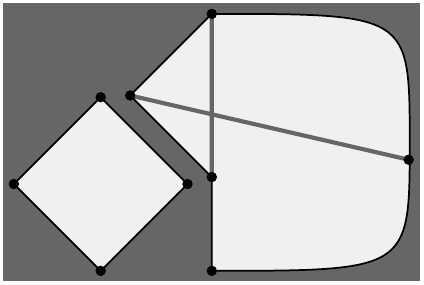}}&\raisebox{30pt}{$=$}&\scalebox{0.75}[0.69]{\includegraphics{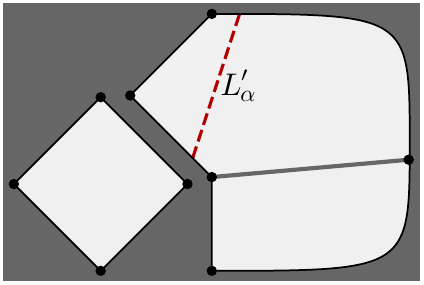}}&\raisebox{30pt}{$+$}&\scalebox{0.75}[0.69]{\includegraphics{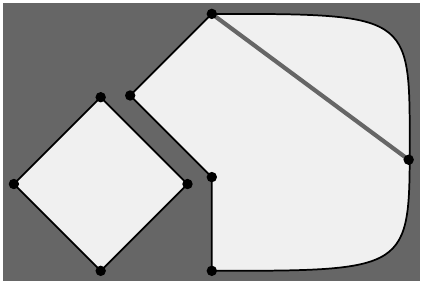}}\\[-3pt]
$\chi\downarrow$&&$\chi\downarrow$&&$\chi\downarrow$\\[1pt]
\scalebox{0.75}[0.69]{\includegraphics{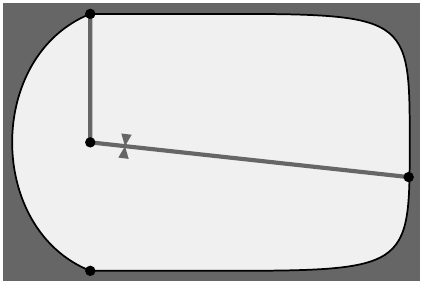}}&\raisebox{30pt}{$=$}&\scalebox{0.75}[0.69]{\includegraphics{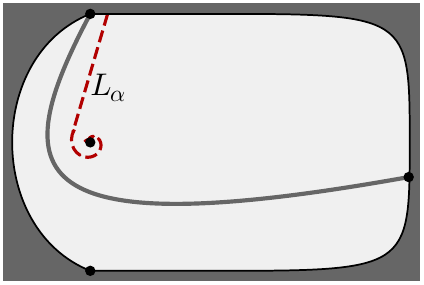}}&\raisebox{30pt}{$+$}&\scalebox{0.75}[0.69]{\includegraphics{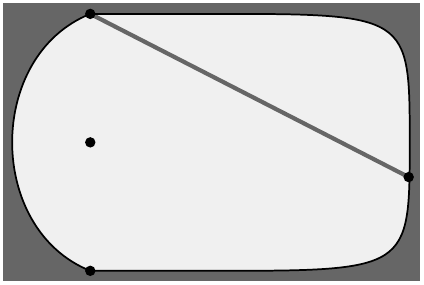}}\\[3pt]\hline\hline\\[-5.3pt]
\scalebox{0.75}[0.69]{\includegraphics{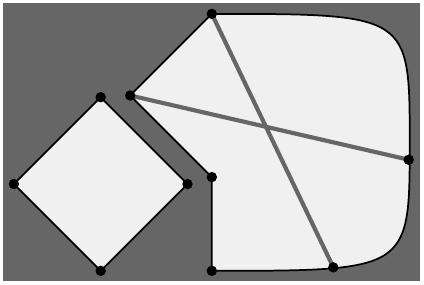}}&\raisebox{30pt}{$=$}&\scalebox{0.75}[0.69]{\includegraphics{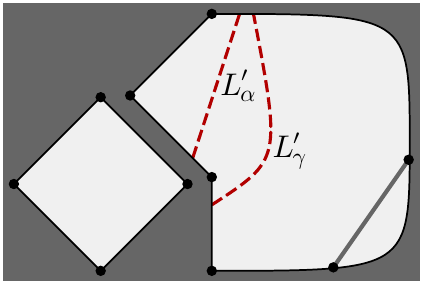}}&\raisebox{30pt}{$+$}&\scalebox{0.75}[0.69]{\includegraphics{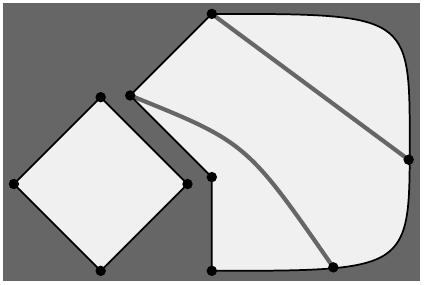}}\\[-3pt]
$\chi\downarrow$&&$\chi\downarrow$&&$\chi\downarrow$\\[1pt]
\scalebox{0.75}[0.69]{\includegraphics{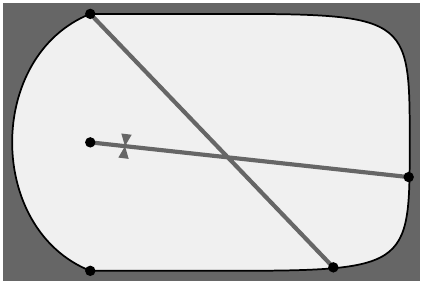}}&\raisebox{30pt}{$=$}&\scalebox{0.75}[0.69]{\includegraphics{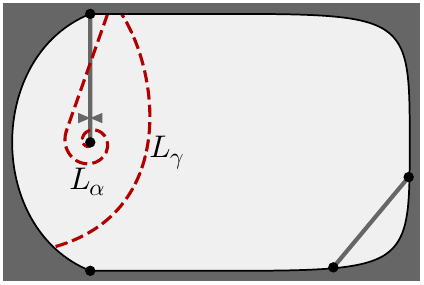}}&\raisebox{30pt}{$+$}&\scalebox{0.75}[0.69]{\includegraphics{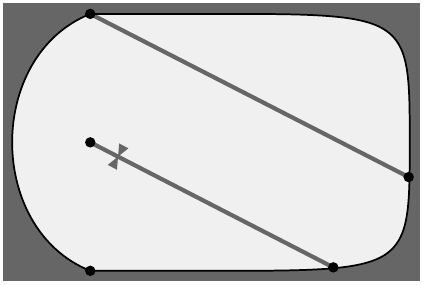}}
\end{tabular}
\caption{More exchange relations, Case 3}
\label{exch3 2}
\end{figure}
\begin{figure}
\begin{tabular}{ccccc}
\scalebox{0.75}[0.69]{\includegraphics{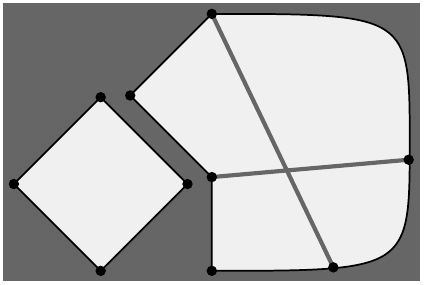}}&\raisebox{30pt}{$=$}&\scalebox{0.75}[0.69]{\includegraphics{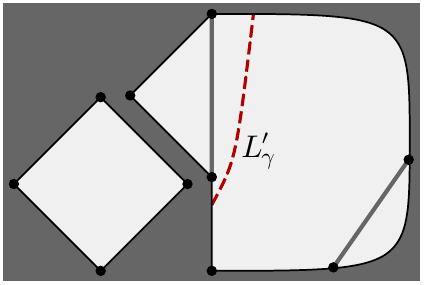}}&\raisebox{30pt}{$+$}&\scalebox{0.75}[0.69]{\includegraphics{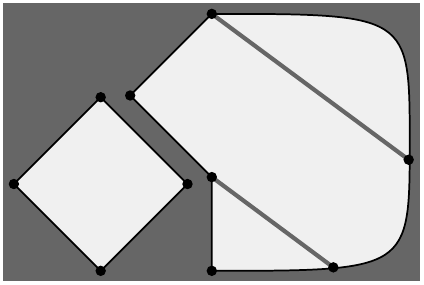}}\\[-3pt]
$\chi\downarrow$&&$\chi\downarrow$&&$\chi\downarrow$\\[1pt]
\scalebox{0.75}[0.69]{\includegraphics{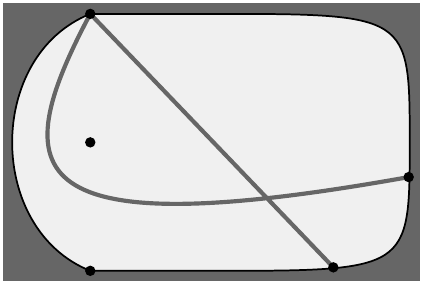}}&\raisebox{30pt}{$=$}&\scalebox{0.75}[0.69]{\includegraphics{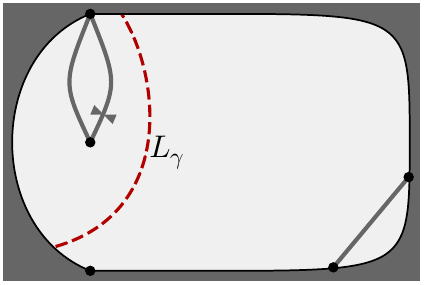}}&\raisebox{30pt}{$+$}&\scalebox{0.75}[0.69]{\includegraphics{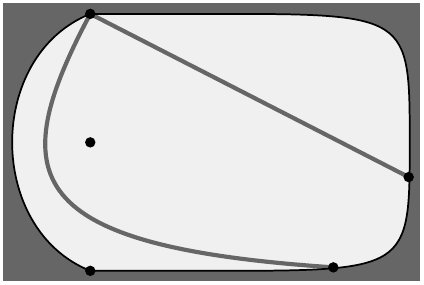}}\\[3pt]\hline\hline\\[-5.3pt]
\scalebox{0.75}[0.69]{\includegraphics{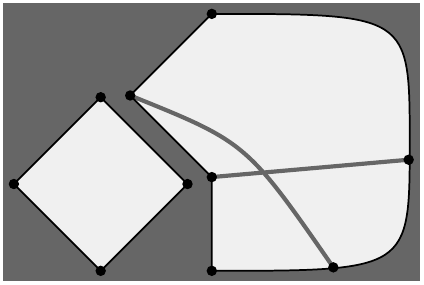}}&\raisebox{30pt}{$=$}&\scalebox{0.75}[0.69]{\includegraphics{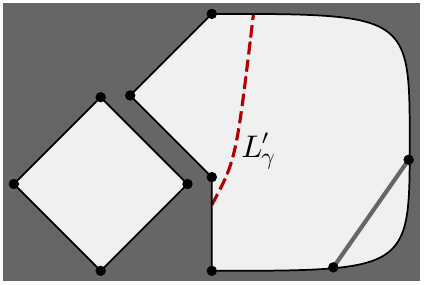}}&\raisebox{30pt}{$+$}&\scalebox{0.75}[0.69]{\includegraphics{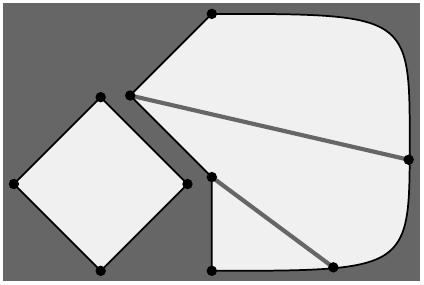}}\\[-3pt]
$\chi\downarrow$&&$\chi\downarrow$&&$\chi\downarrow$\\[1pt]
\scalebox{0.75}[0.69]{\includegraphics{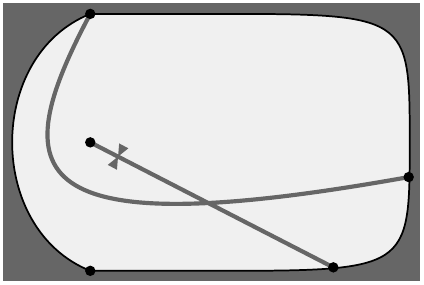}}&\raisebox{30pt}{$=$}&\scalebox{0.75}[0.69]{\includegraphics{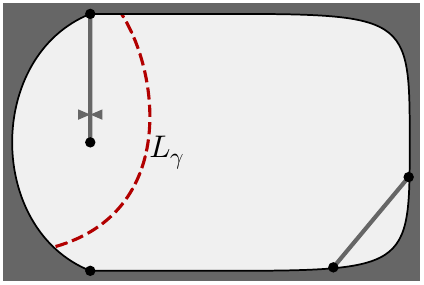}}&\raisebox{30pt}{$+$}&\scalebox{0.75}[0.69]{\includegraphics{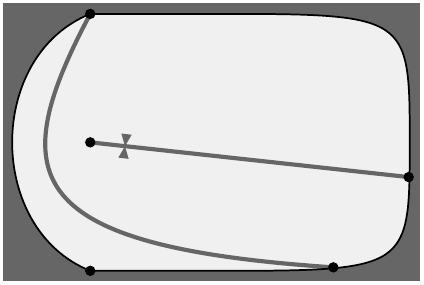}}
\end{tabular}
\caption{More exchange relations, Case 3}
\label{exch3 3}
\end{figure}
Similarly, in Case 3b, $\chi$ maps the unique exchange relation in the quadrilateral to an exchange relation, as illustrated in the last third and fourth lines of Figure~\ref{exch3 2}.
An exchange relation in $\A_\bullet(B(T'))$ involves $L'_\alpha$ if and only if it exchanges an arc with endpoint $p_1$ and an arc with endpoint $p_\alpha$.
An exchange relation involves $L'_\gamma$ if and only if exactly two of $p_1$, $p_\alpha$, and $p_2$ occur as endpoints of the two arcs being exchanged.
Again, $\chi$ takes each exchange relation to an exchange relation, as illustrated in the last four lines of Figure~\ref{exch3 2} and in Figure~\ref{exch3 3}.

Again, the proof concludes as outlined above.
\end{proof}

This completes the proof of Theorem~\ref{hom phen surf thm}.
Similar arguments should work for the surfaces of affine type, but will be even more complicated.
In these cases, $\chi$ maps some tagged arcs to closed allowable curves, so the proof will need more general skein relations \cite{MSW2,MW}, rather than only exchange relations.



\subsection*{Acknowledgments}
The author thanks Man Wai ``Mandy'' Cheung, Salvatore Stella, and Shira Viel for helpful conversations. 

%


\begin{thebibliography}{27}

\bibitem{ADS}
I. Assem, G. Dupont, and R. Schiffler,
\textit{On a category of cluster algebras.}
J. Pure Appl. Algebra \textbf{218} (2014), no. 3, 553--582. 

%
\bibitem{unisphere}
E.~Barnard, E.~Meehan, N.~Reading, and S.~Viel,
\textit{Universal geometric coefficients for the four-punctured sphere.}
 Ann. Comb. \textbf{22} (2018), no. 1, 1--44.
 
\bibitem{ca3}
A. Berenstein, S. Fomin, A. Zelevinsky,
\textit{Cluster algebras. III. Upper bounds and double Bruhat cells.}
Duke Math.\ J.\ \textbf{126} (2005) no. 1, 1--52. 

\bibitem{CGMMRSW}
M. W. Cheung, M. Gross, G. Muller, G. Musiker, D. Rupel, S. Stella, and H. Williams,
\textit{The greedy basis equals the theta basis: a rank two haiku.}
J. Combin. Theory Ser. A \textbf{145}, 150--171. 

\bibitem{ER}
R. Ehrenborg and G.-C. Rota,
\textit{Apolarity and canonical forms for homogeneous polynomials.}
European J. Combin. \textbf{14} (1993), no. 3, 157--181. 

%
\bibitem{FeShT2}
A. Felikson, M. Shapiro, and P. Tumarkin,
\textit{Cluster algebras and triangulated orbifolds.}
Adv. Math. \textbf{231} (2012), no.~5, 2953--3002.

\bibitem{cats1}
S. Fomin, M. Shapiro, and D. Thurston,
\textit{Cluster algebras and triangulated surfaces. I. Cluster complexes.}
Acta Math. \textbf{201} (2008), no. 1, 83--146. 

\bibitem{cats2}
S. Fomin and D. Thurston,
\textit{Cluster algebras and triangulated surfaces. Part II: Lambda lengths.}
Mem. Amer. Math. Soc. \textbf{255} (2018), no.~1223. 

\bibitem{ca1}
S.~Fomin and A.~Zelevinsky,
\textit{Cluster algebras.~I. Foundations.}
J.\ Amer.\ Math.\ Soc.\ \textbf{15} (2002), no.~2, 497--529. 

\bibitem{ca2}
S.~Fomin and A.~Zelevinsky,
\textit{Cluster algebras~II: Finite type classification.}
Invent.\ Math.\ \textbf{154} (2003), 63--121. 

\bibitem{ca4}
S. Fomin and A. Zelevinsky,
\textit{Cluster Algebras IV: Coefficients.}
Compos. Math. \textbf{143} (2007), 112--164.

\bibitem{Fraser}
C. Fraser,
\textit{Quasi-homomorphisms of cluster algebras.}
Adv. in Appl. Math. \textbf{81} (2016), 40--77. 

\bibitem{GHKK}
M. Gross, P. Hacking, S. Keel, and M. Kontsevich,
\textit{Canonical bases for cluster algebras.}
J.~Amer. Math. Soc. \textbf{31} (2018), no. 2, 497--608. 

\bibitem{HLY}
Min Huang, Fang Li, and Yichao Yang
\textit{On Structure of cluster algebras of geometric type I: In view of sub-seeds and seed homomorphisms.}
Sci. China Math. \textbf{61} (2018), no. 5, 831--854. 

\bibitem{HL}
Min Huang and Fang Li,
\textit{On structure of cluster algebras of geometric type, II: Green's equivalences and paunched surfaces.}
Pure Appl. Math. Q. \textbf{11} (2015), no. 3, 451--490. 

%
%

\bibitem{MW}
G. Musiker and L. Williams,
\textit{Matrix formulae and skein relations for cluster algebras from surfaces.}
Int. Math. Res. Not. IMRN \textbf{2013}, no. 13, 2891--2944. 


\bibitem{MSW2}
G. Musiker, R. Schiffler, and L. Williams, 
\textit{Bases for cluster algebras from surfaces.}
Compos.\ Math.\ \textbf{149}, (2013) 217--263.


\bibitem{cambrian}
N.~Reading,
\textit{Cambrian lattices.} 
Adv.\ Math.\ \textbf{205} (2006) no.~2, 313--353.

\bibitem{sortable}
N.~Reading,
\textit{Clusters, Coxeter-sortable elements and noncrossing partitions.}
Trans.\ Amer.\ Math.\ Soc.\ \textbf{359} (2007), no.~12, 5931--5958.

\bibitem{universal}
N.~Reading,
\textit{Universal geometric cluster algebras.}
Math. Z. \textbf{277} no. 1--2 (2014), 499--547.

\bibitem{unisurface}
N.~Reading,
\textit{Universal geometric cluster algebras from surfaces.}
Trans. Amer. Math. Soc. \textbf{366} (2014), 6647--6685. 

\bibitem{unitorus}
N.~Reading,
\textit{Universal geometric coefficients for the once-punctured torus.}
S\'{e}m. Lothar. Combin. B71e (2015), 30 pp.

\bibitem{diagram}
N. Reading, 
\textit{Lattice homomorphisms between weak orders.} 
Electron. J. Combin., to appear.

\bibitem{scatfan}
N. Reading, 
\textit{Scattering fans.} 
Int. Math. Res. Notices., to appear.

\bibitem{scatcomb}
N. Reading, 
\textit{A combinatorial approach to scattering diagrams.} 
Preprint, 2018.  (arXiv:1806.05094)

\bibitem{camb_fan}
N.~Reading and D.~E.~Speyer,
\textit{Cambrian fans.}
J. Eur. Math. Soc. (JEMS), \textbf{11} no.~2, 407-447.

\bibitem{typefree}
N.~Reading and D.~E.~Speyer,
\textit{Sortable elements in infinite Coxeter groups}
Trans. Amer. Math. Soc. 363 (2011) no. 2, 699-761. 

\bibitem{framework}
N.~Reading and D.~Speyer,
\textit{Combinatorial frameworks for cluster algebras.}
Int. Math. Res. Notices. \textbf{2016} no. 1 (2016), 109--173.

%




\bibitem{Simion}
R. Simion,
\textit{Noncrossing partitions},
Formal power series and algebraic combinatorics (Vienna, 1997). 
Discrete Math. \textbf{217} (2000), no.~1-3, 367--409.

\bibitem{Viel}
S. Viel,
\textit{Cluster algebras and mutation-linear algebra: folding, dominance, and the orbifolds model.}
Ph.D. thesis, North Carolina State University, 2018.

\bibitem{YZ}  
S.~Yang and A.~Zelevinsky
\textit{Cluster algebras of finite type via Coxeter elements and principal minors},
Transformation Groups \textbf{13} (2008),  no.~3--4, 855--895.

\end{thebibliography}
\end{document}